\newcommand{\e}{{\mathrm e}}
\newcommand{\Cbold} {{\mathbb C}}
\newcommand{\Bcal}   {\mathcal{B}}
\newcommand{\Ccal}   {\mathcal{C}}
\newcommand{\Fcal}   {\mathcal{F}}
\newcommand{\nin}{\not\in}
\def\1{{\mathchoice {1\mskip-4mu\mathrm l}      
{1\mskip-4mu\mathrm l}
{1\mskip-4.5mu\mathrm l} {1\mskip-5mu\mathrm l}}}
\newcommand{\indic}[1]{\1_{\{#1\}}}
\newcommand{\indicwo}[1]{\1_{#1}}
\DeclareMathSymbol{\expect}        {\mathord}{AMSb}{"45}
\DeclareMathSymbol{\expec}        {\mathord}{AMSb}{"45}
\DeclareMathSymbol{\prob}        {\mathord}{AMSb}{"50}
\DeclareMathSymbol{\Ibold}        {\mathord}{AMSb}{"49}
\DeclareMathSymbol{\Nbold}        {\mathord}{AMSb}{"4E}
\DeclareMathSymbol{\Rbold}        {\mathord}{AMSb}{"52}
\DeclareMathSymbol{\Zbold}        {\mathord}{AMSb}{"5A}
\newcommand{\ver}{\Z^d}
\newcommand{\C}{\mathbb C}
\newcommand{\Z}{\Zbold}
\newcommand{\Zd}{\Zbold^d}
\newcommand{\sss}   { \scriptscriptstyle }
\newcommand{\conn}{\longleftrightarrow}
\newcommand{\dbc}{\Longleftrightarrow}
\newcommand{\ct}[1]     { \stackrel{#1}{\conn} }
\newcommand{\connLe}[1]     { \stackrel{#1}{\conn} }
\newcommand{\ctx}[1]     {\leftarrow\shift\!\xrightarrow{#1}}
\newcommand{\nc}        { \conn {\hspace{-2.8ex} /} \hspace{1.8ex}   }
\newcommand{\bb}{\underline{b}}
\newcommand{\tb}{\overline{b}}
\newcommand{\eqalign}[1]
    {\begin{align}#1\end{align}
    }
\newcommand{\eqarray}   {\begin{eqnarray}}
\newcommand{\enarray}   {\end{eqnarray}}
\newcommand{\lbeq}[1]  {\label{e:#1}}
\newcommand{\refeq}[1] {\eqref{e:#1}}
\newcommand{\eq}{\begin{equation}}
\newcommand{\en}{\end{equation}}
\newcommand{\ben}{\begin{enumerate}}
\newcommand{\een}{\end{enumerate}}
\newcommand{\eqn}[1]{\begin{equation} #1 \end{equation}}
\newcommand{\eqan}[1]{\begin{align} #1 \end{align}}
\newcommand{\shift}{\!\!\!\!}
\newcommand{\nn}{\nonumber}
\newcommand{\nnb}{\nonumber\\}
\renewcommand{\to}{\rightarrow}
\def\Zd{\mathbb{Z}^d}
\def\mA[#1]{ {\bf A}(#1)}
\def\miA[#1]{ {\bf A}^{-1}(#1)}
\def\mD[#1]{{\bf \hat D}(#1)}
\def\mE[#1]{{\bf \hat E}(#1)}
\def\mM[#1]{{\bf \hat M}_1(#1)}
\def\mMa[#1]{{\bf \hat M}_2(#1)}
\def\mE[#1]{{\bf E}_{#1}}
\def\ve[#1]{ {e}_{#1}}
\def\v1{{\vec 1}}
\def\mJ{{\bf J}}
\def\mI{{\bf I}}
\newcommand{\dmin}{11}
\def\mPi[#1]{\hat {\bf \Pi}_z(#1)}
\def\mPiwoz[#1]{\hat {\bf \Pi}(#1)}
\def\mPiM[#1]{\hat {\bf \Pi}_{{\sss M}}(#1)}
\def\vXi[#1]{\vec {\hat \Xi}(#1)}
\def\vXiz[#1]{\vec {\hat \Xi}_z(#1)}
\def\vXiM[#1]{\vec {\hat \Xi}_{\sss M}(#1)}
\def\vRM[#1]{\vec {\hat R}_{\sss M}(#1)}
\def\vPsi[#1]{\vec {\hat \Psi}(#1)}
\def\vPsiT[#1]{\vec {\hat \Psi}^T(#1)}
\def\vPsiz[#1]{\vec {\hat \Psi}_z(#1)}
\def\vPsiMT[#1]{\vec {\hat \Psi}^T_{\sss M}(#1)}
\def\vPsiM[#1]{\vec {\hat \Psi}_{\sss M}(#1)}
\def\hatPhiM[#1]{\hat {\Phi}_{\sss M}(#1)}
\def\betaaa{\beta_{\sss \mu}}
\def\betaaalow{\underline \beta_{\sss \mu}}
\def\aap{\mu_p}
\def\aabp{\bar {\mu}_p}
\def\ssss[#1]{{\sss \text{\rm #1}}}
\def\ssc[#1]{{\sss( \text{\rm #1})}}
\def\genC[#1]{\hat {C}_{\mu_z}(#1)}
\def\genG[#1]{\hat {G}_{z}(#1)}
\def\bvtheta[#1]{\vec  {\boldsymbol \theta}_{#1}}
\def\diagRepulsiveLetter{\mathscr}
\newcommand{\LTsecondcolomnwidth}{20mm}
\newcommand{\LTthirdcolomnwidth}{20mm}
\newcommand{\LTfourthcolomnwidth}{32mm}
\newcommand{\LTfivethcolomnwidth}{38mm}
\newcommand{\fourcolomntablePer}[3]{
\begin{table}[ht!]
\caption{#1}
\label{#2}
{\small
\begin{tabular}
{|
>{\centering\arraybackslash} m{\LTsecondcolomnwidth-9mm}|
>{\centering\arraybackslash} m{\LTthirdcolomnwidth+1mm}|
>{\centering} m{\LTfourthcolomnwidth+2mm}|
>{\centering\arraybackslash} m{\LTfivethcolomnwidth+3mm}|
}
  \hline
  Parameter & Condition & Diagram & Definition \\
  \hline
#3
  \hline
\end{tabular}
}
\end{table}
}
\newcommand{\threecolomntablePer}[3]{
\begin{table}[h!]
\caption{#1}
\label{#2}
{\small
\begin{tabular}
{|
>{\centering\arraybackslash} m{\LTsecondcolomnwidth+10mm}|
>{\centering} m{\LTfourthcolomnwidth+\LTthirdcolomnwidth-10mm}|
>{\centering\arraybackslash} m{\LTfivethcolomnwidth}|
}
  \hline
  Parameter &Diagram & Definition \\
  \hline
#3
  \hline
\end{tabular}
}
\end{table}
}
\def\projIndexsetdir[#1]{{\mathsf {IK} }(#1)}
\def\projIndexsetPoints[#1]{{\mathsf {AB} }(#1)}
\def\projIndexsetNumber[#1]{{\mathsf {NM} }(#1)}
\def\projIndexsetPointsTwo[#1]{{\mathsf {X} }(#1)}
\newcommand{\ii}{{\mathrm i}}
\newtheorem{theorem}{Theorem}[section]
\newtheorem{corollary}[theorem]{Corollary}
\newtheorem{lemma}[theorem]{Lemma}
\newtheorem{prop}[theorem]{Proposition}
\newtheorem{ass}[theorem]{Assumption}
\newtheorem{definition}[theorem]{Definition}
\newtheorem{remark}[theorem]{Remark}
\numberwithin{equation}{section}
\numberwithin{theorem}{section}
\title{Mean-field behavior for nearest-neighbor percolation in $d>10$\iflongversion : Extended version \fi}
\author{Robert Fitzner\thanks{Department of Mathematics and
        Computer Science, Eindhoven University of Technology,
        5600 MB Eindhoven, The Netherlands.
        {\tt math@fitzner.nl},{\tt rhofstad@win.tue.nl}}
        \and
        Remco van der Hofstad$^*$
    }
    	\date{November 1, 2016}
\begin{document}
\maketitle


\begin{abstract}
We prove that nearest-neighbor percolation in dimensions $d\geq\dmin$ displays mean-field behavior by proving that the infrared bound holds, in turn implying the finiteness of the percolation triangle diagram. The finiteness of the triangle implies the existence and mean-field values of various critical exponents, such as $\gamma=1, \beta=1, \delta=2$. We also prove sharp $x$-space asymptotics for the two-point function and the existence of various arm exponents.  Such results had previously been obtained in unpublished work by Hara and Slade for nearest-neighbor percolation in dimension $d\geq 19$, so that we bring the dimension above which mean-field behavior is rigorously proved down from $19$ to $\dmin$. Our results also imply sharp bounds on the critical value of nearest-neighbor percolation on $\Z^d$, which are provably at most $1.306\%$ off in $d=\dmin$. We make use of the general method analyzed in \cite{FitHof13b}, which proposes to use a lace expansion perturbing around non-backtracking random walk. This proof is {\em computer-assisted}, relying on (1) rigorous numerical upper bounds on various simple random walk integrals as proved by Hara and Slade \cite{HarSla92a}; and (2) a verification that the numerical conditions in \cite{FitHof13b} hold true. These two ingredients are implemented in two Mathematica notebooks that can be downloaded from the website of the first author.

The main steps of this paper are (a) to derive a non-backtracking lace expansion for the percolation two-point function; (b) to bound the non-backtracking lace expansion coefficients, thus showing that the general methodology of \cite{FitHof13b} applies, and (c) to describe the numerical bounds on the coefficients.

\iflongversion
In the appendix of this extended version of the paper, we give additional details about the bounds on the NoBLE coefficients that are not given in the article version.
\fi
\end{abstract}

\section{Introduction}
\label{sec-intro}
\subsection{Motivation}
\label{sec-motiv}
The \emph{lace expansion} was invented by Brydges and Spencer in 1985 \cite{BrySpe85} to prove mean-field behavior for weakly self-avoiding walk. Thereafter, it was extended to self-avoiding walks (SAW), percolation, and lattice trees and animals \cite{HarSla90a,HarSla90b,HarSla92a,Slad87}, and has become an indispensable tool to prove mean-field behavior of statistical mechanical models above the so-called upper critical dimension. More recent extensions include oriented percolation \cite {HofSla02, NguYan93,NguYan95}, the contact process \cite{HofSak04,Saka01}, and the Ising model \cite{Saka07}.

Being a \emph{perturbative method} in nature, applications of the lace expansion typically necessitate a small parameter. This small parameter tends to be the degree of the underlying base graph. There are two possible approaches to obtain a small parameter. The first is to work in a so-called \emph{spread-out} model, where long- but finite-range connections over a distance $L$ are possible, and we take $L$ large. This approach has the advantage that the results hold, for $L$ sufficiently large, all the way up to the critical dimension of the corresponding model. The second approach applies to the simplest and most often studied nearest-neighbor version of the model. For the nearest-neighbor model, the degree of a vertex is $2d$ which then has to be taken large in order to prove mean-field results. Thus, we need to take the dimension large, and therefore obtain suboptimal results in terms of the dimension above which the results hold. A seminal exception is SAW, where Hara and Slade \cite{HarSla92b} have proved that $d\geq 5$ is sufficient for their perturbation analysis to hold, using a computer-assisted method. For SAW, mean-field results are expected to be false in dimension $d=4$. See the work using the renormalization group to identify the logarithmic corrections to mean-field behavior by Bauerschmidt, Brydges and Slade in \cite{BauBrySla15b} and the references therein. Here, the Green's function does not have logarithmic corrections \cite{BauBrySla15b}, while, e.g., the susceptibility does  \cite{BauBrySla15a}.

For percolation, on the other hand, this methodology was proved to apply in the nearest-neighbor setting only for $d\geq 19$ (see \cite{HarSla90a, HarSla94}), and makes use of similar computer-assisted methods as used for SAWs in \cite{HarSla92a}. These computations were never published. Through private communication with Takashi Hara, the authors have learned that he recently obtained a further improvement to $d\geq 15$. Hara's proof is restricted to $d\geq 15$, as it assumes that the heptagon is finite.

These seemingly suboptimal results, where the results are proved to hold for $d\geq 19$, while they are expected to hold for $d>6$, have a reason that is quite deep. Indeed, it is well-known that for mean-field behavior to hold for percolation, it is sufficient that the so-called \emph{triangle condition} holds (see e.g., \cite{AizNew84,BarAiz91}). As we explain in more detail below, the triangle condition states that a certain sum called the triangle diagram is \emph{finite}. However, the current methodology of the lace expansion only applies when the triangle diagram is \emph{sufficiently small}. Thus, we can think of $d\geq 19$ to be sufficient for the triangle diagram to be sufficiently small, rather than being finite, and there previously was no way to prove that the triangle diagram is finite rather than small. In this paper, we take a first step to prove such a result, by proving that the triangle diagram is finite, but in the proof working with different diagrams that need to be small. We are able to do so, since the diagrams that we obtain in our analysis contain loops of at least four bonds, while the classical lace expansion gives rise to loops that can also contain two bonds. This allows us to reduce the dimension above which the infrared bound holds from 19 to $\dmin$.

We extend the proof by Hara and Slade so that it applies to $d\geq \dmin$, by using several novel ideas. The main innovations in our proof are that (i) we perturb around {\em non-backtracking random walk,} rather than simple random walk, so that the lace-expansion coefficients are significantly smaller than in the classical lace expansion as used by Hara and Slade in \cite{HarSla90a}; (ii) our bounds on the lace-expansion coefficients are {\em matrix-based,} so as to profit maximally from the fact that loops consists of at least four bonds in our expansion; (iii) we use and provide Mathematica notebooks that implement the bounds and that can be downloaded from the first author's website. As a side result, our proof gives the best bounds on the percolation threshold available in the literature, that are of independent interest.

Our results prove that the percolation triangle is finite, and thus prove that many critical exponents exist and take on their mean-field values such as the ones related to the percolation function ($\beta=1$), to the expected subcritical cluster size ($\gamma=1$) and to the critical cluster-tail distribution ($\delta=2$). Further, we prove the sharp asymptotics of the critical two-point function in $x$-space, using the results of Hara \cite{Hara08}, which in turn implies the existence of several arm-exponents as proved by Kozma and Nachmias \cite{KozNac08, KozNac11}, as well as the existence of the {\em incipient infinite cluster} \cite{HeyHofHul14a, HofJar04}. An overview about recent results on high-dimensional percolation can be found in \cite{HeyHof15}.\\
Also our proof is computer-assisted, and relies on the following two key ingredients:
\begin{enumerate}
\item[(I)] Rigorous upper bounds on various simple random walk integrals, as proved by Hara and Slade in \cite{HarSla92a}, where they also served as a key ingredient in the proof. This part of the analysis is {\em unchanged} compared to the Hara-Slade proof. The crucial reason why we can use these integrals is that the non-backtracking random walk Green's function can be explicitly described in terms of the simple random walk Green's function. Our analysis requires us to compute 112 such integrals, corresponding to convolutions of random walk Green's functions with itself at various values in $\Z^d$. We further need to compute the number of simple random walks of lengths up to 10 ending at various values in $\Z^d$, as well as related self-avoiding walks and bond-self-avoiding walks.  These bounds are performed in one Mathematica notebook;
\item[(II)] Two other Mathematica notebooks, a first that implements the computations in our general approach to the non-backtracking lace expansion (NoBLE) in \cite{FitHof13b}, as well as a notebook that computes the rigorous bounds on the lace-expansion coefficients provided in the present paper. These notebooks do nothing else than implement the bounds proved here and in \cite{FitHof13b}, and rely on nothing but many multiplications, additions as well as diagonalizations of two three-by-three matrices. These computations {\em could} be performed by hand, but the use of the notebooks tremendously simplifies them.
\end{enumerate}

The fact that our Mathematica notebooks are made publicly available maximizes the {\em transparancy} for the entire community about the status of the proof.  Indeed, for one, the community can verify that the computations performed really {\em are} the ones provided in this paper and in \cite{FitHof13b}, for second, anyone interested can optimize constants so as to improve bounds on various percolation parameters.

We next introduce the nearest-neighbor percolation model that we investigate, and state our main results.

\renewcommand{\Ccal}{\mathscr{C}}
\subsection{Model}
\label{sec-mod}
In nearest-neighbor percolation, we set each bond $\{x,y\}\in\Z^d\times\Z^d$, with $x$ and $y$ nearest-neighbors, \emph{occupied}, independently of all other bonds, with probability $p$ and \emph{vacant} otherwise. The corresponding product measure is denoted by $\prob_p$ with corresponding expectation $\expec_p$. We write $\{x\conn y\}$ for the event that there exists a path of occupied bonds from $x$ to $y$. When the event $\{x\conn y\}$ occurs we call the vertices $x$ and $y$ \emph{connected}. For $x\in\Z^d$, the set $\Ccal(x):=\{y\in\Z^d\mid y\conn x\}$ of vertices connected to $x$ is called the \emph{cluster} of $x$. It is the size and geometry of these clusters that we are interested in.

Clearly, for $p$ small, $\Ccal=\Ccal(0)$ is $\prob_p$-a.s.\ finite, whereas for $d\ge2$ and large $p$, the percolation probability
    \eqn{
    \lbeq{def-Theta}
    \theta(p)=\prob_p(|\Ccal|=\infty),
    }
i.e., the probability that the cluster $\Ccal$ is infinite, is strictly positive. Hence, there exists some critical value where this probability turns positive (see e.g.\ \cite{Grim99}). As it turns out, it is convenient for us to use a different definition of the critical value, as we explain now. For this, we define the \emph{two-point function} $\tau_p\colon \Z^d\times \Z^d \to [0,1]$ by
    \eqn{
    \lbeq{def-tau}
    \tau_p(x,y)=\prob_p(x\conn y).
    }
By translation invariance, $\tau_p(x,y)=\tau_p(0,y-x)\equiv \tau_p(y-x)$. We further introduce the \emph{susceptibility} as
    \eqn{
    \lbeq{def-Chi}
    \chi(p)=\sum_{x\in\Z^d}\tau_p(x).
    }
We define $p_c$, the critical value of $p$, as
    \eqn{
    \lbeq{def-pc-alt}
    p_c(d)=\sup\,\left\{p\,|\,\chi(p)<\infty\right\}.
    }
Thus, $p_c$ is characterized by the explosion of the expected cluster size. Menshikov \cite{Mens86}, as well as Aizenman and Barsky \cite{AizBar87}, have proved that this characterization coincides with the critical value described below \refeq{def-Theta}.
See also the recent and very short proof of this fact by Duminil-Copin and Tassion \cite{DumTas15}.

We now discuss the notion of \emph{critical exponents}. It is predicted that
    \eqn{
    \lbeq{beta-def}
    \theta(p) \sim (p-p_c)^{\beta}\qquad \text{as} \qquad p\searrow p_c,
    }
for some $\beta>0$. The symbol $\sim$ in \refeq{beta-def} can have several meanings, and we shall always mean that the critical exponent exists in the \emph{bounded-ratio form}, meaning that there exist $0<c_1<c_2<\infty$ such that, uniformly for $p\geq p_c$,
    \eqn{
    \lbeq{beta-def-bounded-ratios}
    c_1(p-p_c)^{\beta}\leq \theta(p)\leq c_2(p-p_c)^{\beta}.
    }
The existence of a critical exponent is \emph{a priori} unclear, and needs a mathematical proof. Indeed, the existence of the critical exponent $\beta>0$ is stronger than the continuity of $p\mapsto \theta(p)$, which is unknown in general, and is arguably the holy grail of percolation theory. More precisely, $p\mapsto \theta(p)$ is clearly continuous on $[0,p_c)$, and it is also continuous (and even infinitely differentiable) on $(p_c,1]$ by the results of \cite{BerKea84} (for infinite differentiability of $p\mapsto \theta(p)$ for $p\in (p_c,1]$, see \cite{Russ78}). Thus, continuity of $p\mapsto \theta(p)$ is equivalent to the statement that $\theta(p_c(d))=0$. The critical exponent $\gamma$ for the expected cluster size is given by
    \eqn{
    \lbeq{gamma-def}
    \chi(p) \sim (p_c-p)^{-\gamma},\qquad p\nearrow p_c,
    }
while the exponent $\delta\geq 1$ measures the power-law exponent of the critical cluster tail, i.e.,
    \eqn{
    \lbeq{delta-def}
    \prob_{p_c}(|\Ccal(0)|\geq n) \sim n^{-1/\delta}, \qquad n\rightarrow \infty,
    }
the assumption that $\delta\geq 1$ following from the fact that $\chi(p_c)=\infty$.

\subsection{Results}
\label{sec-res}
Our analysis makes heavy use of Fourier analysis. Unless specified otherwise, $k$ always denotes an arbitrary element from the Fourier dual of the discrete lattice, which is the torus $[-\pi,\pi]^d$. The Fourier transform of a function $f\colon\Z^d\to\C$ is defined by
    \eqn{
    \lbeq{def-FourTrans}
    \hat f(k)=\sum_{x\in\Zd}f(x)\,\e^{\ii k\cdot x}.
    }
For two summable function $f,g\colon \Zd \mapsto \Rbold$, we let $f\star g$ denote their \emph{convolution}, i.e.,
    \eqn{
    \lbeq{definition-convolution}
    (f\star g)(x)= \sum_{x\in\Zd} f(y)g(x-y).
    }
We note that the Fourier transform of $f\star g$ is given by the product of $\hat f$ and $\hat g$. In particular, let $D(x)=\indic{|x|=1}/(2d)$ be the nearest-neighbor random walk transition probability, so that
    \eqn{
    \lbeq{def-Dhat}
    \hat{D}(k)=\frac{1}{2d}\sum_{x:|x|=1} \e^{\ii k\cdot x} = \frac{1}{d} \sum_{i=1}^d \cos(k_i).
    }
The main result of this paper is the following \emph{infrared bound:}

\begin{theorem}[Infrared bound]
\label{thm-IRB}
For nearest-neighbor percolation with $d\geq \dmin$, there exist constants $A_1(d)$ and $A_2(d)$ such that
    	\eqn{
    	\lbeq{IRB}
    	\hat{\tau}_p(k)\leq \frac{A_1(d)}{\chi(p)^{-1}+p[1-\hat{D}(k)]}
	\quad\qquad
	\text{and}
	\quad\qquad
	\hat{\tau}_p(k)\leq \frac{A_2(d)}{1-\hat{D}(k)},
    	}
uniformly for $p\leq p_c(d)$.
\end{theorem}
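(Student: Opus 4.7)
The plan is to apply the non-backtracking lace expansion (NoBLE) framework of \cite{FitHof13b} to the percolation two-point function. The classical lace-expansion identity for $\tau_p$ perturbs around simple random walk; the bottleneck for low dimensions is that the one-step loops created by backtracking inflate the leading triangle-type diagram. By re-expanding in a non-backtracking fashion, every loop produced in the expansion is forced to have length at least four, which is the conceptual reason one can hope to reach $d\geq\dmin$.

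First I would derive a non-backtracking lace expansion for percolation. Following the Hara--Slade strategy for the classical expansion, one writes $\tau_p(x)=\mathbb{P}_p(0\conn x)$ and decomposes $\{0\conn x\}$ using pivotal bonds into a product structure; the novelty is to iterate in a non-backtracking manner with respect to the directed bonds at the origin and at the endpoint. This yields a matrix/vector identity whose Fourier transform schematically takes the form
\begin{equation}
\hat\tau_p(k)\;=\; \hat G_p(k) + \hat G_p(k)\,\hat \Pi_p(k)\,\hat\tau_p(k),
\end{equation}
where $\hat G_p$ is built explicitly from the non-backtracking random walk Green's function (which is available in closed form in terms of the simple random walk Green's function) and $\hat\Pi_p$ collects the NoBLE coefficients. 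Solving for $\hat\tau_p$ gives a ratio $N_p(k)/F_p(k)$ with $F_p(k) = \chi(p)^{-1}+p[1-\hat D(k)]+\text{correction}$, which is exactly the form required by \refeq{IRB}.

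Second, I would bound the NoBLE coefficients $\hat\Pi_p$ by BK-type convolutions of two-point functions. Because the non-backtracking structure kills immediate returns, every such diagram contains loops of length at least four, and these are controlled by the simple random walk integrals rigorously bounded by Hara and Slade in \cite{HarSla92a}. The bounds must be kept matrix-valued, so that the smallness of individual entries of $\hat\Pi_p$ is not destroyed by coarse operator norms. Third, one runs the standard bootstrap (``forbidden region'') argument: define a function $f(p)$ measuring how close $\hat\tau_p$ is to its non-backtracking random walk analog, show that $f$ is continuous on $[0,p_c)$, small at $p=0$, and that the diagrammatic bounds upgrade $f(p)\leq K$ to $f(p)\leq K/2$ for an appropriate constant $K$. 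Continuity then forces $f(p)\leq K/2$ throughout $[0,p_c)$, from which \refeq{IRB} follows with explicit constants $A_1(d),A_2(d)$; the bound at $p=p_c$ is obtained by monotone convergence using $\tau_p(x)\uparrow\tau_{p_c}(x)$.

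The main obstacle is the improvement step of the bootstrap: verifying that the matrix-valued diagrammatic bounds actually contract in dimension $d=\dmin$, rather than only for very large $d$. The Hara--Slade argument required $d\geq 19$ precisely because this step failed below; here the four-loop gain from the non-backtracking expansion and the matrix-valued treatment of $\hat\Pi_p$ provide the extra slack, but one must still verify rigorously that a certain three-by-three matrix contraction holds with factor strictly less than one. This is where the numerical input of items (I)--(II) in the introduction enters: the $112$ non-backtracking Green's function integrals together with the short-range SAW/BSAW enumerations are fed into the diagrammatic bounds, and a Mathematica notebook checks that the contraction closes. Conceptually, the reduction of the mean-field dimension from $19$ to $\dmin$ is entirely concentrated in this contraction.
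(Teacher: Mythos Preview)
Your outline matches the paper's strategy: NoBLE derivation, diagrammatic bounds exploiting the four-bond loop constraint, and a bootstrap closed via the Hara--Slade SRW integrals and Mathematica verification. Two points where the paper's implementation is more specific than your sketch: the NoBLE is not a single scalar identity but a coupled pair of relations linking $\tau_p$ to a modified two-point function $\tau_p^\iota$ (connections avoiding a prescribed neighbour), which is what produces the matrix structure $[\hat{\mathbf D}(k)+\mu_p\mathbf J+\hat{\boldsymbol\Pi}_M(k)]^{-1}$ in the solved form; and the bootstrap uses three distinct functions $f_1,f_2,f_3$ (controlling $p$, the $k$-space ratio $\hat\tau_p/\hat B_{1/(2d-1)}$, and weighted $x$-space diagrams respectively), initialized at $p_I=1/(2d-1)$ via $\tau_{p_I}\le B_{p_I}$ rather than at $p=0$.
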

\medskip

Our methods require a detailed analysis of both the critical value as well as the
amplitudes $A_1(d)$ and $A_2(d)$. As a result, we obtain the following bounds:
\begin{theorem}[Bounds on critical value and amplitude]
\label{thm-bds-crit}
For nearest-neighbor percolation with $d\geq \dmin$,
the following upper bounds hold:
{\rm \begin{center}
\begin{tabular}{|c|  c|c | c|c |c|c|}
  \hline
  $d$             & 11& 12   & 13 & 14  & 15  & 20 \\
  \hline
  $(2d-1)p_c(d)\leq$ &1.01306 & 1.00857 &1.006244  & 1.00484 &  1.0039  & 1.001777  \\
   $A_2(d)\leq$      &1.02393 & 0.9947  & 0.986    & 0.98237 &  0.98085 & 0.981136   \\
 \hline
\end{tabular}
\end{center}
}
\end{theorem}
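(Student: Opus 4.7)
The plan is to obtain the tabulated constants as the explicit numerical output of the Non-Backtracking Lace Expansion (NoBLE) bootstrap of \cite{FitHof13b}, specialized to nearest-neighbor percolation through the diagrammatic bounds derived elsewhere in this paper. The pipeline has three stages: (a) derive a self-consistent identity for $\hat\tau_p(k)$ expanded around non-backtracking random walk, (b) bound the resulting expansion coefficients $\hat\Pi_p$ by NBRW diagrams, and (c) feed rigorous numerical upper bounds for the constituent simple random walk (SRW) integrals into the matrix-valued bootstrap of \cite{FitHof13b} and read off the resulting upper bounds on $p_c(d)$ and $A_2(d)$ for each $d$ in the table.

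Stage (a) produces, schematically, a matrix identity of the form
\eq
\hat\tau_p(k) \;=\; \hat{F}_p(k)\bigl[\bfone+\hat{\Pi}_p(k)\hat\tau_p(k)\bigr],
\en
where the $3\times 3$ matrix structure reflects the NBRW direction bookkeeping. Stage (b) bounds each contribution to $\hat\Pi_p$ by a sum of convolutions of NBRW Green's functions; since the NBRW Green's function is an explicit rational combination of the SRW Green's function, every such convolution reduces to one of the $112$ SRW integrals and to the SRW/SAW/bond-SAW path counts up to length $10$, all of which are computed, with rigorous numerical upper bounds, in the first Mathematica notebook using the Hara--Slade integrals of \cite{HarSla92a}. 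Stage (c) is the standard lace-expansion bootstrap applied to a vector $f(p)$ of suitably normalized sup-norms of $\hat\tau_p$: one shows that, assuming $f(p)\le K_1$ componentwise, the NoBLE identity together with the bounds of stage (b) improves this to $f(p)\le K_2<K_1$; since $f$ is continuous on $[0,p_c)$ and starts below the forbidden region, it remains below $K_2$ throughout. The amplitude $A_2(d)$ is then the value of the relevant component of the bootstrap bound, and the upper bound on $(2d-1)p_c(d)$ is obtained by evaluating the self-consistent equation for $\chi(p)^{-1}$ at $k=0$ in the limit $p\nearrow p_c$.

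The main obstacle is the case $d=\dmin$, where the triangle is only marginally finite and the classical Hara--Slade lace expansion would fail to close. The bootstrap closes there only because of three concurrent gains: first, expanding around NBRW replaces the one-step kernel $[1-2dp\hat{D}(k)]^{-1}$ by $[1-(2d-1)p\hat{G}_{\mathrm{NBRW}}(k)]^{-1}$, which is itself already a much better approximation to $\hat\tau_p$; second, every NoBLE diagram contains loops of at least four bonds, and the matrix-valued bounds exploit this to save several factors of $1/(2d)$; third, the SRW integrals are controlled sharply by the Hara--Slade numerical bounds rather than by soft power counting. Concretely, verifying Theorem~\ref{thm-bds-crit} amounts to checking, in the second Mathematica notebook, a finite explicit list of rational inequalities between SRW integrals for each $d\in\{\dmin,12,13,14,15,20\}$; once the tight case $d=\dmin$ is verified, the same verification succeeds with monotonically increasing slack for larger $d$, producing the improving constants displayed in the table.
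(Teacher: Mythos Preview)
Your proposal is correct and follows the paper's approach: the tabulated constants are indeed the numerical output of the NoBLE bootstrap, read off directly from the bootstrap functions $f_1(p)$ (which contains $(2d-1)p$ as a term, so the $p_c$ bound is simply $\gamma_1$ rather than requiring any inversion of the self-consistent equation at $k=0$) and $f_2(p)$ (rescaled by $(2d-2)/(2d-1)$ to yield $A_2(d)$). One small correction: the $3\times 3$ matrix structure you mention arises in the diagrammatic \emph{bounds} on the NoBLE coefficients (indexing the three cases $0$, $1$, $\geq 2$ for the length of lines shared between adjacent loops), whereas the NoBLE identity itself is written with $2d\times 2d$ matrices indexed by NBRW step directions.
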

\bigskip
Remarkably, the bound on $d\mapsto A_2(d)$ is not decreasing, as is usually the case. The explanation of this may be quite simple. Indeed, for NBW, $A_2(d)=(2d-2)/(2d-1)$, which is {\em increasing}. Unfortunately, our method does not allow us to get very close to this value, particularly for {\em low} dimensions. This explains why first $d\mapsto A_2(d)$ decreases (as we get closer to the NBW constant), after which it increases, being very close to its NBW counterpart.
\medskip

In the literature, the following numerical values given in Table \ref{tab-num-pc}, have appeared for the percolation critical value. These values indicate that the approximation $p_c(d)\approx 1/(2d-1)$ is already quite good for $d\geq 7$, being at most 2\% off the reported numerical values. Also, our estimate for $p_c(11)$ is only around 0.62\% off the reported numerical value, the one for $p_c(13)$ only 0.16\%.

\begin{center}
\begin{table}[h]
{\small \begin{tabular}{|c|c|c|c|c|c|c|c|}
  \hline
  $d$                               & 7                 & 8                     & 9			&10             &11		&12		&13\\
  \hline
  $p_c(d)\approx$                   	&0.078675  &0.06771               &0.05947	&0.0531      	&0.04795	&0.04373	&0.040188\\
  $\!\!(2d-1)p_c(d)\approx$\!\!             	& 1.02278     &1.015626               &1.011433       &1.00876		&1.00694	&1.00565	&1.0047\\
  \hline
\end{tabular}
}
\caption{Numerical values of $p_c(d)$ taken from \cite[Table I]{Gras03}. Related numerical values can be found in \cite[Table 3.6]{Hugh96} and \cite{AdlMeiAhaHar90}.}
\label{tab-num-pc}
\end{table}
\end{center}

We next report some consequences of the infrared bound in Theorem \ref{thm-IRB}. In \cite{AizNew84}, it was proved that $\gamma=1$ in the bounded-ratio sense when the so-called \emph{triangle condition}, a condition on the percolation model, holds. The triangle condition states that
    \eqn{
    \lbeq{triangle-cond-def}
    \bigtriangleup(p_c)=\sum_{x,y\in \Z^d} \tau_{p_c}(0,x)\tau_{p_c}(x,y)\tau_{p_c}(y,0)
    <\infty.
    }
In \cite{BarAiz91}, it was shown that, under the same condition, $\beta=1$ and $\delta=2$ in the bounded-ratio sense. Since
    	\eqn{
    	\bigtriangleup(p_c)=(\tau_{p_c}\star\tau_{p_c}\star\tau_{p_c})(0)
	=\lim_{p\nearrow p_c} (\tau_{p}\star\tau_{p}\star\tau_{p})(0)
	=\lim_{p\nearrow p_c} \int_{(-\pi,\pi)^d} \frac {dk} {(2\pi)^d} \hat \tau^3_{p}(k),
    	}
the infrared bound in Theorem \ref{thm-IRB} (which is uniform in $p<p_c$) immediately implies that the triangle condition holds, and therefore that $\gamma=\beta=1, \delta=2$:
\begin{corollary}[Triangle condition and critical exponents]
\label{cor-TC-crit-exp}
For nearest-neighbor percolation with $d\geq\dmin$, the triangle condition holds. Therefore the critical exponents $\gamma, \beta$ and $\delta$ exist in the bounded-ratio sense, and take on the mean-field values $\gamma=\beta=1, \delta=2$.
\end{corollary}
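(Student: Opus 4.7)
The plan is straightforward given Theorem~\ref{thm-IRB}: first derive finiteness of the triangle diagram from the infrared bound, then invoke the classical results of Aizenman--Newman and Barsky--Aizenman that convert finiteness of the triangle into the three mean-field critical exponents.

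For the triangle, I would start from the Parseval identity already displayed right before the statement,
\[
\bigtriangleup(p) = (\tau_{p}\star\tau_{p}\star\tau_{p})(0) = \int_{(-\pi,\pi)^d} \frac{dk}{(2\pi)^d}\,\hat{\tau}_p(k)^3,
\]
valid for every $p<p_c$ (where $\hat{\tau}_p$ is a genuine Fourier transform of an $\ell^1$ function since $\chi(p)<\infty$). Applying the second bound of Theorem~\ref{thm-IRB} termwise gives
\[
\hat{\tau}_p(k)^3 \;\leq\; \frac{A_2(d)^3}{[1-\hat{D}(k)]^3}
\]
uniformly in $p\leq p_c$. The right-hand side is independent of $p$, and, since $1-\hat{D}(k)$ behaves like $|k|^2/d$ near the origin, the singularity at $k=0$ is integrable whenever $d>6$; away from the origin, $1-\hat{D}(k)$ is bounded below on the torus minus any neighbourhood of the lattice of zeros of $\hat D$, and these zeros are isolated and of the same quadratic type, so the full integral is finite for every $d\geq \dmin$. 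Monotone convergence in $p\nearrow p_c$ then yields $\bigtriangleup(p_c)\leq A_2(d)^3 \int (2\pi)^{-d}[1-\hat D(k)]^{-3}\,dk <\infty$, which is the triangle condition~\refeq{triangle-cond-def}.

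Once the triangle condition is established, the existence and mean-field values of the critical exponents follow from known general theorems. The bounded-ratio statement $\gamma=1$ is proved in Aizenman--Newman \cite{AizNew84} assuming only $\bigtriangleup(p_c)<\infty$. The bounded-ratio statements $\beta=1$ and $\delta=2$ are proved in Barsky--Aizenman \cite{BarAiz91} under the same triangle condition, together with the fact that $\chi(p)\to\infty$ as $p\nearrow p_c$ and finite moments of the cluster-size distribution below $p_c$; the latter are immediate from the definition \refeq{def-pc-alt} of $p_c$ via the susceptibility together with the Aizenman--Barsky / Menshikov identification of the two critical values. Citing these two works finishes the proof.

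The only genuine step here is the Fourier integrability check above; there is no real obstacle because the $[1-\hat D(k)]^{-3}$ integrand is the cube of the simple random walk Green's function, whose $L^3$-integrability for $d>6$ is classical and in fact is exactly what motivated the upper critical dimension $d_c=6$ for percolation in the first place. Everything else is bookkeeping and citation of the standard triangle-to-exponents machinery.
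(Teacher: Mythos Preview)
Your proof is correct and follows essentially the same route as the paper: apply Parseval, bound $\hat\tau_p(k)^3$ by $A_2(d)^3[1-\hat D(k)]^{-3}$ via Theorem~\ref{thm-IRB}, use integrability of $[1-\hat D(k)]^{-3}$ for $d>6$, pass to the limit $p\nearrow p_c$, and cite \cite{AizNew84,BarAiz91} for the exponents. One small remark: for the nearest-neighbor step distribution $\hat D(k)=\tfrac{1}{d}\sum_i\cos k_i$, the only zero of $1-\hat D(k)$ on the torus is at $k=0$, so your discussion of a ``lattice of zeros'' is unnecessary.
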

\medskip

We next investigate the asymptotics in $x$-space of $\tau_{p_c}(x)$ for $x$ large, using the results of Hara in \cite{Hara08}:

\begin{theorem}[Two-point function asymptotics]
\label{thm-x-space}
For nearest-neighbor percolation with $d\geq \dmin$, there exists a constant $A(d)$ such that, as $|x|\rightarrow \infty$,
    	\eqn{
    	\lbeq{x-space}
    	\tau_{p_c}(x)=\frac{a_dA(d)}{|x|^{d-2}}(1+O(|x|^{-2/d})),
	\qquad
	\text{with}
	\qquad
	a_d=\frac{d\Gamma(d/2-1)}{2\pi^{d/2}}.
    }
\end{theorem}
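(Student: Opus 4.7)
The plan is to derive Theorem~\ref{thm-x-space} as a consequence of Theorem~\ref{thm-IRB} combined with Hara's general $x$-space asymptotics result from \cite{Hara08}. Hara's theorem is a black-box statement: given any model whose two-point function satisfies (a) an infrared bound of the form in \refeq{IRB}, and (b) suitable decay/smoothness bounds on the associated lace-expansion coefficients, one obtains the Gaussian-type asymptotics $\tau_{p_c}(x) \sim a_d A(d) |x|^{-(d-2)}$ with an explicit error. So the work reduces to verifying the input hypotheses in the NoBLE setting.

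The first step is to recast the NoBLE identity developed in the paper into the scalar form that Hara's framework expects. Concretely, from the matrix NoBLE equation
\begin{equation*}
\hat{\tau}_p(k) = \frac{1+\hat{\Pi}_p(k)}{1-\hat{F}_p(k)}
\end{equation*}
(or its suitable vector-valued analogue), I would show that at $p=p_c$ the denominator satisfies $1-\hat{F}_{p_c}(k) \sim c(d)[1-\hat{D}(k)]$ as $k\to 0$, and that $\hat\Pi_{p_c}(0)$ is finite. Combined with Theorem~\ref{thm-IRB}, this identifies the prefactor $A(d) = (1+\hat\Pi_{p_c}(0))/c(d)$, and the constant $a_d$ arises as the standard Fourier-inversion constant for $1/[1-\hat D(k)]$ on $\Z^d$.

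The second step is to verify the decay hypotheses that Hara requires on $\Pi_{p_c}(x)$. The diagrammatic bounds established in this paper for the NoBLE coefficients already furnish bounds of the form $|\Pi_{p_c}(x)| \le C(d)(|x|\vee 1)^{-(d+2)}$ by iterating the infrared bound in \refeq{IRB} through the diagrams; since the NoBLE coefficients contain loops of at least four bonds, these diagrammatic estimates are in fact strictly stronger than those needed for the classical lace expansion, so Hara's condition is verified with room to spare for $d\ge\dmin$. Translating the matrix NoBLE coefficients into a single scalar $\Pi_{p_c}(x)$ compatible with Hara's setup requires grouping the various NoBLE components and extracting the leading scalar behaviour; this step is routine once the matrix diagonalization used elsewhere in the paper is in place.

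The main obstacle I expect is not conceptual but notational: Hara's result in \cite{Hara08} is phrased for the classical lace expansion around simple random walk, whereas our expansion is around non-backtracking walk and is matrix-valued. Thus, the crucial bookkeeping task is to \emph{reduce} the NoBLE identity to an equivalent scalar identity of the shape Hara's theorem accepts, and then to verify that the resulting coefficients inherit the required $|x|^{-(d+2)}$-type decay from the bounds proved in Sections establishing the NoBLE diagrammatic estimates. Once this translation is complete, the conclusion of Theorem~\ref{thm-x-space}, including the error $O(|x|^{-2/d})$ which is the generic rate provided by Hara for $d>6$, follows directly.
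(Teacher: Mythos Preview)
Your overall idea—invoke Hara's result from \cite{Hara08}—is correct, but you have misidentified where the actual work lies, and your proposed route is substantially harder than necessary.

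The paper does \emph{not} attempt to translate the matrix-valued NoBLE into a scalar expansion that Hara's theorem would accept. Instead, it takes a much simpler detour: once the NoBLE has delivered the infrared bound (Theorem~\ref{thm-IRB}) and sharp numerical control on $p_c$, the paper verifies that the hypotheses of \cite{Hara08} hold for the \emph{classical} lace expansion. Concretely, Hara's conditions reduce to a geometric smallness requirement of the form $T_{p_c}(1+2\bar{T}^{\sss(0,0)})<1$, where $T_{p_c}$ and $\bar{T}^{\sss(0,0)}$ are triangle-type diagrams built from $\tau_{p_c}$. The infrared bound from the NoBLE is exactly what is needed to bound these diagrams numerically (in $d=11$ one gets $\bar{T}^{\sss(0,0)}\leq 0.536$ and $T_{p_c}\leq 0.280$, so the product is below $1$). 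The classical lace-expansion coefficients then satisfy Hara's hypotheses as stated, and Theorem~\ref{thm-x-space} follows directly. The point is summarised in Section~\ref{sec-summ-proof}: the NoBLE is used to prove the infrared bound, and then the classical lace expansion is shown to converge as a \emph{consequence}.

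Your proposal to recast the NoBLE into Hara's scalar framework and verify $|x|^{-(d+2)}$ decay for the NoBLE coefficients is not wrong in spirit, but the claim that this is ``routine bookkeeping'' is the gap. Hara's analysis in \cite{Hara08} is tightly coupled to the specific algebraic structure of the classical expansion (a single scalar convolution equation with SRW step distribution $D$), and adapting the Fourier-inversion machinery to the matrix/NBW setting would require reworking substantial portions of that paper, not just regrouping terms. The paper's approach sidesteps this entirely.
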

\medskip

The proof of Theorem \ref{thm-x-space} follows by verifying that the conditions that Hara poses in \cite{Hara08}, which are formulated in terms of the classical lace expansion, are satisfied. In particular, these conditions imply that $\bigtriangleup(p_c)$ is quite close to 1, which is the contribution in \refeq{triangle-cond-def} of $x=y=0$. We have decided to state Theorem \ref{thm-x-space} explicitly, as it has major consequences, such as the existence of the incipient infinite cluster (IIC) and the behavior of random walks on it. We next state these results.

Let $\Fcal$ denote the $\sigma$-algebra of events. A {\em cylinder event} is an event given by conditions on the states of finitely many bonds only. We denote the algebra of cylinder events by $\Fcal_0$. We define
	\eqn{
	\lbeq{def-P_x}
  	\prob_x(F)= \prob(F\mid 0 \conn x)
           = \frac{1}{\tau_{p_c}(x)} \prob(F,\, 0 \conn x),
         \quad F \in \Fcal.
	}
Then, the results on the existence of the IIC in \cite{HofJar04, HeyHofHul14a} imply that the following theorem holds:

\begin{theorem}[Existence of the IIC]
\label{thm-IIC-lim}
Let $d \geq \dmin$ and $p = p_c$. Then, the limit
	\eqn{
	\lbeq{IIC-lim}
  	\prob_{\sss \infty}(F) = \lim_{|x| \to \infty} \prob_x(F)
	}
exists for any cylinder event $F$. Also, $\prob_{\sss \infty}$ extends uniquely from $\Fcal_0$ to a probability measure on $\Fcal$.
\end{theorem}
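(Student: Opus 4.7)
The plan is to invoke the general IIC existence results of \cite{HofJar04, HeyHofHul14a}, whose hypotheses have essentially all been verified by the results stated earlier in this section. Specifically, these constructions require (i) the triangle condition $\bigtriangleup(p_c)<\infty$, which is Corollary \ref{cor-TC-crit-exp}; and (ii) the sharp $x$-space Gaussian asymptotics $\tau_{p_c}(x)=a_d A(d)|x|^{-(d-2)}(1+o(1))$ as $|x|\to\infty$, which is Theorem \ref{thm-x-space}. So the work is to check that these two inputs suffice in our setting and to apply them.

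The strategy underlying \cite{HofJar04, HeyHofHul14a} runs as follows. Fix a cylinder event $F\in\Fcal_0$ depending on the states of bonds in a finite set $B\subset \Z^d\times\Z^d$, and decompose $F=\bigcup_{\omega} F_\omega$ according to the configuration $\omega$ on $B$. For each such $\omega$, the connection event $\{0\conn x\}$ is decomposed along the first vertex $u$ at which a path from $0$ escapes a large but fixed box containing $B$, producing a representation of the form
\eq
\prob_{p_c}(F\cap\{0\conn x\})=\sum_{\omega\in F}\sum_u \prob_{p_c}(E_{\omega,u})\,\tau_{p_c}(x-u)+\mathrm{err}(x),
\en
where $E_{\omega,u}$ is a local event depending only on bonds near $B\cup\{u\}$, and $\mathrm{err}(x)$ collects contributions from connections that re-enter the box. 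Dividing by $\tau_{p_c}(x)$, the ratio $\tau_{p_c}(x-u)/\tau_{p_c}(x)\to 1$ uniformly for $u$ in the bounded box by Theorem \ref{thm-x-space}, which yields pointwise convergence
\eq
\prob_x(F)\ \to\ \prob_{\sss\infty}(F)\ :=\ \sum_{\omega\in F}\sum_u \prob_{p_c}(E_{\omega,u}),
\en
with the sum finite by the triangle condition.

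The main obstacle is controlling the error term $\mathrm{err}(x)$. This is handled by repeatedly applying the BK inequality to split multi-connections into independent connection events, and then bounding the resulting diagrams by sums of subtriangles. The triangle condition $\bigtriangleup(p_c)<\infty$ from Corollary \ref{cor-TC-crit-exp} ensures that each such diagram is finite, while the sharp $|x|^{-(d-2)}$ decay from Theorem \ref{thm-x-space} forces $\mathrm{err}(x)/\tau_{p_c}(x)\to 0$. It is precisely here that the strength of Theorem \ref{thm-x-space} (and not merely the finiteness of the triangle) is needed, because one must compare tail decays of $\mathrm{err}(x)$ with $\tau_{p_c}(x)$ on the same sharp scale.

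Once pointwise convergence is established on $\Fcal_0$, the limit $\prob_{\sss\infty}$ is a nonnegative, finitely additive set function of total mass $1$ on the algebra $\Fcal_0$. To promote it to a countably additive probability measure on $\Fcal$, apply Carath\'eodory's extension theorem; the required continuity at $\emptyset$ reduces to showing that $\prob_{\sss\infty}(F_n)\to 0$ for any decreasing sequence of cylinder events with $\bigcap_n F_n=\emptyset$, which follows from the fact that each $\prob_{\sss\infty}(F_n)$ is a uniform limit of probabilities and hence inherits monotonicity, together with a standard compactness argument on finitely-determined events. Uniqueness of the extension is automatic from Carath\'eodory.
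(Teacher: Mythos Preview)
Your overall strategy---invoke \cite{HofJar04, HeyHofHul14a} and verify their hypotheses---is exactly what the paper does, but you misidentify the hypothesis that actually needs checking. The construction in \cite{HeyHofHul14a} is not stated under ``triangle condition plus $x$-space asymptotics''; it is stated under the assumption that the \emph{classical} lace expansion converges, i.e., that the classical lace-expansion coefficients $\hat{\Pi}_{p_c}^{\ssc[N]}(0)$ decay exponentially in $N$. The paper's proof therefore consists of verifying precisely this: using the diagrammatic bound $\hat{\Pi}_{p_c}^{\ssc[N]}(0)\leq T_{p_c}'[T_{p_c}(1+2\bar{T}^{\sss(0,0)})]^{N-1}$ from \cite{BorChaHofSlaSpe05b} (with a slight sharpening), together with the numerical inequality $T_{p_c}(1+2\bar{T}^{\sss(0,0)})<1$ already established in the proof of Theorem~\ref{thm-x-space}. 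Finiteness of the triangle alone (Corollary~\ref{cor-TC-crit-exp}) is not enough here; one needs the triangle to be \emph{quantitatively small} in this sense, which is a strictly stronger input.

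Your second and third paragraphs---the decomposition over escape vertices $u$, the BK-based error control, and the Carath\'eodory extension---amount to a heuristic sketch of what happens \emph{inside} \cite{HofJar04, HeyHofHul14a}. This is not needed for the present paper, which simply black-boxes those references. Moreover, the sketch is not quite faithful to the actual argument: the IIC construction in those papers proceeds via the lace-expansion representation of $\tau_{p_c}$ and a careful analysis of the expansion coefficients, not via the first-escape decomposition you wrote down. If you want to reproduce the argument rather than cite it, you would need to follow the lace-expansion route and again confront the exponential decay of $\hat{\Pi}^{\ssc[N]}$, which brings you back to the hypothesis the paper actually checks.
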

\medskip

There is quite some literature investigating the existence of IICs and proving that different limiting schemes produce the same limiting IIC measure. We refer to \cite{HeyHofHul14a, HofHolSla98, HofJar04, Jara03b, Jara03a,Kest86a} for more details. We also note that the existence of the IIC measure as well as Theorem \ref{thm-x-space}  allow for a proof that the Alexander-Orbach conjecture holds for nearest-neighbor percolation and $d\geq \dmin$, see \cite{KozNac08} for more details.

We close this section by identifying two arm-exponents that follow from Theorem \ref{thm-x-space}, using recent proofs by Kozma and Nachmias \cite{KozNac08, KozNac11}. For this, we start by introducing some notation. Fix $p=p_c$ and let $B_r(x)$ denote the ball of intrinsic radius $r$ from $x\in \Z^d$. Thus, $y\in B_r(x)$ when there exists a path of at most $r$ occupied bonds connecting $y$ to $x$. We further write $\partial B_r(x)=B_r(x)\setminus B_{r-1}(x)$. Kozma and Nachmias \cite{KozNac08, KozNac11} have proved that Theorem \ref{thm-x-space} implies that arm-probabilities decay as inverse powers of $r$, and have identified the corresponding critical exponents:

\begin{theorem}[One-arm exponents]
\label{thm-one-arm}
Let $d \geq \dmin$ and $p = p_c$. Then,
	\eqn{
	\lbeq{one-arm}
  	\prob_{p_c}(\partial B_r(0)\neq \varnothing)\sim r^{-1},
	\qquad
	\text{and}
	\qquad
	\prob_{p_c}(0\conn Q_r^c)\sim r^{-2},
	}
in the bounded-ratios sense, where $Q_r=\{x\in \Z^d\colon |x|\leq r\}$.
\end{theorem}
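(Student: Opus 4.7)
The plan is to reduce both statements in \refeq{one-arm} to the work of Kozma and Nachmias \cite{KozNac08, KozNac11}, which isolates a clean set of high-dimensional percolation hypotheses that are sufficient to derive the arm exponents $1$ and $2$ respectively, and then to check that these hypotheses are supplied by the results already established in this paper. Concretely, the two inputs needed are (i) the triangle condition, $\bigtriangleup(p_c)<\infty$, and (ii) a two-sided Gaussian-type estimate on the critical two-point function, i.e.\ the existence of constants $0<c_1\leq c_2<\infty$ such that $c_1|x|^{-(d-2)}\leq \tau_{p_c}(x)\leq c_2|x|^{-(d-2)}$ for all $|x|$ sufficiently large, together with the natural smoothness/translation invariance of $\tau_{p_c}$.

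Input (i) is exactly the content of Corollary \ref{cor-TC-crit-exp}, valid for $d\geq\dmin$. Input (ii) is an immediate consequence of Theorem \ref{thm-x-space}: the sharp asymptotic $\tau_{p_c}(x)=a_d A(d)|x|^{-(d-2)}(1+O(|x|^{-2/d}))$ gives matching upper and lower Gaussian bounds with ratio tending to $1$ as $|x|\to\infty$, which in particular implies the two-sided estimate demanded by Kozma and Nachmias (and is in fact substantially stronger than what they require). With both inputs at our disposal, the extrinsic bound $\prob_{p_c}(0\conn Q_r^c)\sim r^{-2}$ follows from the main theorem of \cite{KozNac11}, and the intrinsic bound $\prob_{p_c}(\partial B_r(0)\neq\varnothing)\sim r^{-1}$ follows from the volume and one-arm estimates derived en route to the Alexander--Orbach conjecture in \cite{KozNac08}; both are proved there in the bounded-ratios sense of \refeq{beta-def-bounded-ratios}, which is exactly the sense claimed in \refeq{one-arm}.

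The only substantive obstacle I foresee is a bookkeeping one rather than a mathematical one: the hypotheses in \cite{KozNac08,KozNac11} are sometimes phrased in terms of diagrammatic smallness (for example, that $\bigtriangleup(p_c)-1$ is sufficiently small, or that certain bubble/square diagrams are controlled uniformly in $p\leq p_c$), and one must verify that our constants in Theorem \ref{thm-IRB} and Theorem \ref{thm-x-space} put us in the regime where their arguments apply for every $d\geq\dmin$. This is automatic here because Theorem \ref{thm-x-space} itself, via the formulation in \cite{Hara08}, is established only under a near-mean-field regime in which the triangle diagram is close to its trivial $x=y=0$ contribution; by Corollary \ref{cor-TC-crit-exp} and the quantitative bounds in Theorem \ref{thm-bds-crit}, this regime holds throughout $d\geq\dmin$. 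Consequently, no additional perturbative work beyond citing \cite{KozNac08,KozNac11} is required, and Theorem \ref{thm-one-arm} follows.
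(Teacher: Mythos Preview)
Your proposal is correct and follows essentially the same route as the paper: deduce the two-sided bound $c_1|x|^{-(d-2)}\le \tau_{p_c}(x)\le c_2|x|^{-(d-2)}$ from Theorem~\ref{thm-x-space} and then invoke Kozma--Nachmias \cite{KozNac08,KozNac11}. The paper's proof is in fact shorter, because the \emph{only} hypothesis Kozma and Nachmias require is this two-sided two-point function estimate; the triangle condition you list as input~(i) is already a consequence of the upper bound in~(ii) for $d>6$, and the ``diagrammatic smallness'' obstacle you raise in your final paragraph does not arise in their work at all---their arguments take the two-sided bound as a black box and need no further perturbative control. So your proof is right, just slightly over-hedged.
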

\medskip

\noindent
In the next section, we give an overview of the proof of Theorem \ref{thm-IRB}.

\section{Overview of the proof}
\label{sec-overview}
In this section, we give a brief overview of how we derive our main results. We start by describing the philosophy behind our proof.

\subsection{Philosophy of the proof}
\label{sec-phil-proof}
In this section, we reduce the proof of Theorem \ref{thm-IRB} to three key propositions and a computer-assisted proof. These ingredients involve
	\begin{enumerate}[(a)]
  	\item the derivation of the non-backtracking lace expansion (NoBLE) in Proposition \ref{prop-LE};
  	\item the diagrammatic bounds on the NoBLE coefficients in Proposition \ref{prop-bds-LEC};
  	\item the analysis presented in \cite{FitHof13b} to obtain the infrared bound in Theorem \ref{thm-IRB} for all $p\leq p_c$ for all $d\geq\dmin$, as stated in Proposition \ref{prop-analysis-is-success}; and
	\item a computer-assisted proof to verify the numerical conditions arising in the analysis in \cite{FitHof13b}.
	\end{enumerate}
\label{parts-proof}
These parts are discussed in Sections \ref{sec-part-a}-\ref{sec-part-d}, respectively.

In Sections \ref{secExp} and \ref{secBoundsPerc} we prove parts (a) and (b), respectively. In Section \ref{sec-part-c}, we explain how we obtain part (c) using the analysis of \cite{FitHof13b}, the computer-assisted proof \cite{FitNoblePage} of part (d) and the results of this paper.
For the analysis in the generalized setting \cite{FitHof13b} we state assumptions, which we verify for percolation in  Sections \ref{sec-part-c}, \ref{subsec-prop-coef} and \ref{secSummaryBounds}, respectively. Part (d) is explained in detail in Section \ref{sec-part-d}, where we describe how the necessary computations are performed in several Mathematica notebooks. The mathematics behind the notebooks is explained in \cite{FitHof13b}. The notebooks also include a routine that verifies whether the numerical assumption on the expansions are satisfied and thereby verifies whether the analysis of \cite{FitHof13b} yields the infrared bounds for percolation or not for a given dimension. Thus, \cite{FitHof13b} and the notebooks together prove Proposition \ref{prop-analysis-is-success}. See also Figure \ref{Struct-NoBLE} for a visual description of the proof of Theorem \ref{thm-IRB}.  We prove Theorems \ref{thm-x-space}, \ref{thm-IIC-lim} and \ref{thm-one-arm} in Section \ref{sec-proof-rel-res} and close this section with a discussion of our method and results.
\vskip-1cm

\begin{center}
\begin{figure}[ht]
\begin{tikzpicture} [scale=0.85]

  \draw [->,line width=2] (2.5,0) to (6.25,0);
  \draw [->,line width=2] (3.75,-4.75) to (6.25,-4.75);
  \draw [<-,line width=2] (10,-1.25) to (10,-3);
  \draw [->,line width=2] (9.5,-1.25) to (9.5,-3);
  \draw [->,line width=2] (0,-1.25) to (0,-3.25);

  \draw [<-,line width=2] (9.75,-5.75) to (9.75,-6.75);

  \node[above]   at(4.4,0.1)     {General relation};
  \node[below]   at(4.6,-5)   {Bounds in};
  \node[below]   at(4.6,-5.5)   { form of diagrams};
  \node[left]   at(0,-1.5)        {Coefficients to};
  \node[left]   at(0,-2)        {describe the};
  \node[left]   at(0,-2.5)        {perturbation};
  \node[right]   at(10,-1.75)      {Bound on the};
  \node[right]   at(10,-2.25)      {perturbation};

  \node[left]   at(9.5,-1.5)      {Prior bounds};
  \node[left]   at(9.5,-2)       {on the two-point };
  \node[left]   at(9.5,-2.5)      {function};
  \node[right]   at(10,-6.2)      {Numerical values};

\node [draw] {
\begin{tabular}{c}
{\bf \large Expansion} \\[1mm]
\hline
{\large Proposition \ref{prop-LE}}
\end{tabular}};

\node [draw]  at (10,0){
\begin{tabular}{c}
{\bf \large Analysis/ Bootstrap } \\[1mm]
\hline
\vspace{2mm}
{\large in accompanying paper \cite{FitHof13b}}\\[-2mm]
{\normalsize Model independent}
\end{tabular}};

\node [draw]  at (10,-4.5){
\begin{tabular}{c}
{\bf \large Numerical bounds } \\[1mm]
\hline
\vspace{2mm}
{\large in accompanying}\\
{\large Mathematica notebooks \cite{FitNoblePage}}
\end{tabular}};

\node [draw]  at (10,-8){
\begin{tabular}{c}
{\bf \large Numerical computation} \\[1mm]
\hline
{\large in accompanying}\\
{\large Mathematica notebooks \cite{FitNoblePage}}
\end{tabular}};

\node [draw]  at (0,-4.5){
\begin{tabular}{c}
{\bf \large Diagrammatic bounds } \\[1mm]
\hline
{\large Propostion \ref{prop-bds-LEC} }
\end{tabular}};
\end{tikzpicture}
\caption{Structure of the non-backtracking lace expansion.}
\label{Struct-NoBLE}
\end{figure}
\end{center}

\vskip-1cm

In the following, we explain the philosophy behind the non-backtracking lace expansion (NoBLE), and start by describing simple random walk and non-backtracking walk.

\paragraph{Simple random walk and non-backtracking walk.}
In the expansion we derive that the percolation two-point function $\tau_p$ can be viewed as a perturbation of the non-backtracking walk two-point function. We now define simple and non-backtracking walk to be able to formalise this connection.

An $n$-step \emph{nearest-neighbor simple random walk} (SRW) on $\Zd$ is an ordered $(n+1)$-tuple $\omega=(\omega_0,\omega_1,\omega_2,\dots, \omega_n)$, with $\omega_i\in\Zd$ and $\|\omega_i-\omega_{i+1}\|_1=1$, where $\|x\|_1=\sum_{i=1}^d |x_i|$. Unless stated otherwise, we take $\omega_0=(0,0,\dots,0)$.\\
We define $p_n(x)$ to be the number of $n$-step SRWs with $\omega_n=x$. Then, for $n\geq 1$,
	\begin{eqnarray}
	\lbeq{SRWRecScheme}
	p_n(x) =\sum_{y\in\Zd} 2d D(y)p_{n-1}(x-y) =2d (D \star p_{n-1})(x)
	=(2d)^{n} D^{\star n}(x),
	\end{eqnarray}
where $D$ is the one-step transition probability, see also \refeq{def-Dhat}, and $f^{\star n}$ denotes the $n$-fold convolution of a function $f$.  The SRW two-point function is given by the generating function of $p_n$, i.e., for $|z|<1/(2d)$,
	\begin{eqnarray}
	\lbeq{genSRW}
	C_z(x)&=&\sum_{n=0}^\infty p_n(x)z^n,\qquad\text{and}
	\qquad \hat C_z(k) =\frac {1}{1-2dz\hat D(k)}
	\end{eqnarray}
in $x$-space and $k$-space, respectively. We denote the SRW \emph{susceptibility} by
	\begin{eqnarray}
	\chi^{\sss\rm SRW}(z)= \hat C_z(0)= \frac {1} {1-2dz},
	\end{eqnarray}
for $|z|<z_c$, with \emph{critical point} $z_c=1/(2d)$.

If an $n$-step SRW $\omega$ satisfies $\omega_i\not=\omega_{i+2}$ for all $i=0,1,2,\dots,n-2$, then we call $\omega$ \emph{non-backtracking}. In order to analyze non-backtracking walk (NBW), we derive an equation similar to \refeq{SRWRecScheme}. The same equation does not hold for NBW as it neglects the condition that the walk does not revisit the origin after the second step. 

We exclusively use the Greek letters $\iota$ and $\kappa$ for values in $\{-d,-d+1,\dots,-1,1,2,\dots,d\}$ and denote the unit vector in direction $\iota$ by $\ve[\iota]\in\Zd$, e.g.\ $(\ve[\iota])_i=\text{sign}(\iota) \delta_{|\iota|,i}$.

Let $b_n(x)$ be the number of $n$-step NBWs with $\omega_0=0,\omega_n=x$. Further, let $b^{\iota}_{n}(x)$ denote the number of $n$-step NBWs $\omega$ with $\omega_n=x$ and $\omega_1 \not =\ve[\iota]$. Summing over the direction of the first step\footnote{Bear in mind that the first step is to $-\ve[\iota]$.} we obtain, for $n\geq 1$,
	\begin{eqnarray}
	\lbeq{NBWRecScheme1}
	b_{n}(x) &=& \sum_{\iota\in\{\pm1,\dots,\pm d\}} b^{\iota}_{n-1}(x+\ve[\iota]).
	\end{eqnarray}
Further, we distinguish between the case that the walk visits $-\ve[\iota]$ in the first step or not to obtain, for $n\geq 1$,
	\begin{eqnarray}
	\lbeq{NBWRecScheme2}
	b_{n}(x)&=&b^{-\iota}_{n}(x) + b^{\iota}_{n-1}(x+\ve[\iota]).
	\end{eqnarray}
The NBW two-point functions $B_z$ and $B^{\iota}_{z}$ are defined as the generating functions of $b_n$ and $b_n^{\iota}$, respectively, i.e.,  for $|z|<1/(2d-1)$,
	\begin{eqnarray}
	B_{z}(x)&=&\sum_{n=0}^{\infty} b_n(x)z^n,\qquad \qquad
	B^{\iota}_{z}(x)=\sum_{n=0}^{\infty} b^{\iota}_n(x)z^n.
	\end{eqnarray}
In this paper, we use $\Cbold^{2d}$-valued and $\Cbold^{2d}\times\Cbold^{2d}$-valued quantities.
For a clear distinction between scalar-, vector- and matrix-valued quantities, we always write $\Cbold^{2d}$-valued functions with a vector arrow (e.g.\ $\vec v$) and matrix-valued functions with bold capital letters (e.g.\ ${\bf M}$). We do not use $\{1,2,\dots,2d\}$ as the index set for
the elements of a vector or a matrix, but use $\{-d,-d+1,\dots,-1,1,2,\dots,d\}$ instead.
Further, for $k\in[-\pi,\pi]^d$ and negative index $\iota\in\{-d,-d+1,\dots,-1\}$, we write $k_\iota=-k_{|\iota|}$.

We denote the identity matrix by $\mI\in\Cbold^{2d\times 2d}$ and the all-one vector by $ \v1 =(1,1,\dots,1)^T\in\Cbold^{2d}$. Moreover, we define the matrices $\mJ,\mD[k]\in\Cbold^{2d\times 2d}$ by
	\begin{eqnarray}
  \lbeq{J-D-matrix-def}
	(\mJ)_{\iota,\kappa}=\delta_{\iota,-\kappa}\qquad\text{ and }\qquad
	(\mD[k])_{\iota,\kappa}=\delta_{\iota,\kappa} \e^{\ii k_\iota}.
	\end{eqnarray}
We define the vector $\vec {\hat B}_{z}(k)$ with entries $(\vec {\hat B}_{z}(k))_\iota=\vec {\hat B}^{\iota}_{z}(k)$ and rewrite \refeq{NBWRecScheme1}-\refeq{NBWRecScheme2} to
	\eqn{
	\lbeq{NBWScheme1}
	\hat B_{z}(k)=1+z \v1^T \mD[-k] \vec {\hat B}_{z}(k),
	\qquad\qquad
	\hat B_{z}(k)\v1 =\mJ \vec {\hat B}_{z}(k) +  z\mD[-k]\vec {\hat B}_{z}(k),
	}
Then, as derived in detail in \cite[Section 1.2.2]{FitHof13b}, 
	\begin{eqnarray}
	\lbeq{NBWGen}
	\hat B_{z}(k)&=& \frac {1} {1 -z\v1^T\left[\mD[k]+z \mJ\right]^{-1}\v1}.
	\end{eqnarray}
In turn, using that $\left[\mD[k]+z \mJ\right]^{-1}=\frac 1 {1-z^2} \left(\mD[-k]-z \mJ\right)$, this is equivalent to
	\eqan{
	\lbeq{NBWGenSolved}
	\hat B_{z}(k)&= \frac {1}{1-2d z\frac {\hat D(k)-z}{1-z^2}}
	= \frac {1-z^2} {1+(2d-1)z^2-2dz\hat D(k)}\nn\\
	&=\frac {1-z^2}{1+(2d-1)z^2} \cdot \hat C_\frac{z}{1+(2d-1)z^2}(k).
	}
The NBW susceptibility is $\chi^{\rm\sss NBW}(z)=\hat B_{z}(0)$ with critical point $z_c=1/(2d-1)$.
The NBW and SRW critical two-point functions are related by
	\eqn{
	\lbeq{SRWtoNBWlink}
	\hat B_{1/(2d-1)}(k)=\frac {2d-2}{2d-1}\hat C_{1/2d}(k)
	=\frac {2d-2}{2d-1} \cdot \frac 1 {1-\hat D(k)}.
	}
This link allows us to compute values for the NBW two-point function in $x$- and $k$-space, using the SRW two-point function.
A detailed analysis of the NBW including a proof that the NBW, when properly
rescaled, converges to Brownian motion can be found in \cite{FitHof13a}.

\subsection{Part (a): Non-Backtracking Lace Expansion (NoBLE)}
\label{sec-part-a}
In this section, we explain the shape of the Non-Backtracking Lace Expansion (NoBLE), which is a perturbative expansion of the two-point function.
The aim of the NoBLE for percolation is to derive equations alike \refeq{NBWScheme1} for the percolation two-point function $\tau_p(x)$. This is motivated by the fact that a large part of the interaction present in $\tau_p(x)$ is due to percolation paths not backtracking. We next explain how we can set this expansion up, and explain how our proof follows from three main propositions.

Next to the usual two-point function \refeq{def-tau}, we use a slight
adaptation of it. For a direction $\iota\in\{\pm1,\pm2,\dots, \pm d\}$, we define
    \eqn{
    \lbeq{tau-z-def}
    \tau^\iota_p(x)=
    \prob^{e_{\iota}}_p(0\conn x),
    }
where we write
    \eqan{
    \lbeq{conditional-Point-Prob-def}
    \prob^{y}_p(E)=\prob_p(E \text{ occurs when all bonds containing $y$ are made vacant})
    }
for $y\in\Zd$ and all events $E$.

Our analysis relies on two expansion identities relating $\tau_p(x)$ and $\tau_p^\iota(x)$, which are perturbations of \refeq{NBWScheme1}. 
In the following, we drop the subscript $p$ when possible, and write, e.g., $\tau(x)=\tau_p(x)$ and $\tau^{\iota}(x)=\tau^{\iota}_p(x)$.
The NoBLE is formulated in the following proposition:

\begin{prop}[Non-backtracking lace expansion]
\label{prop-LE}
For every $x\in \Z^d$, $\iota,\kappa\in \{\pm1,\pm2, \dots, \pm d\}$, and $M\geq 1$, the following recursion relations hold:
    \eqan{
    \tau(x)&=\delta_{0,x}+\aap\sum_{y\in\Zd, \kappa\in\{\pm 1,\dots,\pm d\}} (\delta_{0,y}+\Psi_{\sss M}^\kappa(y))\tau^{\kappa}(x-y+\ve[\kappa])
        +\Xi_{\sss M}(x),
    \lbeq{taux-M-in-ex-1}\\
        \tau(x)&=\tau^\iota(x)+\aap \tau^{-\iota}( x-\ve[\iota])
        +\sum_{y\in\Zd, \kappa\in\{\pm 1,\dots,\pm d\}} \Pi_{\sss M}^{\iota,\kappa}(y) \tau^{\kappa}(x-y+\ve[\kappa])+\Xi_{\sss M}^{\iota}(x),
    \lbeq{taux-M-in-ex-2}
    }
where
    \eqan{
    \lbeq{PhiPsialt-M}
    \Pi_{\sss M}^{\iota,\kappa}(y)
    &= \sum_{N=0}^{M} (-1)^N
    \Pi^{\ssc[N],\iota,\kappa}(y),
    \qquad
    \Xi_{\sss M}(x)= R_{\sss M}(x)+\sum_{N=0}^{M} (-1)^N
    \Xi^{\ssc[N]}(x),\\
   \lbeq{PhiPsialt-M-2}
    \Psi_{\sss M}^\kappa(x)&=\sum_{N=0}^{M} (-1)^N
    \Psi^{\ssc[N],\kappa}(y),
    \qquad
    \Xi_{\sss M}^\iota(x)= R_{\sss M}^\iota(x)+ \sum_{N=0}^{M} (-1)^N
    \Xi^{\ssc[N],\iota}(x),\\
    \aap &=   p\prob(\ve[1]\nin \Ccal(0) \mid (0,\ve[1])\text{ vacant}),
   \lbeq{Definition-aap}
    }
with
    \eqan{
    R_{\sss M}(x)&\leq \aap\sum_{\stackrel{ y\in\Zd}{ \kappa\in\{\pm 1,\dots,\pm d\}}} \Psi^{\ssc[M],\kappa}(y)\tau^{\kappa}(x-y+\ve[\kappa]),\lbeq{RM-bd}\\
    R_{\sss M}^\iota(x)&\leq \sum_{\stackrel{ y\in\Zd}{ \kappa\in\{\pm 1,\dots,\pm d\}}} \Pi^{\ssc[M],\iota,\kappa}(y)\tau^{\kappa}(x-y+\ve[\kappa]).
	\lbeq{RMiota-bd}
    }
Explicit formulas for the lace-expansion coefficients in \refeq{PhiPsialt-M}--\refeq{PhiPsialt-M-2} are given in Section \ref{sec-compl}.
\end{prop}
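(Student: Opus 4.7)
The plan is to follow the philosophy of the Hara--Slade lace expansion for percolation, refined so that the zeroth-order term is the non-backtracking walk two-point function rather than the simple random walk two-point function. The key device is the direction-constrained two-point function $\tau^\iota(x)$ from \refeq{tau-z-def}, which deletes all bonds incident to $e_\iota$ and thereby forbids a subsequent step from $e_\iota$ back to $0$ through that edge. This allows us to pull out $\aap$ as an explicit prefactor whenever we take a step from a pivot point in direction $\kappa$, since $\aap$ in \refeq{Definition-aap} is precisely the probability that such a step is not immediately undone along the same edge.

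First I would derive the two \emph{base} identities (the $M=0$ cases of \refeq{taux-M-in-ex-1}--\refeq{taux-M-in-ex-2}). For \refeq{taux-M-in-ex-1}, start from $\mathbb P_p(0\conn x)$ and decompose according to the first pivotal bond for $0\conn x$: either no such bond exists (so $0$ and $x$ are in the same ``sausage''), or there is a first pivotal bond which we write as $(u,u+e_\kappa)$. The usual pivotal-bond independence argument factorises the latter event into (i) the probability that $0\conn u$ inside the initial sausage with the bond $(u,u+e_\kappa)$ removed, (ii) a factor of $p$ combined with a ``direction-$\kappa$ non-return'' conditioning, and (iii) the probability that $u+e_\kappa\conn x$ with all bonds at $u$ vacant. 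After relabeling $y=u+e_\kappa$, factor (iii) is precisely $\tau^\kappa(x-y+e_\kappa)$, factor (ii) combines to $\aap$, and factor (i) is expanded by standard inclusion--exclusion over cluster intersections to produce $\delta_{0,y}+\Psi^{(0),\kappa}(y)$, plus a residual to be resolved at the next iteration. For \refeq{taux-M-in-ex-2} the same philosophy is applied to the difference $\tau(x)-\tau^\iota(x)$, which is exactly the probability of those configurations whose connection requires some bond incident to $e_\iota$; the leading ``first step is $(0,e_\iota)$'' contribution produces $\aap\,\tau^{-\iota}(x-e_\iota)$, and the sausage-avoidance correction is packaged into $\Pi^{(0),\iota,\kappa}(y)$.

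Next I would \emph{iterate}. Each residual left by the base step is itself the probability of a connectivity event involving an initial sausage followed by a tail connection; applying the same pivotal-bond/sausage decomposition to the tail gives a new main term of the same shape plus a strictly smaller residual, with opposite sign because of the inclusion--exclusion used to restore independence across the pivot. Iterating $M$ times produces the alternating sums $\sum_{N=0}^{M}(-1)^N \Pi^{(N),\iota,\kappa}(y)$ and similarly for $\Psi^{(N),\kappa}$, $\Xi^{(N)}$, $\Xi^{(N),\iota}$, leaving the unresolved residual as $R_M$ or $R_M^\iota$. The bounds \refeq{RM-bd}--\refeq{RMiota-bd} then follow because after $M$ iterations the residual is the probability of a \emph{positive} event of the form ``$M+1$ successive sausages followed by a connection to $x$''; bounding the final connection by $\tau^\kappa(x-y+e_\kappa)$ and summing over the last entry point $y$ and direction $\kappa$ gives the displayed inequalities.

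The principal obstacle is the combinatorial bookkeeping of directions at each junction. Unlike the classical lace expansion, where only pivotal-bond endpoints need to be tracked, here both the incoming and outgoing directions of every sausage must be recorded, which is what forces the coefficients $\Pi^{(N),\iota,\kappa}$ to be matrix-valued. Ensuring that the non-backtracking constraint at each junction is encoded precisely by the correct $\tau^\kappa$ on the right-hand side, that the alternating inclusion--exclusion signs produced by iteration match \refeq{PhiPsialt-M}--\refeq{PhiPsialt-M-2}, and that the ``no immediate return'' prefactor $\aap$ is extracted cleanly at each step, is the delicate combinatorial core; the remaining steps reduce to standard lace-expansion bookkeeping adapted from Hara--Slade.
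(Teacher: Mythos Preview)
Your proposal follows the same Hara--Slade strategy as the paper: partition on the first pivotal bond, factorize across the pivot via the Cutting Lemma, use inclusion--exclusion on the ``off $\tilde\Ccal_0$'' connection to extract $\tau^\kappa$, and iterate. The paper organizes this by first proving a general expansion (Lemma~\ref{Lemma-tauxMA}) for $\prob^B(x\ct{A}y)$ with arbitrary $A$ and suitable $B$, then specializing. Two points in your sketch need correction.

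First, $\aap$ does not ``combine'' out of the pivotal step. The Cutting Lemma gives a factor $p$ together with the indicator $\indic{\tb_0\notin\tilde\Ccal_0}$ inside the expectation; only when $\bb_0=0$ does the latter have expectation $\aap/p$. The paper obtains \refeq{taux-M-in-ex-1} by \emph{defining} $\Psi^{(N),\kappa}(y)=\frac{p}{\aap}\,\Psi^{\varnothing,(N),\kappa}(0,y;\{0\})$, so that the $y=0$, $N=0$ term is normalized to $1$ and $\aap$ sits outside. This is a convention, not a factorization.

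Second, for \refeq{taux-M-in-ex-2} your treatment is incomplete. The paper splits
\[
\tau(x)-\tau^\iota(x)=\prob(0\ct{\{e_\iota\}}x)=\prob(0\ct{b_\iota}x)+\prob^{b_\iota}(0\ct{\{e_\iota\}}x),
\]
according to whether the specific bond $b_\iota=(0,e_\iota)$ is pivotal. Only the first term yields the leading $\aap\,\tau^{-\iota}(x-e_\iota)$ (via the Cutting Lemma and then the general expansion with $B=\mathrm B(0)$, $A=\tilde\Ccal_0^{b_\iota}(0)$). The second term, where the connection runs through $e_\iota$ without using $b_\iota$, is handled by a separate application of the general expansion with $B=\{b_\iota\}$, $A=\{e_\iota\}$, and contributes its own $N=0$ pieces. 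Your ``first step is $(0,e_\iota)$'' phrasing captures only the first half; this is why $\Xi^{(N),\iota}$ and $\Pi^{(N),\iota,\kappa}$ for $N\geq1$ are each a sum of two terms (see \refeq{tau-tauj-LEC-ident1}--\refeq{tau-tauj-LEC-ident2}).
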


Of course, the precise formulas for the lace-expansion coefficients are crucial for our analysis to succeed. However, at this stage, we refrain from stating their forms explicitly, and refer to Section \ref{sec-compl} instead. We continue by discussing how to bound these coefficients.

\subsection{Part (b): Bounds on the NoBLE}
\label{sec-part-b}
In this section, we explain the strategy of proof for bounds on the lace-expansion coefficients of the NoBLE. We start by rewriting the equations \refeq{taux-M-in-ex-1} and \refeq{taux-M-in-ex-2} to obtain an explicit equation for $\hat{\tau}(k)$.

\paragraph{The NoBLE-equation.}
Using the  NoBLE expansion of Proposition \ref{prop-LE} we now derive the NoBLE form, which rewrites $\hat \tau(k)$ in a form that is a perturbation of \refeq{NBWGen}.  We take the Fourier transforms of \refeq{taux-M-in-ex-1} and \refeq{taux-M-in-ex-2} to obtain
	\begin{align}
    	\hat{\tau}(k)&=1+\hat{\Xi}_{\sss M}(k)+\aap\sum_{\kappa} (1+\hat{\Psi}_{\sss M}^\kappa(k))	\e^{-\ii k\cdot \ve[\kappa]} \hat{\tau}^{\kappa}(k),
    	\lbeq{taux-M-in-ex-1-Four}\\
        \hat{\tau}(k)&=\hat{\tau}^\iota(k)+\aap\e^{\ii k\cdot \ve[\iota]}\hat{\tau}^{-\iota}(k)
        +\sum_{\kappa} \hat{\Pi}_{\sss M}^{\iota,\kappa}(k)\e^{-\ii k\cdot \ve[\kappa]}\hat{\tau}^{\kappa}(k)+\hat \Xi_{\sss M}^{\iota}(k).
    	\lbeq{taux-M-in-ex-2-Four}
	\end{align}
We write $\vec {\hat \tau}(k)\in\Rbold^{2d}$ for the (column-)vector with entries
    \eqn{
    (\vec {\hat \tau}(k))_\iota=(\hat{\tau}^{\iota}(k)).
    }
and note that, by $\mD[k]\mJ=\mJ\mD[-k]$ (see \refeq{J-D-matrix-def}) and $k_{-\iota}=-k_\iota$,
   \eqn{
    \e^{\ii k\cdot \ve[\iota]}\hat{\tau}^{-\iota}(k)=\big(\mD[k]\mJ\vec {\hat \tau}(k) \big)_\iota.
    }
Defining the vectors $\vec {\hat \Psi}(k),\vXiM[k]$ and the matrix $\mPiM[k]$, with entries
    \eqn{
    (\vec {\hat \Psi}(k))_\kappa=\hat \Psi^{\kappa}(k),\qquad
    (\vXiM[k])_\iota=\hat \Xi^{\iota}_{\sss M}(k), \qquad     (\mPiM[k])_{\iota,\kappa}=\hat \Pi^{\iota,\kappa}_{\sss M}(k),
    }
we can rewrite \refeq{taux-M-in-ex-2-Four} as
	\begin{align}
    	\hat{\tau}(k) {\v1}=\vec {\hat \tau}(k)+
        \aap\mD[k]\mJ  \vec {\hat \tau}(k)
        +\mPiM[k]\mD[-k] \vec {\hat \tau}(k)+ \vXiM[k],
	\end{align}
so that
	\begin{align}
    	\vec {\hat \tau}(k)=\mD[k]\big[\mD[k]+\aap \mJ +\mPiM[k]\big]^{-1} \times
        \big(\hat{\tau}(k) {\v1}- \vXiM[k]\big).
	\end{align}
In turn, by \refeq{taux-M-in-ex-1-Four}, the above gives rise to the relation
	\begin{align}
    	\hat{\tau}(k)=&1+\hat{\Xi}_{\sss M}(k)+\aap(\v1+\vPsiM[k])^T\mD[-k]\vec {\hat \tau}(k)\nnb
        =& 1+\hat{\Xi}_{\sss M}(k)-	\aap(\v1+\vPsiM[k])^T\big[\mD[k]+\aap\mJ  +\mPiM[k]\big]^{-1} \vXiM[k]\nnb
        &+\hat{\tau}(k)\aap(\v1+\vPsiM[k])^T\big[\mD[k]+ \aap\mJ+\mPiM[k]\big]^{-1}\v1.
	\end{align}
Thus, we can solve the above equation as
	\begin{align}
    	\lbeq{lace-exp-eq}
    	\hat{\tau}(k)=\frac{1+\hat{\Xi}_{\sss M}(k)-\aap(\v1+\vPsiM[k])^T\big[\mD[k]+\aap\mJ  +\mPiM[k]\big]^{-1} \vXiM[k]}
    {1-\aap(\v1 +\vPsiM[k])^T\big[\mD[k]+\aap \mJ+\mPiM[k]\big]^{-1}{\v1}}.
	\end{align}
Equation \refeq{lace-exp-eq} is the NoBLE equation, and is the workhorse behind our proof. The goal of the NoBLE is now to show that \refeq{lace-exp-eq} is indeed a perturbation of \refeq{NBWGen}. This amounts to proving that $\hat{\Xi}_{\sss M}(k)$,$\vXiM[k]$, $\vPsiM[k]$ and $\mPiM[k]$ are small, which will only be true in sufficiently high dimensions.

We will show that, for every $p<p_c(d)$, the remainder terms $R_{\sss M}(x), R_{\sss M}^\iota(x)\rightarrow 0$ as $M\rightarrow \infty$. The content of the second key proposition is that the NoBLE coefficient can be bounded by combinations of simple diagrams. Simple diagrams are combinations of two-point functions, alike the triangle defined in
\refeq{triangle-cond-def} and the following examples:
	\begin{align}
	(2dp)^2 (\tau_p\star D\star D\star \tau_p)(e_\iota),   \qquad
	\sup_{x\in\Zd\colon \ \|x\|_2^2>2}  \sum_{y\in\Zd} \|y\|_2^2\tau_p(y) (\tau_p\star D)(x-y).
	\end{align}
In Section \ref{secBoundsPerc} we prove that we can derive bounds on the NoBLE coefficients:

\begin{prop}[Diagrammatic bound on the NoBLE coefficients]
\label{prop-bds-LEC}
For each $N\geq 0$ the NoBLE coefficients
$ \Pi^{\ssc[N],\iota,\kappa}(y),$ $\Xi^{\ssc[N]}(x),$ $\Psi^{\ssc[N],\kappa}(y)$ and $\Xi^{\ssc[N],\iota}(x)$  can be bounded by a finite combination of sums and products of simple diagrams.
\end{prop}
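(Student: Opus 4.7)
The plan is to start from the explicit probabilistic definitions of the NoBLE coefficients as given in Section \ref{sec-compl} and perform a systematic diagrammatic decomposition. Each coefficient $\Pi^{\ssc[N],\iota,\kappa}(y)$, $\Xi^{\ssc[N]}(x)$, $\Psi^{\ssc[N],\kappa}(y)$, and $\Xi^{\ssc[N],\iota}(x)$ is by construction a sum over $N$-fold nested combinations of connection events, where each ``level'' corresponds to a pivotal bond together with the accompanying sausage. The goal is to upper-bound the probability of each such nested event by a product of two-point functions of disjoint connection events, and then to sum over all intermediate vertices to obtain convolutions of $\tau_p$, $D$, and related functions, which are precisely the simple diagrams.

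The main tool is the BK (van den Berg--Kesten) inequality for increasing events, which allows us to replace the probability of an intersection of connection events that occur disjointly by the product of the individual connection probabilities. For each coefficient I would first describe the event $E^{\ssc[N]}$ underlying the $N$-th term as a concatenation of sausages joined at pivotal bonds (this is the standard Hara--Slade picture, adapted to the non-backtracking setting: the crucial point is that the non-backtracking constraint forces the first step of each sausage to avoid one specific direction, which produces the refined loop structure with loops of length at least four). Then, applying BK repeatedly, the probability of $E^{\ssc[N]}$ factorises into a product of two-point function evaluations, one for each internal connection, with additional factors of $p$ coming from the pivotal bonds. Summing over all vertex positions in the sausages yields a finite product/convolution of $\tau_p$'s and $D$'s, which is a simple diagram.

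Concretely, I would proceed in the following order. First, isolate the $N=0$ and $N=1$ terms separately and exhibit their diagrammatic bounds explicitly, as these set the template and fix the matrix-valued bookkeeping of the directional indices $\iota,\kappa$ (this is where the $\mJ$ matrix and the direction-sensitive displacement $\ve[\kappa]$ are tracked). Second, for general $N\geq 2$, bound the sausage events inductively: each sausage contributes either a bubble, a triangle, or a ``non-backtracking'' modification thereof (with the excluded direction forbidding immediate reversal), and the total number of combinatorial cases is finite because the lace expansion has already performed the inclusion-exclusion that collapses the combinatorics. Third, collect the resulting bounds into explicit named diagrams (the triangles, open triangles, weighted bubbles, etc., that are catalogued in the tables of Section \ref{secBoundsPerc}), thereby completing the proof of Proposition \ref{prop-bds-LEC}.

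The hard part, and the main obstacle, will be a careful treatment of the non-backtracking constraint inside each sausage. In the classical lace expansion it is harmless to ignore the constraint that the first and last step of a sausage not coincide with the previous/next pivotal bond; one simply overcounts. However, the whole point of the NoBLE is that keeping this constraint tightens the diagrams by excluding the shortest loops, so we may not simply drop it. Instead, we must carry the directional indices $\iota,\kappa$ through every application of BK, producing matrix-valued bounds rather than scalar ones, and verify that the resulting ``restricted'' two-point functions $\tau^{\kappa}$ can still be bounded above by $\tau_p$ in each factor where the direction is not needed. A secondary difficulty is producing bounds on the remainder terms $R_{\sss M}(x)$ and $R_{\sss M}^\iota(x)$ that are uniform in $M$ and vanish as $M\to\infty$ for every $p<p_c$; this will follow from the same diagrammatic bounds together with the recursive structure of the expansion, since each additional level of the expansion introduces an extra small factor once the bootstrap of \cite{FitHof13b} is in force.
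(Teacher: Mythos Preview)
Your overall strategy is correct and matches the paper's approach: bound the nested $E'$ events by simpler ``sausage'' events, apply BK-type disjoint-occurrence reasoning to factorise, and organise the result into building blocks that combine via matrix products. The paper carries this out by first introducing explicit intermediate events $F_0, F', F'', F''', F_{\sss N}$ (Section~\ref{secBoundsPercEvents}) that dominate the $E'$ events, and then bounding their probabilities by the simple diagrams of Section~\ref{secBoundsSimpleDiagrams}.

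Two points where your plan diverges from what the paper actually does. First, the $3\times 3$ matrix structure is \emph{not} indexed by the directional labels $\iota,\kappa$ as you suggest; the directions are summed out in each block. The matrix indices $a,b\in\{0,1,\geq 2\}$ record instead the \emph{lengths} of the line segments shared between consecutive building blocks (see Table~\ref{InformalDefinitonOfBlock} and the discussion around Figure~\ref{fig-Form-Xi4-Decomposed}). This case distinction is what enforces that every closed loop in the bounding diagrams has at least four bonds, which is the numerical gain of the NoBLE. Carrying directional indices alone would not recover this.

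Second, you assume BK can be applied cleanly at every stage. The paper identifies one configuration (the $F'''$ event, illustrated in Figure~\ref{fig-intersectSausage}) in which the path $w_{i-1}\conn z_i$ in $\tilde{\Ccal}_{i-1}$ and the path $w_i\conn z_{i+1}$ in $\tilde{\Ccal}_i$ cannot be made bond-disjoint for any choice of $z_i$. For this case the paper falls back on \emph{non-repulsive} diagrams (the starred blocks $A^{\iota,a,b,*}$, $\bar A^{\iota,a,b,*}$), which are numerically weaker but still finite. Your plan would need to account for this exception; otherwise the factorisation step is not justified.
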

\noindent
The explicit form of the bounds in Proposition \ref{prop-bds-LEC} is given in Section \ref{sec-bounds-N0} for $N=0$, in Section \ref{sec-bounds-N=1} for $N=1$, and in Section \ref{secPercostatingtheBounds} for $N\geq 2$.

\iflongversion
We will only informally present the proof of the bounds on the NoBLE coefficients.  A detailed proof can be found in the thesis of the first author \cite{Fit13} that can be download from \cite{FitNoblePage}. In Appendix \ref{app-bounds}, we give the precise form of our bounding diagrams on the NoBLE coefficients that we use to prove the result for $d\geq \dmin$.
\else
As the complete proof of the bounds on the NoBLE coefficients is quite elaborate, we only sketch the proof. In this paper, we only informally define the building blocks that we use to bound the coefficients. A complete definition and more details can be found in the extended version \cite{FitHof13d-ext}.
\fi

\begin{remark}[Matrix-valued bounds]
\rm
The lace-expansion coefficients describe contributions created by multiple mutually intersecting paths, which we call {\em loops}.
In the NoBLE, these loops require at least $4$ bonds by design, as direct reversals are excluded. Lines can be part of two loops.
To optimally use the information that loops contain at least 4 bonds, we distinguish three cases for the length of lines shared by two loops.
Then, we bound the contribution of a loop to the lace-expansion diagram in terms of the lengths of the lines shared with the previous and preceding loops.
We explain this in detail in Section \ref{secBoundsPercDefinitionBlocks}, see especially Figures \ref{fig-Form-Xi4} and \ref{fig-Form-Xi4-Decomposed}.
This gives rise to a bound on the NoBLE coefficients in terms of a matrix product, as given in Section \ref{secPercostatingtheBounds} below.
In Section \ref{secProofBoundsNOne}, we explain how these matrix-valued bounds arise, see especially \refeq{lemmapercboundXi1-1-summation}.
For example, our proof yields that
	\begin{align*}
	\hat \Xi^{\ssc[N]}_{p}(0)\leq& \vec P^{\sss \rm S} ({\bf B}^\iota)^{N-1} {\bf \bar  A}^\iota \vec P^{\sss \rm E},
	\end{align*}
for $N\geq 2$, see \refeq{BoundXiPercNTwo}, for certain vectors $P^{\sss \rm S}, P^{\sss \rm E}$ and 3 by 3 matrices ${\bf \bar  A}^\iota, {\bf B}^\iota$. We will give an interpretation to the elements in these vectors and matrices in Section \ref{secBoundsPercDefinitionBlocks}.
For our analysis we require a bound on this when summed over $N$. To compute this bound numerically, we perform an eigenvector decomposition of $\vec P^{\sss \rm S}$, in terms of the eigenvectors $(\vec v_i)_{i=1}^3$ of ${\bf B}^{\iota}$ with corresponding eigenvalues $(\lambda_i)_{i=1}^3$. In this decomposition, we write $\vec P^{\sss \rm S}=\sum_{i=1}^3 \vec v_i,$ so that the eigenvectors used are not normalized.\footnote{We do not account for the possibility that ${\bf B}^{\iota}$ is not diagonalizable, as numerically this has never occurred in our applications.}
Then,
	\begin{align*}
	\hat \Xi^{\ssc[N]}_{p}(0)\leq& \sum_{i=1}^3 \vec v_i \lambda^{N-1}_i {\bf \bar  A}^\iota \vec P^{\sss \rm E}.
	\end{align*}
The sum of this over $N$ is computed using the geometric sum, see \cite[Section 5.4]{FitHof13b} for more details.
The order of this bound is to a large extent given by the largest eigenvalue of ${\bf B}^{\iota}$. For example, in $d=11$,
  	\begin{align*}
	{\bf B}^{\iota}=\begin{pmatrix}
  	0.0134202&	0.0112907&	0.0257405 \\
	0.0127527&	0.0108018&	0.0338533\\
	0.028009&	0.0260537&	0.0401418
	\end{pmatrix}.
  	\end{align*}
with largest eigenvalue $\lambda_1=0.073$. In the classical lace expansion also bounds on the $N$th lace-expansion diagram are present that decay exponentially in $N$, where the base of the exponent, roughly corresponding to $\sum_{i,j}{\bf B}^{\iota}_{i,j}$, is bounded in terms of a non-trivial triangle, which we can bound by $0.281$. This shows the power of the NoBLE, as well as the gain achieved by using matrix-valued bounds.
\end{remark}

\subsection{Part (c): The NoBLE analysis}
\label{sec-part-c}
We start by defining what our so-called {\em bootstrap functions} are.

\paragraph{Bootstrap functions.}
For the bootstrap, we use the following functions:
	\begin{eqnarray}
	\lbeq{defFunc1}
	f_1(p)&:=&\max\left\{(2d-1)p,c_\mu (2d-1)\aap\right\},\\
	\lbeq{defFunc2}
	f_2(p)&:=& \sup_{k\in(-\pi,\pi)^d}
	\frac{\hat \tau_p(k)}{\hat B_{1/(2d-1)}(k)} = \frac{2d-1}{2d-2}\sup_{k\in(-\pi,\pi)^d} [1-\hat D(k)]\ \hat \tau_p(k),\\
	\lbeq{defFunc3}
	f_3(p)&:=& \max_{\{n,l,S\}\in \mathcal{S}} \frac{\sup_{x\in S}
	\sum_{y}\|y\|_2^2\tau_p(y)(\tau_p^{\star n}\star D^{\star l})(x-y)}{c_{n,l,S}},
	\end{eqnarray}
where $c_\mu>1$ and $c_{n,l,S}>0$ are some well-chosen constants and $\mathcal{S}$ is some finite set of indices. Let us now start to discuss the choice of these functions.

The functions $f_1$ and $f_3$ can been seen as the combinations of multiple functions.  We group these functions together as they play a similar role and are analyzed in the same way. We do not expect that the value of the bound on the individual functions constituting $f_1$ and $f_3$ are comparable. This is the reason that we introduce the constants $c_\mu$ and $c_{n,l,S}$.

The value of $n$ is model-dependent.  For SAW, we would use only $n=0$. For percolation, we use $n=0,1$, while $n=0,1,2$ for LT and LA. This can intuitively be understood as follows. By the $x$-space asymptotics in \refeq{x-space}, and the fact that $(f\star f)(x)\sim \|x\|_2^{4-d}$ when $f(x)\sim \|x\|_2^{2-d}$, we have that $\|y\|_2^2\tau_p(y)\sim (\tau_p\star \tau_p)(y)$. As a result, this suggests that
	\eqn{
	\lbeq{n=1-triangle}
	\sum_{y}\|y\|_2^2\tau_p(y)(\tau_p^{\star n}\star D^{\star l})(x-y)
	\sim \sum_{y}(\tau_p\star \tau_p)(y)(\tau_p^{\star n}\star D^{\star l})(x-y)
	=\big(\tau_p^{\star (n+2)}\star D^{\star l}\big)(x),
	}
so that finiteness of $\sum_{y}\|y\|_2^2\tau_p(y)(\tau_p^{\star n}\star D^{\star l})(x-y)$ is related to
finiteness of the bubble when $n=0$, of the triangle when $n=1$ and of the square when $n=2$.

The choices of point-sets $S\in \mathcal{S}$ improve the numerical accuracy of our method. For example, we obtain much better estimates in the case when $x=0$, since this leads to closed diagrams, than for $x\neq 0$. For $x$ being a neighbor of the origin, we can use symmetry to improve our bounds significantly. To obtain the infrared bound for percolation in $d\geq \dmin$, we use
	\eqn{
	\lbeq{calS-def}
  	\mathcal{S}=\big\{ \{0,0,\mathcal{X}\},\{1,0,\mathcal{X}\},\{1,1,\mathcal{X}\},\{1,2,\mathcal{X}\},
	\{1,3,\mathcal{X}\},\{1,6,\{0\}\} \big\},
	}
with $\mathcal{X}=\{x\in\Zd\colon \|x\|_2>1\}$. This turns out to be sufficient for our main results.

We apply a \emph{forbidden region or bootstrap argument} that is based on three claims:
\begin{enumerate}
\item[(i)] $p\mapsto f_i(p)$ is \emph{continuous} for all $p\in [1/(2d-1), p_c)$ and $i=1,2,3$; \item[(ii)] $f_i(1/(2d-1))\leq \gamma_i$ holds for $i=1,2,3$; and
\item[(iii)] if $f_i(p)\leq \Gamma_i$ holds for $i=1,2,3$, then, in fact, also $f_i(p)\leq \gamma_i$ holds for every $i=1,2,3$, where $\gamma_i<\Gamma_i$ for every $i=1,2,3$.
\end{enumerate}
Together, these three claims imply that $f_i(1/(2d-1)\leq \gamma_i$ holds for every $i=1,2,3$ and all $p\in [1/(2d-1), p_c)$. This in turn implies the statement of Theorem \ref{thm-IRB}  for all $p\in [1/(2d-1), p_c)$.

The continuity in Claim (i) is proven in \cite{FitHof13b} under some assumption that we explain and prove  below. The proofs of the initialization of the bootstrap in Claim (ii) as well as the improvement of the bounds in Claim (iii) use the following relations that are also sketched in Figures \ref{fig-Heuristic-bootstrap-initial} and \ref{fig-Heuristic-bootstrap}, where we write $p_{I}=1/(2d-1)$:
\begin{enumerate}[(1)]
\item simple diagrams can be bounded by a combination of two-point functions,\\ see \cite[Section 4]{FitHof13b};
\item the NoBLE coefficients can be bounded by a combination of simple diagrams,\\ see Section \ref{secBoundsPerc};
\item bounds on the NoBLE coefficients imply bounds on the two-point function,\\ see \cite[Section 2]{FitHof13b}.
\end{enumerate}
Thus, whenever we have numerical bounds on simple diagrams, or on NoBLE coefficients, or on the two-point function, we can also conclude bounds on the other two quantities.

Using that $\tau_{p_I}(x)\leq  B_{p_I}(x)$ with $p_{I}=1/(2d-1)$ and that we can compute $B_{p_I}(x)$ numerically, we verify the initialization of the bootstrap in Claim (ii) (i.e., $f_i(p_I)\leq \gamma_i$ for $i=1,2,3$) numerically, see Figure \ref{fig-Heuristic-bootstrap-initial}.

\begin{figure}
 \begin{center}
\begin{tikzpicture}[line width=1pt,auto,scale=0.7]
 \node   at ({3*cos(153)},{3*sin(153)})     {$f_i(p_I)\leq \gamma_i$};
 \node   at ({3*cos(0)},{3*sin(0)})      {Bounds on simple diagrams};
 \node   at ({3*cos(230)},{3*sin(230)+0.2})      {Bounds on coefficients};
 \draw [->,very thick] ({3*cos(215)-0.2},{3*sin(215)})  arc  (215:165:3);

\draw [->,very thick] ({3*cos(350)},{3*sin(350)})  arc  (350:240:3);
 \draw [->,very thick] (4,2.2) to (3,0.5);
 \color{black}
 \node  at(4,2.5)      {$\tau_{p_I}(x)\leq B_{p_I}(x)$};
 \node[left]   at(-3.1,0)   {Conclude a bound};
\end{tikzpicture}
\end{center}
 \caption{Initialization of the bootstrap: proof that $f_i(p_I)\leq \gamma_i$ holds for $i=1,2,3$. Here $\gamma_1,\gamma_2,\gamma_3$ are appropriately and carefully chosen constants.}
 \label{fig-Heuristic-bootstrap-initial}
\end{figure}

The proof of Claim (iii) is the most elaborate step of our analysis. Its structure is shown in Figure \ref{fig-Heuristic-bootstrap}. We start from the assumption that $f_i(p)\leq \Gamma_i$ holds for every $i=1,2,3$. The function $f_1$ gives a bound on $p$ and $f_2$ allows us to bound the  two-point function in Fourier space by $\hat B_{p_I}(k)$, which we can integrate numerically to obtain numerical bounds on simple diagrams. These, in turn, imply bounds on the NoBLE coefficients, which we use to compute bounds on the bootstrap functions.

\emph{In the case that} the computed bounds are small enough, we can conclude that $f_i(p)\leq \gamma_i$ holds and thereby that the improvement of the bounds in Claim (iii) holds. Whether we can indeed prove that Claim (iii)  holds depends on the dimension we are in, the quality of the bounds and the analysis used to conclude bounds for the bootstrap function. In high dimensions (e.g.\ $d\geq 1000$) the perturbation is rather small so that it is
relatively easy to prove Claim (iii). Proving the claim in lower dimension is only possible  when the bounds on the lace-expansion coefficients and the analysis are sufficiently sophisticated. It is here that it pays off to apply the NoBLE compared to the classical lace expansion.

\begin{figure}
 \begin{center}
\begin{tikzpicture}[line width=1pt,auto,scale=0.7]
\node   at ({3*cos(90)},{3*sin(90)})     {$f_i(p)\leq \Gamma_i$};
 \node   at ({3*cos(153)},{3*sin(153)})     {$f_i(p)\leq \gamma_i$};
 \node   at ({3*cos(0)},{3*sin(0)})      {Bounds on simple diagrams};
 \node   at ({3*cos(230)},{3*sin(230)+0.2})      {Bounds on coefficients};
 \draw [->,very thick] ({3*cos(215)-0.2},{3*sin(215)})  arc  (215:165:3);
 \draw [->,very thick] ({3*cos(70)},{3*sin(70)})  arc  (70:10:3);
 \draw [dotted,very thick] ({3*cos(140)},{3*sin(140)})  arc  (140:115:3);
\draw [->,very thick] ({3*cos(350)},{3*sin(350)})  arc  (350:240:3);
 \draw [->,very thick] (-4,3) to ({3*cos(90)-1.4},{3*sin(90)}) ;
  \color{black}
 \node[left]   at(-4,3)      {Assume a bound};
 \node[left]   at(-3.1,0)   {Conclude a bound};
\end{tikzpicture}
\end{center}
 \caption{ Proof of claim (iii): $f_i(p)\leq \Gamma_i$ for $i=1,2,3$ implies that $f_i(p)\leq \gamma_i$ for $i=1,2,3$.}
 \label{fig-Heuristic-bootstrap}
\end{figure}

The third step in the proof of Theorem \ref{thm-IRB} is formalized in the following proposition:

\begin{prop}[Successfull application of NoBLE analysis]
\label{prop-analysis-is-success}
For nearest-neightbor percolation in $d\geq \dmin$, the NoBLE analysis of \cite{FitHof13b} applies and proves the infrared bound in Theorem \ref{thm-IRB}. In particular, there exist constants $\Gamma_1,\Gamma_2,\Gamma_3$ and $\gamma_1,\gamma_2,\gamma_3$ such that, for every $p<p_c$, the bounds $f_i(p)\leq \Gamma_i$ for $i=1,2,3$ imply that $f_i(p)\leq \gamma_i$ for $i=1,2,3$.
\end{prop}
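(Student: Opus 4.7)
The plan is to reduce the proposition to three verifications that combine the structural results already established (Propositions \ref{prop-LE} and \ref{prop-bds-LEC}) with the model-independent analysis of \cite{FitHof13b} and the computer-assisted bounds of \cite{FitNoblePage}. Starting from the NoBLE identity \refeq{lace-exp-eq}, one sees that $\hat\tau_p(k)$ is expressed as an explicit perturbation of the NBW two-point function \refeq{NBWGen}. The role of the bootstrap functions $f_1,f_2,f_3$ in \refeq{defFunc1}--\refeq{defFunc3} is precisely to control, respectively, (i) the parameters $p$ and $\aap$ that govern the strength of the perturbation, (ii) the infrared behavior of $\hat\tau_p$ relative to the NBW reference, and (iii) the displacement diagrams needed to control $\mPiM[k]$ as $k\to 0$ (this is where the identification \refeq{n=1-triangle} with a finite triangle becomes essential).

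First, I would verify the general-setup assumptions that \cite{FitHof13b} imposes on any model for which its analysis applies. These are essentially: the validity of the expansion \refeq{taux-M-in-ex-1}--\refeq{taux-M-in-ex-2} with a vanishing remainder ($R_{\sss M}, R_{\sss M}^\iota\to 0$ as $M\to\infty$ for $p<p_c$), the existence of diagrammatic bounds on the coefficients in terms of simple diagrams with the matrix structure described in the Remark above, and the continuity of $p\mapsto f_i(p)$ on $[p_I,p_c)$. The expansion and the vanishing remainder follow from Proposition \ref{prop-LE} together with \refeq{RM-bd}--\refeq{RMiota-bd} and the a priori finiteness of $\tau_p$ for $p<p_c$. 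The diagrammatic bounds are the content of Proposition \ref{prop-bds-LEC}, whose explicit matrix form is given in Section \ref{secPercostatingtheBounds}. Continuity of $f_1,f_2,f_3$ follows from standard monotone-convergence and dominated-convergence arguments in the subcritical regime, exactly as in \cite{FitHof13b}; the only subtlety is that $f_3$ involves a supremum over $x\in\mathcal X$, and this is handled by the uniform $x$-space decay estimates implied by the assumed bound $f_2(p)\leq \Gamma_2$.

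Next, I would perform the initialization at $p_I=1/(2d-1)$ (Figure \ref{fig-Heuristic-bootstrap-initial}). Since $\tau_{p_I}(x)\leq B_{p_I}(x)$ by a straightforward coupling between percolation clusters and NBW paths, the link \refeq{SRWtoNBWlink} lets one express every simple diagram evaluated at $p_I$ as an SRW integral, to which the rigorous numerical upper bounds of Hara and Slade \cite{HarSla92a} apply directly. The Mathematica notebook of part (I) carries out these 112 SRW-integral bounds and the walk-counting bounds; feeding the results into the bounds of Proposition \ref{prop-bds-LEC} yields numerical values $\gamma_1,\gamma_2,\gamma_3$, which one verifies satisfy $f_i(p_I)\leq\gamma_i$ for every $d\geq \dmin$.

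The main step, and where I expect the real difficulty to lie, is the improvement of the bounds (Figure \ref{fig-Heuristic-bootstrap}): assuming $f_i(p)\leq\Gamma_i$ for $i=1,2,3$, I would (a) use $f_2(p)\leq\Gamma_2$ to dominate $\hat\tau_p(k)$ by a multiple of $\hat B_{p_I}(k)$, thereby bounding every simple diagram by an SRW integral and a prefactor depending only on $(\Gamma_1,\Gamma_2,\Gamma_3)$; (b) substitute these diagrammatic bounds into the matrix-valued estimates of Section \ref{secPercostatingtheBounds} and sum over $N$ via the eigenvector decomposition of the $3\times 3$ matrix $\mathbf B^\iota$, exploiting that the largest eigenvalue is small (for $d=11$ it is $\approx 0.073$, as computed in the Remark); (c) insert the resulting bounds on $\hat\Xi_{\sss M}, \vXiM[k], \vPsiM[k], \mPiM[k]$ into \refeq{lace-exp-eq} and into the derivative identities from \cite[Sections 2--3]{FitHof13b} to extract new numerical upper bounds on $f_1(p),f_2(p),f_3(p)$. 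The hard part is to choose the constants $\Gamma_i,\gamma_i, c_\mu,c_{n,l,S}$ and the set $\mathcal S$ as in \refeq{calS-def} so that the resulting bounds satisfy the strict inequalities $f_i(p)\leq\gamma_i<\Gamma_i$ simultaneously for all three $i$; this fails for too-low dimensions and is precisely the step that constrains us to $d\geq\dmin$. This optimization and the verification of the strict inequality are delegated to the Mathematica notebooks of \cite{FitNoblePage}, whose output constitutes the computer-assisted part (d) of the proof. Combining continuity with the initialization and the improvement gives $f_i(p)\leq\gamma_i$ on the whole interval $[p_I,p_c)$, and substituting these bounds into \refeq{lace-exp-eq} yields the infrared bound \refeq{IRB} with explicit amplitudes $A_1(d), A_2(d)$, uniformly in $p\leq p_c$ by a final limiting argument as $p\nearrow p_c$.
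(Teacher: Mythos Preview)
Your proposal is correct and follows essentially the same approach as the paper: verify the hypotheses of the general analysis in \cite{FitHof13b}, then invoke that analysis together with the computer-assisted numerics of \cite{FitNoblePage}. The paper organizes this verification more systematically around the specific numbered assumptions of \cite{FitHof13b} (Assumptions 2.2--2.4 on the two-point function and Assumptions 4.1--4.4 on the NoBLE coefficients), and in particular explicitly checks items you did not mention---the symmetry properties of the coefficients, the relations $\Psi^{\ssc[N],\kappa}_p\leq (p/\aap)\Xi^{\ssc[N]}_p$ and $\Pi^{\ssc[N],\iota,\kappa}_p\leq p\,\Xi^{\ssc[N],\iota}_p$, and the left-continuity of the coefficients at $p_c$---but the structure of the argument is the same as yours.
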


As shown in Figure \ref{fig-Heuristic-bootstrap}, Proposition \ref{prop-analysis-is-success} is proved using the results of Propositions \ref{prop-LE}-\ref{prop-bds-LEC}, the analysis of \cite{FitHof13b} and the computer-assisted proof performed in the Mathematica notebook that can be found on \cite{FitNoblePage}.  To apply the general NoBLE analysis of \cite{FitHof13b} for percolation in $d\geq \dmin$, we need to prove that the assumptions formulated in \cite{FitHof13b} hold. We recall these assumptions when we prove them. For example, we now verify the assumptions on the two-point function $\tau_{p}$. In Section \ref{subsec-prop-coef}, we verify the symmetry assumptions on the NoBLE coefficients.
The assumption that is most difficult to prove is the existence of the bounds on the NoBLE coefficients. We prove this in Sections \ref{secBoundsPerc}-- \ref{secProofBounds}. We give an overview of where to find the bounds on the NoBLE coefficients stated in \cite[Assumptions 4.3]{FitHof13b} in Section \ref{secSummaryBounds}.

\paragraph{Verification of the assumptions for the general NoBLE analysis in \cite{FitHof13b}.}
In this section, we verify the assumptions in \cite{FitHof13b} that are independent of the NoBLE. Namely, we prove that \cite[Assumptions 2.2, 2.3 and 2.4]{FitHof13b} hold for percolation:

\paragraph{\cite[Assumption 2.2]{FitHof13b}: At the initial point $p_I$.}
Whenever $\{0\conn x\}$ occurs, there exists a path of occupied bonds connecting $0$ and $x$.
Since such a connecting path is a non-backtracking walk (NBW),
	\begin{align}
	\tau_{p}(x)&\leq \sum_{n=0}^\infty b_n(x) p^n= B_{p}(x),
	\end{align}
for all $p\leq 1/(2d-1)$, which implies \cite[Assumption 2.2]{FitHof13b}.

\paragraph{\cite[Assumption 2.3]{FitHof13b}: Growth of the two-point function.}
We need to show that for every $x\in \Zd$, the two-point functions $p\mapsto \tau_p(x)$ and $p\mapsto \tau^\iota_p(x)$
are non-decreasing, differentiable in $p\in(0,p_c)$. Further, we need to show that for all $\varepsilon>0$, there exists a constant $c_{\varepsilon}\geq 0$ such that for all $p\in(0,p_c-\varepsilon)$ and $x\in\Zd\setminus\{0\}$,
	\begin{eqnarray}
	\lbeq{assGzDiffBound}
	\frac d {dp} \tau_p(x)\leq c_{\varepsilon} (\tau_p\star D\star \tau_p)(x),
	\quad \text{ and therefore }\quad \frac d {dp} \hat \tau_p(0)\leq c_{\varepsilon} \hat \tau_p(0)^2.
	\end{eqnarray}
Finally, we need to show that for each $p\in(0,p_c)$, there exists a constant $K(p)<\infty$ such that $\sum_{x\in\Zd} \|x\|_2^2 \tau_{p}(x)<K(p)$. We will do this now.\\

We recall that $\tau_p(x)=\prob_p(0\conn x)$,
so that $\tau_p(x)$ is non-decreasing in $p$ as occupying a bond can only increase the probability that a path of occupied bonds from $0$ to $x$ exists.
The same clearly also holds for  $\tau^\iota_p(x)$ in \refeq{tau-z-def}.
The differentiability of $p\mapsto\tau_p(x)$ is well known for $p\in(0,p_c)$ and the bound on the derivative \refeq{assGzDiffBound} is obtained using Russo's Formula (\cite[Lemma 3]{Russ81} or \cite[Theorem 2.25]{Grim99}) and the BK-inequality \cite{BerKes85}.
As these are standard arguments in percolation theory we will not comment further on them. The argument for $\tau_p^\iota(x)$ is identical, by considering percolation on the base graph $\Z^d\setminus \{e_{\iota}\}$ instead.\\[2mm]
To prove the bound on $\sum_{x\in\Zd} \|x\|_2^2 \tau_{p}(x)$, by \cite[Theorem 6.1]{Grim99},
	\begin{align}
	\tau_p(x)\leq \e^{-\sigma(p) \|x\|_\infty},
	\end{align}
where $\|x\|_\infty=\max_{i=1}^d |x_i|$, with $\sigma(p)>0$ for every $p<p_c$. From this, we conclude that $\sum_x \|x\|_2^2 \tau_p(x)\leq K(p)<\infty$,
which completes the proof of \cite[Assumption 2.3]{FitHof13b}.

\paragraph{\cite[Assumption 2.4]{FitHof13b}: Continuity of $\aabp$ and $\aap$.}
In the application of the analysis in \cite{FitHof13b}, we define $\aabp=p$ and $\aap=p\prob_p(\ve[1]\nin \Ccal(0) \mid (0,\ve[1])\text{ vacant})$. Thus, $p\mapsto \aabp$ clearly is continuous in $p$.
For $\aap$, we note that the percolation two-point function on the lattice $\Zd$ where the edge $\{0,\ve[\iota]\}$ is deleted, is also continuous. This can e.g.\ be seen by modifying the proof of \cite[Assumptions 2.3]{FitHof13b}, given above. Thus, also $p\mapsto\aap$ is continuous.

\subsection{Part (d): Numerical analysis}
\label{sec-part-d}
In this section, we explain how the numerical computations are performed using Mathematica notebooks that are available from the first author's homepage.

\paragraph{Simple-random walk computations.}
The procedure starts by evaluating the notebook \verb|SRW|. The file computes the number of SRWs and SAWs of a given number of steps ending at various locations in $\Zd$, using a combinatorial analysis, as well as numerical values for SRW integrals based on numerical integration of certain Bessel functions.
These computations are performed in \cite[Appendix B]{HarSla92a}, and are explained in detail in \cite[Section 5]{FitHof13b}. The SRW integrals provide {\em rigorous numerical upper bounds} on various convolutions of SRW Green's functions with themselves, evaluated at various $x\in \Z^d$. For the analysis in $d=\dmin$, we rely on 112 of such integrals.

Running these programs takes several hours. For this reason, once computed, the results are saved in two files, \verb|SRWCountData.nb| and \verb|SRWIntegralsData.nb| and are loaded automatically when the notebooks are evaluated a second time for the same dimension. 
Alternatively, these two files can also be downloaded directly from the home page of the first author, and put in one's own home directory.\footnote{In Mathematica, the command {\tt \$InitialDirectory} will tell you what this directory is.}

\paragraph{Implementation of the NoBLE analysis for percolation.}
After having computed all the simple random walk ingredients, we evaluate the notebook \verb|General|, that implements the bounds of the NoBLE analysis \cite{FitHof13b}. After this, we are ready to perform the NoBLE analysis for percolation by evaluating the notebook \verb|Percolation|. In the percolation notebook, we implement all the bounds proved in this paper. The computations in \verb|General| and \verb|Percolation| merely implements the bounds proved in this paper and in \cite{FitHof13b}, and rely on many multiplications and additions, as well as the diagonalization of 2 three-by-three matrices. These computations could in principle be done by hand (even though we prefer a computer to do them).

\paragraph{Output of Mathematica Notebooks.} After having evaluated the Mathematica notebooks, we can verify whether the analysis has worked with the chosen constants $\Gamma_1,\Gamma_2, \Gamma_3$. See Figure \ref{fig-percolation-successful} for the first output after evaluating the \verb|Percolation| notebook. Let us now explain this output in more detail. The green dots mean that the bootstrap has been successful for the parameters as chosen. When evaluating the notebook, it is possible that some red dots appear, and this means that these improvements were not successful. The first 3 dots in the first table are the verifications that $f_i(1/(2d-1))\leq \gamma_i$ for $i=1,2,3$. The next three dots show that the improvement has been successful for all $p<p_c(\dmin)$. The values for $\Gamma_1, \Gamma_2,\Gamma_3$ are indicated in the first few lines.
For example, $\Gamma_1=1.01306$ means that $(2d-1)p\leq 1.01306$. In the verification of the bootstrap improvement, it turns out that $\gamma_1$ can be taken to be $1.0130591$. Since this it true for all $p<p_c(\dmin)$, we obtain that $(2d-1)p_c(\dmin)\leq 1.0130591$. This explains the value in the table in Theorem \ref{thm-bds-crit}.
Similarly, $\Gamma_2=1.076$. This implies that $A_2(\dmin)\leq 1.07513 \times 20/21= 1.02393$. Anyone interested in obtaining improved bounds on $p_c(d)$ or $A_2(d)$ for $d\geq \dmin$ can play with the notebook to optimize them. The second table in Figure \ref{fig-percolation-successful} gives more details on the improvement of $f_3(p)$, which, as indicated in \refeq{defFunc3}, consists of several contributions, over which the maximum is taken. The assumed bound correspond to the constants $c_{n,l,S}$, with $S\in {\mathcal S}$ in \refeq{calS-def}. The notebook \verb|Percolation| also includes a routine that optimizes the choices of $\Gamma_i$ and $c_{n,l,S}$. This makes it easier to find values for which the analysis works (when these exist).

\begin{figure}
\begin{center}
\includegraphics[width=0.75\textwidth]{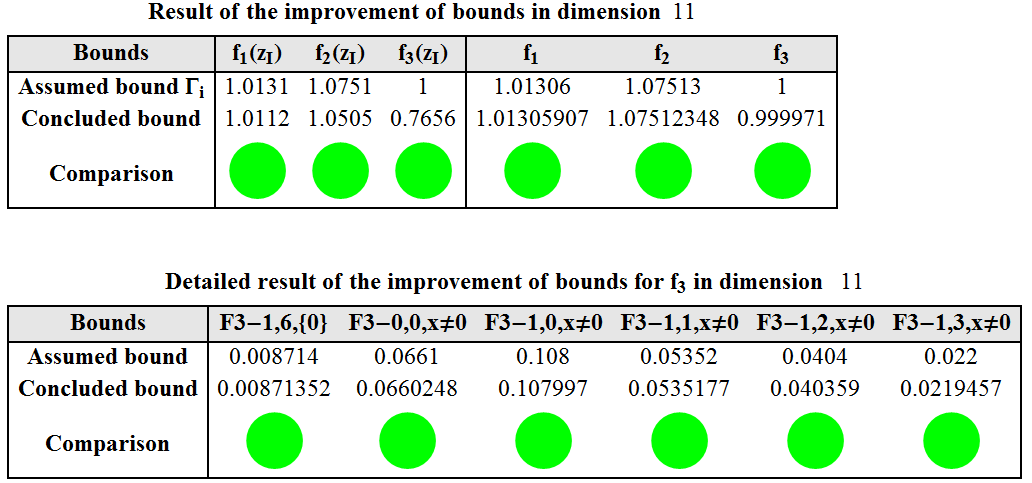} 
\end{center}
 \cprotect\caption{\label{fig-percolation-successful} Output of the Mathematica notebook {\verb|Percolation|}.}
\end{figure}

\subsection{Structure of the NoBLE proof and related results}
\label{sec-summ-proof}
\paragraph{Summary of the proof of the infrared bound in Theorem \ref{thm-IRB}.}
We have explained the proof of the infrared bound in Theorem \ref{thm-IRB}. When reviewing the analysis, we have already seen how delicately the four parts of the proof described on page \pageref{parts-proof} are intertwined. The expansion in part (a) gives us a characterisation of $\hat \tau_p(k)$ as a perturbation of $\hat B_\mu(k)$ involving the NoBLE coefficients. The analysis in part (c) allows us to compute bounds on $\hat \tau_p(k)$ provided that numerical bounds on the coefficients are available.  To obtain such bounds we need to derive diagrammatic bounds, as formulated in part (b), that bound the NoBLE coefficients by simple diagrams.  However, we rely on bounds on $\tau_p$ to bound such simple diagrams. Thus, we obtain a circular reasoning.

Using the bootstrap argument we can indeed complete the circle, see Figures \ref{fig-Heuristic-bootstrap-initial}-\ref{fig-Heuristic-bootstrap}, to obtain a bound on $\hat \tau_p(k)$ for all $p\in [1/(2d-1), p_c)$. For the bootstrap argument, we need to show that $f_i(p_I)\leq \gamma_i$, as well as the fact that $f_i(p)\leq \Gamma_i$ implies that $f_i(p)\leq \gamma_i$, for appropriately chosen $\gamma_i$ and $\Gamma_i$ for all $p\in( 1/(2d-1), p_c)$. The verification whether $f_i(p_I)\leq \gamma_i$ holds for $i=1,2,3$. Whether we can conclude from $f_i(p)\leq \Gamma_i$ for $i=1,2,3$ that also $f_i(p)\leq \gamma_i$ holds for $i=1,2,3$ requires a computer-assisted proof as indicated in Section \ref{sec-part-d}. Starting from $\tau_{p_I}(x)\leq  B_{1/(2d-1)}(x)$, $f_i(p)\leq \Gamma_i$ for $i=1,2,3$ and explicit computations of $B_{1/(2d-1)}(x)$, we obtain numerical bounds on simple diagrams.  These are then used to obtain numerical bounds on the NoBLE coefficients, which we in turn use to verify whether we can actually conclude from $f_i(p)\leq \Gamma_i$ for $i=1,2,3$ that $f_i(p)\leq \gamma_i$ for $i=1,2,3$ holds.

Combining these steps yields the required results for $p\in [1/(2d-1), p_c)$. We obtain the statement also for $p=p_c$ by using that $\hat \tau_p(k)/\hat B_{1/(2d-1)}(k)$ and the NoBLE-coefficients are continuous and uniformly bounded for $p\in [1/(2d-1), p_c)$ and left-continuous in $x$-space at $p_c$. We explain this in more detail in Section \ref{subsec-prop-coef}.

\paragraph{The numerical bounds in Theorem \ref{thm-bds-crit}.} As can be observed from Figure \ref{fig-percolation-successful}, after running the notebook \verb|Percolation|, we obtain numerical estimates on $f_1(p)$ and $f_2(p)$ that are uniform in $p\leq p_c(d)$, which will provide the bounds in Theorem \ref{thm-bds-crit}.

\paragraph{Proof of related results and the classical lace expansion.} The strategy behind the proof of our related results in Theorems \ref{thm-x-space} and \ref{thm-IIC-lim} is that we show that the {\em classical} lace expansion actually also converges, and we prove sufficient bounds on the clasical lace-expansion coefficients to deduce Theorems \ref{thm-x-space} and \ref{thm-IIC-lim} from the literature. Remarkably, we thus see that for $d=11$, we cannot {\em directly} prove that the classical lace expansion converges, but we can prove it after we have obtained sharp estimates on the two-point function in $k$-space and on $p_c(11)$ using the NoBLE. Theorem \ref{thm-one-arm} follows directly from Theorem \ref{thm-x-space}.

\subsection{Relations to the literature}
\label{sec-disc-res}
\paragraph{Trigonometric approach.} The improvement of the bootstrap function $f_3(p)$ is the most delicate of the general analysis. In the bootstrap function $f_3(p)$, the most important parameter is $n\geq 0$.  In \refeq{n=1-triangle}, we have explained that we can think of $f_3(p)$ as bounding various triangle diagrams.

In \cite{BorChaHofSlaSpe05b, HeyHofSak08, Slad06}, the use of {\em trigonometric functions} has been used successfully to simplify the traditional lace-expansion analysis. In the trigonometric approach, the analysis is performed directly in $k$-space, by using
	\eqn{
	\tilde{f}_3(p)=\sup_{k,l\in (-\pi,\pi]^d} \frac{|\hat\tau_p(k+l)+\hat\tau_p(k-l)-2\hat\tau_p(k)|}{\hat U(k,l)},
	}
where
	\eqn{
	\hat U(k,l)=[1-\hat{D}(l)]\big[\hat{C}(k+l)\hat{C}(k-l)+\hat{C}(k+l)\hat{C}(k)+\hat{C}(k-l)\hat{C}(k)\big],
	}
and related objects, instead of $f_3(p)$ in \refeq{defFunc3}.
We have compared both approaches using the NoBLE in the thesis of the first author \cite{Fit13}. This comparison shows that the $x$-space approach that we describe in this paper is numerically superior, and therefore we have decided not to describe the competing trigonometric approach.

The analysis that we derive in \cite{FitHof13b} is powerful and flexible. Both the bounding and the analysis could be further improved to reduce the  dimension even further. However, we have decided that the necessary effort would not  be in relation to the possible gain.
The ideas underlying these bounds are explained in Sections 3-5.
\iflongversion The precise definitions are in Appendix \ref{app-bounds}.
\else
The precise definitions are in \cite[Appendix A]{FitHof13d-ext}.
\fi
It turns out that our methods no longer work in dimension $d=10$. The main reason is that the improvement of $f_3(p)$ becomes problematic. Particularly the weighted open bubbles become rather large, and as a result, the perturbations become unmanageable.

\paragraph{Relations to the work of Hara and Slade.}
We close this discussion section by relating our methods to those of Hara and Slade, which have been explained in full detail for SAW in  \cite{HarSla92b}. Takashi Hara has been so kind to explain us how it has been precisely implemented for percolation. The crucial estimates involve the triangle diagram. We can bound this using $f_2(p)$, but typically the constant $\Gamma_2$ that is used to bound the two-point function in $k$-space is rather large, and thus limits the numerical accuracy of the method. Therefore, both here as well as in the analysis by Hara and Slade, such bounds are being improved. The main method of Hara and Slade is to improve the bounds on the two-point function by bounding
	\eqn{
	\lbeq{tau-NBW}
	\tau_p(x)\leq \tau_{1/(2d-1)}(x)+(p-p_{I})\frac{d}{dp}\tau_p(x).
	}
We can obtain a good start by noting that  $\tau_{1/(2d-1)}(x)\leq B_{1/(2d-1)}(x)$, which can be numerically computed and is independent of $\Gamma_2$. By Russo's formula,
	\eqn{
	\lbeq{Russo}
	\frac{d}{dp}\tau_p(x)\leq (\tau_p\star \tau_p\star D)(x),
	}
which can then be bounded using $f_2(p)$. Since this yields a small factor $p-p_{I}$ in front of these terms, these bounds are smaller than those obtained by using $f_2(p)$ immediately. However, it does mean that $\tau_p$ is bounded in terms of $\tau_p\star \tau_p$, which turns a triangle into a square. For the best possible results, which apply to $d\geq 15$, this bound is used repeatedly leading to heptagons. Since heptagons are only finite for $d\geq 15$, this method cannot be used for $d=\dmin$.

Let us comment on the main differences of our approach compared to that of Hara and Slade. Our NoBLE expansion perturbs around non-backtracking random walk, and thus explicitly takes immediate reversals into account. As a result, loops arising in the lace-expansion coefficients consist of at least four bonds. We use a matrix-based approach to bound the lace-expansion coefficients taking the number of bonds on loops explicitly into account. This is much more efficient, as it removes the dominant contribution from the lace-expansion coefficients. In particular, when explicitly taking the length of paths into account, we bound
	\eqn{
	\lbeq{extraction-step}
	\tau_p(x)-\delta_{0,x}\leq 2dp D(x)+2dp (D\star (\tau_p-\delta_0))(x).
	}
Terms containing $D$ can be computed explicitly, and the factor $D$ in the second term also significantly reduces the bound. This bound is further improved by noting that the paths leading to triangles are often mutually disjoint, thus leading to self-repellent triangles. Also, we extract longer paths than the single-step path in \refeq{extraction-step}, and use that these paths can be taken to be mutually disjoint. Finally, the accuracy is significantly improved due to the NoBLE expansion, which ensures that all closed paths contain at least $4$ steps, so that our triangles contain more steps than those in the Hara-Slade approach. Apart from these differences, our method crucially relies on the ideas of Hara-Slade in \cite{HarSla92b}, in that spatial estimates have been used, the SRW Green's functions are computed in the same way, etc.
Thus, our work could not have been possible without theirs.

Recently, Chen, Handa, Heydenreich, Kamijima and Sakai \cite{CheHeySak15} have started to investigate percolation on the high-dimensional body-centered cubic lattice. Here, the bonds are given by $\{x,y\}$, where $|x_i-y_i|=1$ for every $i\in \{1,\ldots,d\}$. Thus, the degree of this base graph is $2^d$ compared to the degree of the hypercubic lattice, which is $2d$. Therefore, one is tempted to believe that mean-field behavior follows more easily in lower dimensions. It would be of great interest to verify (either by the classical lace expansion or the NoBLE) whether mean-field behavior for the body-centered cubic lattice can be proved for {\em all} $d\geq 7$. More information on SAW and percolation on the body-centered cubic lattice is given in \cite{HanKamSak17}.

\paragraph{Organization of this paper.} In Section \ref{secExp}, we perform the NoBLE, and thus prove Proposition \ref{prop-LE}.
In Section \ref{secBoundsPerc}, we explain how diagrammatic bounds on the NoBLE coefficients can be obtained.  These diagrammatic bounds are phrased in terms of various building blocks that are informally defined in Section \ref{sec-bounds-Ngeq1}. In Section \ref{secProofBounds}, we explain how such diagrammatic bounds can be obtained, without going in too much detail. In Section \ref{sec-proof-rel-res}, we prove Theorems \ref{thm-x-space}, \ref{thm-IIC-lim} and \ref{thm-one-arm} using results from the literature.

\def\figureEventET[#1]{
\begin{figure}
\begin{center}
\begin{tikzpicture}[line width=1pt,auto,scale=#1]
    \color{black!50!white}
    \pgfpathmoveto{\pgfpoint{2cm}{1cm}}
    \pgfpathcurveto{\pgfpoint{3cm}{1.5cm}}{\pgfpoint{8cm}{1cm}}{\pgfpoint{7.5cm}{-0.7cm}}
    \pgfpathcurveto{\pgfpoint{7cm}{-0.5cm}}{\pgfpoint{6.5cm}{0.4cm}}{\pgfpoint{4.5cm}{0.4cm}}
    \pgfpathcurveto{\pgfpoint{4cm}{-0.2cm}}{\pgfpoint{3.3cm}{0.4cm}}{\pgfpoint{2cm}{0.9cm}}
    \pgfusepath{fill,stroke}
    \color{black!80!white}
    \node[right]   at(3.5,2)      {$A$};
    \color{black}

  \node[left]   at(0,0)      {$x$};
  \node[above]   at(8,0)      {$\underline{b}$};
  \node[above]   at(9,0)      {$\bar{b}$};
  \node[right]   at(14,0)      {$y$};
\begin{scope}[shift={(0,0)},rotate=0]
  \draw [-] (0,0) to [out=40,in=180] (1,0.5);
  \draw [-] (0,0) to [out=-40,in=180] (1,-0.5);
  \draw [-] (1,0.5) to [out=0,in=140] (2,0);
  \draw [-] (1,-0.5) to [out=0,in=220] (2,0);
  \draw [-] (2,0) to  (3,0);
 \end{scope}

\begin{scope}[shift={(3,0)},rotate=0]
  \draw [-] (0,0) to [out=40,in=180] (1,0.5);
  \draw [-] (0,0) to [out=-40,in=180] (1,-0.5);
  \draw [-] (1,0.5) to [out=0,in=140] (2,0);
  \draw [-] (1,-0.5) to [out=0,in=220] (2,0);
    \draw [-] (2,0) to  (3,0);
 \end{scope}

\begin{scope}[shift={(6,0)},rotate=0]
  \draw [-] (0,0) to [out=40,in=180] (1,0.5);
  \draw [-] (0,0) to [out=-40,in=180] (1,-0.5);
  \draw [-] (1,0.5) to [out=0,in=140] (2,0);
  \draw [-] (1,-0.5) to [out=0,in=220] (2,0);
    \draw [-] (2,0) to  (3,0);
 \end{scope}

\begin{scope}[shift={(9,0)},rotate=0]
  \draw [-] (0,0) to [out=40,in=180] (1,0.5);
  \draw [-] (0,0) to [out=-40,in=180] (1,-0.5);
  \draw [-] (1,0.5) to [out=0,in=140] (2,0);
  \draw [-] (1,-0.5) to [out=0,in=220] (2,0);
    \draw [-] (2,0) to  (3,0);
 \end{scope}
 \begin{scope}[shift={(12,0)},rotate=0]
  \draw [-] (0,0) to [out=40,in=180] (1,0.5);
  \draw [-] (0,0) to [out=-40,in=180] (1,-0.5);
  \draw [-] (1,0.5) to [out=0,in=140] (2,0);
  \draw [-] (1,-0.5) to [out=0,in=220] (2,0);
 \end{scope}
\end{tikzpicture}
\caption{\label{fig-perclemevent} The event $E(x,b,y;A)$ of
Lemma~\protect\ref{lem-cut1}.  The shaded regions represent the
vertices in $A$.  There is no restriction on intersections between
$A$ and $\tilde{\Ccal}^{b}(y)$.}
\end{center}
\end{figure}}

\def\figureStageTwoT[#1]{
\begin{figure}
\begin{center}
\begin{tikzpicture}[line width=1pt,auto,scale=#1]

  \node[left]   at(0,0)      {$0$};
  \node[above]   at(2.5,0)      {$b$};
  \node[above]   at(11.5,0)      {$b'$};
  \node[right]   at(14,0)      {$x$};

\begin{scope}[shift={(0,0)},rotate=0]
  \draw [line width=2pt,-] (0,0) to [out=40,in=180] (1,0.5);
  \draw [line width=2pt,-] (0,0) to [out=-40,in=180] (1,-0.5);
  \draw [line width=2pt,-] (1,0.5) to [out=0,in=140] (2,0);
  \draw [line width=2pt,-] (1,-0.5) to [out=0,in=220] (2,0);
 \end{scope}
  \draw [line width=2pt,-] (1,0.5) to [out=40,in=180] (6,1);
  \draw [line width=2pt,-] (6,1) to [out=0,in=120] (10.5,-0.6);

  \draw [line width=2pt,-] (1,-0.5) to [out=-40,in=180] (2.3,-0.9);
  \draw [line width=2pt,-] (2.3,-0.9) to [out=0,in=250] (3.8,0.2);

\begin{scope}[shift={(3,0)},rotate=0]
  \draw [-] (0,0) to [out=40,in=180] (1,0.5);
  \draw [-] (0,0) to [out=-40,in=180] (1,-0.5);
  \draw [-] (1,0.5) to [out=0,in=140] (2,0);
  \draw [-] (1,-0.5) to [out=0,in=220] (2,0);
    \draw [-] (2,0) to  (3,0);
 \end{scope}

\begin{scope}[shift={(6,0)},rotate=0]
  \draw [-] (0,0) to [out=40,in=180] (1,0.5);
  \draw [-] (0,0) to [out=-40,in=180] (1,-0.5);
  \draw [-] (1,0.5) to [out=0,in=140] (2,0);
  \draw [-] (1,-0.5) to [out=0,in=220] (2,0);
    \draw [-] (2,0) to  (3,0);
 \end{scope}

\begin{scope}[shift={(9,0)},rotate=0]
  \draw [-] (0,0) to [out=40,in=180] (1,0.5);
  \draw [-] (0,0) to [out=-40,in=180] (1,-0.5);
  \draw [-] (1,0.5) to [out=0,in=140] (2,0);
  \draw [-] (1,-0.5) to [out=0,in=220] (2,0);
    \draw [-] (2,0) to  (3,0);
 \end{scope}
 \begin{scope}[shift={(12,0)},rotate=0]
  \draw [-] (0,0) to [out=40,in=180] (1,0.5);
  \draw [-] (0,0) to [out=-40,in=180] (1,-0.5);
  \draw [-] (1,0.5) to [out=0,in=140] (2,0);
  \draw [-] (1,-0.5) to [out=0,in=220] (2,0);
 \end{scope}
\end{tikzpicture}
\caption{A possible configuration appearing in the second stage of the
expansion.}
\label{fig-2pt2}
\end{center}
\end{figure}}

\def\figureEventE{
\begin{figure}
\begin{center}
\setlength{\unitlength}{0.0080in}%
\begin{picture}(500,100)(100,-50)
\put(120,20){${\sss x}$}
\put(350,25){${\sss \underline{b}}$}
\put(368,25){${\sss \bar {b}}$}
\put(520,20){${\sss y}$}

\shade\path(270,-45)(275,-50)(285,-47)(295,-41)
(305,-32)(315,-20)(325,-5)(335,13)(345,34)(346,36)
(335,31)(325,25)(315,20)
(305,12)(295,5)(285,-8)(275,-24)(270,-37)(269,-41)
(270,-45)

\shade\path(155,-65)(160,-70)(170,-67)(180,-61)
(190,-52)(200,-40)(210,-25)(220,-7)(230,14)(230,16)
(220,11)(210,5)(200,0)
(190,-8)(180,-15)(170,-28)(160,-44)(155,-57)(154,-61)
(155,-65)

\put(245,-20){${\scriptstyle A}$}

\qbezier(130,20)(160,-10)(190,20)
\qbezier(130,20)(160,50)(190,20)
\put(190,20){\line(1,0){20}}
\qbezier(210,20)(240,-10)(270,20)
\qbezier(210,20)(240,50)(270,20)
\put(270,20){\line(1,0){20}}
\qbezier(290,20)(320,-10)(350,20)
\qbezier(290,20)(320,50)(350,20)
\put(350,20){\line(1,0){20}}
\qbezier(370,20)(400,-10)(430,20)
\qbezier(370,20)(400,50)(430,20)
\put(430,20){\line(1,0){20}}
\qbezier(450,20)(480,-10)(510,20)
\qbezier(450,20)(480,50)(510,20)
\end{picture}
\end{center}
\caption{\label{fig-perclemevent} The event $E(x,b,y;A)$ of
Lemma~\protect\ref{lem-cut1}.  The shaded regions represent the
bonds in $A$.  There is no restriction on intersections between
$A$ and $\tilde{\Ccal}^{\{u,v\}}(y)$.}
\end{figure}
}
\def\figureStage2{
\begin{figure}
\begin{center}
\setlength{\unitlength}{0.0080in}%
\begin{picture}(480,50)
\put(-10,20){${\sss 0}$}
\put(120,25){${\sss b}$}
\put(360,25){${\sss b'}$}
\put(520,20){${\sss x}$}
\thicklines
\qbezier(10,20)(60,-20)(110,20)
\qbezier(10,20)(60,60)(110,20)
\qbezier(60,40)(290,100)(320,-10)
\qbezier(60,0)(120,-40)(160,20)
\thinlines
\multiput(110,20)(4,0){6}{\circle*{2}}
\qbezier(130,20)(160,-10)(190,20)
\qbezier(130,20)(160,50)(190,20)
\put(190,20){\line(1,0){20}}
\qbezier(210,20)(240,-10)(270,20)
\qbezier(210,20)(240,50)(270,20)
\put(270,20){\line(1,0){20}}
\qbezier(290,20)(320,-10)(350,20)
\qbezier(290,20)(320,50)(350,20)
\put(350,20){\line(1,0){20}}
\qbezier(370,20)(400,-10)(430,20)
\qbezier(370,20)(400,50)(430,20)
\put(430,20){\line(1,0){20}}
\qbezier(450,20)(480,-10)(510,20)
\qbezier(450,20)(480,50)(510,20)
\end{picture}
\end{center}
\caption{A possible configuration appearing in the second stage of the
expansion.}
\label{fig-2pt2}
\end{figure}
}
\section{The non-backtracking lace expansion}

\label{secExp}
In this section, we derive the NoBLE and thereby prove Proposition \ref{prop-LE}.  We proceed as follows: In Section \ref{secExpPercNotation}, we introduce the necessary notation, including a specific restricted two-point function. In Section \ref{sec-expresttwopoint}, we prove an expansion for this restricted two-point function. In Section \ref{sec-compl}, we use this expansion to obtain Proposition \ref{prop-LE}.

\subsection{Notation}
\label{secExpPercNotation}
Parts of this section are taken almost verbatim from \cite[Section 2]{HofHolSla07b}. Fix $p \in [0,1]$. We write $\tau(x)=\tau_p(x)$ for brevity, and generally drop subscripts indicating dependence on $p$.
\label{sec-factlem}
\begin{definition}[Occurring on and off sets of vertices and bonds]~
\label{def-onin2}
\begin{enumerate}[(i)]
\item Given a bond configuration $\omega$ and two points $x,y\in\Zd$, we say that $x$ and $y$ are \emph{connected}, and write $x\conn y$, when there exists a path of occupied edges connecting $x$ and $y$.
      Further, we say that $x$ and $y$ are \emph{doubly connected}, and write $x\dbc y$,
      when there exist two bond-disjoint paths of occupied bonds connecting $x$ and $y$.
      We adopt the convenient convention that $x$ is doubly connected to itself.
\item Given a (deterministic or random)
set of undirected bonds $B$ and a bond configuration $\omega$,
we define $\omega_B$, the restriction of $\omega$ to $B$, to be
    \eqn{
    \omega_B(\{x,y\})
   =
   \left\{
   \begin{array}{lll}
   &\omega(\{x,y\})   &\text{if  }\{x,y\} \in B,\\
   &0           &\text{otherwise},
   \end{array}
   \right.
   }
for every nearest-neighbor pair $x,y$.
In other words, $\omega_B$ is obtained from $\omega$ by making every bond that is not in $B$ vacant.
\item Given a (deterministic or random) set of vertices $A$, we define ${\rm B}(A)$ to be the set of all bonds that have at least one endpoint in $A$.
\item Given a (deterministic or random)
set of bonds $B$ and an event $E$, we say
that \emph{$E$ occurs in $B$}, and write $\{E$ in $B\}$,
if $\omega_B\in E$. In other words, $\{E$ in $B\}$ means that $E$ occurs on the (possibly modified) configuration in which every bond that is not in $B$ is made vacant.  We further say that $E$ occurs \emph{off} $B$ when
$E$ occurs in $B^c$.

\item Given a (deterministic or random)
set of vertices $A$ and an event $E$, we say that \emph{$E$ occurs in $A$}, and write $\{E$ in $A\}$,
when $E$ occurs in ${\rm B}(A)$. We adopt the convenient convention that $\{x\conn x$ in $A\}$ occurs if and only if $x\in A$. We further say that $E$ occurs \emph{off} $A$ when $E$ occurs in ${\rm B}(A)^c$.

\item Given a bond configuration and $x \in \Z^d$, we define
$\Ccal(x)$ to be the set of vertices to which $x$ is connected,
i.e., $\Ccal(x)=\{y \in \Z^d\colon x\conn y\}$.
Given a bond configuration and a bond $b$, we define
$\tilde{\Ccal}^{b}(x)$ to be the set of vertices $y \in \Ccal(x)$
to which $x$ is connected in the (possibly modified)
configuration in which $b$ is made vacant.
\item Given a deterministic set of bonds $B$, we define the probability measure $\prob^{\sss B}$ by
  \eqn{
  \lbeq{prob-b-def}
  \prob^{\sss B}(E)=\prob(E \text{ occurs off } B).
}
\end{enumerate}
\end{definition}
\noindent
Regarding this definition we note for all events $E$ and deterministic sets of bonds $B, B'$,
    \eqalign{
    \big\{\{E \text{ off }B\}\text{ off }B'\big\}&=\{E\text { off }B\cup B'\},
    \lbeq{onininters}
    }
and therefore
    \eqalign{
  	\prob^{\sss B}(E \text{ off }B' )=\prob(E \text{ occurs off } B\cup B' ).
    \lbeq{ExpPercoffoff}
    }
Now we introduce the restricted two-point function, that was already stated in \refeq{conditional-Point-Prob-def}.
For any point $y$ we define
\begin{align}
 	\lbeq{tau-b-def-rep-full}
	\tau^y_p(x)&=\prob_p(0\conn x \text{ off } {\rm B}(y))=\prob^{{\rm B}(y)}_p(0\conn x).
	\end{align}
As abbreviation we define for $\iota\in\{\pm1,\dots, \pm d\}$
	\begin{align}
 	\lbeq{tau-b-def-rep}
	\tau^\iota_p(x)&=\tau^{\ve[\iota]}_p(x)=\prob_p(0\conn x \text{ off } \ve[\iota]).
	\end{align}
and note that $\tau^\iota_p(y-x)=\prob_p(x\conn y \text{ off } B(x+\ve[\iota]))$.
\begin{definition}[Connections through]
\label{def-through}
Given a bond configuration and a set of bonds $B\subseteq \Zd\times \Zd$, we say that $x$ is {\it connected to} $y$ \emph{through} $B$, and write
$x \ct{B} y$, if every occupied path connecting $x$ to $y$ contains at least one bond in $B$.
Given a bond configuration and a set $A\subseteq \Zd$, we
say that $x$ is {\it connected to} $y$ \emph{through} $A$, and write
$x \ct{A} y$, if $x$ is connected to $y$ through $\rm{B}(A)$.
By convention, $x \ct{A} x$ holds if and only if $x \in A$.
\end{definition}

\noindent
In terms of these events, it is clear that, for every set of vertices $A\subseteq \Z^d$,
\eqn{
    \lbeq{incl-excl-events}
    \{x\ct{A}y\}=\{x\conn y\}\setminus \{x\conn y \text{ off }{\rm B}(A)\}.
}
We can generalize this as follows: For any points $w,x,y\in\Zd$ and set of bonds $B$ such that\\ ${\rm B}(w) \subseteq B$ we know that
\begin{eqnarray}
    \lbeq{incl-excl-events-general}
    \{x\conn y\text { off } B\}=\{x\conn y\text { off }{\rm B}(w)\} \setminus \{x\ct{B} y\text{ off }{\rm B}(w)\},
	\end{eqnarray}
which implies that
\begin{eqnarray}
    \lbeq{ProbOffSplit}
   \prob\big(x \conn y \text{ off }B\big)&=&\tau^{w-x}(y-x)  -\prob^{w}(x \ctx{B}y).
\end{eqnarray}

Using this notation we first prove a general form of the expansion
and use it in Section \ref{sec-compl} to derive the expansion stated in Proposition \ref{prop-LE}:

\begin{lemma}[General NoBLE equation]
 \label{Lemma-tauxMA}
Fix $x,y\in\Zd$. Let $M \in\Nbold$, $A$ be any deterministic set of vertices and $B$ be any deterministic set of bonds satisfying that either $B\subseteq {\rm B}(x)$ or $B={\rm B}(A')$ for some set of vertices $A'\subseteq \Z^d$. Then there exist
$\Xi^{\sss B}_{\sss M}$, $\Psi^{{\sss B},\kappa}_{\sss M}$ and $R^{\sss B}_{\sss M}$ such that
	\eqn{
  	\lbeq{tauxMA}
    	\prob^{\sss B}(x\ct{A}y)=\Xi^{\sss B}_{\sss M}(x,y;A)+\sum_{w,\kappa}p\Psi^{{\sss B},\kappa}_{\sss M}(x,w; A)\tau^\kappa(y-w+\ve[\kappa])+
    	R^{\sss B}_{\sss M}(x,y;A).
	}
The dependence of $\Xi^{\sss B}_{\sss M}$ and $\Psi^{{\sss B},\kappa}_{\sss M}$ on $M$ is given by
	\begin{align}
    	\lbeq{2pt.37}
    	\Xi^{\sss B}_{\sss M}(x,y;A) &= \sum_{N=0}^{M} (-1)^N
    	\Xi^{\sss B,\ssc[N]}(x,y;A),
	\ \
    	\Psi^{{\sss B},\kappa}_{\sss M}(x,w;A) =
    	\sum_{N=0}^{M} (-1)^N  \Psi^{{\sss B},\ssc[N],\kappa}(x,w;A),
	\end{align}
with $\Xi^{{\sss B,}\ssc[N]}(x,y;A)$ and $\Psi^{{\sss B,}\ssc[N],\kappa}(x,w;A)$ independent of $M$.
\end{lemma}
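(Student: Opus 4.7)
My plan is to prove \refeq{tauxMA} by induction on $M\ge 0$, adapting the Hara--Slade lace-expansion for percolation to the non-backtracking setting. The key point compared to the classical expansion is to track, at every cutting-bond step, the direction $\kappa$ of the bond along which the connection continues, so that the resulting ``continuation'' two-point function is the restricted $\tau^\kappa$ of \refeq{tau-b-def-rep} rather than the unrestricted $\tau$; this is precisely what produces the $\kappa$-indexing of $\Psi^{B,\kappa}_M$ and what distinguishes the NoBLE from the classical lace expansion.

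For the base case $M=0$, I start from $\prob^B(x \ct{A} y)$ and partition according to whether the connection admits a pivotal bond whose removal disconnects $x$ from $A$ in the off-$B$ configuration. If not, the doubly-connected structure forces a closed ``diagrammatic'' contribution, which I call $\Xi^{B,\ssc[0]}(x,y;A)$. Otherwise, let $(\underline b,\bar b)$ be the first pivotal bond, with $\bar b-\underline b=\ve[\kappa]$. A standard cutting-bond factorisation (in the spirit of the lemma that will appear as Lemma~\ref{lem-cut1}) decomposes the event into: (i) a ``sausage'' event on ${\rm B}(\tilde\Ccal^{(\underline b,\bar b)}(x))$; (ii) the bond $(\underline b,\bar b)$ being occupied, contributing the factor $p$ in \refeq{tauxMA}; and (iii) a connection from $\bar b$ to $y$ off both $B$ and the random sausage cluster. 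Conditioning on the random cluster and applying \refeq{ProbOffSplit} with $w=\underline b$ rewrites piece (iii) as $\tau^\kappa(y-\bar b+\ve[\kappa])$ minus a residual probability of the same form as the left-hand side of \refeq{tauxMA}, but with $x$ replaced by $\bar b$, with $A$ replaced by the (random) sausage cluster $A'$, and with $B$ replaced by $B' = {\rm B}(A')$. Summation over $(\underline b,\bar b)$ of the $\tau^\kappa$ contribution defines $\Psi^{B,\ssc[0],\kappa}(x,w;A)$, and the residual piece defines $R^B_0(x,y;A)$, with an overall minus sign from \refeq{ProbOffSplit}.

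For the inductive step, I apply the $M=0$ identity just established to each of the probabilities $\prob^{B'}(\bar b \ct{A'} y)$ appearing inside $R^B_{M-1}(x,y;A)$. The crucial observation is that the hypothesis on $B$ is preserved under iteration: although the initial $B$ may only satisfy $B\subseteq {\rm B}(x)$, after one cutting-bond step the new bond-set $B'={\rm B}(A')$ falls into the second clause, and it remains of this form at every subsequent iteration. Applying the base case therefore yields new terms $\Xi^{B,\ssc[M]}(x,y;A)$ and $\Psi^{B,\ssc[M],\kappa}(x,w;A)$ (obtained by summing $\Xi^{B',\ssc[0]}$ and $\Psi^{B',\ssc[0],\kappa}$ over the random cluster realisations $A'$ from all previous iterations), together with a new remainder $R^B_M$. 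The alternating signs in \refeq{2pt.37} arise directly from iterating the minus sign in $R^B_0$. The claimed $M$-independence of $\Xi^{B,\ssc[N]}$ and $\Psi^{B,\ssc[N],\kappa}$ is then built into the construction, since they are determined entirely by the first $N$ cutting-bond steps.

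The main obstacle is the bookkeeping in the cutting-bond factorisation. Specifically, the conditional independence between the sausage containing $x$ and the continuation from $\bar b$ must be argued carefully, since the configuration on ${\rm B}(\tilde\Ccal^{(\underline b,\bar b)}(x))$ determines both the sausage event and the random cluster $A'$ that constrains the continuation, while the continuation itself is a disjoint event on the complementary bonds. Making this partition of bonds rigorous is what justifies the use of \refeq{ProbOffSplit} conditionally on the cluster. Second, one must verify that the direction $\kappa$ encoded in the pivotal bond $(\underline b,\bar b)$ is exactly the direction $\kappa$ that appears in the restricted two-point function $\tau^\kappa(y-\bar b+\ve[\kappa])$; this requires observing that forbidding the reverse step of the pivotal bond is automatic, since $\bar b\in\tilde\Ccal^{(\underline b,\bar b)}(x)^c$ by construction. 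These two technical points are the heart of the argument; the explicit expressions for $\Xi^{B,\ssc[N]}$, $\Psi^{B,\ssc[N],\kappa}$ and $R^B_M$ are then read off directly from the iteration and will be spelled out in Section~\ref{sec-compl} to yield Proposition~\ref{prop-LE}.
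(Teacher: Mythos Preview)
Your overall strategy---iterate the cutting-bond factorisation (Lemma~\ref{lem-cut1}) together with \refeq{ProbOffSplit}, tracking the direction $\kappa$ of each pivotal bond---is exactly what the paper does. However, there is a genuine bookkeeping error in your description of the iteration that would cause trouble if carried out literally.

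You write that after one cutting step the residual probability has the form $\prob^{B'}(\bar b\ct{A'}y)$ with $A'$ the sausage cluster and $B'={\rm B}(A')$. This is not what \refeq{ProbOffSplit} produces. Applying \refeq{ProbOffSplit} with $w=\underline b$ to $\prob_1(\bar b\conn y\text{ off }\tilde\Ccal^b_0(x)\cup A')$ yields the residual $\prob^{\underline b}_1(\bar b\ct{\tilde\Ccal^b_0(x)\cup A'}y)$, i.e.\ the new off-set is ${\rm B}(\underline b)={\rm B}(\{\underline b\})$, \emph{not} ${\rm B}(\text{sausage cluster})$. The sausage cluster becomes the new through-set $A$, while the new $B$ is the single-vertex bond set ${\rm B}(\{\underline b\})$. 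With your choice $B'={\rm B}(\text{sausage})$ the residual would actually vanish, since making every bond touching the sausage vacant disconnects $\bar b$ from it entirely. Fortunately your high-level claim---that the new $B$ falls under the second clause of the hypothesis---remains correct once you use the right $B'={\rm B}(\{\underline b\})$, and this is what drives the induction in the paper (see \refeq{R0def}--\refeq{secondstageDelv}).

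A smaller point: your parametrisation $\bar b-\underline b=\ve[\kappa]$ combined with the formula $\tau^\kappa(y-\bar b+\ve[\kappa])$ does not quite agree with the paper's convention, where the bond summed over is $(w,w-\ve[\kappa])$ so that $\kappa$ points from $\bar b$ back to $\underline b$; this is why $\tau^\kappa(y-w+\ve[\kappa])=\tau^\kappa(y-\bar b)$ is the continuation off ${\rm B}(\underline b)$. Your explanation that ``forbidding the reverse step is automatic since $\bar b\notin\tilde\Ccal^b(x)$'' conflates two things: the non-backtracking restriction in $\tau^\kappa$ comes directly from the choice $w=\underline b$ in \refeq{ProbOffSplit}, not from the pivotal-bond structure.
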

The functions $\Xi^{\sss B}_{\sss M}$ and $\Psi^{{\sss B},\kappa}_{\sss M}$ are the key quantities in the NoBLE, and $R_{\sss M}^{\sss B}$ is a remainder term. The alternating signs in \refeq{2pt.37} arise via repeated use of inclusion-exclusion.
We will apply Lemma \ref{Lemma-tauxMA} for three choices of bond sets $B$, namely, $B=\varnothing, B={\rm B}(v)$ for some $v$ incident to $x$, and $B=\{b\}$ for some bond $b$ incident to $x$. The first and the last choices satisfy that $B\subseteq {\rm B}(x)$, the second satisfies the alternative restriction. This restriction arises since we wish to use the Cutting Lemma (see Lemma \ref{lem-cut1} below), which is traditionally stated in terms of vertex sets.

The next section is devoted to the proof of Lemma \ref{Lemma-tauxMA}.

\subsection{Expansions for restricted two-point functions}
\label{sec-expresttwopoint}
We next define what it means for a bond to be pivotal:
    \begin{definition}[Pivotal bonds]
    \label{def-inon}
    Given a bond configuration, a bond $\{u,v\}$ (occupied or not) is called
    \emph{pivotal} for the connection from $x$ to $y$, if (i) either $x \conn u$ and
    $y \conn v$, or $x \conn v$ and $y \conn u$, and (ii)
    $y \, \nin \, \tilde{\Ccal}^{\{u,v\}}(x)$.
    Bonds are not usually regarded as directed.  However, it will be convenient at times
    to regard a bond $\{u,v\}$ as directed from $u$ to $v$, and we will
    emphasize this point of view by writing $(u,v)$ for a directed bond.
    A directed bond $(u,v)$ is pivotal for the connection from $x$ to
    $y$, if $x \conn u$, $v\conn y$ and $y \, \nin \, \tilde{\Ccal}^{\{u,v\}}(x)$.
    For a directed bond $b=(u,v)$, we denote its starting point by $\bb=u$ and its ending point by $\tb=v$.
        \end{definition}
\noindent

In terms of Definition \ref{def-onin2}, we have the characterization of a pivotal bond for $v \conn y$ as
    \eqalign{
   \{ b &\text{ pivotal    for }
    v \conn y\}
	=\{v \ct{b} y\}\nnb
    &=\big\{\{v\conn \bb,~\tb\nin \tilde{\Ccal}^b(v)\} \text{ in }{\rm B}(\tilde{\Ccal}^b(v))\setminus\{b\}\big\}\cap \big\{\tb\conn y
    \text{ in } {\rm B}(\tilde{\Ccal}^b(v))^c\big\}.
    \lbeq{pivrew}
    }
For a set of vertices $A$, we define the events
    \eqalign{
    \lbeq{317}
    E'(v, y; A) & = \{ v \ct{A} y\} \cap
    \left\{\begin{array}{c}\nexists b' \text{ occupied and pivotal for }\\
    v\conn y \text{ such that }    v \ct{A} \bb'\end{array}\right\}
    }
and
    \eqalign{
    \lbeq{317a}
    E(x, b, y; A) & = E'(x, \bb; A) \cap
    \{\text{$b$ is occupied and pivotal for $x \conn  y$}\}.
    }
    \figureEventET[0.8]

Given a configuration in which $x\ctx{A} y$, the {\em cutting bond}\/ $b$ is defined
to be the first bond that is pivotal for $x\conn y$ such that
$x\ctx{A} \bb$. It is possible that no such bond exists.
By partitioning $\{x\ct{A} y\}$ according to the location of the cutting bond (or the lack of a cutting bond), we obtain
    \eqn{
    \{x\ct{A} y\}
    = E'(x, y; A)
    \dot{\bigcup}\
    \dot{\bigcup_b}
    E(x, b, y; A),
    }
which implies that
    \eqalign{
    \lbeq{2pt.31a}
    \prob^{\sss B}(x\ct{A} y)
    & =
    \prob^{\sss B}(E'(x,y;A))+
    \sum_{b}\prob^{\sss B}(E(x, b, y;A))\\
    &=\prob^{\sss B}(E'(x,y;A))+
    \sum_{b\nin B}\prob^{\sss B}(E(x, b, y;A))\nn
    ,}
where the last equality follows from the fact that under $\prob^{\sss B}$ the event $E(x, b, y;A)$ is supposed to occur off $B$.
The following lemma is the major tool that we use to derive the expansion:
\begin{lemma}[The cutting lemma]
\label{lem-cut1}
Let $p<p_c(d)$, $x,y\in \ver$, and $A\subseteq \Z^d$. Then, for all bonds $b$,
    \eqarray
    \lbeq{percpivineq}
    \prob\left(E(x, b, y; A)\right)
    = p\expec_{\sss 0}\left(\indicwo{E'(x,\bb;A)}
    \indic{\tb \nin \tilde{\Ccal}^{b}_{\sss 0}(x)}
    \prob_{\sss 1}\big(\tb \conn y \text{ off }\tilde{\Ccal}^b_{\sss 0}(x)\big)\right).
    \enarray
\end{lemma}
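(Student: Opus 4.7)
The plan is to decompose $E(x,b,y;A)$ according to the random cluster $\tilde{\Ccal}^b(x)$, exploiting the fact that pivotality of $b=(u,v)$ for $x\conn y$ cleanly separates the configuration into an ``inside'' part (bonds touching $\tilde{\Ccal}^b(x)$) and an ``outside'' part (bonds disjoint from it), with $b$ itself independent of everything else.

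First, I would rewrite the occupied-pivotal condition: on the event $\{b\text{ occupied and pivotal for }x\conn y\}$, every path from $x$ to $y$ uses $b$, so the connection $v\conn y$ must actually take place using bonds that do not touch $\tilde{\Ccal}^b(x)$, and $y\notin\tilde{\Ccal}^b(x)$. Conversely, if $b$ is occupied, $y\notin\tilde{\Ccal}^b(x)$ and $v\conn y$ off $\tilde{\Ccal}^b(x)$, then $b$ is pivotal. Hence $\{b\text{ occ.\ and pivotal for }x\conn y\}=\{b\text{ occ.}\}\cap\{v\conn y\text{ off }\tilde{\Ccal}^b(x)\}\cap\{y\notin\tilde{\Ccal}^b(x)\}$, and intersecting with $E'(x,u;A)$ gives an alternative description of $E(x,b,y;A)$.

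Second, I would condition on the value $C=\tilde{\Ccal}^b(x)$, summing over finite sets $C$ with $x,u\in C$ and $v\notin C$. The event $\{\tilde{\Ccal}^b(x)=C\}$ is determined by bonds in ${\rm B}(C)\setminus\{b\}$ (the inner bonds behave appropriately and the boundary bonds leaving $C$ are vacant), and on this event $E'(x,u;A)$ is also ${\rm B}(C)\setminus\{b\}$-measurable: any bond $b'$ pivotal for $x\conn u$ is automatically pivotal for $x\conn y$ and earlier than $b$, so both its endpoints lie in $\tilde{\Ccal}^b(x)=C$, and therefore the ``no earlier pivotal'' clause depends only on bonds inside ${\rm B}(C)$. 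Meanwhile $\{v\conn y\text{ off }C\}$ is measurable with respect to the disjoint bond set $\edg\setminus{\rm B}(C)$, and $b$ is independent of all other bonds. Applying independence three ways,
\eq
\prob(E(x,b,y;A))=\sum_{C}p\cdot\prob\big(E'(x,u;A),\tilde{\Ccal}^b(x)=C\big)\cdot\prob\big(v\conn y\text{ off }C\big).
\en

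Third, I would reassemble the sum by introducing two independent copies $\omega_0,\omega_1$ of the percolation configuration: using $\omega_0$ to realise $E'(x,u;A)\cap\{v\notin\tilde{\Ccal}^b_0(x)\}$ and recording the random set $\tilde{\Ccal}^b_0(x)$, and using $\omega_1$ to realise $\{v\conn y\text{ off }\tilde{\Ccal}^b_0(x)\}$. This exactly yields the right-hand side of \refeq{percpivineq}. I expect the main technical obstacle to be a careful verification of the measurability claim for $E'(x,u;A)$ with respect to ${\rm B}(\tilde{\Ccal}^b(x))\setminus\{b\}$, since the ``no earlier pivotal bond $b'$ with $x\ct{A}\bb'$'' clause must be shown to be fully determined by the inside configuration; this is the essence of the cutting lemma, and is what justifies writing the outside factor as an independent probability in the second copy.
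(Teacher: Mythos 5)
Your sketch is correct, but it takes a genuinely different route from the paper. The paper does not prove the cutting lemma from scratch: it cites \cite[Lemma 10.1]{Slad06} for the indicator-free version and observes in a single line that inserting $\indic{\tb\nin\tilde{\Ccal}^b_0(x)}$ leaves the right-hand side unchanged, because $\prob_1\big(\tb\conn y\text{ off }\tilde{\Ccal}^b_0(x)\big)=0$ whenever $\tb\in\tilde{\Ccal}^b_0(x)$. What you sketch is instead a self-contained reproof of the cited lemma by the classical cluster-conditioning argument: summing over possible values $C$ of $\tilde{\Ccal}^b(x)$, factoring by independence of $b$, of ${\rm B}(C)\setminus\{b\}$ (which determines $\{\tilde{\Ccal}^b(x)=C\}\cap E'(x,\bb;A)$), and of $\edg\setminus{\rm B}(C)$ (which determines $\{\tb\conn y\text{ off }C\}$), then reassembling the $C$-sum as a two-copy expectation. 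That is exactly the route of \cite{Slad06}, so your plan is sound. You absorb the paper's indicator remark implicitly by restricting the $C$-sum to $v\notin C$ rather than stripping the indicator at the end; both are fine. The measurability claim you flag as the crux is indeed the crux — it is isolated as \cite[Lemma 10.2]{Slad06} — and your one-sentence justification via pivotals of $x\conn\bb$ is the right intuition, but a complete verification must also argue that every occupied path from $x$ to $\bb$ stays inside $C$ (the only occupied boundary bond of $C$ is $b$, and a self-avoiding path leaving $C$ through $b$ cannot return), so that the $x\ct{A}\bb$ and ``no earlier pivotal'' clauses of $E'$ really depend only on the bonds in ${\rm B}(C)\setminus\{b\}$.
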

We emphasize the fact that we deal with two percolation configurations by adding subscripts $0$ and $1$, so that the law of $\tilde{\Ccal}^b_{\sss 0}(x)$ is described by $\prob_{\sss 0}$ and
$\tilde{\Ccal}^b_{\sss 0}(x)$ can be considered to be {\em deterministic} when it appears in events described by $\prob_{\sss 1}$.

\proof
The lemma is proved e.g.\ in \cite[Lemma 10.1]{Slad06}, with the exception that the indicator $\tb \nin \tilde{\Ccal}^{b}_{\sss 0}(x)$ is absent on the right-hand side there. When $\tb \in \tilde{\Ccal}^{b}_{\sss 0}(x)$, however, we have
$\prob_{\sss 1}(\tb \conn y \text{ off }{\rm B}(\tilde{\Ccal}^b_{\sss 0}(x)) )\equiv 0$, so the statement is also true with the indicator. For the NoBLE, keeping this indicator is crucial.
\qed

\medskip
We remark here that \cite[Lemma 10.1]{Slad06} proves Lemma \ref{lem-cut1} for percolation on \emph{all} graphs. As a result, Lemma \ref{lem-cut1} also applies to the measure $\prob^{\sss B}$ for all deterministic bond sets $B$ and we obtain that for every $p<p_c(d)$, $x,y\in \ver$, set of bonds $B$, set of vertices $A$ and bonds $b$,
    \eqarray
    \lbeq{percpivineq-rep}
    \prob^{\sss B}\left(E(x, b, y; A)\right)
    = p\expec_{\sss 0}^{\sss B}\left(\indicwo{E'(x,\bb;A)}
    \indic{\tb \nin \tilde{\Ccal}^{b}_{\sss 0}(x)}
    \prob_{\sss 1}^{\sss B}\big(\tb \conn y \text{ off }\tilde{\Ccal}^b_{\sss 0}(x)\big)\right).
    \enarray
For the probability in the expectation we use \refeq{onininters} to see that
   	\eqarray
	\lbeq{off-union-B-protoStep}
   	\prob^{\sss B}_{\sss 1}\big(\tb \conn y \text{ off }\tilde{\Ccal}^b_{\sss 0}(x)\big)
  	= \prob_{\sss 1}\big(\tb \conn y \text{ off } {\rm B}(\tilde{\Ccal}^b_{\sss 0}(x))\cup B\big).
   	\enarray
To apply Lemma \ref{lem-cut1} once more, we need to consider connections that are off a set of vertices, while \refeq{off-union-B-protoStep}
instead considers a set of {\em bonds}. It is here that we rely on the two special choices of $B$ that we assumed in Lemma \ref{Lemma-tauxMA}.
Indeed, there we consider either a set of bonds $B$ such that $B={\rm B}(A')$ for some set of vertices $A'\subseteq \Z^d$,
or we consider $B\subseteq {\rm B}(x)$. In the latter case, we have that ${\rm B}(\tilde{\Ccal}^b_{\sss 0}(x))\cup B={\rm B}(\tilde{\Ccal}^b_{\sss 0}(x))$,
since $x\in \tilde{\Ccal}^b_{\sss 0}(x)$. For this choice of $B$, for convenience we write $A'=\varnothing$. Considering only the cases of Lemma \ref{Lemma-tauxMA},
we conclude
   	\eqarray
	\lbeq{off-union-B}
   	\prob^{\sss B}_{\sss 1}\big(\tb \conn y \text{ off }\tilde{\Ccal}^b_{\sss 0}(x)\big)
  	= \prob_{\sss 1}\big(\tb \conn y \text{ off } \tilde{\Ccal}^b_{\sss 0}(x)\cup A' \big),
   	\enarray
and now $\tilde{\Ccal}^b_{\sss 0}(x)\cup A'$ is a collection of vertices, as required in Lemma \ref{Lemma-tauxMA}.

The term in \refeq{off-union-B} denotes the restricted two-point function \emph{given} the cluster $\tilde{\Ccal}^{b}_{\sss 0}(x)$ of the outer expectation $\expec_{\sss 0}^{\sss B}$.
In other words, in  \refeq{percpivineq-rep} the inner expectation that defines $\prob_{\sss 1}$, effectively introduces a second percolation model on a second graph, which depends on the original percolation model via the set $\tilde{\Ccal}^{b}_{\sss 0}(x)$.  We stress this delicate point here, as it is also crucial for the further expansion.
Combining \refeq{off-union-B} with \refeq{2pt.31a} leads to

    \eqalign{
    \prob^{\sss B}(x\ct{A} y)
     =&
    \prob^{\sss B}(E'(x,y;A))\nnb
    \lbeq{2pt.31b}
    &+    \sum_{b\nin B}p\expec^{\sss B}_{\sss 0}\left(\indicwo{E'(x,\bb;A)}
    \indic{\tb \nin \tilde{\Ccal}^{b}_{\sss 0}(x)}
    \prob_{\sss 1}\big(\tb \conn y \text{ off }\tilde{\Ccal}^b_{\sss 0}(x)\cup A'\big)\right).
    }

As in \refeq{2pt.31b} the indicator $\indicwo{E'(x,\bb;A)}$ is present we know that only configurations with $\bb\in\tilde{\Ccal}^b(x)$ contribute and we can apply \refeq{ProbOffSplit} with $B=\tilde{\Ccal}^b(x)\cup A'$ and $w=\bb$ to obtain:
    \eqalign{
    \nonumber
    \prob^{\sss B}(x\ct{A} y)
     =&
    \prob^{\sss B}(E'(x,y;A))+  \sum_{b\nin B}p\expec^{\sss B}_{\sss 0}\left(\indicwo{E'(x,\bb;A)}
    \indic{\tb \nin \tilde{\Ccal}^{b}_{\sss 0}(x)}\right)\tau^{\bb}(\tb,y)\\
        \lbeq{2pt.26Perversion}
    &\quad-\sum_{b\nin B}p
    \expec^{\sss B}_{\sss 0}\left(\indicwo{E'(x,\bb;A)}\indic{\tb \nin \tilde{\Ccal}^{b}_{\sss 0}(x)}
    \prob^{\bb}_{\sss 1}(\tb \ctx{\tilde{\Ccal}^b_{\sss 0}(x)\cup A'}y)\right)\\
    =&\Xi^{\sss B,\ssc[0]}(x,y;A)+\sum_{\kappa,w}\indic{(w,w-\ve[\kappa])\nin B}  p\Psi^{{\sss B,\ssc[0]},\kappa}(x,w;A)\tau^{\kappa}(y-w+\ve[\kappa])\nn\\
    &\qquad +R^{\sss B}_{\sss 0}(x,y;A),\nn
    }
where we define
    \eqan{
    \lbeq{Xi0def}
    \Xi^{\sss B,\ssc[0]}(x,y;A)&=\prob^{\sss B}(E'(x,y;A)),\\
    \lbeq{Psi0def}
    \Psi^{{\sss B,\ssc[0]},\kappa}(x,w;A)&=\indic{(w,w-\ve[\kappa])\nin B}\prob^{\sss B}(E'(x,w;A)\cap\{w-\ve[\kappa]\nin \tilde{\Ccal}^{(w,w-\ve[\kappa])}_{\sss 0}(x)\}),\\
    \lbeq{R0def}
    R^{\sss B}_{\sss 0}(x,y;A)&=-\sum_{b\nin B}p
    \expec^{\sss B}_{\sss 0}\left(\indicwo{E'(x,\bb;A)}\indic{\tb \nin \tilde{\Ccal}^{b}_{\sss 0}(x)}
    \prob^{\bb}_{\sss 1}\big(\tb \ctx{\tilde{\Ccal}^{b}_{\sss 0}(x)\cup A'}y\big)\right)\nnb
	&=-\sum_{b}p
    \expec^{\sss B}_{\sss 0}\left(\indicwo{E'(x,\bb;A)}\indic{\tb \nin \tilde{\Ccal}^{b}_{\sss 0}(x)\cup A'}
    \prob^{\bb}_{\sss 1}\big(\tb \ctx{\tilde{\Ccal}^{b}_{\sss 0}(x)\cup A'}y\big)\right),
    }
where the last equality holds since $b\nin B$, so that also $b\nin A'$ trivially holds. See the text below \refeq{off-union-B-protoStep} for details on the choice of $B$ and $A'$.
Further, after this change, we may remove the restriction $b\nin B$ from the sum, since
the expectation is trivially zero for $b\in B$, both when $B\subseteq {\rm B}(x)$ and when $B={\rm B}(A')$.

\figureStageTwoT[0.8]

\noindent
This proves Lemma \ref{Lemma-tauxMA} for $M=0$. To continue the expansion, we use \refeq{2pt.26Perversion} and ${\rm B}(\bb)\subseteq  {\rm B}(\tilde{\Ccal}^{b}_{\sss 0}(x))$ since $\bb\in \tilde{\Ccal}^{b}_{\sss 0}(x)$, to rewrite the factor $\prob^{\bb}_{\sss 1}(\tb \ctx{\tilde{\Ccal}^{b}_{\sss 0}(x)\cup A'}y)$ appearing in $R^{\sss B}_{\sss 0}(x,y;A)$ as
    \eqalign{
    \nn
    \prob^{\bb}_{\sss 1}(\tb \ctx{\tilde{\Ccal}^{b}_{\sss 0}(x)\cup A'}y)
    =&
    \prob^{\bb}_{\sss 1}(E'(\tb,y;\tilde{\Ccal}^{b}_{\sss 0}(x)\cup A'))\\
    &\quad +
    \sum_{b_{\sss 1}}p\expec^{\bb}_{\sss 1}\left(\indicwo{E'(\tb,\bb_{\sss 1};\tilde{\Ccal}^{b}_{\sss 0}(\tb)\cup A')}
    \indic{\tb_{\sss 1} \nin \tilde{\Ccal}^{b_{\sss 1}}_{\sss 1}(\tb)}\right)\tau^{\bb_{\sss 1}}(\tb_{\sss 1},y)\nn \\
\lbeq{secondstageDelv}
    &\quad-\sum_{b_{\sss 1}}p
    \expec^{\bb}_{\sss 1}\left(\indicwo{E'(\tb,y;\tilde{\Ccal}^{b}_{\sss 0}(\tb)\cup A')}\indic{\tb_{\sss 1} \nin \tilde{\Ccal}^{b_{\sss 1}}_{\sss 1}(\tb)\cup \{\bb\}}
    \prob^{\bb_{\sss 1}}_{\sss 2} \Big(\tb_{\sss 1} \ctx{\tilde{\Ccal}^{b_{\sss 1}}_{\sss 1}(\tb)\cup \{\bb\}}y\Big)\right).
    }
We introduce subscripts for $\tilde{\Ccal}$, the expectations and the bonds to indicate to which expectation they belong.
To derive this rewrite first add the restriction $\tb_{\sss 1}\neq \bb$, after which we can remove the restriction $b_1\nin {\rm B}(\bb)$ since otherwise the summand is trivially zero.
For brevity, we write $\tilde \Ccal_{\sss 0}=\tilde \Ccal^{b_{\sss 0}}_{\sss 0}(x)\cup A'$ and $\tilde \Ccal_i = \tilde \Ccal^{b_i}_i(\tb_{i-1})\cup\{\bb_{i-1}\}$ for $i\geq 1$.
We insert \refeq{secondstageDelv} into $R^{\sss B}_{\sss 0}(x,y;A)$ and obtain \refeq{tauxMA} for $M=1$ with
\eqan{
    \lbeq{Xi1def}
    \Xi^{\sss B,\ssc[1]}(x,y;A) &= \sum_{b_{\sss 0}}p\expec_{\sss 0}^{\sss B} \left(\indicwo{E'(x,\bb_{\sss 0};A)}\indic{\tb_{\sss 0}\nin \tilde{\Ccal}_{\sss 0}}
    \prob_{\sss 1}^{\bb_{\sss 0}} \big(E(\tb_{\sss 0},y;\tilde{\Ccal}_{\sss 0})\big)
    \right),\\
    \Psi^{{\sss B,\ssc[1]},\kappa}(x,w;A)&=\sum_{b_{\sss 0}} p \expec_{\sss 0}^{\sss B}\Big( \indicwo{E'(x,\bb_{\sss 0};A)}\indic{\tb_{\sss 0}\nin \tilde{\Ccal}_{\sss 0}}     \lbeq{Psi1def}\\ \nn
    &\qquad\qquad\qquad\times \prob_{\sss 1}^{\bb_{\sss 0}} \left(E'(\tb_{\sss 0},w;\tilde{\Ccal}_{\sss 0})  \cap \{w-\ve[\kappa]\nin \tilde{\Ccal}^{(w,w-\ve[\kappa])}_{\sss 1}(\tb_{\sss 0})\}\cup\{\bb_0\}\right) \Big),
    }
and
    \eqan{
    R^{\sss B}_{\sss 1}(x,y;A)&=\sum_{b_{\sss 0},b_{\sss 1}}p^2\expec_{\sss 0}^{\sss B}\Big(\indicwo{E'(x,\bb_{\sss 0};A)}\indic{\tb_{\sss 0}\nin \tilde{\Ccal}_{\sss 0}}
    \expec_{\sss 1}^{\bb_{\sss 0}}\Big(    \indicwo{E'(\tb_{\sss 0},\bb_{\sss 1};\tilde{\Ccal}_{\sss 0})}
    \indic{\tb_{\sss 1}\nin \tilde{\Ccal}_1}
    \lbeq{R1-def}
    \prob^{\bb_{\sss 1}}_{\sss 2}\big(\tb_{\sss 1} \ctx{\tilde{\Ccal}_{\sss 1}}y\big)\Big)
    \Big).
    }
This proves Lemma \ref{Lemma-tauxMA} for $M=1$.
We now repeat using \refeq{secondstageDelv} recursively, for
	\begin{align}
	\prob^{\bb_{\sss M}}_{\sss M+1}
	\Big(\tb_{\sss M} \ctx{\tilde{\Ccal}_{\sss M}}y\Big)
	\end{align}
that appears in the remainder term $R^{\sss B}_{\sss M}(x,y;A)$. This leads to Lemma \ref{Lemma-tauxMA} for all $M\geq 0$ with
$\Xi^{\sss B,\ssc[N]}$, $\Psi^{{\sss B,\ssc[N]},\kappa}$ and $R^{\sss B}_{\sss N}$
given in \refeq{Xi0def}-\refeq{R0def} for $N=0$,
in \refeq{Xi1def}-\refeq{R1-def} for $N=1$ and for $N\geq 2$ given by
   \eqalign{
    \lbeq{XiNdef}
    \Xi^{\sss B,\ssc[N]}(x,y;A)
    & =
    p^N\sum_{b_{\sss 0}, \ldots, b_{N-1}}
    \expec_{\sss 0}^{\sss B} \indicwo{E'(x,\bb_{\sss 0};A)}\indic{\tb_{\sss 0} \nin \tilde{\Ccal}_{\sss 0}} \expec_{\sss 1}^{\bb_{\sss 0}} \indicwo{E'(\tb_{\sss 0}, \bb_{\sss 1}; \tilde{\Ccal}_{\sss 0})}
    \indic{\tb_{\sss 1} \nin \tilde{\Ccal}_{\sss 1}}\, \,
    \\\nn &
    \quad \quad \times \expec_{\sss 2}^{\bb_{\sss 1}}
    \indicwo{E'(\tb_{\sss 1}, \bb_{\sss 2};\tilde{\Ccal}_{\sss 1})}
    \indic{\tb_{\sss 2}\nin\tilde{\Ccal}_{\sss 2}}
    \cdots
    \expec_{\sss N}^{\bb_{\sss N-1}} \indicwo{E'(\tb_{\sss N-1}, y; \tilde{\Ccal}_{\sss N-1})},\\
    \lbeq{PsiNdef}
    \Psi^{{\sss B,\ssc[N]},\kappa}(x,y;A)
    & =p^{N}\sum_{b_{\sss 0}, \ldots, b_{N-1}}
    \expec_{\sss 0}^{\sss B}\indicwo{E'(x,\bb_{\sss 0};A)}\indic{\tb_{\sss 0} \nin \tilde{\Ccal}_{\sss 0}} \expec_{\sss 1}^{\bb_{\sss 0}} \indicwo{E'(\tb_{\sss 0}, \bb_{\sss 1}; \tilde{\Ccal}_{\sss 0})}
    \indic{\tb_{\sss 1} \nin \tilde{\Ccal}_{\sss 1}}\, \,
    \\ \nonumber &
    \quad \quad \times
    \expec_{\sss 2}^{\bb_{\sss 1}}
    \indicwo{E'(\tb_{\sss 1}, \bb_{\sss 2};\tilde{\Ccal}_{\sss 1})}
    \indic{\tb_{\sss 2}\nin\tilde{\Ccal}_{\sss 2}} \cdots
    \expec_{\sss N-1}^{\bb_{N-2}}
    \indicwo{E'(\tb_{\sss N-2}, \bb_{\sss N-1};\tilde{\Ccal}_{\sss N-2})}
    \indic{\tb_{\sss N-1}\nin\tilde{\Ccal}_{\sss N-1}}
    \\ \nonumber &
    \quad \quad \times \expec_{\sss N}^{\bb_{N-1}}
    \big(\indicwo{E'(\tb_{N-1},y;\tilde{\Ccal}_{\sss N-1})}\indic{y-\ve[\kappa]\nin \tilde{\Ccal}^{(y,y-\ve[\kappa])}_{\sss N}(\tb_{\sss N-1})\cup\{\bb_{\sss N-1}\}}\big),
    }
    \eqalign{
    \lbeq{Rdef}
    R^{\sss B}_{\sss N}(x,y;A)
    & = 
    (-1)^{N+1}p^{N+1}\sum_{b_{\sss 0}, \ldots, b_{\sss N}}
    \expec_{\sss 0}^{\sss B} \indicwo{E'(x,\bb_{\sss 0};A)}\indic{\tb_{\sss 0} \nin \tilde{\Ccal}_{\sss 0}} \, \,
    \\ \nonumber &
    \quad \quad \times
    \expec_{\sss 1}^{\bb_{\sss 0}}
    \indicwo{E'(\tb_{\sss 0}, \bb_{\sss 1}; \tilde{\Ccal}_{\sss 0})}
    \indic{\tb_{\sss 1} \nin \tilde{\Ccal}_{\sss 1}}
    \cdots
    \expec_{\sss N-1}^{\bb_{\sss N-2}}
    \indicwo{E'(\tb_{\sss N-2}, \bb_{\sss N-1}; \tilde{\Ccal}_{\sss N-2})}
    \indic{\tb_{\sss N-1} \nin \tilde{\Ccal}_{\sss N-1}}\nnb
    &\qquad \times
    \expec_{\sss N}^{\bb_{\sss N-1}} \indicwo{E'(\tb_{\sss N-1}, \bb_{\sss N};\tilde{\Ccal}_{\sss N-1})}
    \indic{\tb_{\sss N}\nin \tilde{\Ccal}_{\sss N}}
    \prob_{\sss N+1}^{\bb_{\sss N}}(\tb_{\sss N}\ctx{\tilde{\Ccal}_{\sss N}} y).
    \nn
    }
Since
    \eqn{
    \prob_{\sss N+1}^{\bb_{\sss N}}(\tb_{\sss N}\ctx{\tilde{\Ccal}_{\sss N}} y)\leq \tau^{\bb_{N}}(\tb_{\sss N},x),
    }
it follows from \refeq{PsiNdef}--\refeq{Rdef} that
    \eqn{
    \lbeq{Rxbd}
    |R^{\sss B}_{\sss N}(x,y;A)|
    \leq \sum_{w,\kappa}
    \Psi^{\sss B,\ssc[N],\kappa}(x,w;A)p\tau^\kappa(y-w+\ve[\kappa]).
    }
When we take $M\rightarrow \infty$, and assume that $\lim_{M\rightarrow \infty} |R^{\sss B}_{\sss M}(x,y;A)|=0$, we arrive at
    \eqn{
    \lbeq{taux}
        \prob^{\sss B}(x\ct{A}y)=\Xi^{\sss B}(x,y;A)+\sum_{w,\kappa}p\Psi^{{\sss B},\kappa}(x,w; A)\tau^\kappa(y-w+\ve[\kappa]),
    }
where
	\eqn{
     	\Xi^{\sss B}(x,y;A)=\sum_{N=0}^{\infty} (-1)^N  \Xi^{\sss B, \ssc[N]}(x,y;A),
	\quad
    	\lbeq{PhiPsialt}
    	\Psi^{{\sss B},\kappa}(x,w; A)= \sum_{N=0}^{\infty} (-1)^N \Psi^{{\sss B, \ssc[N]},\kappa}(x,w;A).
 	}
Naturally, the convergence of the expansion needs to be obtained to reach the above conclusion. This convergence follows from \refeq{Rxbd} and the bounds on $ \Psi^{\sss B,\ssc[N],\kappa}$ that we prove in Section \ref{secBoundsPerc}, by showing that the remainder term $R^{\sss B}_{\sss N}$ converges to zero.
The expansion developed here is different from the traditional lace expansion as it expands in terms of $\tau^\iota(x)$ rather than $\tau(x)$. This difference causes that the formulas \refeq{XiNdef},
\refeq{PsiNdef} and \refeq{Rdef} involve $\expec_{\sss j}^{\bb_{j-1}}$ rather than just $\expec_{\sss j}$
as in \cite{HarSla90a}. Further, we explicitly keep the factors $\indic{\tb_{\sss j} \nin \tilde{\Ccal}_{\sss j}}$.
Finally, the set appearing in the $E'$ events is now $\tilde{\Ccal}_j^{b_j}(\tb_{j-1})\cup \{\bb_{j-1}\}$,
while in the classical lace expansion $\tilde{\Ccal}_j^{b_j}(\tb_{j-1})$ appears, see e.g.\ \cite{HarSla90a}.

These differences ensure, as we argue in the following, that each loop in the lace-expansion coefficients now involve paths of at least four steps, whereas in \cite{HarSla90a} they can have length equal to two.
By a loop we denote a closed path of occupied bonds. The involved bonds may be occupied on different percolation configurations enforced by the events $E'$ and $\tb_{j+1}\nin \tilde{\Ccal}_{j}$. By the parity of the hypercubic lattice, a loop consists of an even number of steps. On the lattice there exists only one possibility for a two-step loop, namely, when $\tb_{j-1}=\bb_j$ and $\tb_{j}=\bb_{j-1}$. We now argue by contradiction that $\bb_j=\tb_{j-1}$ does not contribute. Let us assume instead that  $\bb_j=\tb_{j-1}$. Then, since $E'(x, x; A)=\{x\ctx{A}x\}=\{x\in A\}$,
    \eqn{
    \lbeq{bbj-neq-tbj-1a}
    E'(\tb_{\sss j-1}, \bb_{\sss j}; \tilde{\Ccal}_{\sss j-1})
    =\{\tb_{j-1}\in \tilde{\Ccal}_{\sss j-1}\},
    }
which does not contribute to the lace-expansion coefficients, due to the presence of the indicator $\indic{\tb_{\sss j-1} \nin \tilde{\Ccal}_{\sss j-1}}$. Thus, indeed, loops in the lace-expansion coefficients consist of at least four bonds.
Due to these differences the largest contributions to the classical lace-expansion coefficients
are not present for the NoBLE lace-expansion coefficients.

\subsection{Completion of the NoBLE}
\label{sec-compl}
In this section, we complete the NoBLE. Lemma \ref{Lemma-tauxMA} with $B=\varnothing$ (so that trivially $B\subseteq {\rm B}(0)$) and $A=\{0\}$
yields
	\eqn{
  	\tau(x) =\Xi^{\sss \varnothing}_{\sss M}(0,x;\{0\})
	+\sum_{w,\kappa}p\Psi^{{\sss \varnothing},\kappa}_{\sss M}(0,w; \{0\})\tau^\kappa(x-w+\ve[\kappa])+
    	R^{\sss \varnothing}_{\sss M}(0,x;\{0\}).
    	\lbeq{taux-M-in-ex-1-proto}
	}
We extract the dominant contribution of $\Xi^{\sss \varnothing}_{\sss M}(0,x;\{0\})$ and $\Psi^{{\sss \varnothing},\kappa}_{\sss M}(0,w; \{0\})$ from this.
We note that $\Xi^{{\sss \varnothing,}\ssc[0]}(0,0;\{0\})=1$ and
	\eqn{
	\Psi^{{\sss \varnothing},\ssc[0],\kappa}(0,0; \{0\})
	=\prob(-\ve[\kappa]\nin \tilde{\Ccal}^{(0,-\ve[\kappa])}(0))
	=\prob(0\nc -\ve[\kappa] \text{ off the bond }\{0,-\ve[\kappa]\}).
	}
Define, recalling \refeq{Xi0def}--\refeq{Psi0def} and \refeq{317},
	\begin{align}
  	\lbeq{def-xi-zero}
    	\Xi^{\ssc[0]}(x)&=(1-\delta_{0,x})
	\Xi^{\sss
    	{\sss \varnothing,}\ssc[0]}(0,x;\{0\})=(1-\delta_{0,x})\prob(0\dbc x),\\
	\lbeq{def-psi-zero}
    	\Psi^{\ssc[0],\kappa}(x)&=\frac p
    	{\aap}(1-\delta_{0,x})\Psi^{{\sss \varnothing,}\ssc[0],\kappa}(0,x;\{0\})\\
	&=(1-\delta_{0,x})\frac p \aap \prob_p(\{0\dbc x \} \cap \{(x-\ve[\kappa])\nin\tilde{\Ccal}^{\{x,x-\ve[\kappa]\}}(x)\} ),\nn
	\end{align}
with $\aap=p\prob(e_\kappa\nin \tilde{\Ccal}^{(0,e_\kappa)}(0) )$, and where, in \refeq{def-psi-zero}, we use that $\tilde{\Ccal}^{\{x,x-\ve[\kappa]\}}(0)=\tilde{\Ccal}^{\{x,x-\ve[\kappa]\}}(x)$ on the event that $\{0\conn x\}$. For $N\geq 1$, we define
	\begin{align}
    \lbeq{tau-LEC-ident}
    \Xi^{\ssc[N]}(x)&=\Xi^{\sss \varnothing,\ssc[N]}(0,x;\{0\}),\qquad\quad
    \Psi^{\ssc[N],\kappa}(x)=\frac p {\aap} \Psi^{{\sss\varnothing,}\ssc[N],\kappa}(0,x;\{0\}).
	\end{align}
and use these functions to define
	\eqn{
    	\lbeq{tau-LEC-ident-2}
    	\shift\Xi_{\sss M}(x)=\sum_{N=0}^M(-1)^N  \Xi^{\ssc[N]}(x),\quad
	\Psi^\kappa_{\sss M}(x)=\sum_{N=0}^M(-1)^N  \Psi^{\ssc[N],\kappa}(x),\quad
     	R_{\sss M}(x)=R^{\sss\varnothing}_{\sss M}(0,x;\{0\}).
	}
In this notation, \refeq{taux-M-in-ex-1-proto} becomes
	\eqn{
      	\tau(x)=\delta_{0,x}+\Xi_{\sss M}(x)
	+\aap \sum_{w,\kappa}(\delta_{0,w}+\Psi^{\kappa}_{\sss M}(w))\tau^\kappa(x-w+\ve[\kappa])+ R_{\sss M}(x).
  	}
This proves the first relation of the NoBLE in \refeq{taux-M-in-ex-1}.
To obtain the second relation of the NoBLE in \refeq{taux-M-in-ex-2}, we first define $b_\iota=(0,\ve[\iota])$ and see that
	\begin{align}
  	\lbeq{tau-tauiota-split}
    	\tau(x)-\tau^\iota(x) =\prob(0\ct{\ve[\iota]}x)=\prob(0\ct{b_\iota}x)
	+\prob^{b_\iota}(0\ct{\ve[\iota]} x),
	\end{align}
where, for a bond $b$, we abbreviate $\prob^{b}=\prob^{\{b\}}$.
We investigate both terms separately, starting with $\prob(0\ct{b_\iota}x)$, with the aim to extract the NBW-like contribution $p\tau^{-\iota}( x-\ve[\iota])$. We see that $\{0\ct{b_\iota}x\}=E(0,b_\iota,x;\{0\})$ as it is equivalent to $b_\iota$ being occupied and pivotal for $0\conn x$. Thus, we can apply Lemma \ref{lem-cut1}, with $x=0,b=b_\iota, y=x$ and $A=\{0\}$ to obtain
	\begin{eqnarray}
     	\prob(0\ct{b_\iota}x)&=&p\expect_{\sss 0}\left[\indic{\ve[\iota]\nin\tilde \Ccal^{b_\iota}_{\sss 0}(0)}
	\prob_{\sss 1}(\ve[\iota]\conn x \text{ off }\tilde \Ccal^{b_{\iota}}_{\sss 0}(0))\right].
	\end{eqnarray}
Next, we analyze $\prob_{\sss 1}(\ve[\iota]\conn x \text{ off }\tilde \Ccal^{b_{\iota}}_{\sss 0}(0))$ within the outside expectation $\expect_{\sss 0}$. For this we consider $\tilde \Ccal^{b_{\iota}}_{\sss 0}(0)$ to be a fixed deterministic set.
Since $0\in \tilde \Ccal^{b_{\iota}}_{\sss 0}(0)$, we conclude as in \refeq{ExpPercoffoff} that
\begin{align}
	&\prob_{\sss 1}(\ve[\iota]\conn x \text{ off }\tilde \Ccal^{b_{\iota}}_{\sss 0}(0))
	=\prob_{\sss 1}^{\sss 0}(\ve[\iota]\conn x \text{ off }\tilde \Ccal^{b_{\iota}}_{\sss 0}(0)).
	\end{align}
Then, we use an inclusion-exclusion argument to rewrite (recall the definition of $\prob_{\sss 1}^{\sss 0}$ in \refeq{conditional-Point-Prob-def})
\begin{align}
	\prob_{\sss 1}^{\sss 0}(\ve[\iota]\conn x \text{ off }\tilde \Ccal^{b_{\iota}}_{\sss 0}(0))
	&=\prob_{\sss 1}^{\sss 0}(\ve[\iota]\conn x)
	-\prob_{\sss 1}^{\sss 0}(\ve[\iota]\ct{\tilde \Ccal^{b_\iota}_{\sss 0}(0)} x)
	=\tau^{-\iota}(x-\ve[\iota])-\prob_{\sss 1}^{\sss 0}(\ve[\iota]\ct{\tilde \Ccal^{b_{\iota}}_{\sss 0}(0)} x),\nn
	\end{align}
and obtain
	\begin{align}
    	\prob(0\ct{b_\iota}x)&
	=p\tau^{-\iota}(x-\ve[\iota])\prob^{b_\iota}\big(\ve[\iota]\nin\tilde \Ccal^{b_\iota}_{\sss 0}(0)\big)
	-p\expect_{\sss 0}^{b_\iota}\Big[\indic{\ve[\iota]\nin\tilde \Ccal^{b_\iota}_{\sss 0}(0)}
	\prob_{\sss 1}^{\sss 0}(\ve[\iota]\ct{\tilde \Ccal^{b_{\iota}}_{\sss 0}(0)} x)\Big].
	\end{align}
As $\tilde \Ccal^{b_{\iota}}_{\sss 0}(0)$ is deterministic in the inner probability $\prob_{\sss 1}^{\sss 0}$, we apply Lemma \ref{Lemma-tauxMA} to $\prob_{\sss 1}^{\sss 0}(\ve[\iota]\ct{\tilde \Ccal^{b_{\iota}}_{\sss 0}(0)} x)$ with $B={\rm B}(0)$ and $A=\tilde \Ccal^{b_{\iota}}_{\sss 0}(0)$ to obtain
\begin{align}
\prob(0\ct{b_\iota}x)
	=&\aap \tau^{-\iota}(x-\ve[\iota])
	-p\expect_{\sss 0}^{b_\iota}\Big[\indic{\ve[\iota]\nin\tilde \Ccal^{b_\iota}_{\sss 0}(0)}\left( \Xi^{\sss{\rm B}(0)}_{\sss M}(\ve[\iota],x;\tilde \Ccal^{b_{\iota}}_{\sss 0}(0))+   R^{\sss {\rm B}(0)}_{\sss M}(\ve[\iota],x;\tilde \Ccal^{b_{\iota}}_{\sss 0}(0))\right)\Big]\nnb
	&-p\expect_{\sss 0}^{b_\iota}\Big[\indic{\ve[\iota]\nin\tilde \Ccal^{b_\iota}_{\sss 0}(0)}\sum_{w,\kappa}p
    \Psi^{{\sss {\rm B}(0)},\kappa}_{\sss M}(\ve[\iota],w; \tilde \Ccal^{b_{\iota}}_{\sss 0}(0))\tau^\kappa(x-w+\ve[\kappa])\Big].
	  	\lbeq{tau-tauiota-split-p1}
\end{align}
To deal with $\prob^{b_\iota}(0\ct{\ve[\iota]} x)$ in \refeq{tau-tauiota-split}, we directly apply Lemma \ref{Lemma-tauxMA} with $B=\{b_{\iota}\}\subseteq{\rm B}(0)$ and $A=\{e_{\iota}\}$ to arrive at
\begin{align}
    \prob^{b_\iota}(0\ct{\ve[\iota]}x)=\Xi^{b_\iota}_{\sss M}(0,x;\{\ve[\iota]\})+\sum_{w,\kappa}p\Psi^{b_\iota,\kappa}_{\sss M}(0,w; \{\ve[\iota]\})\tau^\kappa(x-w+\ve[\kappa])+   R^{b_\iota}_{\sss M}(0,x;\{\ve[\iota]\}).
    	  	\lbeq{tau-tauiota-split-p2}
\end{align}
Combining \refeq{tau-tauiota-split},  \refeq{tau-tauiota-split-p1} and \refeq{tau-tauiota-split-p2} concludes the derivation of \refeq{taux-M-in-ex-2}, i.e.,
	\begin{align}
	\tau(x)&=\tau^{\iota}(x)+ \aap \tau^{-\iota}(x-\ve[\iota])
	+\sum_{y,\kappa}\Pi^{\iota,\kappa}_{\sss M}(y)\tau^{\kappa}(x-y+\ve[\kappa])
	+\Xi^{\iota}_{\sss M}(x)+R^{\iota}_{\sss M}(x),
	\end{align}
with
	\begin{eqnarray}
    	\lbeq{tau-tauj-LEC-ident-NZero}
        \Xi^{{\sss (0)},\iota}(x)&=&\Xi^{b_\iota,\ssc[0]}(0,x;\{\ve[\iota]\}),\qquad
        \Pi^{{\sss (0)},\iota,\kappa}(x)=p\Psi^{b_\iota,{\sss (0)}, \kappa}(0,x;\{\ve[\iota]\}),\\
         R^{\iota}_{\sss 0}(x)&=&R^{b_\iota}_{\sss 0}(0,x;\{\ve[\iota]\}),
	\end{eqnarray}
and, for $N,M\geq 1$,
	\begin{eqnarray}
    	\lbeq{tau-tauj-LEC-ident1}
        \Xi^{\ssc[N],\iota}(x)&=&\Xi^{b_\iota,\ssc[N]}(0,x;\{\ve[\iota]\})+p\expect_{\sss 0}^{b_\iota}\left[\indic{\ve[\iota]\nin\tilde \Ccal^{b_\iota}_{\sss 0}(0)}\Xi^{{\rm B}(0),\ssc[N-1]}(\ve[\iota],x;\tilde \Ccal^{b_\iota}_{\sss 0}(0))\right],\\
            \lbeq{tau-tauj-LEC-ident2}
        \!\!\Pi^{\ssc[N],\iota,\kappa}(x)&=&p\Psi^{b_\iota,\ssc[N], \kappa}(0,x;\{\ve[\iota]\})+p^2\expect_{\sss 0}^{b_\iota}\left[\indic{\ve[\iota]\nin\tilde \Ccal^{b_\iota}_{\sss 0}(0)}\Psi^{{\rm B}(0),{\ssc[N-1]}, \kappa}(\ve[\iota],x;\tilde \Ccal^{b_\iota}_{\sss 0}(0))\right],\\
            \lbeq{tau-tauj-LEC-ident3}
         R^\iota_{\sss M}(x)&=&R^{b_\iota}_{\sss M}(0,x;\{\ve[\iota]\})+p\expect_{\sss 0}^{b_\iota}\left[\indic{\ve[\iota]\nin\tilde \Ccal^{b_\iota}_{\sss 0}(0)}R^{{\rm B}(0)}_{\sss M-1}(\ve[\iota],x;\tilde \Ccal^{b_\iota}_{\sss 0}(0))\right].
	\end{eqnarray}
Finally, \refeq{Rxbd} together with the above characterization of $R_{\sss M}(x)$ and $R_{\sss M}^\iota(x)$ proves \refeq{RM-bd}--\refeq{RMiota-bd}.

This completes the derivation of the NoBLE for percolation and thereby the proof of Proposition \ref{prop-LE}.
Further, we have obtained a description of the NoBLE coefficients that will be the starting point to obtain  bounds on them in Section \ref{secBoundsPerc}.
\qed

\subsection{Split of the coefficients of the NoBLE analysis}
\label{subsectionSplit}
For the analysis in \cite[Section 3]{FitHof13b}, we extract some explicit contributions from the coefficients, so as to improve the numerical precision of our method.  When choosing the terms to extract we are guided by the intuition that they should be  substantial contributions to the coefficients and that we need to be able to accurately approximate them numerically.  This is usually only possible for contributions created for $x=0$ and $\|x\|=1$, in particular, for $x=\ve[\iota]$.

Terms with subscripts $\alpha$ correspond to the leading order contributions, while terms with subscripts $R$ correspond to errors. Further, the subscripts $I$ refer to the extraction of terms for which $\|x-\ve[\kappa]\|_2\leq 1$ for $\kappa$ fixed, while the subscripts $II$ refer to the extraction of terms with $\|x\|_2\leq 1$. Thus, for example,
	\eqn{
	\sum_{x\in \Zd}[1-\e^{\ii k\cdot  (x-\ve[\kappa])}]\Psi^{\ssc[0],\kappa}_{\alpha,{\sss I},p}(x)
	=\alpha_{\sss I} [1-\hat{D}(k)],
	\quad
	\sum_{x\in \Zd}[1-\e^{\ii k\cdot x}]\Psi^{\ssc[0],\kappa}_{\alpha,{\sss II},p}(x)
	=\alpha_{\sss II} [1-\hat{D}(k)],
	}
for some $\alpha_{\sss I},\alpha_{\sss II}$. These terms can be incorporated in the random walk contributions, while other contributions cannot. The terms labeled with $I$ are numerically larger and contribute to bigger contributions in the analysis of \cite{FitHof13b}. In this document however we often focus on the terms with subscripts $II$ as they tend to be easier to define and bound.\\

In the sequel, we will make these notions precise.  We start by formulating the split for $N=0$. We define
	\begin{align}
  	\lbeq{Def-Xi-Split}
	\Xi^{\ssc[0]}_{\alpha,p}(x)&:=\indic{\|x\|_2= 1}\prob_p(\{0\dbc x\}\cap\{(0,x)\text{ is occ.}\} ),\\
  	\lbeq{Def-Psi-Split-II}
	\Psi^{\ssc[0],\kappa}_{\alpha,{\sss II},p}(x)&:=\frac {p}  {\aap} \indic{\|x\|_2= 1}
	\prob_p\Big(\{0\dbc x\}\cap\{(0,x)\text{ is occ.}\}\cap\{(x-\ve[\kappa])\nin\tilde \Ccal^{(x,x-\ve[\kappa])}(0)\} \Big),\\
    \lbeq{Def-Psi-Split-III}
	\Xi^{\ssc[0]}_{{\sss R},p}(x)&:=\Xi^{\ssc[0]}_{p}(x)-\Xi^{\ssc[0]}_{\alpha,p}(x),\qquad
	\qquad
	\Psi^{\ssc[0],\kappa}_{{\sss R, II},p}(x):=\Psi^{\ssc[0],\kappa}_{p}(x)-\Psi^{\ssc[0],\kappa}_{\alpha,{\sss II},p}(x).
	\end{align}
In $\Psi^{\ssc[0],\kappa}_{\alpha,{\sss I},p}(x)$, we extract the main contributions to $\Psi^{\ssc[0],\kappa}_{p}(x)$ for $\|x-\ve[\kappa]\|_2\leq 1$. Let
	\begin{align}
  	\lbeq{Def-Psi-Split-I}
	\Psi^{\ssc[0],\kappa}_{\alpha,{\sss I},p}(x):=
	&\indic{\|x-\ve[\kappa]\|_2\leq 1}\frac {p}  {\aap}
	\prob_p(\{0\dbc x\}\cap\{(x-\ve[\kappa])\nin\tilde \Ccal^{(x,x-\ve[\kappa])}(0)\}\\
	&\qquad\qquad\quad \cap\{\exists\text{ path between $0$ and $x$ consisting of one or two occ. bonds}\} ),\nn
	\end{align}
and $\Psi^{\ssc[0],\kappa}_{{\sss R, I},p}(x):=\Psi^{\ssc[0],\kappa}_{p}(x)-\Psi^{\ssc[0],\kappa}_{\alpha,{\sss I},p}(x)$. We split $\Xi^{\ssc[0],\iota}_{p}(x)$ as
    \begin{align}
	\lbeq{Def-XiIota-Split-I}
	\Xi^{\ssc[0],\iota}_{\alpha,{\sss I},p}(x)
	:=&\delta_{x,\ve[\iota]}\Xi^{\ssc[0],\iota}_{p}(\ve[\iota])\\ &
	+\indic{\|x-\ve[\iota]\|=1}
	\prob ( \{0\ct{\{\ve[\iota]\}} x \text{ off } \{(\ve[\iota],x)\} \} \circ \{(\ve[\iota],x) \text{ is occ.}\} \mid (0,\ve[\iota])\text{ is vacant} ),\nnb
 	\lbeq{Def-XiIota-Split-II}
	\Xi^{\ssc[0],\iota}_{\alpha,{\sss II},p}(x):=&\delta_{x,\ve[\iota]}\Xi^{\ssc[0],\iota}_{p}(\ve[\iota]),\\
 	\Xi^{\ssc[0],\iota}_{{\sss R, I},p}(x):=&\Xi^{\ssc[0],\iota}_{p}(x)-\Xi^{\ssc[0],\iota}_{\alpha,{\sss I},p}(x)\qquad
 	\Xi^{\ssc[0],\iota}_{{\sss R, II},p}(x):=\Xi^{\ssc[0],\iota}_{p}(x)-\Xi^{\ssc[0],\iota}_{\alpha,{\sss II},p}(x).
	\end{align}
Finally,
	\begin{align}
 	\lbeq{Def-Pi-Split}
	\Pi^{\ssc[0],\iota,\kappa}_{\alpha,p}(x)&:=\delta_{x,\ve[\iota]} \Pi^{\ssc[0],\iota,\kappa}_{p}(x),\qquad
	\Pi^{\ssc[0],\iota,\kappa}_{{\sss R},p}:=\Pi^{\ssc[0],\iota,\kappa}_{p}(x)-\Pi^{\ssc[0],\iota,\kappa}_{\alpha,p}(x).
	\end{align}
This completes the definition of the relevant splits for $N=0$.

For $N=1$, we recall the definition of $\Xi^{\sss B,\ssc[1]}(x,y;A)$ and $\Psi^{{\sss B,\ssc[1]},\kappa}(x,w;A)$ in \refeq{Xi1def}-\refeq{Psi1def} with $B=\varnothing$, $A=\{0\}$ and of the event $E'$ in \refeq{317}. Due to the way in which we bound the coefficients, the definition of the split is a bit involved as we only want to extract some specific contributions.  In each case, we extract the contribution where the pivotal edge $b_0=(0,e)$ starts at the origin and the cut through occurs directly at $x$.
We define, for $x$ with $\|x\|_1=1$,
	\begin{align}
   	\lbeq{Def-XiOne-Split}
    	\Xi^{\ssc[1]}_{\alpha,p}(x) &= p\sum_{e\colon \|e\|_1=1} \expec_{\sss 0}
	\left(\indic{e\nin \tilde\Ccal^{(0,e)}_{\sss 0}(0)}  \indic{(0,x)\text{ is occ.}}
	\prob_{\sss 1}^{\sss 0} \left(E'(e,x;\tilde \Ccal^{(0,e)}_{\sss 0}(0)) \right)\right),\\
   	\lbeq{Def-PsiOne-SplitTwo}
    	\Psi^{\ssc[1],\kappa}_{\alpha,{\sss II},p}(x) &= \frac {p^2}\aap
    	\sum_{e\colon \|e\|_1=1}
    	\expec_{\sss 0} \Big(\indic{e\nin \tilde\Ccal^{(0,e)}_{\sss 0}(0)}
	\indic{(0,x)\text{ is occupied}} \nnb
    	&\quad\qquad\qquad\qquad	\times\expec_{\sss 1}^{\sss 0} \left(\indicwo{E'(e,x;\tilde \Ccal^{(0,e)}_{\sss 0}(0))}
	\indic{x-\ve[\kappa]\nin \tilde \Ccal^{(x,x-\ve[\kappa])}_{\sss 1}(e)\cup\{0\}}
	\right)\Big),
	\end{align}
and $\Xi^{\ssc[1]}_{\alpha,p}(x)=\Psi^{\ssc[1],\kappa}_{\alpha,{\sss II},p}(x)=0$ for all other $x$.
In $\Psi^{\ssc[1],\kappa}_{\alpha,{\sss I},p}(x)$ we collect contributions in which $\|x-\ve[\kappa]\|\leq 1$ and at least one of the connections $\{e\conn x\}$ is realised in no more than two steps, i.e.,
\begin{align}	
	\Psi^{\ssc[1],\kappa}_{\alpha,{\sss I},p}(x)&= \indic{\|x-\ve[\kappa]\|\leq 1}
	\frac {p^2}  {\aap} 	\sum_{e\colon \|e\|_1=1} \expec_{\sss 0} \big(\indic{e\nin \tilde \Ccal^{(0,e)}_{\sss 0}(0)}\indic{x\in \tilde\Ccal^{(0,e)}_{\sss 0}(0)}
	    	\expec_{\sss 1}^{\sss 0} \big(\indicwo {E'(e,x;\tilde \Ccal^{(0,e)}_{\sss 0}(0))}\\\
    	&\qquad\times\indic{x-\ve[\kappa]\nin \tilde \Ccal^{(x,x-\ve[\kappa])}_{\sss 1}(e)\cup\{0\}} \indic{\exists\text{ path between $e$ and $x$ consisting of one or two occ. bonds}} \big)\big).\nn
    	\lbeq{Def-PsiOne-SplitOne}
	\end{align}	
We define the remainder terms by
	\begin{align}
	\Xi^{\ssc[1]}_{{\sss R},p}(x):=&\ \Xi^{\ssc[1]}_{p}(x)-\Xi^{\ssc[1]}_{\alpha,p}(x),\qquad\quad
	\Psi^{\ssc[1],\kappa}_{{\sss R, I},p}(x):= \Psi^{\ssc[1],\kappa}_{p}(x)
	-\Psi^{\ssc[1],\kappa}_{\alpha,{\sss I},p}(x),\\
	\Psi^{\ssc[1],\kappa}_{{\sss R, II},p}(x):=&\Psi^{\ssc[1],\kappa}_{p}(x)
	-\Psi^{\ssc[1],\kappa}_{\alpha,{\sss II},p}(x).
	\end{align}
It turns out that it is numerically not worthwhile to split $\Xi^{\ssc[1],\iota}_{p}(x),\Pi^{\ssc[1],\iota,\kappa}_{p}(x)$ any further. This completes the definition of the relevant splits for $N=1$.

\subsection{Verification of assumptions on NoBLE coefficients}
\label{subsec-prop-coef}

In \cite{FitHof13b}, we analyze the asymptotic properties of the NoBLE by making a number of assumptions. In this section, we verify the assumption on the NoBLE coefficients formulated in \cite[Assumptions 3.1, 3.2 and 3.4]{FitHof13b}.

\paragraph{\cite[Definition 2.5]{FitHof13b} Symmetry of the model.}
We denote by $\mathcal{P}_d$ the set of all permutations of $\{1,2,\dots, d\}$. For $\nu\in \mathcal{P}_d$, $\delta\in\{-1,1\}^d$ and $x\in\Zd$, we define $p(x;\nu,\delta)\in\Zd$ to be the vector with entries $(p(x;\nu,\delta))_j=\delta_j x_{\nu_j}$. We say that a function $f:\Zd\mapsto \Rbold$ is {\em totally rotationally symmetric} when $f(x)=f(p(x;\nu,\delta))$ for all $\nu\in \mathcal{P}_d$ and $\delta\in\{-1,1\}^d$.\\

Total rotational symmetry is natural on $\Zd$, e.g., the two-point function $\tau_p$ as well as the NBW two-point function have this symmetry. We next argue that the following assumption holds for percolation:\\[2mm]
{\bf \cite[Assumption 4.1]{FitHof13b}.} Let $\iota,\kappa\in\{\pm 1,\pm 2,\dots,\pm d\}$. The following symmetries hold for all $x\in\Zd$, $p\leq p_c$, $N\in\Nbold$ and $\iota,\kappa$:
	\begin{eqnarray*}
	\lbeq{Symmetrie-goal}
	\Xi^\ssc[N]_p(x)&=& \Xi^\ssc[N]_p(-x), \qquad  \qquad \qquad
	\Xi^{\ssc[N],\iota}_p(x)= 	\Xi^{\ssc[N],-\iota}_p (-x),\\
	\Psi^{\ssc[N],\iota}_p(x)&=& \Psi^{\ssc[N],-\iota}_p(-x), \qquad \qquad\
	\Pi^{\ssc[N],\iota,\kappa}_p(x)= \Pi^{\ssc[N],-\iota,-\kappa}_p(-x).
	\end{eqnarray*}
For all $N\geq 0,$ the coefficients
	\begin{align}
	\lbeq{TRS-sums}
	\Xi^\ssc[N]_p(x),\qquad \sum_{\iota}\Psi^{\ssc[N],\iota}_p(x),
	\qquad \sum_{\iota}\Xi^{\ssc[N],\iota}_p(x) \quad \text{and}\quad
	\sum_{\iota,\kappa}\Pi^{\ssc[N],\iota,\kappa}_p(x),
	\end{align}
as well as the remainder terms of the split
	\begin{align}
	\Xi^{\ssc[0]}_{{\sss R},p}(x),\quad \sum_{\iota}\Psi^{\ssc[0],\iota}_{{\sss R, I},p}(x),
	\quad \sum_{\iota}\Psi^{\ssc[0],\iota}_{{\sss R, II},p}(x),
	\quad \sum_{\iota}\Xi^{\ssc[0],\iota}_{{\sss R, I},p}(x),
	\quad \sum_{\iota}\Xi^{\ssc[0],\iota}_{{\sss R, II},p}(x),
	\quad \sum_{\iota,\kappa}\Pi^{\ssc[0],\iota,\kappa}_{{\sss R},p}(x),\nn
	\end{align}
are totally rotationally symmetric functions of $x\in\Zd$.
Finally, the dimensions are exchangeable, in the sense that, for all $\iota,\kappa$,
	\begin{align}
 	\lbeq{assCoefficentsDimInterchange}
	\hat \Psi^{\ssc[N],\iota}_p(0)=\ \hat \Psi^{\ssc[N],\kappa}_p(0),\quad
	\hat \Xi^{\ssc[N],\iota}_p(0)=\ \hat \Xi^{\ssc[N],\kappa}_p(0),\quad
	\sum_{\kappa'}\hat \Pi^{\ssc[N],\iota,\kappa'}_p(0)=\sum_{\iota'}\hat \Pi^{\ssc[N],\iota',\kappa}_p(0).
	\end{align}
\medskip

For $p<p_c$ and $N$ fixed, all the above functions are well defined. We now check the stated symmetry properties, and will return to the case $p=p_c$ at the end. By the definition of the NoBLE-coefficients in Section \ref{sec-compl}, it is easy to see that \refeq{Symmetrie-goal}, \refeq{assCoefficentsDimInterchange} hold. By the definition of $\Xi^\ssc[N]_p$ it is not difficult to see that $x\mapsto \Xi^\ssc[N]_p(x)$ is TRS for all $N\geq 0$. The other three NoBLE coefficients are not TRS as their definition includes constraints on one or two specific directions. For example, the coefficient $\Psi^{\ssc[N],\kappa}(x)$ includes the constraint that $x-\ve[\kappa]$ is not in the last cluster. When we sum over $\kappa$, the directional constraint is averaged out and $\sum_{\kappa}\Psi^{\ssc[N],\kappa}_p(x)$ is TRS. For the same reason, the sums over $\iota$ and $\iota,\kappa$ in \refeq{TRS-sums},
as well as the stated remainder terms, are TRS.

\cite[Assumption 4.1]{FitHof13b} states that the symmetry properties also hold for $p=p_c$. However, it is not even obvious that these objects are well defined at $p=p_c$. We verify the left-continuity in \cite[Assumption 4.4]{FitHof13b} below, from which the symmetries will follow also for $p=p_c$. Further, inspection of the proof in \cite{FitHof13b} shows that the symmetries are only used for $p<p_c$, while properties at $p=p_c$ are concluded by left continuity arguments instead.
\qed

\paragraph{\cite[Assumption 4.2]{FitHof13b} Relation between coefficients.}{\it
For all $x\in\Zd$, $p\leq p_c$, $N\in\Nbold$ and $\iota,\kappa\in\{\pm 1,\pm 2,\dots,\pm d\}$, the following bounds hold:
\begin{align}
\lbeq{XidominatespsiImproved}
\Psi^{\ssc[N],\kappa}_p(x)\leq&  \frac {p} {\aap} \Xi^\ssc[N]_p(x),
\qquad\qquad\qquad \Pi^{\ssc[N],\iota,\kappa}_p(x)\leq p \Xi^{\ssc[N],\iota}_p(x).
\end{align}}
\medskip

Comparing the definitions of $\Xi^{\ssc[N]},\Psi^{\ssc[N],\kappa}$ and $\Xi^{\ssc[N],\iota}$ and $\Pi^{\ssc[N],\iota,\kappa}$, we see that they differ by the presence of the additional indicator of the event $\{y+\ve[\kappa]\nin \tilde{\Ccal}^{(y,y+\ve[\kappa])}_{\sss N}(\tb_{\sss N-1})\}$ and a factor $p/\aap$ and $p$, respectively. We bound the indicator by 1 and obtain the relations stated in \refeq{XidominatespsiImproved}.\qed

\paragraph{\cite[Assumption 4.4]{FitHof13b} Growth at the critical point.}{ \it
The functions $p\mapsto \hat \Xi_p(k), p\mapsto \hat  \Xi^{\iota}_p(k), p\mapsto \hat \Psi^{\kappa}_p(k)$ and $p\mapsto \hat  \Pi^{\iota,\kappa}_p(k)$ are continuous for $p\in(0,p_c)$. Further, let $\Gamma_1,\Gamma_2,\Gamma_3\geq 0$ be such that $f_i(p)\leq \Gamma_i$ and that Assumption \cite[Assumption 4.3]{FitHof13b} holds. Then, the functions stated above are left-continuous at $p_c$ with a finite limit $p\nearrow p_c$ for all $x\in\Zd$. Further, for technical reasons, we assume that $p_c<1/2$.}
\medskip

The two-point functions $\tau_p,\tau^\iota_p$ and the coefficients $\Xi^{\ssc[N]}_p,\Xi^{{\ssc[N]},\iota}_p,\Psi^{{\ssc[N]},\iota}_p$ and $\Pi^{{\ssc[N]},\iota,\kappa}_p$ are defined as sums of probabilities and expectations of intertwined events. The percolation measure $\prob_p$, in which each bond is occupied/vacant independently, is a product measure.  Restricted to a finite graph, the above functions are clearly continuous. The continuity for $p<p_c$ can be obtained using a finite-volume approximation that is non-trivial. We omit the proof of this here, and instead refer the reader to \cite[Appendix A.2]{Hara08}  where such a statement is proved for the coefficients of the classical lace expansion. The extension to our setting is straightforward.

Since  $f_i(p)\leq \Gamma_i$ holds for all $p\in(p_I,p_c)$, the coefficients are uniformly bounded in $p\in(p_I,p_c)$. We obtain the left-continuity of the coefficients using a finite-volume approximation, which is a bit more elaborate than the arguments used to obtain continuity for $p<p_c$ and requires that the coefficients are uniformly bounded. We omit the proof of this and again refer the reader to \cite[Appendix A.2]{Hara08}.


\def\picPercolationFdefinition[#1]{
\begin{figure}[ht]
\begin{center}
}

\section{Diagrammatic bounds}
\label{secBoundsPerc}

In Sections \ref{secBoundsPerc}-\ref{secProofBounds} we bound the NoBLE-coefficients, derived in the last section, and define the split of the coefficients as used in \cite[Section 4.1]{FitHof13b}. The bounds are stated in terms of simple diagrams, which can in turn be bounded by combinations of two-point functions. Thus, we prove Proposition \ref{prop-bds-LEC}, and provide the bounds, in term of diagrams, stated in \cite[Assumption 4.3]{FitHof13b}. We start by giving an overview of the bounds on the NoBLE coefficients.

\subsection{Overview of the bounds on the coefficients in Sections \ref{secBoundsPerc}-\ref{secProofBounds}}
\label{sec-overview-NoBLEbds}
In Section \ref{secBoundsSimpleDiagrams}, we first introduce simple diagrams that can be obtained by various generalizations of the two-point function, as well as so-called {\em repulsive diagrams}.
Then, we state and prove the bounds on the coefficients for $N=0$ in Section \ref{sec-bounds-N0}. For $N\geq 1$, the coefficients are defined as combinations of increasingly intertwined events that we first bound in terms of simpler events in Section \ref{secBoundsPercEvents}.

In Sections \ref{sec-bounds-Ngeq1} and \ref{secProofBounds}, we bound these events by so-called building blocks, which are combinations of simple diagrams. We define these building blocks informally in Section \ref{secBoundsPercDefinitionBlocks}. In Sections \ref{sec-bounds-N=1} and \ref{secPercostatingtheBounds}, we state the bounds for $N\geq 1$. In Section \ref{secSummaryBounds}, we give a brief overview of how they prove Proposition \ref{prop-bds-LEC} and explain how the diagrammatic bounds are used to prove \cite[Assumption 4.3]{FitHof13b}.

In Section \ref{secProofBounds}, we indicate how to prove the diagrammatic bounds for $N\geq 1$. We give the full proof of the bounds on $\Xi_p^{\ssc[1]}(x)$ in Section \ref{secProofBoundsNOne}, and explain how to use similar arguments to prove the bounds on $\Xi_p^{\ssc[1],\iota}(x)$. The proof for $N\geq 2$ relies on ideas already used in the classical lace expansion and a distinction of cases for the length of several distinct connections within the diagrams. We use this distinction of cases to make optimal use of the additional avoidance constraints in the events defining the NoBLE coefficients.  The ideas underlying the distinction of cases is discussed within the proof of the bounds of $\Xi^{\ssc[1]}_p$. We give an outline of the proof for $N\geq 2$ in Section \ref{secProofBoundsNBig}.

As the proof of these bounds is quite elaborate, we do not give the full proof.
\iflongversion
In Section \ref{secProofBoundsNBig}, we sketch the proof and discuss some of the more involved steps. These bounds are stated using the thirteen building blocks that we informally define in Section \ref{secBoundsPercDefinitionBlocks}. As we consider several cases for each block, we have to define some 100 different blocks. We give the formal definition in Appendix \ref{app-bounds}.

\else
In the extended version of this document we give a sketch of the proof. A detailed explanation of these bounds and their proof can be found in the thesis of the first author \cite{Fit13}, which can be downloaded from \cite{FitNoblePage}. From \cite{FitNoblePage} you can also download the extended version of this article. In that extended version we give the formal definition of all building blocks, that we only define informally in Section \ref{secBoundsPercDefinitionBlocks}.
\fi

\subsection{Simple diagrams}
\label{secBoundsSimpleDiagrams}
In this section, we define simple diagrams that we use to bound the NoBLE-coefficients. Then, we review how we bound these diagrams using the bootstrap functions given in \refeq{defFunc1}--\refeq{defFunc3}.  Moreover, we derive sharp bounds for the probability of a double connection.

\paragraph{Modified two-point functions.}
For $m\geq 0$, we denote by $\{0\connLe{m}x\}$ the event that $0$ and $x$ are connected and that there exists a path of occupied, disjoint bonds between $0$ and $x$ that consists of at least $m$ bonds. Further, we define $\{0\connLe{\underline m}x\}$ to be the event that $0$ and $x$ are connected by a path of exactly $m$ occupied bonds. For $m\geq 0$, we define
	\begin{align}
	\tau_{m,p}(x)&=\prob_p ( 0\connLe{m} x), \qquad\qquad &\tau^\iota_{m,p}(x)
	=\prob^{\ve[\iota]}_p (0\connLe{m} x),\\
	\tau_{\underline m,p}(x)&=\prob_p (0\connLe{\underline m}  x), \qquad\qquad
	&\tau^\iota_{\underline m,p}(x)=\prob^{\ve[\iota]}_p ( 0\connLe{\underline m} x).
	\end{align}
For $m\geq 1$ and $x\neq 0$, we note that
	\begin{align}
	\lbeq{tau-j-bounds-steps}
	\tau^{\iota}_{m,p}(x)&\leq \tau_{m,p}(x)
	\leq 2d p (D\star \tau_{m-1,p})(x)\leq (2d p)^m (D^{\star m}\star \tau_{p})(x), \\
	\tau^{\iota}_{\underline m,p}(x)&\leq \tau_{\underline m,p}(x)
	\leq 2d p (D\star \tau_{\underline {m-1},p})(x)\leq (2d p)^m D^{\star m}(x).
	\end{align}

\paragraph{Non-repulsive diagrams.}
For $x_i\in\Zd$ and indices $j_i\in\{\underline 0,\underline 1,\dots\}\cup\{0,1,\dots,\}$, where $i=1,\dots,5$, we define the \emph{non-repulsive diagrams} by
	\begin{align}
	\lbeq{defpercbubble}
	\diagRepulsiveLetter{B}^*_{j_1,j_2}(x_1,x_2)=&\tau_{j_1,p}(x_1)\tau_{j_2,p}(x_2-x_1),\\
	\nn
	\diagRepulsiveLetter{T}^*_{j_1,j_2,j_3}(x_1,x_2,x_3)=&\tau_{j_1,p}(x_1)\tau_{j_2,p}(x_2-x_1)\tau_{j_3,p}(x_3-x_2)\\
	\lbeq{defpercTriangle}
	=& \diagRepulsiveLetter{B}^*_{j_1,j_2}(x_1,x_2)\tau_{j_3,p}(x_3-x_2),\\
	\lbeq{defpercSquare}
	\diagRepulsiveLetter{S}^*_{j_1,j_2,j_3,j_4}(x_1,x_2,x_3,x_4)
	=&\diagRepulsiveLetter{T}^*_{j_1,j_2,j_3}(x_1,x_2,x_3)\tau_{j_4,p}(x_4-x_3),\\
	\lbeq{defpercPentagon}
	\diagRepulsiveLetter{P}^*_{j_1,j_2,j_3,j_4,j_5}(x_1,x_2,x_3,x_4,x_5)
	=&\diagRepulsiveLetter{S}^*_{j_1,j_2,j_3,j_4}(x_1,x_2,x_3,x_4)\tau_{j_4,p}(x_5-x_4).
	\end{align}
In the analysis of \cite{FitHof13b} we assume that the bootstrap functions, see \refeq{defFunc1}-\refeq{defFunc3}, are bounded. These bounds in particular imply bounds on $p<p_c$ and $\sup_{k\in(-\pi,\pi)^d }[1-\hat D(k)]\hat \tau_p(k)$. In this discussion, we assume that
\begin{eqnarray}
	2dp\leq \bar \Gamma_1,\qquad \qquad
	\sup_{k\in(-\pi,\pi)^d} [1-\hat D(k)]\hat{\tau}_p(k)\leq \bar \Gamma_2,
	\end{eqnarray}
where $\bar \Gamma_1$ and $\bar \Gamma_2$ are functions of $\Gamma_1,\Gamma_2$ whose precise values are irrelevant for the discussion at hand.
Using this and \refeq{tau-j-bounds-steps} allows us to bound these non-repulsive diagrams,
for $l_1,l_2,l_3\in\Nbold$ with $l_1+l_2+l_3=l$ even, as
	\begin{align}
	\sum_{x_1,x_2}\diagRepulsiveLetter{T}^*_{l_1,l_2,l_3}(x_1,x_2,x_3)
	&\leq(2dp)^{l}(D^{\star l_1}\star \tau_p\star D^{\star l_2}\star
	\tau_p\star D^{\star l_3}\star \tau_p)(x_3)\nnb
	&\leq (2dp)^{l}\int_{(-\pi,\pi)^d} \hat D^{l}(k)\hat \tau_p(k)^3 \frac {d^dk}{(2\pi)^d}
	\leq \bar \Gamma^l_1 \bar \Gamma^3_2 I_{3,l}(0),
	\lbeq{Simple-Bound-triangle}
	\end{align}
with
	\begin{align}
	I_{n,l}(x)=\int_{(-\pi,\pi)^d} \e^{\ii k\cdot x}\frac {\hat D^{l}(k)}{[1-\hat D(k)]^n}\frac {d^dk}{(2\pi)^d}
	\end{align}
being a SRW-integral that we can compute numerically. In \cite[Section 5]{FitHof13b}, we explain how we compute this integral, as well as how to improve the bounds on such non-repulsive diagrams.

\paragraph{Repulsive diagrams.}
Using only bounds as simple as \refeq{Simple-Bound-triangle} we could show mean-field behavior only for $d\geq 50$. In the following, we define \emph{repulsive} diagrams that allow us to prove sharper bounds on the coefficients. In repulsive diagrams, the connections between the points $x_i$ are bond-disjoint. As first example we define the repulsive double connection as
\begin{align}
 \lbeq{defpercRepDouble}
\diagRepulsiveLetter{D}_{j_1,j_2}(x)=\prob_p(\{0\connLe{j_1} x\} \circ \{0\connLe{j_2} x\} ),
\end{align}
where $x\in\Zd$ and $j_1,j_2\in\{\underline 0,\underline 1,\dots\}\cup\{0,1,\dots,\}$, and where the symbol $\circ$ indicates that two events occur disjointly. For events involving the existence of paths, this means that these paths consist of disjoint bonds. A formal definition can be found e.g.\ in \cite[Section~2.3]{Grim99}.\\
The connections in $\diagRepulsiveLetter{D}_{j_1,j_2}(x)$ are realised on the same percolation configuration. In our bounds, we also consider paths on {\em different} percolation configurations. For this reason we need to generalize the notion of disjoint occurrence:

\begin{definition}[Generalized disjoint occurrence]
 \label{def-gen-disjoint}
For a percolation realisation $\omega$, we denote by $\Bcal(\omega;x)$ the set of all bonds that are occupied in $\omega$ and for which one of its endpoints is connected to the point $x$ in $\omega$. By $\{x\conn y\}_i$ we denote that $x,y\in\Zd$ are connected in $\omega_i$, for $i\geq 1$. For $n\geq 2$ and $i=1,2,\dots n$, let $\omega_i$ be a percolation configuration and let $x_i,y_i\in\Zd$. Then, we say that the connections $\{x_i\conn y_i\}_i$ occur generalized-disjointly, and write
	\begin{align}
 	 \{x_1\conn y_1\}_{1}\circledast\{x_2\conn y_2\}_{2}\circledast
	\cdots \circledast\{x_n\conn y_n\}_{n},
	\end{align}
when, for each $i=1,2,\dots,n$, we can choose a path of bonds $p_i\subset \Bcal(\omega_i;x_i)$, such that the path $p_i$ connects $x_i$ to $y_i$ and the paths $(p_i)_{i=1}^n$ are pairwise vertex disjoint. Similar definitions apply to connections of the form $\{x_i\connLe{j_i} y_i\}_{i}$ for indices $j_i\in\{\underline 0,\underline 1,\dots\}\cup\{0,1,\dots,\}$.
\end{definition}
\medskip

Note that if we choose $i=1$ for all $i=2,\dots,n$, then this notion corresponds to the usual disjoint occurrence $\circ$. Further, when $\prob_p$ denotes the distribution of independent percolation configurations $(\omega_1,\ldots,\omega_n)$,
	\begin{align}
	\prob_p(\{x_1\conn y_1\}_{1}\circledast\{x_2\conn y_2\}_{2})
	&\leq \prob_p(x_1\conn y_1) \prob_p(x_2\conn y_2),\\
	\prob_p(\{x_1\conn y_1\}_{1}\circledast\{x_2\conn y_2\}_{2}
	&\circledast\{x_3\conn y_3\}_{2})\nnb
	&\leq \prob_p(x_1\conn y_1)\prob_p(\{x_2\conn y_2\}\circ\{x_3\conn y_3\}).
	\end{align}
For $x_i\in\Zd$ and indices $j_i\in\{\underline 0,\underline 1,\dots\}\cup\{0,1,\dots,\}$, we define the repulsive bubble and triangle to be
	\begin{align}
	\lbeq{defpercRepbubble}
	\diagRepulsiveLetter{B}_{j_1,j_2}(x_1,x_2)
	&= \max_{i=1,2} \prob_p(\{0\connLe{j_1} x_1\}_{1}\circledast
	\{x_1\connLe{j_2} x_2\}_{i}),\\
	\lbeq{defpercRepTriangle}
	\shift\diagRepulsiveLetter{T}_{j_1,j_2,j_3}(x_1,x_2,x_3)&=\max_{(i,j)=\{1,2,3\}^2}
	\prob_p(\{0\connLe{j_1} x_1\}_{1}\circledast
	\{x_1\connLe{j_2} x_2\}_{i}\circledast\{x_2\connLe{j_3} x_3\}_{j}),
	\end{align}
where $\omega_1,\omega_2,\omega_3$ are three i.i.d.\ percolation configurations under $\prob_p$.
The repulsive square $\diagRepulsiveLetter{S}_{j_1,j_2,j_3,j_4}(x_1,x_2,x_3,x_4)$ and pentagon $\diagRepulsiveLetter{P}_{j_1,j_2,j_3,j_4,j_5}(x_1,x_2,x_3,x_4,x_5)$ are defined in the same manner. We omit the formal definitions of these diagrams.

\paragraph{Bounds on repulsive diagrams.}
We bound repulsive diagrams in an efficient manner by extracting explicit contributions.
This is easily seen for the two-point function itself, by noting that
	\begin{align}
	\tau_{n,p}(x)\leq \sum_{r=n}^{M-1} p^r a_r(x)+p^M(a_M\star \tau)(x),
	\lbeq{improved-bound-taun}
	\end{align}
where $a_n(x)$ is the number of $n$-step \emph{bond-avoiding walks} ending at $x$. Bond-avoiding walks are simple random walks that never use a bond twice, i.e., for $i,j$ with $i\neq j$, we have that
$\{\omega_i,\omega_{i+1} \}\neq \{\omega_j,\omega_{j+1} \}$. The number $M$ is some number larger than $n$. For the implementation, we choose $M=10$.
We use the same idea for the repulsive bubbles, triangles and squares. For example consider $l_1, l_2\in{\mathbb N}$, with $l_1+l_2<M$, we conclude that
	\begin{align}
	\sum_y\diagRepulsiveLetter{B}_{l_1,l_2}(y,x)\leq&\ \sum_y \diagRepulsiveLetter{B}_{M-l_2,l_2}(y,x)+\sum_y\sum_{r=l_1}^{M-l_2-1}	\diagRepulsiveLetter{B}_{\underline r,l_2}(y,x)\\
	\leq&\ p^M (a_M\star \tau^{\star 2})(x)+(M-l_1-l_2)p^M(a_M\star \tau)(x)\nnb
    	&\quad +\sum_{s=l_1}^{M-l_2-1} \sum_{r=l_2}^{M-s-1} p^{r+s} a_{r+s}(x)
	.\nn
	\end{align}
More details can be found in \cite[Section 5.3.2]{FitHof13b}.
\paragraph{Bounds on double connections.}
The probability of a double connection  $\diagRepulsiveLetter{D}_{n,n}(x)$ deserves our special attention.
As each double connection uses at least two neighbors, we know that for $x\neq 0$,
	\begin{align}
	\lbeq{N0-bd}
&	\prob_p(0\dbc x )\\
& \leq \sum_{\iota}\sum_{\kappa\neq \iota}
	\prob_p \left( \{\ve[\iota] \conn x\text{ off 0}\}\circ\{\ve[\kappa] \conn x\text{ off 0}\}
	\cap\{(0,\ve[\iota])\text{ occ.}\}\cap\{(0,\ve[\kappa])\text{ occ.}\}\right).\nn
	\end{align}
Reviewing this bound, we see that each percolation configuration in which $\{0\dbc x\}$ occurs for $x\neq 0$, contributes twice to the right-hand side, e.g., once for $\iota=1,\kappa=2$ and once for $\iota=2,\kappa=1$. Thus, this bound overcounts by a factor two, and actually \refeq{N0-bd} holds with an extra factor $1/2$ on the right-hand side.
Another way to view this factor $1/2$ is that the two connections in $\diagRepulsiveLetter{D}_{n,n}(x)$ are interchangeable, while in the bubble $\diagRepulsiveLetter{B}_{n,n}(x,0)$ the two connections are not.
We conclude for $n\geq 1$ that
\begin{align}
\lbeq{double-bound}
\diagRepulsiveLetter{D}_{n,n}(x)\leq
\frac 1 2 \Big(p^M(a_M\star \tau^{\star 2})(x)+(M-2)p^M(a_M\star \tau)(x)
+\sum_{r=n}^{M-1} \sum_{s=n}^{M-r-1} p^{r+s} a_{r+s}(x)\Big).
\end{align}

\subsection{Diagrammatic bounds for $N=0$}
\label{sec-bounds-N0}

In this section, we bound the NoBLE coefficient for $N=0$ and prove a part of the bounds assumed in \cite[Assumption 4.3]{FitHof13b}.
These bounds on the coefficients defined in Sections \ref{sec-compl} and \ref{subsectionSplit} are given in the following two lemmas:
\begin{lemma}[Bounds on $\Xi^{\ssc[0]}_p$ and $\Psi^{\ssc[0],\kappa}_p$]
\label{lemmapercboundXi0}
Let $p<p_c$. Then,
	\begin{align}
	\lbeq{lemmapercboundXi0-bound-1}
	\sum_{x\in\Zd}\Xi^{\ssc[0]}_p(x)\leq& \sum_{x\in\Zd} \diagRepulsiveLetter{D}_{1,1}(x),\qquad \
	\sum_{x\in\Zd} \|x\|_2^2 \Xi^{\ssc[0]}_p(x)\leq  \sum_{x\in\Zd} \|x\|_2^2 \diagRepulsiveLetter{D}_{1,1}(x),\\
	\lbeq{lemmapercboundXi0-bound-4}
	\sum_{x\in\Zd} \Xi^{\ssc[0]}_{{\sss R},p}(x)\leq&  \sum_{x\in\Zd} \diagRepulsiveLetter{D}_{2,2}(x),\qquad
	\sum_{x\in\Zd}\|x\|_2^2 \Xi^{\ssc[0]}_{{\sss R},p}(x)\leq  \sum_{x\in\Zd}\|x\|_2^2\diagRepulsiveLetter{D}_{2,2}(x).
\end{align}
Further, for all $\kappa$,
\begin{align}
	\lbeq{lemmapercboundXi0-bound-5}
	\sum_{x\in\Zd} \Psi^{\ssc[0],\kappa}_{{\sss R, I},p}(x)\leq&(2d-2)p \tau_{3,p}(\ve[1]) + \min\left\{1, \frac {p} {\aap} \frac {d-1} {d } \right\} \sum_{x\in\Zd} \diagRepulsiveLetter{D}_{2,2}(x),\\
	\lbeq{lemmapercboundXi0-bound-6}
	\sum_{x\in\Zd} \|x-\ve[\kappa]\|_2^2 \Psi^{\ssc[0],\kappa}_{{\sss R, I},p}(x)\leq&
	\frac p {\aap} \sum_{x\in\Zd} \ \diagRepulsiveLetter (1+\|x\|_2^2)\diagRepulsiveLetter{D}_{1,1}(x),
\end{align}
and
\begin{align}
	\lbeq{lemmapercboundXi0-bound-7}
	\sum_{x\in\Zd} \Psi^{\ssc[0],\kappa}_{{\sss R, II},p}(x)&\leq \min\left\{1, \frac {p} {\aap} \frac {d-1} {d } \right\}  \sum_{x\in\Zd}  \diagRepulsiveLetter{D}_{2,2}(x),\\
	\lbeq{lemmapercboundXi0-bound-8}
	\sum_{x\in\Zd} \|x\|_2^2 \Psi^{\ssc[0],\kappa}_{{\sss R, II},p}(x)&\leq \min\left\{1, \frac {p} {\aap} \frac {d-1} {d } \right\}   \sum_{x\in\Zd}\|x\|_2^2\diagRepulsiveLetter{D}_{2,2}(x).
\end{align}
\end{lemma}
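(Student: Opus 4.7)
The plan is to prove each inequality directly from the definitions of $\Xi^{\ssc[0]}_p$ and $\Psi^{\ssc[0],\kappa}_p$ in \refeq{def-xi-zero}--\refeq{def-psi-zero} and the splits \refeq{Def-Xi-Split}--\refeq{Def-Psi-Split-I}, using three main tools: (i) the BK-type identification of $\{0\dbc x\}$ with two bond-disjoint occupied paths; (ii) parity/length constraints obtained by noting that if the direct bond $(0,x)$ is excluded then every path between $0$ and $x$ has at least $2$ steps; and (iii) a careful comparison of the prefactor $p/\aap$ against the probability of the avoidance event $\{x-\ve[\kappa]\nin\tilde\Ccal^{(x,x-\ve[\kappa])}(x)\}$ appearing in $\Psi$.

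For \refeq{lemmapercboundXi0-bound-1} I would observe that $\Xi^{\ssc[0]}_p(x)=(1-\delta_{0,x})\prob_p(0\dbc x)$ is, for $x\ne 0$, precisely $\diagRepulsiveLetter{D}_{1,1}(x)$ as defined in \refeq{defpercRepDouble}; the weighted bound then follows immediately. For \refeq{lemmapercboundXi0-bound-4}, the $\alpha$-contribution in \refeq{Def-Xi-Split} consists exactly of those double connections using the direct bond $(0,x)$ when $\|x\|_2=1$; what remains is the event $\{0\dbc x\}$ with the direct bond vacant (if $\|x\|_2=1$) or the direct bond nonexistent (if $\|x\|_2>1$). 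In either case both bond-disjoint paths have at least $2$ steps, giving $\diagRepulsiveLetter{D}_{2,2}(x)$, and the weighted bound follows verbatim.

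For \refeq{lemmapercboundXi0-bound-7}--\refeq{lemmapercboundXi0-bound-8} I would use two complementary bounds and take the minimum. The first is obtained by dropping the avoidance indicator in the definition of $\Psi^{\ssc[0],\kappa}_p$, which yields $\Psi^{\ssc[0],\kappa}_{R,II,p}(x)\le \frac{p}{\aap}\Xi^{\ssc[0]}_{R,p}(x)\le\frac{p}{\aap}\diagRepulsiveLetter{D}_{2,2}(x)$; using translation invariance and averaging over the $2d$ directions of $\ve[\kappa]$ produces the $\frac{d-1}{d}$ gain from the single forbidden direction, exactly as in the computation of $\aap$ in \refeq{Definition-aap}. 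The second is the trivial bound $\Psi^{\ssc[0],\kappa}_{R,II,p}(x)\le\Xi^{\ssc[0]}_{R,p}(x)\le\diagRepulsiveLetter{D}_{2,2}(x)$ obtained by rewriting $\aap=p\prob(\ve[\kappa]\nin\tilde\Ccal^{(0,\ve[\kappa])}(0))$ and using the domination of the avoidance event in $\Psi$ by the one in $\aap$. The minimum of the two gives the claim; the weighted version is identical.

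For \refeq{lemmapercboundXi0-bound-5}--\refeq{lemmapercboundXi0-bound-6} I would separate $\Psi^{\ssc[0],\kappa}_{R,I,p}(x)$ into the contribution from $\|x-\ve[\kappa]\|_2>1$ and that from $\|x-\ve[\kappa]\|_2\le 1$. The first piece is bounded by the same $\diagRepulsiveLetter{D}_{2,2}$ estimate as in Step~3, producing the second term on the right. For $\|x-\ve[\kappa]\|_2\le 1$ I would first rule out $x=\ve[\kappa]$ using the key observation that if $0\dbc \ve[\kappa]$ then at least one of the two bond-disjoint paths does not use $(0,\ve[\kappa])$, so it survives the deletion and forces $0\in\tilde\Ccal^{(\ve[\kappa],0)}(\ve[\kappa])$, making $\Psi^{\ssc[0],\kappa}_p(\ve[\kappa])=0$. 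This leaves the $2d-1$ neighbors of $\ve[\kappa]$ distinct from $0$; after further subtracting the $x=2\ve[\kappa]$ direction, which is already absorbed into the $\diagRepulsiveLetter{D}_{2,2}$ contribution via the single remaining long path, one finds $2d-2$ neighbors for which the $\alpha,I$-split forces every path between $0$ and $x$ to have at least three steps. Applying BK to the resulting disjoint structure (a one-step bond from $0$ to a neighbor times a long three-step connection from that neighbor to $x$) gives the explicit correction $(2d-2)p\,\tau_{3,p}(\ve[1])$. The weighted bound then follows by writing $\|x-\ve[\kappa]\|_2^2\le 2(1+\|x\|_2^2)$ and applying the pointwise bound $\Psi^{\ssc[0],\kappa}_{R,I,p}(x)\le\frac{p}{\aap}\diagRepulsiveLetter{D}_{1,1}(x)$.

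The main obstacle is the combinatorial bookkeeping in the last step: identifying exactly which values of $x$ with $\|x-\ve[\kappa]\|_2\le 1$ must be handled by the explicit $(2d-2)p\,\tau_{3,p}(\ve[1])$ correction and which can be absorbed into the $\diagRepulsiveLetter{D}_{2,2}$ term without double-counting, while simultaneously respecting the avoidance constraint $\{x-\ve[\kappa]\nin\tilde\Ccal^{(x,x-\ve[\kappa])}(x)\}$. The emptiness observation for $x=\ve[\kappa]$ is the essential simplification that makes the clean form of \refeq{lemmapercboundXi0-bound-5} possible.
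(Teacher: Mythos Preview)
Your treatment of \refeq{lemmapercboundXi0-bound-1} and \refeq{lemmapercboundXi0-bound-4} is correct and matches the paper. However, there are genuine gaps in the remaining bounds.

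\textbf{The $\min\{1,\frac{p}{\aap}\frac{d-1}{d}\}$ factor.} Neither of your two arguments is right as stated. For the factor $1$, you invoke ``domination of the avoidance event in $\Psi$ by the one in $\aap$'', but by translation invariance these events have the \emph{same} probability; what you need is that the avoidance event $\{x-\ve[\kappa]\nin\tilde\Ccal^{(x,x-\ve[\kappa])}(x)\}$ is negatively correlated with the increasing event $\{0\dbc x\}$. This is the Harris/FKG inequality (the avoidance event is decreasing), and the paper uses exactly that: it gives $\prob_p(\{0\dbc x\}\cap\{\text{avoidance}\})\le\prob_p(0\dbc x)\cdot\aap/p$, so the prefactor $p/\aap$ cancels. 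For the factor $\frac{d-1}{d}$, once you \emph{drop} the avoidance indicator you cannot recover any gain by ``averaging over directions''. The paper instead keeps the indicator and uses an overcounting argument: on $\{0\dbc x\}$ at least two neighbours of $x$ lie in $\tilde\Ccal^{(x,x-\ve[\kappa])}(x)$ (the last steps of the two disjoint paths), so at most $2d-2$ values of $\kappa$ can satisfy the avoidance, whence $\sum_\kappa\Psi^{\ssc[0],\kappa}_p(x)\le\frac{p}{\aap}(2d-2)\Xi^{\ssc[0]}_p(x)$ and symmetry gives $(d-1)/d$.

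\textbf{The decomposition for \refeq{lemmapercboundXi0-bound-5}.} Splitting by $\|x-\ve[\kappa]\|_2$ does not work: when $\|x-\ve[\kappa]\|_2>1$ the $\alpha,I$-split removes nothing, so $\Psi^{\ssc[0],\kappa}_{R,I,p}(x)=\Psi^{\ssc[0],\kappa}_p(x)$ is the full double connection, and if additionally $\|x\|_2=1$ (e.g.\ $x=\ve[1]$, $\kappa=2$) one of the two paths can be the single bond $(0,x)$. Hence this piece is \emph{not} bounded by $\diagRepulsiveLetter{D}_{2,2}(x)$. The paper splits instead by path length: either both disjoint paths have length $\ge 2$ (giving the $\diagRepulsiveLetter{D}_{2,2}$ term), or one path is exactly the direct bond, in which case $x$ is a neighbour with $x\nin\{\ve[\kappa],-\ve[\kappa]\}$ and the other path has length $\ge 3$; applying Harris to this case yields the explicit $(2d-2)p\,\tau_{3,p}(\ve[1])$ correction. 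Your observation that $\Psi^{\ssc[0],\kappa}_p(\ve[\kappa])=0$ is correct and nice, but it does not repair the decomposition.

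\textbf{The weighted bound \refeq{lemmapercboundXi0-bound-6}.} The inequality $\|x-\ve[\kappa]\|_2^2\le 2(1+\|x\|_2^2)$ loses a factor of $2$ against the stated bound. The paper uses the exact identity $\|x-\ve[\kappa]\|_2^2=\|x\|_2^2-2x_\kappa+1$, averages over $\kappa$, and uses the total rotational symmetry of $x\mapsto\Xi^{\ssc[0]}_p(x)$ to kill the cross term $\sum_\kappa x_\kappa$. This symmetry step is essential for the sharp constant.
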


\begin{lemma}[Bounds on $\Xi^{\ssc[0],\iota}_p$]
\label{lemmapercboundXiiota0}
Let $p<p_c$. Then,
\begin{align}
	\lbeq{lemmapercboundXiiota0-bound-1}
	\sum_{x\in\Zd}\Xi^{\ssc[0],\iota}_p(x)\leq& \tau_{3,p} ( \ve[1]) \Big(1+\sum_{x\in\Zd} \diagRepulsiveLetter{D}_{1,1}(x)\Big),\\
	\lbeq{lemmapercboundXiiota0-bound-2}
	\sum_{x\in\Zd}\|x-\ve[\iota]\|_2^2\Xi^{\ssc[0],\iota}_p(x)\leq &\tau_{3,p}(\ve[1])\sum_{x\in\Zd} \|x\|_2^2 \diagRepulsiveLetter{D}_{1,1}(x),\\
	\lbeq{lemmapercboundXiiota0-bound-3}
	\sum_{x\in\Zd}\|x\|_2^2\Xi^{\ssc[0],\iota}_p(x)\leq&\tau_{3,p} ( \ve[1]) +\tau_{3,p} ( \ve[1])
	\sum_{x\in\Zd} \big(1+\|x\|_2^2\big) \diagRepulsiveLetter{D}_{1,1}(x).
\end{align}
Further,
\begin{align}
	\lbeq{lemmapercboundXiiota0-bound-4}
	\Xi^{\ssc[0],\iota}_{\alpha,{\sss I},p}(\ve[\iota])\leq & \tau_{3,p}(\ve[1]),\qquad
	\Xi^{\ssc[0],\iota}_{\alpha,{\sss II},p}(0)=0,\qquad \sum_\iota \Xi^{\ssc[0],\iota}_{\alpha,{\sss II},p}(\ve[\iota])\leq \tau_{3,p}(\ve[1]),\\
	\lbeq{lemmapercboundXiiota0-bound-5}
	&\sum_\iota \Xi^{\ssc[0],\iota}_{\alpha,{\sss I},p}(\ve[1]+\ve[\iota])\leq \tau_{3,p}(\ve[1]) (2d-1)p\tau_{3,p}(\ve[1]),
\end{align}
and
\begin{align}
	\lbeq{lemmapercboundXiiota0-bound-7}
	\sum_{x\in\Zd}\Xi_{{\sss R,I},p}^{\ssc[0],\iota} (x)\leq& \tau_{3,p}(\ve[1])\sum_{x\in\Zd}\diagRepulsiveLetter{D}_{2,2}(x),\\
	\lbeq{lemmapercboundXiiota0-bound-8}
	\sum_{x\in\Zd}\|x-\ve[\iota]\|_2^2\Xi_{{\sss R,I},p}^{\ssc[0],\iota} (x)\leq&
	\tau_{3,p}(\ve[1])\sum_{x\in\Zd} \|x\|_2^2 \diagRepulsiveLetter{D}_{2,2}(x),\\
	\lbeq{lemmapercboundXiiota0-bound-9}
	\sum_{x\in\Zd}\Xi_{{\sss R,II},p}^{\ssc[0],\iota} (x)\leq& \tau_{3,p} (\ve[1]) \sum_{x\in\Zd}\diagRepulsiveLetter{D}_{1,1}(x),\\
	\lbeq{lemmapercboundXiiota0-bound-10}
	\sum_{x\in\Zd}\|x\|_2^2\Xi_{{\sss R,II},p}^{\ssc[0],\iota} (x)\leq&  \tau_{3,p} ( \ve[1]) \sum_{x\in\Zd} \big(1+\|x\|_2^2\big)\diagRepulsiveLetter{D}_{1,1}(x).
\end{align}
The coefficient $\Pi^{\ssc[0],\iota,\kappa}_p$ can be bounded by
\begin{align}
	\lbeq{lemmapercboundXiiota0-bound-11}
	\sum_{\kappa}\Pi^{\ssc[0],\iota,\kappa}_{\alpha,p}(\ve[1])\leq&  2(d-1)\aap \tau_{3,p}(\ve[1]),\\
	\lbeq{lemmapercboundXiiota0-bound-13}
	\sum_{x,\kappa}\Pi^{\ssc[0],\iota,\kappa}_{{\sss R},p} (x)\leq& (2d)^2 p \tau_{3,p}(\ve[1])\sum_{x\in\Zd}\diagRepulsiveLetter{D}_{1,1}(x),\\
	\lbeq{lemmapercboundXiiota0-bound-14}
	\sum_{x,\iota,\kappa}\|x-\ve[\iota]-\ve[\kappa]\|_2^2\Pi^{\ssc[0],\iota,\kappa}_{{\sss R,I},p} (x)\leq&
	(2d)^2 p \tau_{3,p}(\ve[1])\sum_{x\in\Zd} \left(1+ \|x\|_2^2 \right)\diagRepulsiveLetter{D}_{1,1}(x).
\end{align}
\end{lemma}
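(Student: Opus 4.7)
The overall strategy is to exploit that $\Xi^{\ssc[0],\iota}_p(x) = \prob^{b_\iota}(E'(0, x; \{\ve[\iota]\}))$ with $b_\iota = (0, \ve[\iota])$: every occupied connection from $0$ to $x$ must pass through $\ve[\iota]$, and the absence of a cutting-edge pivotal enforces a doubly-connected structure on the $\ve[\iota]$-to-$x$ portion. Since the bond $b_\iota$ is vacant under $\prob^{b_\iota}$, the connection $0 \conn \ve[\iota]$ uses at least three bonds by parity of $\Zd$. First I would show by the BK inequality that
\begin{equation*}
E'(0, x; \{\ve[\iota]\}) \subseteq \{0 \conn \ve[\iota]\} \circ \{\ve[\iota] \dbc x\},
\end{equation*}
with the convention $\ve[\iota] \dbc \ve[\iota]$ holding always, so that
\begin{equation*}
\Xi^{\ssc[0],\iota}_p(x) \leq \prob^{b_\iota}(0\conn \ve[\iota])\bigl(\delta_{x,\ve[\iota]} + \prob_p(\ve[\iota]\dbc x)\bigr).
\end{equation*}
Coordinate symmetry bounds the first factor by $\tau_{3,p}(\ve[1])$, and the second by $\delta_{x,\ve[\iota]} + \diagRepulsiveLetter{D}_{1,1}(x-\ve[\iota])$, which gives \refeq{lemmapercboundXiiota0-bound-1} after summing in $x$. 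Bound \refeq{lemmapercboundXiiota0-bound-2} follows by placing the weight $\|x-\ve[\iota]\|_2^2$ on the double-connection factor (which vanishes at $x=\ve[\iota]$), and \refeq{lemmapercboundXiiota0-bound-3} by $\|x\|_2^2 \leq 2\|x-\ve[\iota]\|_2^2 + 2$ together with \refeq{lemmapercboundXiiota0-bound-1}--\refeq{lemmapercboundXiiota0-bound-2} and the $x = \ve[\iota]$ contribution.

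For the split bounds \refeq{lemmapercboundXiiota0-bound-4}--\refeq{lemmapercboundXiiota0-bound-5}, I recall from \refeq{Def-XiIota-Split-I}--\refeq{Def-XiIota-Split-II} that $\Xi^{\ssc[0],\iota}_{\alpha,{\sss II},p}$ extracts only the single-site $x = \ve[\iota]$ contribution, while $\Xi^{\ssc[0],\iota}_{\alpha,{\sss I},p}$ additionally extracts nearest-neighbor configurations carrying an explicit occupied edge $(\ve[\iota], x)$. Then \refeq{lemmapercboundXiiota0-bound-4} reduces to $\Xi^{\ssc[0],\iota}_p(\ve[\iota]) \leq \tau_{3,p}(\ve[1])$, and for \refeq{lemmapercboundXiiota0-bound-5} I would decompose the extracted neighbor event by BK as $\{0 \conn \ve[\iota]$ off $b_\iota\} \circ \{(\ve[\iota],x)$ occupied$\} \circ \{\ve[\iota] \conn x$ off $(\ve[\iota],x)\}$, yielding $\tau_{3,p}(\ve[1]) \cdot (2d-1)p \cdot \tau_{3,p}(\ve[1])$ after summing $x$ over the at most $2d-1$ neighbors of $\ve[\iota]$ distinct from $0$. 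The remainder bounds \refeq{lemmapercboundXiiota0-bound-7}--\refeq{lemmapercboundXiiota0-bound-10} follow by the same BK decomposition with the extracted contributions subtracted: in the $\alpha,{\sss I}$ split the remaining double connection from $\ve[\iota]$ to $x$ has both paths of length at least two, giving $\diagRepulsiveLetter{D}_{2,2}(x-\ve[\iota])$; in the $\alpha,{\sss II}$ split only the diagonal term is removed, so $\diagRepulsiveLetter{D}_{1,1}(x-\ve[\iota])$ suffices for $x \neq \ve[\iota]$. The weighted variants use the same $\|x\|_2^2$-to-$\|x-\ve[\iota]\|_2^2$ manipulation as before.

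For the $\Pi^{\ssc[0],\iota,\kappa}$ bounds, I would use $\Pi^{\ssc[0],\iota,\kappa}_p(x) = p \Psi^{b_\iota,\ssc[0],\kappa}(0, x; \{\ve[\iota]\})$, which compared to $\Xi^{\ssc[0],\iota}_p(x)$ carries the additional indicator $\indic{x-\ve[\kappa] \nin \tilde\Ccal^{(x, x-\ve[\kappa])}_{\sss 0}(0)}$ and the restriction $(x, x-\ve[\kappa]) \neq b_\iota$. For the extracted $x = \ve[\iota]$ term the bond restriction rules out $\kappa = \iota$; the cluster-avoidance indicator, conditioned on the cluster of $\ve[\iota]$ and applied on the event $\{0 \conn \ve[\iota]\}$, contributes a factor $\aap/p$ via the defining identity $\aap = p\prob(\ve[1] \nin \tilde\Ccal^{(0,\ve[1])}(0))$ together with translation and rotation symmetry, producing the required $2(d-1)\aap \tau_{3,p}(\ve[1])$ in \refeq{lemmapercboundXiiota0-bound-11} after carefully counting admissible direction pairs. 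Bounds \refeq{lemmapercboundXiiota0-bound-13}--\refeq{lemmapercboundXiiota0-bound-14} follow by trivially bounding the additional cluster indicator by $1$, summing over $(2d)^2$ pairs of $(\iota, \kappa)$, and applying the $\Xi^{\ssc[0],\iota}$ bounds established above, with an extra factor of $p$ from the definition of $\Pi$.

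The main obstacle will be the precise combinatorial accounting in \refeq{lemmapercboundXiiota0-bound-11}: the factor $2(d-1)$ requires correctly isolating the admissible directions $\kappa$ and correctly relating the cluster-avoidance probability on $\{0 \conn \ve[\iota]\}$ to $\aap$ through a conditioning-on-cluster argument rather than a direct BK-type bound, since the avoidance event is decreasing while the connection event is increasing.
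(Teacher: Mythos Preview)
Your BK-decomposition and the bounds \refeq{lemmapercboundXiiota0-bound-1}, \refeq{lemmapercboundXiiota0-bound-2}, \refeq{lemmapercboundXiiota0-bound-4}--\refeq{lemmapercboundXiiota0-bound-9} follow the paper's route closely. There are, however, two genuine gaps.

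\textbf{Bound \refeq{lemmapercboundXiiota0-bound-3} (and by the same mechanism \refeq{lemmapercboundXiiota0-bound-10}, \refeq{lemmapercboundXiiota0-bound-14}).} Your inequality $\|x\|_2^2 \le 2\|x-\ve[\iota]\|_2^2 + 2$ combined with \refeq{lemmapercboundXiiota0-bound-1}--\refeq{lemmapercboundXiiota0-bound-2} yields
\[
2\tau_{3,p}(\ve[1]) + 2\tau_{3,p}(\ve[1])\sum_{x}(1+\|x\|_2^2)\diagRepulsiveLetter{D}_{1,1}(x),
\]
which is a factor of two too large; it does not prove the stated bound. The paper instead exploits that the left-hand side of \refeq{lemmapercboundXiiota0-bound-3} is, by symmetry, independent of $\iota$, so after the substitution $x\mapsto x+\ve[\iota]$ one may average over $\iota$ and use
\[
\frac{1}{2d}\sum_{\iota}\|x+\ve[\iota]\|_2^2=\|x\|_2^2+1,
\]
since the cross term $\sum_\iota 2x_\iota$ vanishes. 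This symmetrisation trick is what produces the constant $1$ rather than $2$, and it is equally the mechanism behind the exact constants in \refeq{lemmapercboundXiiota0-bound-10} and \refeq{lemmapercboundXiiota0-bound-14}.

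\textbf{Bound \refeq{lemmapercboundXiiota0-bound-11}.} You correctly identify this as the delicate point, but the tool you reach for (``conditioning on the cluster of $\ve[\iota]$'') is not what is needed. The connection event $\{0\conn\ve[\iota]\}$ is increasing and the avoidance event $\{\ve[\iota]-\ve[\kappa]\nin\tilde\Ccal^{(\ve[\iota],\ve[\iota]-\ve[\kappa])}(0)\}$ is decreasing, so the Harris (FKG) inequality gives directly
\[
\Pi^{\ssc[0],\iota,\kappa}_p(\ve[\iota])\le p\,\tau_{3,p}(\ve[1])\,\prob\big(\ve[\iota]-\ve[\kappa]\nin\tilde\Ccal^{(\ve[\iota],\ve[\iota]-\ve[\kappa])}(\ve[\iota])\big)=\aap\,\tau_{3,p}(\ve[1]).
\]
The reduction of the direction count to $2(d-1)$ is then an overcounting argument: on $\{0\conn\ve[\iota]\}$ with $b_\iota$ vacant, at least two values of $\kappa$ give a vanishing contribution (one because $\ve[\iota]-\ve[\kappa]=0$ is always in the cluster, another because the penultimate vertex of any occupied path $0\to\ve[\iota]$ lies in $\tilde\Ccal$ after deleting the last bond). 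Your proposed conditioning argument does not supply the negative-correlation step that Harris handles cleanly.
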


\begin{proof}[Proof of Lemma \ref{lemmapercboundXi0}.]
We begin by simplifying the coefficients. Recall the definition of $\Xi^{\ssc[0]}_p(x)$ in \refeq{def-xi-zero}. Using the split \refeq{Def-Xi-Split}, \refeq{Def-Psi-Split-III} we extract from this the dominante nearest-neighbor contribution.
Thus, all contributions to the remainder term involve paths of length at least two. We conclude that
\begin{align}
	 \lbeq{XiZeroR-in-clear}
	\Xi^{\ssc[0]}_{{\sss R},p}(x)&=(1-\delta_{0,x})\prob_p \left( \{0\connLe{2} x\} \circ \{ x \connLe{2} 0\}\right).
\end{align}
Thus, the bounds on $\Xi^{\ssc[0]}_p$ and $\Xi^{\ssc[0]}_{{\sss R},p}$, stated in \refeq{lemmapercboundXi0-bound-1},  \refeq{lemmapercboundXi0-bound-4}, follow directly from the definition of the double connections \refeq{defpercRepDouble}. \\
Recall the definition of $\Psi^{\ssc[0],\kappa}_p$ in \refeq{Psi0def}. To bound $\Psi^{\ssc[N],\kappa}_p(x)$, we can use \refeq{XidominatespsiImproved} to bound it by $\tfrac {p}{\aap}\Xi^{\ssc[N]}_p(x)$. For $N=0$, however, we can improve upon this in two different ways that we now present.

First, since $\{0\dbc x\}$ is an increasing event, while $ \{x-\ve[\kappa]\nin \tilde{\Ccal}^{\{x,x-\ve[\kappa]\}}(x)\}$ is decreasing, we conclude from the Harris inequality, see e.g.\ \cite[Section 2.2]{Grim99}, that
	\eqn{
    \lbeq{Harris-bdeQ}
	\prob_p(\{0\dbc x \} \cap \{(x-\ve[\kappa])\nin\tilde{\Ccal}^{\{x,x-\ve[\kappa]\}}(x)\} )\leq \prob_p(0\dbc x )\prob_p((x-\ve[\kappa])\nin\tilde{\Ccal}^{\{x,x-\ve[\kappa]\}}(x)),
	}
so that
	\begin{align}
	\lbeq{Harris-bd}
	\Psi^{\ssc[0],\kappa}_p(x)\leq \tfrac {p}{\aap} \prob_p((x-\ve[\kappa])\nin
	\tilde{\Ccal}^{\{x,x-\ve[\kappa]\}}(x)) \Xi^{\ssc[0]}_p(x)= \Xi^{\ssc[0]}_p(x).
	\end{align}
We cannot use the same argument for $N\geq 1$ as the event $E'(\tb_{\sss N-1}, y; \tilde{\Ccal}_{\sss N-1})$ in the innermost expectation $\expec_{\sss N}^{\bb_{\sss N-1}}$, see \refeq{XiNdef}-\refeq{PsiNdef}, is not increasing.

A second way to improve upon \refeq{XidominatespsiImproved} for $N=0$ is to remove {\em overcounting:}
\label{overcounting} In every fixed configuration in which $\{0\dbc x \}$ occurs, there exist at least two bond-disjoint paths leading from $0$ to $x$. Thus, the event  $x-\ve[\kappa]\in\tilde{\Ccal}^{\{x,x-\ve[\kappa]\}}(x)$ occurs for at least two $\kappa$. As a result, when we sum over $\kappa$, each configuration can contribute at most $(2d-2)$ times to the sum, and we obtain the bound
	\begin{align}
	\sum_{\kappa} \Psi^{\ssc[0],\kappa}_p(x)\leq\frac p {\aap} (2d-2)\Xi^{\ssc[0]}_p(x).
	\end{align}
By symmetry also
	\begin{align}
	\sum_{x\in \Zd} \Psi^{\ssc[0],\kappa}_p(x)=\frac 1 {2d} \sum_{x,\kappa} \Psi^{\ssc[0],\kappa}_p(x)
	\leq\frac p {\aap} \frac {2d-2} {2d} \sum_x \Xi^{\ssc[0]}_p(x).
	\end{align}
The split using $\Psi^{\ssc[0],\kappa}_{\alpha,{\sss II},p}$ removed all contributions in which $x$ is connected to the origin via the direct bond $(0,x)$, which can only occur when $|x|=1$.
Therefore, any connection in the remainder $\Psi^{\ssc[0],\kappa}_{{\sss R, II},p}(x)$ of the split will use at least two bonds, see \refeq{Def-Psi-Split-II}-\refeq{Def-Psi-Split-III}.
Recalling \refeq{def-psi-zero} we conclude that
	\begin{align}
	\lbeq{PsiZeroR-in-clear}
	\Psi^{\ssc[0],\kappa}_{{\sss R, II},p}(x)&=(1-\delta_{0,x})\frac p \aap \prob_p(\{0\connLe{2} x\} \circ \{ x \connLe{2} 0\}
	\cap \{(x-\ve[\kappa])\nin\tilde{\Ccal}^{\{x,x-\ve[\kappa]\}}(x)\} ).
	\end{align}
We either use the Harris inequality as in \refeq{Harris-bdeQ} or remove the overcounting to obtain the bounds stated in  \refeq{lemmapercboundXi0-bound-7}-\refeq{lemmapercboundXi0-bound-8}.

In $\Psi^{\ssc[0],\kappa}_{\alpha,{\sss I},p}$, defined in \refeq{Def-Psi-Split-I}, we have extracted contributions for which $\|x-\ve[\kappa]\|_2\leq 1$ and $x$ is connected to the origin via one or two steps. Further, by definition, the contribution where $x=0$ does not contribute (see \refeq{def-xi-zero}). The remainder term is thus given by
	\begin{align}
	\frac \aap p  \Psi^{\ssc[0],\kappa}_{{\sss R, I},p}(x)=
	&\indic{\|x-\ve[\kappa]\|_1>1} \prob_p(\{0\dbc x\}\cap \{(x-\ve[\kappa])\nin \tilde{\Ccal}^{\{x,x-\ve[\kappa]\}}(x) \})\lbeq{bound-psi0-step1}\\
 	&+\delta_{x,\ve[\kappa]}\prob_p(\{0\connLe{3}\ve[\kappa]\}\circ\{0\connLe{3}\ve[\kappa]\}
	\cap \{(x-\ve[\kappa])\nin \tilde{\Ccal}^{\{x,x-\ve[\kappa]\}}(x) \})\nnb
	&+\indic{\|x-\ve[\kappa]\|_1=1}\prob_p(\{0\connLe{4}x\}\circ\{0\connLe{4}x\}
	\cap \{(x-\ve[\kappa])\nin
	\tilde{\Ccal}^{\{x,x-\ve[\kappa]\}}(x) \}).\nn
	\end{align}
Here, in the last term, the connection from the origin to $x$ requires at least $4$ steps (see $0\connLe{4}x$), as $|x|=2$, the contribution via two steps has already been removed and by the parity of the lattice.
To bound this we conditioning on the length of the connection between $0$ and $x$:
	\begin{align}
	\frac \aap p \Psi^{\ssc[0],\kappa}_{{\sss R, I},p}(x)\leq &
	\indic{x\nin\{-\ve[\kappa],\ve[\kappa]\}}\prob_p(\{0\connLe{\underline 1}x\}\circ\{0\connLe{3}x\}
	\cap \{(x-\ve[\kappa])\nin \tilde{\Ccal}^{\{x,x-\ve[\kappa]\}}(x)\})\nnb
	&+ \prob_p(\{0\connLe{2}x\}\circ\{0\connLe{2}x\}\cap \{(x-\ve[\kappa])\nin\tilde{\Ccal}^{\{x,x-\ve[\kappa]\}}(x)\}).
	\lbeq{bound-psi0-step2}
	\end{align}
For the first term, we use the Harris inequality as in \refeq{Harris-bdeQ} and bound $\prob_p(\{0\connLe{\underline 1}x\}\circ\{0\connLe{3}x\})$ explicitly. For the second term, we either remove the overcounting or use the Harris inequality as on page \pageref{overcounting} and obtain the bound stated in \refeq{lemmapercboundXi0-bound-5}.

To show the bound on the weighted diagram stated in \refeq{lemmapercboundXi0-bound-6} we first use
 \refeq{XidominatespsiImproved}, then $\|x-\ve[\kappa]\|_2^2=\|x\|_2^2-2x_\kappa+1$ and finally spatial symmetry
	\begin{align}
	\sum_{x\in \Zd} \|x-\ve[\kappa]\|_2^2& \Psi^{\ssc[0],\kappa}_{{\sss R, I},p}(x)
	=   \frac 1 {2d}
	\sum_{x,\kappa} \|x-\ve[\kappa]\|_2^2 \Psi^{\ssc[0],\kappa}_{{\sss R, I},p}(x)\nnb
	\leq&\frac 1 {2d}
	\frac {p} {\aap}\sum_{x\neq 0} \sum_{\kappa}(\|x\|_2^2+1-2 x_\kappa) \prob_p(0\dbc x) = \frac {p} {\aap}\sum_{x\neq 0} (\|x\|_2^2+1) \diagRepulsiveLetter{D}_{1,1}(x)\nn
	\end{align}
This completes the proof of all bounds stated in Lemma \ref{lemmapercboundXi0}.
\end{proof}

\begin{proof}[Proof of Lemma \ref{lemmapercboundXiiota0}.]
By the definition in  \refeq{tau-tauj-LEC-ident-NZero}, see also \refeq{317},
	\begin{align}
    	\lbeq{XiIotaZero-in-clear}
	\Xi^{\ssc[0],\iota}_{p}(x)&=\prob (\{0\ct{\{\ve[\iota]\}} x\} \circ \{\ve[\iota]\dbc x\}\mid (0,\ve[\iota])\text{ vacant} ).
	\end{align}
From \refeq{XiIotaZero-in-clear}, it follows immediately that
	\begin{align}
   	\lbeq{XiIotaZero-simple-comment}
	\Xi^{\ssc[0],\iota}_{p}(\ve[\iota])&= \tau_{3,p }(\ve[1]),\qquad \qquad\quad\Xi^{\ssc[0],\iota}_{p}(0)=0,\\
 	\lbeq{XiIotaZero-simplebound}
	\Xi^{\ssc[0],\iota}_{p}(x)&\leq \tau_{3,p }(\ve[1])\diagRepulsiveLetter{D}_{1,1}(x-\ve[\iota])
	\qquad \text { for }x\neq \ve[\iota],0.
      \end{align}
We conclude the bounds stated in \refeq{lemmapercboundXiiota0-bound-1}-\refeq{lemmapercboundXiiota0-bound-2} from this. To bound the weight $\|x\|_2^2$ we apply \refeq{XiIotaZero-simplebound}, average over $\iota$ and shift the sum over $x$:
\begin{align}
\sum_{x\in\Zd} \Xi^{\ssc[0],\iota}_{p}(x)\|x\|_2^2&\leq
\tau_{3,p }(\ve[1])+ \tau_{3,p }(\ve[1])\sum_{x\neq \ve[\iota]} \prob_p(\ve[\iota]\dbc x)\|x\|_2^2\nnb
&\leq \tau_{3,p }(\ve[1])+ \frac {\tau_{3,p }(\ve[1])}{2d} \sum_{x\neq 0}\sum_{\iota} \prob_p(0\dbc x)\|x+\ve[\iota]\|_2^2
\end{align}
Then, we use $\|x+\ve[\iota]\|_2^2= \|x\|^2_2+2\ve[\iota]+1$ and see that in the sum over $\iota$ the term with
$2\ve[\iota]$ cancels. In a final step we apply the bounds in \refeq{lemmapercboundXiiota0-bound-1}-\refeq{lemmapercboundXiiota0-bound-2} to obtain \refeq{lemmapercboundXiiota0-bound-3}.

For the bounds on $\Xi^{\ssc[0],\iota}_{\alpha,{\sss II},p}$ and $\Xi^{\ssc[0],\iota}_{{\sss R,II},p}$ stated in \refeq{lemmapercboundXiiota0-bound-4},   \refeq{lemmapercboundXiiota0-bound-9}, \refeq{lemmapercboundXiiota0-bound-10},
we recall that we have extracted the major contribution $\tau_{3,p}(\ve[1])$ from the coefficients.
Knowing this, these bounds are shown in the same way as \refeq{lemmapercboundXiiota0-bound-1}-\refeq{lemmapercboundXiiota0-bound-3}.

Using the term $\Xi^{\ssc[0],\iota}_{\alpha,{\sss I},p}$ defined in \refeq{Def-XiIota-Split-I} we extract $\Xi^{\ssc[0],\iota}_{p}(\ve[\iota])= \tau_{3,p }(\ve[1])$ and all contributions in which one connection of the double connection $\{\ve[\iota]\dbc x\}$ is realised by the direct bond $(\ve[\iota],x)$.  The bound on $\Xi^{\ssc[0],\iota}_{\alpha,{\sss I},p}$ stated in \refeq{lemmapercboundXiiota0-bound-4} follows from \refeq{XiIotaZero-simple-comment}.
For the bound in \refeq{lemmapercboundXiiota0-bound-5} we remark that $\iota=-1$ does not contribute to the sum.

In the remainder term $\Xi_{{\sss R,I},p}^{\ssc[0],\iota} (x)$, the connection $\ve[\iota]\conn x$ has length at least two. We use this information together with \refeq{XiIotaZero-simplebound} to obtain the bounds \refeq{lemmapercboundXiiota0-bound-7}-\refeq{lemmapercboundXiiota0-bound-8}.

To complete the proof, we still need to prove the bounds on $\Pi_p^{\ssc[0],\iota,\kappa}$, which is defined as
	\begin{align}
  	\lbeq{PiZero-in-clear}
	\shift\Pi^{\ssc[0],\iota,\kappa}_{p}(x)&=p\prob(\{0\conn \ve[\iota]\} \circ\{\ve[\iota]\dbc x\}
	\cap \{(x+\ve[\kappa])\nin\Ccal^{x,x+\ve[\kappa]}(x)\}\mid (0,\ve[\iota])\text{ is vacant} ),
	\end{align}
see \refeq{tau-tauj-LEC-ident-NZero} and \refeq{317}. It is easy to see that $\Pi^{\ssc[0],\iota,\kappa}_{p}(\ve[\iota])\leq \aap \tau_{3,p}(\ve[1])$ using the Harris inequality.
When summing over $\kappa,$ we note that for a given configuration at least two $\kappa$ do not contribute, namely $\kappa=-\iota$ and the direction of the last step of $\{0\conn\ve[\iota]\}$. So using an  overcounting argument, as on page \pageref{overcounting}, we obtain \refeq{lemmapercboundXiiota0-bound-11}.
In the remainder term $\Pi^{\ssc[0],\iota,\kappa}_{{\sss R},p}$ we know that $x\neq \ve[\iota]$, so that
	\begin{align}
  	\lbeq{Bound-PiRemainder-Simple}
	\sum_{x,\iota}\Pi^{\ssc[0],\iota,\kappa}_{{\sss R},p} (x)\leq& p\sum_{\iota,\kappa} \tau_{3,p}(\ve[\iota])
	\sum_{x\neq \ve[\iota]} \prob(\ve[\iota]\dbc  x)=(2d)^2 p\tau_{3,p}(\ve[1])\sum_{x\neq 0}\diagRepulsiveLetter{D}_{1,1}(x).
	\end{align}
and \refeq{lemmapercboundXiiota0-bound-13} holds. 
	\begin{align}
	\sum_{x,\iota}\|x-\ve[\iota]-\ve[\kappa]\|_2^2\Pi^{\ssc[0],\iota,\kappa}_{{\sss R},p} (x)
    	&\leq\sum_{\iota,\kappa} p\tau_{3,p}(\ve[\iota])\sum_{x\neq \ve[\iota]}\|x-\ve[\iota]-\ve[\kappa]\|_2^2 \prob(\ve[\iota]\dbc x)\nnb
	&=\sum_{\iota,\kappa}p \tau_{3,p}(\ve[\iota])\sum_{x\neq 0}\|x-\ve[\kappa]\|_2^2  \diagRepulsiveLetter{D}_{1,1}(x)\nnb
	&=2d \sum_{\kappa} p\tau_{3,p}(\ve[1])\sum_{x\neq 0}(\|x\|_2^2+1-2x_\kappa)\diagRepulsiveLetter{D}_{1,1}(x)\nnb
	&=(2d)^2p\tau_{3,p}(\ve[1]) \sum_{x\neq 0}\Big(1+\|x\|_2^2\Big)\diagRepulsiveLetter{D}_{1,1}(x).
\end{align}
This proves the last bound stated in Lemma \ref{lemmapercboundXiiota0}, and thereby completes the proof.
\end{proof}

\subsection{Bounding events}
\label{secBoundsPercEvents}
For $N\geq 1$, the NoBLE-coefficients are defined in terms the probability of $E'(x,y;A)$ events.
In this section, we show that these events are bounded by simpler events, whose probabilities we bound in the following sections. We adapt arguments that can be found in either \cite[Proof of Lemma~2.5]{HarSla90a} or \cite[Proof of Lemma~5.5.8]{MadSla93}.

Let ${\prob}^{{\ssc[N]}}$ denote the product measure on $N+1$ copies of percolation on $\Z^d$, where in the $i$th copy, all bonds emanating from $\bb_{i-1}$ are made vacant, i.e.,
    \eqn{
    \lbeq{PN-def}
    {\prob}^{{\ssc[N]}}
    =\prob^{\sss B}_{\sss 0}\times \prob_{\sss 1}^{\bb_0}\times \cdots\times \prob_{\sss N}^{\bb_{\sss N-1}}.
    }
Using Fubini's Theorem and \refeq{XiNdef}, we conclude that
    \eqan{
    &\Xi^{{\sss B},\ssc[N]}(x,y;A)     \lbeq{XiBAPlainWritten}\\
    &=
    \sum_{b_0,\ldots,b_{\sss N-1}}
    p^N
    {\prob}^{\ssc[N]}\left(\begin{array}{c}E'(x,\bb_0;A)_0\cap\{\tb_0\nin \tilde{\Ccal}_0\}\cap E'(\tb_{0}, \bb_{1}; \tilde{\Ccal}_{0})_1
    \cap\{\tb_1\nin \tilde{\Ccal}_1\}\\
    \cap \big( \bigcap _{i=2}^{N-1}E'(\tb_{i-1}, \bb_{i}; \tilde{\Ccal}_{i-1})_i
    \cap\{\tb_i\nin \tilde{\Ccal}_i\}\big)
    \cap E'(\tb_{\sss N-1}, y; \tilde{\Ccal}_{\sss N-1})_{\sss N}\end{array}\right),\nn
    }
where, for an event $F$, we write $F_i$ to denote that $F$ occurs on the $i$th percolation copy.

We next define events to bound $E'(\tb_{i-1}, \bb_{i}; \tilde{\Ccal}_{i-1})_i$. For increasing events $E,F$, we recall that $E\circ F$ denotes the event that $E$ and $F$ occur disjointly and focus first on the bounding events used to bound $\Xi^{\ssc[N]}_p(x)$. Note that $B=\varnothing$ for $\Xi^{\ssc[N]}_p(x)$. We define the events
    \eqan{
    F_0(b_0,w_0,z_1) =& \{0\conn \bb_0\}\circ
    \{0\conn w_0\}\circ \{w_0\conn \bb_0\}\nnb
    & \circ \{w_0\conn z_1\}\cap\{z_1\nin b_0\},
     \lbeq{Fdefa}\\
    F_{\sss N}(b_{\sss N-1},t_{\sss N},z_{\sss N},y) =& \{\tb_{\sss N-1}\conn t_{\sss N}\}\circ \{t_{\sss N}\conn z_{\sss N}\}
    \circ \{t_{\sss N}\conn y\}\nnb
    & \circ \{z_{\sss N}\conn y\}\cap\{\bb_{\sss N-1}\nin\{t_{\sss N},z_{\sss N},y\}\},
    \lbeq{FNdefa}
    }
for $N\geq 1$, and
    \eqan{
    F'(b_{i-1},t_i,z_i,b_i, w_i, z_{i+1})
    &= \{\tb_{i-1}\conn w_i\}\circ \{w_i\conn \bb_i\}\circ \{w_i\conn z_{i+1}\}\nn\\
    &\qquad \cap\{z_i=\bb_i= t_i\}\cap\{z_{i+1}\nin b_i\}
    \cap\{\bb_{i-1}\nin\{t_i,w_i,z_i,\bb_i\}\},
    \lbeq{F1defa}\\
    F''(b_{i-1},t_i,z_i,b_i, w_i, z_{i+1})
    &= \{\tb_{i-1}\conn w_i\}\circ \{w_i\conn t_i\}\circ \{t_i\conn z_i\}\circ \{t_i\conn \bb_i\} \nonumber\\
    &\qquad \circ \{z_i\conn \bb_i\}\circ \{w_i\conn z_{i+1}\}\cap\{z_i\neq \bb_i\}\cap\{w_{i}\neq t_i\}\nn\\
    &\qquad \cap\{z_{i+1}\nin b_i\} \cap\{\bb_{i-1}\nin\{t_i,w_i,z_i,\bb_i\}\},
    \lbeq{F2defa}\\
    F'''(b_{i-1},t_i,z_i,b_i, w_i, z_{i+1})
    &= \{\tb_{i-1}\conn t_i\}\circ \{t_i\conn z_i\}\circ \{t_i\conn w_i\}\circ \{z_i\conn \bb_i\}\nonumber\\
    &\qquad \circ \{w_i\conn \bb_i\} \circ \{w_i\conn z_{i+1}\}\cap\{z_i\neq \bb_i\}\cap\{z_{i+1}\nin b_i\}\nnb
    &\qquad \cap\{\bb_{i-1}\nin\{t_i,w_i,z_i,\bb_i\}\},
    \lbeq{F3defa}\\
    F(b_{i-1},t_i,z_i,b_i, w_i, z_{i+1})& =F'(b_{i-1},t_i,z_i,b_i, w_i, z_{i+1})\cup F''(b_{i-1},t_i,z_i,b_i, w_i, z_{i+1})\nnb
    &\qquad\cup F'''(b_{i-1},t_i,z_i,b_i, w_i, z_{i+1}),
    \lbeq{Fdefsa}
}
for $i\in\{1,2,\dots,N-1\}$. The events $F_0$, $F$, $F_{\sss N}$ are depicted in Figure~\ref{fig-F}.

\picPercolationFdefinition[0.8]

These events are similar to those used for the classical lace expansion, see e.g.\ \cite[Section~2.2]{HarSla90a} or \cite[Section 4]{BorChaHofSlaSpe05b}. The difference is that the NoBLE creates additional self-avoidance constraints, which we incorporate into the definition of the $F$-events. These conditions ensure that certain loops in the diagrams have length at least four. In order to bound $E'(\tb_{\sss N-1}, y; \tilde{\Ccal}_{\sss N-1})_{\sss N}$ in terms of these events, we define $E_{\text{vac}}(x)$ to be the event that all bonds that contain $x$  are vacant. Note that $\prob_p^{x}$\, defined in \refeq{tau-b-def-rep-full}, is the same as the percolation measure conditioned on $E_{\text{vac}}(x)$. We will now argue that
	\begin{align}
  	E'(\tb_{\sss N-1}, &y; \tilde{\Ccal}_{\sss N-1})_{\sss N}\cap E_{\text{vac}}(\bb_{\sss N-1})_{\sss N}
	\lbeq{E'bd}\\\nn
    	&\subset  \bigcup_{z_{\sss N}\in \tilde{\Ccal}_{\sss N-1}}
    	\bigcup_{t_{\sss N}\in \ver} F_{\sss N}(b_{\sss N-1},t_{\sss N},z_{\sss N},y)_{\sss N}
	\cap E_{\text{vac}}(\bb_{\sss N-1})_{\sss N},
	\end{align}
where we have defined $E'$ in \refeq{317}. For the event $E'$ to occur there must exist at least one vertex $z_{\sss N}\in\tilde {\Ccal}_{\sss N-1}$ that lies on the last sausage. We denote by $t_{\sss N}$ the first point of the last sausage. As we restrict to the configurations in which $E_{\text{vac}}(\bb_{\sss N-1})_{\sss N}$ holds, we know that only $\bb_{\sss N-1}\nin\{t_{\sss N},z_{\sss N},y\}$ contribute. Since $t_{\sss N},w_{\sss N},z_{\sss N},y\in {\tilde \Ccal}_{\sss N}$, we know that there exists a path of open edges connecting them as shown in Figure~\ref{fig-F}.  In the right-hand side of \refeq{E'bd}, we simply sum over all possible $t_{\sss N}$ and $z_{\sss N}$ and conclude that \refeq{E'bd} holds. Next we argue that, for $N \geq 1$ and $i \in \{1,2,\dots,N-1\}$,
	\begin{align}
    	E'(\tb_{i-1}, \bb_{i}; \tilde{\Ccal}_{i-1})_{i}&
    	\cap \{z_{i+1}\in  \tilde{\Ccal}_i^{b_i}(\tb_{i-1})\} \cap \{ \tb_{i}\nin \tilde{\Ccal}_{i}\}
	\cap E_{\text{vac}}(\bb_{\sss i-1})_i\nn\\
    	\subset \bigcup_{z_{i}\in \tilde{\Ccal}_{i-1}}&  \bigcup_{t_i,w_i\in \ver}
    	F(b_{i-1},t_{i},z_{i},b_{i}, w_{i}, z_{i+1})_{i}\cap E_{\text{vac}}(\bb_{\sss i-1})_i.
    	\lbeq{Eind}
	\end{align}
If $E'(\tb_{i-1}, \bb_{i}; \tilde{\Ccal}_{i-1})_{i}$ occurs, then a string of sausages connects $\tb_{i-1}$ and $\bb_{i}$ and the last sausage of the string is cut through by $\tilde{\Ccal}_{i-1}$.
We denote the ``first'' point  of the last sausage by $t_i$. We identify the first point that every path from $\tb_{i-1}$ to $\bb_{i}$ and from $\tb_{i-1}$ to $z_{i+1}$ share by $w_i$.
By $z_i\in \tilde{\Ccal}_{i-1}\cap \tilde{\Ccal}_i^{b_i}(\tb_{i-1})$ we identify one point in the last sausage, where $\tb_{i-1}\conn \bb_{i}$ is cut through. As $z_i\in \tilde{\Ccal}_{i-1}\cap \tilde{\Ccal}_i^{b_i}(\tb_{i-1})$, with $\tilde{\Ccal}_{j} = \tilde{\Ccal}_j^{b_j}(\tb_{j-1})\cup\{\bb_{j-1}\}$, while $\tb_i\not\in \tilde{\Ccal}_{i}$ and $\bb_i\not\in \tilde{\Ccal}_{i+1}^{b_{i+1}}(\tb_{i})$ since $E_{\text{vac}}(\bb_{i})_{i+1}$ occurs, we know that $z_{i+1}\not \in b_i$.
The restriction on configurations for which $E_{\text{vac}}(\bb_{\sss i-1})_{i}$ occurs, further guarantees that $\bb_{i-1}\nin\{t_i,w_i,z_i,\bb_i\}$.\\
Now we distuinguish between three different cases, characterized by $F'$,$F''$,$F'''$, depending on the relative position of $w_i,t_i$.
The event $F'$ represents the cases for which $z_i=\bb_i$, in which case we define $t_i=\bb_i$.
The event $F''$ represents cases for which $z_i\neq \bb_i$ and that $w_i$ is before the last sausage.
As $t_i$ is on the last sausage we know for this event that $w_i\neq t_i$.
The event $F'''$ captures the configurations in which $w_i$ is in the last sausage.
If $w_{i}$ is in the last sausage, then we choose $z_{i}$ such that it is on the opposite side of the sausage, i.e., we choose $z_i$ such that there exist two bond-disjoint paths from $t_{i} $ to $\bb_i$ such that $z_i$ lies on the path and $w_i$ on the other path. This is always possible as a sausage is formed by a double connection and, since this sausage is cut though by $\tilde{\Ccal}_{i-1}$, all connections contain an element of $\tilde{\Ccal}_{i-1}$. \\
The event at level $0$ is bounded in the same way using $F_0$. As this is completely analogous, we omit further discussions here. We conclude from \refeq{E'bd} and \refeq{Eind} that
    	\eqan{
    	\lbeq{XiFs}
    	\Xi_p^{\ssc[N]}(x)
    	& \leq \sum_{\vec{t},\vec{w},\vec{z},\vec b}
    	p^N \prob_p (F_{\sss 0}(b_0,w_0,z_1)\cap\{\tb_0\nin \tilde{\Ccal}_0\})\\
        & \quad \times
    	\prod_{i=1}^{N-1} \prob_p^{\bb_{i-1}}(F_{\sss i}(b_{i-1},t_i,z_i,b_i, w_i, z_{i+1})
	\cap\{\tb_i\nin \tilde{\Ccal}_i\})
    	\prob_p^{\bb_{\sss N-1}}(F_{\sss N}(b_{\sss N-1},t_{\sss N},z_{\sss N},x)),\nn
    	}
where the summation is over $\vec{t}= (t_0,\ldots,t_{\sss N})$, $\vec{w}= (w_0,\ldots,w_{\sss N-1}), \vec{z}= (z_1,\ldots,z_{\sss N})$ and $\vec b=(b_0,\dots b_{\sss N-1})$. The probabilities in \refeq{XiFs} factor as the events $F_0,\ldots, F_{\sss N}$ occur on different percolation configurations and are thus independent.
If we would at this point follow the classical lace expansion, then we would apply the BK-inequality on \refeq{XiFs} and obtain a bound on $\Xi^\ssc[N]_p$ in terms of combinations of two-point functions $\tau_p$. In doing so, we would lose the information that all loops have length at least four and that the intersection at $z_{i+1}$ cannot occur at the bond $b_i$.

For our bounds, we use one additional property of the diagram that we now explain. Recall the definition of repulsive diagrams in Section \ref{secBoundsSimpleDiagrams}. In most cases, we can choose $z_i$ such that there exists a path from $w_{i-1}$ to $z_i$ in $\tilde{\Ccal}_{i-1}$ that intersects $\tilde{\Ccal}_i$ only at its endpoint $z_i$. We can bound such events using repulsive diagrams. Indeed, if $w_{i}$ is not in the last sausage, then we simply define $z_i$ to be the point in $\tilde{\Ccal}_{i-1}\cap \tilde{\Ccal}_{i}$ with the smallest intrinsic distance to $\bb_{i-1}$. In this case, all paths involved in the above connections are bond disjoint, even when they occur in different levels.

There is one exception, in which we have to resort to non-repulsive diagrams. Indeed, if $w_i$ is in the last sausage in $\tilde{\Ccal}_{i-1}$ containing $\bb_i$, then it can occur that, for every choice of $z_i$, every path from $z_{i}$ to $\bb_{i-1}$ in $\tilde{\Ccal}_{i-1}$ contains at least one bond of any path in $\tilde{\Ccal}_{i}$ connecting $w_{i}$ to $z_{i+1}$, see Figure \ref{fig-intersectSausage} for an example. As we cannot exclude this case, we resort to non-repulsive diagrams to bound the $F'''$ events in which $w_i$ and $z_i$ are both in the sausage containing $\bb_i$.

\begin{figure}[h]
\begin{center}
\picIntersectionSausage[1.2]
\caption{A configuration in which we can not define $z_{i}$ and $w_{i}$ such that the path $\bb_{i-1}\conn z_i$ in $\tilde{\Ccal}_{i-1}$ and the path $z_{i+1}\conn w_i$ in $\tilde{\Ccal}_{i}$ are bond-disjoint. The reason is that all paths connecting $\tb_{i-1}$ to $z_{i+1}$ in $\tilde{\Ccal}_{i}$ (indicated in dark dashed style), as well as all paths in  $\tilde{\Ccal}_{i-1}$ cutting through the double connection from $\tb_{i-1}$ to $\bb_i$ (indicated in solid lines), use the bond $\{u,w_i\}$.} \label{fig-intersectSausage}
\end{center}
\end{figure}
\noindent
Next, we derive a bound as in \refeq{XiFs} for $\Xi^{\ssc[N]}_p(x)$,  but now for $\Xi^{\ssc[N],\iota}_p(x)$ instead, see \refeq{tau-LEC-ident} and \refeq{tau-tauj-LEC-ident1} for their definitions. For $N\geq 1$, $\Xi^{\ssc[N],\iota}_p(x)$ is the sum of two terms. The only difference between the first term $\Xi_p^{b_\iota,\ssc[N]}(0,x;\{\ve[\iota]\})$ and $\Xi_p^{\ssc[N]}(x)=\Xi_p^{\sss \varnothing,\ssc[N]}(0,x;\{0\})$ is at the level of the graphs $0$ and $1$ that describe the configurations $\omega_0$ and $\omega_1$.
Thus, we can use \refeq{E'bd} and \refeq{Eind} to estimate the event defining $\Xi_p^{b_\iota,\ssc[N]}(0,x;\{\ve[\iota]\})$ on levels $i=2,\dots,N$. Since $0\in b_{\iota}$, $\mathrm{B}(\tilde{\Ccal}_{0})\cup \{b_\iota\}=\mathrm{B}(\tilde{\Ccal}_{0})$ and we can use \refeq{Eind} also to bound the event on level $i=1$. To bound the event on level $0$, we define
 \eqan{
    F^{\iota,{\sss \rm I}}_0(b_0,w_0,z_1) =& \{0\conn \ve[\iota]\}\circ
    \{\ve[\iota]\conn w_0\}\circ \{w_0\conn \bb_0\} \circ \{w_0\conn z_1\}\circ \{\ve[\iota]\conn \bb_0\}\nnb
    &\cap\{z_1\nin b_0\}\cap \{b_\iota \text{ is vacant}\},  \lbeq{Fiota1defa}\\
    F^{\iota,{\sss \rm II}}_0(b_0,w_0,z_1) =& \{0\conn w_0\}\circ \{w_0\conn \ve[\iota]\} \circ \{\ve[\iota]\dbc\bb_0\} \circ \{w_0\conn z_1\}\nnb
    &\cap\{w_0\neq \ve[\iota]\}\cap\{z_1\nin b_0\}\cap \{b_\iota \text{ is vacant}\},
     \lbeq{Fiota2defa}\\
    F^{\iota,{\sss \rm III}}_0(b_0,w_0,z_1) =& \{0\conn z_1\}\circ  \{b_\iota \text{ is occupied}\} \cap\{z_1\nin b_0\}
    \cap\{w_0=0\}\cap\{b_0=(0,\ve[\iota])\}.
     \lbeq{Fiota3defa}
}
See Figure \ref{fig-F-iota} for diagrammatic representations of these events.\\
\picPercolationFIotadefinition[0.8]

\noindent
Now, we first argue that
	\begin{align}
	\lbeq{E'-FI-II-inclusion}
	\shift E'(0, \bb_{0}; \{\ve[\iota]\})_0\cap \{ z_{1}\in \tilde{\Ccal}_{0}\}\cap \{b_\iota\text{ is vacant}\}
    	&\subset \bigcup_{w_0\in \Zd} \left(F^{\iota,{\sss \rm I}}_0(b_0,w_0,z_1)
	\cup F^{\iota,{\sss \rm II}}_0(b_0,w_0,z_1)\right).
	\end{align}
Indeed, if $E'(0, \bb_{0}; \{\ve[\iota]\})_0$ occurs, then there exists a path of occupied bonds from $0$ to $\ve[\iota]$. As $\ve[\iota]$ cuts the connection $0\conn\bb_0$, either $\bb_0=\ve[\iota]$ or $\ve[\iota]$ and $\bb_0$ are connected by a sausage. We denote by $w_0$ the last point that the connections $0\conn\bb_0$ and $0\conn z_1$ share. If $w_0$ is on the last sausage, then the event is part of $F^{\iota,{\sss \rm I}}_0$ and otherwise part of $ F^{\iota,{\sss\rm II}}_0$. This proves \refeq{E'-FI-II-inclusion} and bounds the events on the first level of $\Xi_p^{b_\iota,\ssc[N]}(0,x;\{\ve[\iota]\})$.
We conclude that $\Xi_p^{b_\iota,\ssc[N]}(0,x;\{\ve[\iota]\})$ is bounded as in \refeq{XiFs} when $F(b_0,w_0,z_1)$ is replaced by $F^{\iota,{\sss\rm I}}_0(b_0,w_0,z_1)\cup F^{\iota,{\sss\rm II}}_0(b_0,w_0,z_1)$.\\[2mm]
The second part of $\Xi_p^{\ssc[N],\iota}(x)$, defined in \refeq{tau-tauj-LEC-ident1}, is given by
	\begin{eqnarray}
	\lbeq{XiIotaDefPart2}
	p\expect_0^{b_\iota}\left[\indic{\ve[\iota]\nin\tilde \Ccal^{b_\iota}(0)}
	\Xi_p^{b_\iota,\ssc[N-1]}(\ve[\iota],x;\Ccal^{b_\iota}(0))\right].
	\end{eqnarray}
We apply \refeq{XiBAPlainWritten} with $A=\tilde\Ccal^{b_\iota}(0)$ and $B=\{b_\iota\}$ and use \refeq{E'bd} and \refeq{Eind} to bound the event at levels $i=2,\dots,N$. For $i=1$,
\begin{align}
    E'(\ve[\iota], \bb_{1};& \tilde\Ccal^{b_\iota}(0))_{1}
    \cap
    \{ z_{2}\in \tilde\Ccal_1 \}\cap \{ \tb_{1}\nin \tilde{\Ccal}_{1}\} \cap \{b_\iota \text{ is vacant}\}\nnb
    \lbeq{Eind2}
    &\subset \bigcup_{z_{2}\in \tilde\Ccal_1}  \bigcup_{t_1,w_1\in \Zd}
    F(\ve[\iota],t_{1},z_{1},\bb_{1}, w_{1}, z_{2})_{1}\cap \{b_\iota \text{ is vacant}\}.
	\end{align}
For level $0$ in \refeq{XiIotaDefPart2} we see that
	\begin{eqnarray}
	\{ z_{1}\in \tilde{\Ccal}_{0}\}\cap \{\ve[\iota]\nin\tilde \Ccal^{b_\iota}_0(0)\}
	\cap \{b_\iota\text{ is vacant}\} &\subset& F^{\iota,{\sss \rm III}}_0( (0,\ve[\iota]),0,z_1).
	\end{eqnarray}
Thus, the term in \refeq{XiIotaDefPart2}	is bounded as in \refeq{XiFs} when replacing $F(b_0,w_0,z_1)$ by
$F^{\iota,{\sss\rm III}}_0(b_0,w_0,z_1)$.


In this section we have obtained bounds on $\Xi_p^{\ssc[N]}(x)$ and $\Xi_p^{\ssc[N],\iota}(x)$ in terms of sums over $\vec{t},\vec{w},\vec{z},\vec b$ of products of probabilities of bounding events as in \refeq{XiFs}. The bounding events, in turn, can be bounded using products of two-point functions, or by repulsive diagrams. This leads to enormous sums of complicated diagrams. To structure such sums more effectively, we reformulate them in terms of building blocks in the next section.

\section{Bounding diagrams for $N\geq 1$}
\label{sec-bounds-Ngeq1}
In Section \ref{secBoundsPercDefinitionBlocks}, we define the building blocks that we use to bound the NoBLE coefficients. These building block are defined as combinations of simple diagrams that we can bound numerically, see \cite[Section 5]{FitHof13b}. In Section \ref{sec-bounds-N=1}, we then provide the bounds for $N=1$. In Section \ref{secPercostatingtheBounds}, we extend the bounds to $N\geq 2$.
For $N\geq 2$, we only give bounds on $\Xi^{\ssc[N]}_p$ and $\Xi^{\ssc[N],\iota}_p$. Bounds on $\Psi^{\ssc[N],\kappa}_p$ and $\Pi^{\ssc[N],\iota,\kappa}_p$ follow from \refeq{XidominatespsiImproved}. The proof of these bounds is  discussed in the next section.

\subsection{Building blocks}
\label{secBoundsPercDefinitionBlocks}
The coefficients of the lace expansion are usually displayed as diagrams.
Reviewing the bounding events, which we have created in the preceding section,
the coefficient $\Xi_p^{\ssc[4]}$ is shown in Figure \ref{fig-Form-Xi4}.
\begin{figure}[!htb]
\begin{center}
\picXiStructure[1]
\caption{Diagrammatic representations of the bound on $\Xi_p^{\ssc[4]}(x)$. Lines indicate disjoint connections. Shaded triangles might be trivial.
On the bottom we mark the $F$-event corresponding to the part of the diagram.}
\label{fig-Form-Xi4}
\end{center}
\end{figure}

\noindent
In this section we informally define the simple diagrams that serve as building blocks for our diagrammatic bounds.
In the following sections, we combine these building blocks to construct the bounds on the coefficients, as shown informally in Figure \ref{fig-Form-Xi4-Decomposed}.
\begin{figure}[!htb]
\begin{center}
\picXiDecomposeStructure[0.75]
\caption{Diagrammatic decomposition of $\Xi_p^{\ssc[N]}(x)$. The numbers $l_i$ denote the lengths of shared connections. } \label{fig-Form-Xi4-Decomposed}
\end{center}
\end{figure}

\noindent
The diagrams of the NoBLE-coefficients have stronger repulsive properties than the coefficients of the classical lace expansion. For example, we know that $z_{i+1}\nin b_i$ and that all non-trivial closed triangles/squares consist of at least four occupied bonds.
We use this to obtain sharper bounds. We incorporate information on the lengths of connections shared by two blocks into the definition of the building blocks. This way we can combine the blocks such that all non-trivial loops have length at least four.
We decompose a diagram as shown in Figure \ref{fig-Form-Xi4-Decomposed}, where we denote the length of a line that two squares share by $l_i$.
The length of a connection corresponds to the number of bonds used by the shortest connected path of occupied bonds.
To obtain the infrared bound in $d\geq \dmin$ we distinguish between the  three cases $l_i=0,1$ and $l_i\geq 2$.

\newcolumntype{L}[1]{>{\raggedright\arraybackslash}p{#1}}
\newcolumntype{M}[1]{>{\centering\arraybackslash}p{#1}}
\begin{table}[!htbp]
\centering
We define $a=d_{\Ccal}(0,v)$ and $b=d_{\Ccal}(x,y)$ as the intrinsic distance in the percolation cluster between the points $0$ and $v$ and $x$ and $y$, respectively. We consider $a,b\in\{0,1,\geq 2\}$.
\begin{tabular}[]{|M{3cm}|L{9cm}|}
\hline
\multicolumn{2}{|c|}{Repulsive triangles $P^{{\sss \rm S},b}(x,y)$ and $P^{{\sss \rm E},b}(x,y)$}\\
\hline
\picTable[0.6]& \vspace{-1.5cm} When $x=0$, the triangle shrinks to a point. If $x\neq 0$, then the triangle consists of at least four bonds. For $P^{{\sss \rm E},b}$ we know that $y\neq 0$ and $b\geq 1$.\\
\hline
\multicolumn{2}{|c|}{Open triangle $A^{a,b}(0,v,x,y)$}\\
\hline
\picA[0.6]&  \vspace{-1.6cm} The complete square consists of at least four bonds and $x,y\neq 0$ and $x\neq v$.
The missing connection $0\conn v$ contributes to the neighboring block.\\
\hline
\multicolumn{2}{|c|}{\rule{0pt}{8pt}  Open repulsive triangle with one pivotal edge $A^{\iota,a,b}(0,v,x,y)$, }\\
\hline
\picAIota[0.5]&  \vspace{-1.8cm}  Alike an open triangle with the additional property that $y,v\neq\ve[\iota]$.\\
\hline
\multicolumn{2}{|c|}{\rule{0pt}{10pt}  Double-open bubble $\bar A^{\iota,a,b}(0,v,x,y)$}\\[1mm]
\hline
\picABar[0.5]&  \vspace{-1.5cm} Alike $A^{\iota,a,b}$. Both connections $0\conn v$ and $x\conn y$ contribute to the neighboring blocks. \\
\hline
\multicolumn{2}{|c|}{ \rule{0pt}{9pt}  Open non-repulsive diagram $A^{\iota,a,b,*},\ A^{a,b,*}$ and $\bar A^{\iota,a,b,*}$}\\[1mm]
\hline
&  \vspace{-0.2cm}
 Alike $A^{\iota,a,b}(0,v,x,y),\  A^{a,b}(0,v,x,y)$ and $\bar A^{\iota,a,b}(0,v,x,y)$, with the difference that if $b\neq 0$ the connections are not repulsive.
   \vspace{0.2cm} \\
\hline
\multicolumn{2}{|c|}{\rule{0pt}{9pt}  Double non-trivial triangle $B^{{\sss (2)},\iota,a,b}(0,v,x,y)$, right}\\[1mm]
\hline
\picBTwoPrime[0.8]& \vspace{-2.4cm} A combination of a closed triangle and an open square.
All points $(u,w,y)$ of the small triangle are distinct and $u,w\nin \{0,x\}$.\\
\hline
\multicolumn{2}{|c|}{\rule{0pt}{10pt}Double non-trivial triangle $\bar B^{{\sss (2)},\iota,a,b}(0,v,x,y)$, left}\\[1mm]
\hline
\hspace{-0.5cm}\picBbarPrimeTwo[0.75]& \vspace{-2.2cm} A combination of a closed triangle and an open square.
The small triangle is non-trivial, i.e., $w\neq u,y$ and $x\neq u$, and also $0\nin \{w,w+\ve[\iota]\}$.\\
\hline
\multicolumn{2}{|c|}{The initial piece $P^{\iota,a}$ for $\Xi^{\iota}_p$}\\
\hline
\multicolumn{2}{|c|}{
\begin{tabular}{M{5cm}|L{7cm}}
  \picPiIota[0.7] & \vspace{-1.8cm} When the shaded diagrams are non-trivial, their loop consist of at least four bonds.
The length of $0\conn \ve[\iota]$ is at least three, except when $y=0,x=\ve[\iota]$ and $b=0$.  In the second diagram, $y\neq\ve[\iota]$.
\end{tabular}}\\
\hline
\end{tabular}
\caption{The unweighted building blocks: $P^{{\sss \rm S}}$, $P^{{\sss \rm E}}$, $P^{\iota}$, $A^{\iota}$, $A$, $\bar A$, $B^{{\sss (2)},\iota}$, $\bar B^{{\sss (2)},\iota}$.}
\label{InformalDefinitonOfBlock}
\end{table}

\begin{table}[!ht]
\centering
\begin{tabular}[]{|M{4cm}|M{4cm}|L{4cm}|}
\hline
\multicolumn{3}{|c|}{Weighted double open bubble}\\
\hline
\picDelta[0.8]& \picDeltaIotaTwo[0.8] & \picDeltaIotaThree[0.8]\\
 $H^{{\sss (1)},a,b}(0,v,x,y)$ &
 $H^{{\sss (2)},\iota,a,b}(0,v,x,y)$&
 $H^{{\sss (3)},\iota,a,b}(0,v,x,y)$ \\
 $v=y$ is possible &
 $v=y$ is possible &
 $v\neq y$ \\
 \multicolumn{3}{|c|}{Each complete square consists of at least four bonds.}\\
\hline
\multicolumn{3}{|c|}{Weighted intermediate piece}\\
\hline
\multicolumn{2}{|L{8cm}|}{\picWeightConstructOne[0.5]} & \vspace{-1.8cm} with triangle on top $C^{{\sss (1)},\iota,\kappa,a,b}(0,v,x,y)$\\
\hline
\multicolumn{2}{|L{8cm}|}{\picWeightConstructTwo[0.5]}& \vspace{-1.8cm} with triangle on bottom $C^{{\sss (2)},\iota,\kappa,a,b}(0,v,x,y)$\\
\hline
\multicolumn{3}{|c|}{The weighted initial piece for $\Xi^{\iota}_p$}\\
\hline
\multicolumn{2}{|L{8cm}|}{\picWeightPiIota[0.5]}& \vspace{-1.8cm} $h^{\iota,\kappa,b}(0,v,x,y)$\\
\hline
\end{tabular}
\caption{The weighted building blocks: $H^{\sss (1)}, H^{{\sss (2)},\iota}, H^{{\sss (3)},\iota},C^{\sss (1)}, C^{\sss (2)}$ and $h^{\iota,\kappa}$.}
\label{InformalDefinitonOfBlockDelta}
\end{table}

The formal definition is quite lengthly, as we need $12$ different building blocks, where each depends on two parameters $a,b\in\{0,1,\geq 2\}$. Thus, we introduce them in this section only informally in Tables \ref{InformalDefinitonOfBlock} and \ref{InformalDefinitonOfBlockDelta}.
\iflongversion
The precise definition of these diagrams is given in Appendix \ref{app-bounds}.
\else
The formal definition of the 100 different cases for the blocks can be found in the appendix of the extended version of this paper \cite{FitHof13d-ext}.
 \fi
To give an idea, we define the block that we use to bound the initial and final triangle:
\begin{eqnarray}
  \lbeq{Pa-exampledefinition}
P^{{\sss \rm S},0}(x,y)&=& \delta_{x,y}\prob(0\dbc x),\\
P^{{\sss \rm S},1}(x,y)&=& \delta_{0,y}\diagRepulsiveLetter{B}_{\underline 1,3}(x,0)+\diagRepulsiveLetter{T}_{ 1,\underline 1,1}(x,y,0),\\
P^{{\sss \rm S},2}(x,y)&=& \delta_{0,y}\diagRepulsiveLetter{D}_{2,2}(x)+\diagRepulsiveLetter{T}_{1,2,1}(x,y,0).
\end{eqnarray}
We combine the diagrams to create larger diagrams. We define $B^{\iota,a,b}$ and $\bar B^{\iota,a,b}$ by
\begin{align}
\lbeq{def-Biota-perc}
B^{\iota,a,b}(0,v,x,y)=&\sum_{u,w}\sum_{c=0}^2 A^{\iota,a,c,*}(0,v,u,w)A^{c,b}(u,w,x,y)\nnb
&+\sum_u A^{\iota,a,b}(0,v,x,u)P^{0}(y-u,y-u) +B^{{\sss (2)},\iota}(0,v,x,y),\\
\bar B^{\iota,a,b}(0,v,x,y)=&\sum_{u,w}\sum_{c=0}^2 A^{\iota,a,c}(0,v,w,u)A^{c,b,*}(w,u,x,y)\nnb
&+A^{\iota,a,b}(0,v,x,y)+\bar B^{{\sss (2)},\iota,a,b}(0,v,x,y).
\end{align}
For example, the block $B^{\iota,a,b}(0,v,x,y)$ corresponds to the middle pieces in Figure \ref{fig-Form-Xi4-Decomposed}.
The non-repulsive diagrams are used to bound the combination of squares, corresponding to the event $F'''$, see Section
\ref{secBoundsPercEvents}. Most weighted diagrams are defined as combinations of unweighted diagrams, e.g.,
\begin{align}
h^{\iota,\kappa,b}(x,y)=&\sum_{a=0}^2\sum_{u,v} \left(\delta_{0,a}\delta_{\kappa,\iota}\delta_{0,u}\delta_{0,v} +P^{\iota,a}(u,v)\right)\bar A^{\kappa,a,b}(u,v,x,y)\|x-\ve[\iota]\|_2^2.
 \end{align}
We also use the following adaptation of $h^{\iota,\kappa}$:
\begin{align}
\lbeq{defHiotaII}
h^{\iota,\kappa,{\sss\rm II},b}(x,y)=& h^{\iota,\kappa,b}(x,y)+
\sum_{c=0}^2\sum_{w,t} A^{\iota,0,c}(0,0,w,t) \sum_{\kappa_2} \bar A^{\kappa_2,c,b,*}(t,w,y,x)\|x-\ve[\iota]\|_2^2\nnb
&+ \sum_{a,c=0}^2\sum_{u,v,w,t}  P^{\iota,a}(u,v) A^{\kappa,a,c}(u,v,w,t)
 \sum_{\kappa_2} \bar A^{\kappa_2,c,b,*}(t,w,y,x)\|x-\ve[\iota]\|_2^2.
\end{align}
\subsection*{Elements of the bounds.}
Here we define the objects which we use to state the bounds on the coefficients.
We define the vectors $\vec P^{\sss \rm S},\vec P^{\sss \rm E}\in\Rbold^{3}$ and the matrices ${ \bf {\bar A}}^\iota, {\bf A}^\iota, {\bf A}$,
${ \bf {\bar A}}^{\iota,*}, {\bf A}^{\iota,*}, {\bf A}^*$, ${\bf B},{\bf \bar B}\in\Rbold^{3\times 3}$ by
\begin{eqnarray*}
(\vec P^{\sss \rm S})_b= \sum_{x,y} P^{{\sss \rm S},b}(x,y), && (\vec P^{\sss \rm E})_b= \sum_{x,y} P^{{\sss \rm E},b}(x,y),
\\
({\bf A}^\iota)_{a,b}= \sup_{v\in\Zd}\sum_{\iota,x,y} A^{\iota,a,b}(0,v,x,y),&&
({\bf A}^{\iota,*})_{a,b}= \sup_{v\in\Zd}\sum_{\iota,x,y} A^{\iota,a,b,*}(0,v,x,y),\\
({\bf A})_{a,b}= \sup_{v\in\Zd}\sum_{x,y} A^{a,b}(0,v,x,y),&&
({\bf A}^{*})_{a,b}= \sup_{v\in\Zd}\sum_{x,y} A^{a,b,*}(0,v,x,y),\\
(\bar {\bf A}^\iota)_{a,b}= \sup_{v,y\in\Zd}\sum_{\iota,x,y} \bar A^{\iota,a,b}(0,v,x,x+y),&&
(\bar {\bf A}^{\iota,*})_{a,b}= \sup_{v,y\in\Zd}\sum_{\iota,x,y} \bar A^{\iota,a,b,*}(0,v,x,x+y),\\
({\bf B})_{a,b}= \sup_{v\in\Zd}\sum_{\iota,x,y} B^{\iota,a,b}(0,v,x,y),&&({\bf \bar  B})_{a,b}= \sup_{v\in\Zd}\sum_{\iota,x,y} \bar B^{\iota,a,b}(0,v,x,y).
\end{eqnarray*}
These are sufficient to state the bounds on $\hat \Xi_p^{\ssc[N]}(0)$. For bounds on weighted diagrams we define the vectors $\vec h^{\sss \rm S}$, $\vec h^{\sss \rm E}$ and the matrices ${\bf H}^{\sss (1)},$ ${\bf H}^{\sss (2)},$ ${\bf H}^{\sss (3)},$ ${\bf C}^{\sss (1)}$ and ${\bf C}^{\sss (2)}$ with entries
\begin{align*}
  ({\bf C}^{\sss (1)})_{a,b}=&\sup_{v,y\in\Zd} \sum_{\iota,\kappa,x}  C^{{\sss (1)},\iota,\kappa,a,b}(0,v,x,x+y),\quad
({\bf C}^{\sss (2)})_{a,b}=\sup_{v,y\in\Zd} \sum_{\iota,\kappa,x}  C^{{\sss (2)},\iota,\kappa,a,b}(0,v,x,x+y),\\
({\bf H}^{\sss (1)})_{a,b}=&\sup_{v,y\in\Zd} \sum_{x} H^{{\sss (1)},a,b}(0,v,x,x+y), \qquad
({\bf H}^{\sss (2)})_{a,b}=\sup_{v,y\in\Zd} \sum_{\iota,x} H^{{\sss (2)},\iota,a,b}(0,v,x,x+y), \\
({\bf H}^{\sss (3)})_{a,b}=&\sup_{v,y\in\Zd} \sum_{\iota,x} H^{{\sss (3)},\iota,a,b}(0,v,x+y,y)\qquad
  (\vec h^{\sss \rm S})_b=({\bf H}^{\sss (1)})_{0,b},\qquad (\vec h^{\sss \rm E})_b=({\bf H}^{\sss (3)})_{0,b}.
  \end{align*}
For bounds on $\Xi_p^{\ssc[N],\iota}$, we additionally require the vectors $\vec P^\iota, \vec h^\iota, \vec h^{\iota,{\sss \rm II}}$ with entries
\begin{align*}
  (\vec P^\iota)_b&= \frac 1 {2d} \Big[ \delta_{0,b}+ \sum_{\iota,x,y} P^{\iota,b}(x,y)\Big]\\
(\vec h^\iota)_b&=\frac 1 {2d} \sum_{\iota,\kappa,x,y} h^{\iota,\kappa,b}(x,y),\qquad
(\vec h^{\iota,{\sss \rm II}})_b=\frac 1 {2d} \sum_{\iota,\kappa,x,y} h^{\iota,\kappa,{\sss \rm II},b}(x,y).
\end{align*}
{\bf Remark}:
For convenience, we will interpret starting vectors, such as $\vec P^{\sss \rm S}$, $\vec P^\iota$, $\vec h^{\sss \rm S}$ and $\vec h^\iota$, as row vectors, while ending vectors such as $\vec P^{\sss \rm E}$ and $\vec h^{\sss \rm E}$ are considered to be column vectors.
\iflongversion
In Appendix \ref{secBoundNBigSpecial}, we explain how we can obtain bounds on ${\bf C}^{\sss (1)}$, ${\bf C}^{\sss (2)}$, $\vec h^\iota$ and $\vec h^{\iota,{\sss \rm II}}$ using the bootstrap functions $f_i(p)$ for $i=1,2,3$.
\fi
With these definitions in hand, we are ready to state the bounds on the NoBLE coefficients, first for $N=1$ and then for $N\geq 2$.

\subsection{Diagrammatic bounds for $N=1$}
\label{sec-bounds-N=1}
In this section we state bounds on the NoBLE coefficients for $N=1$ and provide a part of the bounds assumed in \cite[Assumption 4.3]{FitHof13b}.
We abbreviate $\vec u=(1,0,0)$  and
	\begin{align}
  	\diagRepulsiveLetter{H}_{n}(x):=
	&\max\Big\{
	\sum_y \|y\|_2^2 \diagRepulsiveLetter{B}_{0,n}(y,x),
	\sum_{e,y} \|y\|_2^2 \diagRepulsiveLetter{T}_{1,n-1,\underline{1}}(y,x-e,x),
	\label{Hi-defs}
	\\	&\qquad \qquad
	\sum_{e,y} \|y\|_2^2  \diagRepulsiveLetter{T}_{1,\underline 1,n-1}(y,e+y,x)
	\Big\},\nnb
	\label{HD-def}
	\diagRepulsiveLetter{H}^{\sss D}_{n}:=&\sum_x \|x\|_2^2 \diagRepulsiveLetter{D}_{n,n}(x),\\
	\beta_{{\sss \Delta \Xi}}^\ssc[1]:=
	&\vec u^T  {\bf H}^{\sss (3)}\vec u
	+2\diagRepulsiveLetter{H}_{2}(0)\Big(\sum_{x}\diagRepulsiveLetter{D}_{1,1}(x)\Big)
	+2\diagRepulsiveLetter{H}^{\sss D}_{1}(0)\Big(\sum_{x}\diagRepulsiveLetter{B}_{1,2}(x,0)\Big)\nnb
      &+8d p \diagRepulsiveLetter{H}_{2}(\ve[1]) \sum_{x}\diagRepulsiveLetter{B}_{1,1}(x,\ve[1]) +8d p \diagRepulsiveLetter{H}_{1}(\ve[1]) \sum_{\iota,x}\diagRepulsiveLetter{T}_{1,1,\underline 1}(x,\ve[1]+\ve[\iota],\ve[1])\nnb
	&+\Big(\sup_{x\neq 0} \diagRepulsiveLetter{H}_{2}(x)\Big)
	\Big(\sum_{x}\diagRepulsiveLetter{D}_{1,1}(x)
	+4\sum_{x,y}\diagRepulsiveLetter{T}_{1,2,1}(x,y,0)\Big)\nnb
	&+4\Big(\sup_{x\neq 0} \diagRepulsiveLetter{H}_{1}(x)\Big) \Big(\sum_{\iota,x,y}\diagRepulsiveLetter{S}_{\underline 1, 1,1,2}(\ve[\iota],x,y,0)\Big)\nnb
	&+3 \vec h^{{\sss \rm S}} ({\bf A}^\iota)^T (\vec P^{\sss \rm E}-\vec u)
	+3 (\vec P^{\sss \rm S}-\vec u^T) {\bf H}^{\sss (3)} (\vec P^{\sss \rm E}-\vec u)
	+3 (\vec P^{\sss \rm S}-\vec u^T) {\bf A}^\iota\vec h^{\sss \rm E}.
	\end{align}


\begin{lemma}[Bounds on $\Xi^{\ssc[1]}_p$ and $\Psi^{\ssc[1],\kappa}_p$]
\label{lemmapercboundXi1}
Let $p<p_c$. Then,
\begin{align}
  \lbeq{lemmapercboundXi1-1}
\sum_{x\in\Zd}\Xi^{\ssc[1]}_{p}(x) \leq&\ \vec P^{\sss \rm S} {\bf \bar A}^\iota \vec P^{\sss \rm E},\\
    \lbeq{lemmapercboundXi1-3}
\sum_{x}\Xi_{{\sss R},p}^\ssc[1](x) \leq &\ \vec P^{\sss \rm S} {\bf \bar A}^\iota \vec P^{\sss \rm E}-({\bf \bar A}^\iota)_{0,0}+\sum_{\iota,x}\diagRepulsiveLetter{T}_{\underline 1,1,2}(\ve[\iota],x,0),\\
\sum_{x}\Psi_{{\sss R,I},p}^{\ssc[1],\kappa} (x) \leq&  \frac {2d-1}{2d}
\frac p {\aap} \Big(\vec P^{\sss \rm S} {\bf \bar A}^\iota \vec P^{\sss \rm E}-({\bf \bar A}^\iota)_{0,0}+ (2d-2)\sum_{\iota}\diagRepulsiveLetter{T}_{\underline 1,2,\underline 1}(\ve[\iota],\ve[2],0)\Big)\nnb
    \lbeq{lemmapercboundXi1-5}
&+\frac {2d-1}{2d}\frac p {\aap}\Big( \sum_{x} \sum_{\iota} \diagRepulsiveLetter{T}_{\underline 1,1,2}(\ve[\iota],x,0)\Big),\\
  \lbeq{lemmapercboundXi1-6}
\sum_{x}\Psi_{{\sss R,II},p}^{\ssc[1],\kappa} (x)\leq& \frac {2d-1}{2d}
\frac p {\aap} \Big(\vec P^{\sss \rm S} {\bf \bar A}^\iota \vec P^{\sss \rm E}-({\bf \bar A}^\iota)_{0,0}+\sum_{\iota,x}\diagRepulsiveLetter{T}_{\underline 1,1,2}(\ve[\iota],x,0)\Big).
\end{align}
The weighted diagrams are bounded by
\begin{align}
  \lbeq{lemmapercboundXi1-2}
  \sum_{x\in\Zd} \|x\|_2^2 \Xi^{\ssc[1]}_p(x)\leq&\ \beta_{{\sss \Delta \Xi}}^\ssc[1],\\
  \lbeq{lemmapercboundXi1-4}
  \sum_{x}\|x\|_2^2\Xi_{{\sss R},p}^\ssc[1](x) \leq&\ \beta_{{\sss \Delta \Xi}}^\ssc[1]-({\bf H}^{\sss (2)})_{0,0}+\sum_{\iota,x}\|x\|_2^2\diagRepulsiveLetter{T}_{\underline 1,1,2}(\ve[\iota],x,0),\\
  \lbeq{lemmapercboundXi1-7}
\sum_{x}\|x-\ve[\kappa]\|_2^2\Psi_{{\sss R,I},p}^{\ssc[1],\kappa} (x)\leq&\ \frac {p}{\aap}\left(\beta_{{\sss \Delta \Xi}}^\ssc[1]+\vec P^{\sss \rm S} {\bf \bar A}^\iota \vec P^{\sss \rm E}\right)\\
  \lbeq{lemmapercboundXi1-8}
\sum_{x}\|x\|_2^2\Psi_{{\sss R,II},p}^{\ssc[1],\kappa} (x)\leq&\frac p {\aap}
\Big(\beta_{{\sss \Delta \Xi}}^\ssc[1]-({\bf H}^{\sss (2)})_{0,0}+\sum_{\iota,x}\|x\|_2^2\diagRepulsiveLetter{T}_{\underline 1,1,2}(\ve[\iota],x,0)\Big).
\end{align}
\end{lemma}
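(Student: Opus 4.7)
My plan is to derive all eight bounds of Lemma \ref{lemmapercboundXi1} from the general event-level bound \refeq{XiFs} specialized to $N=1$, combined with the BK-type inequality, the relation \refeq{XidominatespsiImproved}, and careful bookkeeping of the splits introduced in Section \ref{subsectionSplit}. Concretely, \refeq{XiFs} together with \refeq{Xi1def} gives
\begin{align}
\lbeq{lemmapercboundXi1-1-summation}
\Xi^{\ssc[1]}_p(x) \leq \sum_{b_0,w_0,z_1,t_1} p\, \prob_p\!\big(F_0(b_0,w_0,z_1)\cap\{\tb_0\nin\tilde{\Ccal}_0\}\big)\,\prob_p^{\bb_0}\!\big(F_1(b_0,t_1,z_1,x)\big),
\end{align}
and a parallel identity holds for $\Psi^{\ssc[1],\kappa}_p(x)$ with an extra avoidance constraint at $x-\ve[\kappa]$. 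I would then apply generalized disjoint occurrence (Definition \ref{def-gen-disjoint}) to the $F_0$ and $F_1$ events to decompose each probability into a product of at most four two-point functions $\tau_p$ or $\tau_{m,p}$, taking care to retain the constraint $z_1\nin b_0$ and the constraint $t_0\nin\tilde{\Ccal}_0$, which together force each closed loop in the expansion to contain at least four bonds.

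The key step is to reorganize the resulting multiple sum according to the intrinsic distance $\ell\in\{0,1,\geq 2\}$ of the shared connection $w_0\leftrightarrow z_1$ between the $F_0$-piece and the $F_1$-piece. This is exactly the structure captured by the building blocks in Section \ref{secBoundsPercDefinitionBlocks}: the $F_0$-block (with the triangle at $0$) becomes $P^{{\sss\rm S},\ell}$, the $F_1$-block (with the triangle at $x$) becomes $P^{{\sss\rm E},\ell}$, and the middle piece that links them becomes the double-open bubble $\bar A^{\iota,a,b}$. Summing over $x$ and over $b_0$ (which contributes the factor $2dp$ absorbed into the bubble) yields the matrix product $\vec P^{{\sss\rm S}}\bar{\bf A}^\iota\vec P^{{\sss\rm E}}$, proving \refeq{lemmapercboundXi1-1}. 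For \refeq{lemmapercboundXi1-3} I would use that the $\alpha$-part $\Xi^{\ssc[1]}_{\alpha,p}$ defined in \refeq{Def-XiOne-Split} extracts exactly the contribution with $x$ a nearest neighbor of $0$, which corresponds to the $(0,0)$ entry $(\bar{\bf A}^\iota)_{0,0}$ after a small correction (the $\diagRepulsiveLetter{T}_{\underline 1,1,2}$ term), because the split retains configurations where the second connection $0\conn x$ has length $\geq 3$. Bounds \refeq{lemmapercboundXi1-5} and \refeq{lemmapercboundXi1-6} then follow from \refeq{XidominatespsiImproved} combined with the overcounting argument used in Lemma \ref{lemmapercboundXi0} (each configuration in $\Psi^{\ssc[1],\kappa}$ satisfies the avoidance constraint in at least one direction, giving the factor $(2d-1)/(2d)$), together with the observation that the extracted leading term $\Psi^{\ssc[1],\kappa}_{\alpha,\sss II,p}$ exactly matches the $(\bar{\bf A}^\iota)_{0,0}$ contribution.

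The main obstacle will be the weighted bound \refeq{lemmapercboundXi1-2}, which then drives \refeq{lemmapercboundXi1-4}, \refeq{lemmapercboundXi1-7}, \refeq{lemmapercboundXi1-8}. The weight $\|x\|_2^2$ must be distributed across the diagram, and the correct distribution requires decomposing $x=y_1+(y_2-y_1)+\cdots+(x-y_k)$ as a telescoping sum over the vertices $y_i$ separating the pieces of the diagram, and applying $\|x\|_2^2\leq k\sum_i\|y_i-y_{i-1}\|_2^2$. Each of the resulting $k$ terms places the weight on a different piece of the diagram; when the weight sits on an interior line we obtain factors of $\diagRepulsiveLetter{H}_n$ (as defined in \refeq{Hi-defs}) or $\diagRepulsiveLetter{H}^{\sss D}_n$, while when it sits on the $F_0$- or $F_N$-triangle we obtain the weighted block $h^{{\sss\rm S}}$, $h^{{\sss\rm E}}$, or ${\bf H}^{\sss(3)}$. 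The bookkeeping is intricate because the weight-placement interacts with the extraction of the local pieces (the $\ve[1]$-terms in $\beta^{\ssc[1]}_{\sss\Delta\Xi}$), and because repulsive vs.\ non-repulsive cases must be handled separately depending on whether $w_0$ lies in the last sausage before $\bb_1$. I would organize this by splitting the sum in \refeq{lemmapercboundXi1-1-summation} according to whether the weight sits on the $F_0$-piece, the middle bubble, or the $F_1$-piece, and bound each contribution by one of the summands of $\beta_{{\sss\Delta\Xi}}^{\ssc[1]}$; the remainder-term bounds \refeq{lemmapercboundXi1-4} and \refeq{lemmapercboundXi1-8} then follow by subtracting the explicit weighted leading contribution $({\bf H}^{\sss(2)})_{0,0}$ that corresponds to the part extracted in the split, and \refeq{lemmapercboundXi1-7} follows by additionally using $\|x-\ve[\kappa]\|_2^2\leq 2\|x\|_2^2+2$ together with \refeq{XidominatespsiImproved}.
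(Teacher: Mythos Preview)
Your overall strategy matches the paper's: start from the $N=1$ case of \refeq{XiFs}, decompose into building blocks $P^{\sss\rm S}$, $\bar A^\iota$, $P^{\sss\rm E}$, and handle the splits and weights accordingly. Two points need correction.

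First, a conceptual imprecision. The case-split is not on a single length $\ell$ of ``the shared connection $w_0\leftrightarrow z_1$''. The line $w_0\to z_1$ joins the $F_0$-piece to the $F_1$-piece and sits entirely inside the middle block. The parameters in the paper are \emph{two}: $a=d_{\tilde\Ccal_0}(\bb_0,w_0)$, the length of the line shared by the left triangle and the middle piece, and $b=d_{\tilde\Ccal_1}(t_1,z_1)$, the line shared by the middle piece and the right triangle. The $3\times 3$ matrix structure of $\bar{\bf A}^\iota$ comes from the nine cases $(a,b)\in\{0,1,\geq 2\}^2$, not three.

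Second, and this is a genuine gap: your route to \refeq{lemmapercboundXi1-7} via the crude inequality $\|x-\ve[\kappa]\|_2^2\leq 2\|x\|_2^2+2$ produces
\[
\frac{2p}{\aap}\Big(\beta_{\sss\Delta\Xi}^{\ssc[1]}+\vec P^{\sss\rm S}\bar{\bf A}^\iota\vec P^{\sss\rm E}\Big),
\]
which is a factor $2$ too large to match the stated bound. The paper instead uses the \emph{exact} identity $\|x-\ve[\kappa]\|_2^2=\|x\|_2^2-2x_\kappa+1$: first average over $\kappa$ by the symmetry of $\Psi^{\ssc[1],\kappa}_{{\sss R,I},p}$, then bound $\Psi^{\ssc[1],\kappa}_{{\sss R,I},p}(x)\leq\tfrac{p}{\aap}\Xi^{\ssc[1]}_p(x)$ pointwise via \refeq{XidominatespsiImproved}, and finally observe that the cross term $\sum_{x,\kappa} x_\kappa\,\Xi^{\ssc[1]}_p(x)$ vanishes (either because $\sum_\kappa x_\kappa=0$, or by the total rotational symmetry of $\Xi^{\ssc[1]}_p$). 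This yields $\tfrac{p}{\aap}\sum_x(\|x\|_2^2+1)\Xi^{\ssc[1]}_p(x)$ with no factor loss. The paper in fact explicitly remarks that your inequality is the fallback when symmetry is unavailable, and that the resulting factor $2$ would be unacceptable.
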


\begin{lemma}[Bounds on $\Xi^{\ssc[1],\iota}_p$]
\label{lemmapercboundXiiota1}
Let $p<p_c$. Then,
\begin{align}
\lbeq{BoundXiIotaOne-1}
\sum_{x}  \Xi^{\ssc[1],\iota}_{p}(x) \leq& \vec P^\iota {\bf \bar A}^\iota \vec P^{\sss \rm E},\\
\lbeq{BoundXiIotaOne-2}
\sum_{x} \|x-\ve[\iota]\|_2^2 \Xi^{\ssc[1],\iota}_{p}(x)\leq& (\vec h^{\iota})_{0}+ 2 \vec h^\iota(\vec P^{\sss \rm E}-(1,0,0)) +2 \vec P^\iota{\bf A}^\iota\vec h^{\sss \rm E},\\
\lbeq{BoundXiIotaOne-3}
\sum_{x}\|x\|_2^2 \Xi^{\ssc[1],\iota}_{p}(x) \leq &2\sum_{\iota,x} (\|x-\ve[\iota]\|_2^2+1) \Xi^{\ssc[1],\iota}_{p}(x).
\end{align}
\end{lemma}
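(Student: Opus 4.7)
The plan is to adapt the strategy used in Lemma \ref{lemmapercboundXi1} for $\Xi^{\ssc[1]}_p$, modifying the level-$0$ analysis to account for the presence of the special bond $b_\iota=(0,\ve[\iota])$. By \refeq{tau-tauj-LEC-ident1}, $\Xi^{\ssc[1],\iota}(x)$ splits into two pieces: the first is $\Xi^{b_\iota,\ssc[1]}(0,x;\{\ve[\iota]\})$, for which Section \ref{secBoundsPercEvents} gives the event bound using $F_0^{\iota,{\sss \rm I}}\cup F_0^{\iota,{\sss \rm II}}$ on level~$0$ and $F_1=F_N$ on level~$1$; the second piece involves $\Xi^{{\rm B}(0),\ssc[0]}(\ve[\iota],x;\tilde\Ccal^{b_\iota}_{\sss 0}(0))$, for which the level-$0$ contribution is absorbed into $F_0^{\iota,{\sss \rm III}}$ and the remaining connection to $x$ is again an $F_N$-type event. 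Independence between the two percolation levels, followed by the BK-type factorization along each level, turns each probability into a product of two-point functions and repulsive triangles.

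For \refeq{BoundXiIotaOne-1}, I would then recognize that the three cases $F_0^{\iota,{\sss \rm I}}$, $F_0^{\iota,{\sss \rm II}}$, $F_0^{\iota,{\sss \rm III}}$ together reproduce exactly the initial piece $P^{\iota,a}$ of Table \ref{InformalDefinitonOfBlock}, indexed by the length $a\in\{0,1,\geq 2\}$ of the shared connection between the initial diagram and the level-$1$ piece. The level-$1$ event $F_N$, together with the length-$b$ connection shared with the terminal triangle $P^{{\sss \rm E},b}$, produces the middle open-bubble block $\bar A^{\iota,a,b}(0,v,x,y)$ (because both the top and bottom lines connect to subsequent blocks, as in the defining picture of $\bar A^{\iota}$). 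Summing over the two length parameters and taking suprema in the intermediate vertices yields the matrix product $\vec P^\iota\,{\bf \bar A}^\iota\,\vec P^{\sss \rm E}$.

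For the weighted bound \refeq{BoundXiIotaOne-2}, I would place the squared displacement $\|x-\ve[\iota]\|_2^2$ on the diagram by selecting an intermediate vertex $y$ along a path from $\ve[\iota]$ to $x$ and using $\|x-\ve[\iota]\|_2^2\leq 2\|y\|_2^2+2\|x-\ve[\iota]-y\|_2^2$ (hence the factors of $2$). The weight then attaches either to the initial block, producing $\vec h^\iota$ via the definition just above \refeq{defHiotaII}, or to the terminal block, producing $\vec h^{\sss \rm E}$. The term $(\vec h^\iota)_0$ captures the degenerate case where the final block is trivial ($b=0$), and the subtraction of $(1,0,0)$ from $\vec P^{\sss \rm E}$ in the middle term removes the corresponding double count. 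The third bound \refeq{BoundXiIotaOne-3} follows immediately from $\|x\|_2^2\leq 2\|x-\ve[\iota]\|_2^2+2\|\ve[\iota]\|_2^2=2(\|x-\ve[\iota]\|_2^2+1)$.

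The main obstacle will be the bookkeeping in the weighted bound: one must verify that the three cases $F_0^{\iota,{\sss \rm I}}, F_0^{\iota,{\sss \rm II}}, F_0^{\iota,{\sss \rm III}}$ interact correctly with the weighted building block $h^{\iota,\kappa,b}$ so that every configuration contributing to the left-hand side of \refeq{BoundXiIotaOne-2} is counted at least once, and that when the intermediate vertex $y$ coincides with $\ve[\iota]$ (so $\|y\|_2^2=1$ does not dominate) one still obtains a valid bound. This requires a careful enumeration of which building block carries the weight, analogous to the analysis leading to $\beta_{{\sss \Delta \Xi}}^\ssc[1]$ in Lemma \ref{lemmapercboundXi1}, but simplified because the initial piece now already encodes the pivotal edge $b_\iota$.
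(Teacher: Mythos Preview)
Your proposal is correct and follows essentially the same approach as the paper's proof. The paper likewise reduces to the event bound with $F_0^{\iota,{\sss\rm I}}\cup F_0^{\iota,{\sss\rm II}}\cup F_0^{\iota,{\sss\rm III}}$ on level~$0$, derives the pointwise bound
\[
\Xi_p^{\ssc[1],\iota}(x) \leq \sum_{a,b=0}^2\sum_{u,\kappa,w,z,t} \big(\delta_{a,0}\delta_{\iota,\kappa}\indic{u=w=0} +P^{\iota,a}(u,w)\big)\bar A^{\kappa,a,b}(u,w,t,z)P^{{\sss\rm E},b}(t-x,z-x),
\]
and then sums as in \refeq{lemmapercboundXi1-1-summation}; for the weighted bound it uses $\|x-\ve[\iota]\|_2^2\leq 2\|t-\ve[\iota]\|_2^2+2\|t-x\|_2^2$ when the right triangle is non-trivial, which is your splitting with $y=t-\ve[\iota]$, and for \refeq{BoundXiIotaOne-3} the same elementary inequality you give.
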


\begin{lemma}[Lower bounds]
\label{lemmapercboundLowerBounds}
Let
\begin{align}
\theta_2&=\max\{\tau_{2,p}^{\sss 1}(2\ve[1]),\tau_{2,p}^{\sss 1}(\ve[1]+\ve[2])\},\quad  \theta_4=\max\{\tau_{4,p}^{\sss 1}(2\ve[1]),\tau_{4,p}^{\sss 1}(\ve[1]+\ve[2])\},\\
\vartheta&=\frac {d^2}{(d-1)(d-2)}(D^{\star 3}\star \tau_{5,p})(0).
\end{align}
The following lower bounds hold for all $p<p_c$:
\begin{align}
\sum_{\kappa}\Pi^{\ssc[0],\iota,\kappa}_{\alpha,p}(\ve[\iota])\geq&
(2d-1)(2d-2)p^4 (1-p^3)^{2d-3}
-(2d-2)^2p^4\tau_{3,p}(\ve[1])^2\nnb
&-(2d-2)p^4\left( \tau_{4,p}^{1}(2\ve[1])+(4d-5) \tau_{2,p}^{1}(\ve[1]+\ve[2])+(4d-4)p^3+2d \vartheta\right)\nnb
&+16(d-1)(d-2)(2d-3)p^6(1-p^3)^{2d-2}(1-p^5)^{16(d-1)(d-2)(2d-3)-1}\nnb
&\qquad \times\Big(1-\tau_{3,p}(\ve[1])-\theta_2-3\sup_{x\neq 0}\tau_{1,p}(\ve[1])\Big).
\lbeq{lemmaLowerbound-1}
\end{align}
Further,
\begin{align}
  \lbeq{lemmaLowerbound-2}
\hat \Psi^{\ssc[0],\kappa}_p(0)\geq&\
 (2d-1)(2d-2)p^4 (1-p^3)^{2d-3}
-(2d-2)^2p^4\tau_{3,p}(\ve[1])^2\\
&-(2d-2)p^4\left( \tau_{4,p}^{1}(2\ve[1])+(4d-5) \tau_{2,p}^{1}(\ve[1]+\ve[2])+(4d-4)p^3+2d \vartheta\right)\nnb
&+(2d-2)^2 p^4 (1-\tau_{3,p}(\ve[1])-2\theta_2-2p^3 )-2d (2d-2) p^4\vartheta\nnb
&+ 64d(d-1)(d-2)(1-p^4)^{2d-2}(1-p^6)^{16(d-1)(d-2)}\nnb
&\quad \times\left(1-\tau_{3,p}(\ve[1])-2p^2-2\theta_2- 2\vartheta-\tau^{\sss 1}_{5,p}(2\ve[1]+\ve[2])\right),\nn
\end{align}
and
\begin{align}
\lbeq{lemmaLowerbound-3}
\sum_{\kappa}\hat \Pi^{\ssc[1],\iota,\kappa}_p(0)\geq
&(2d-1)(2d-2) p^5(1-p-3\tau_{3,p}(\ve[1])-\theta_2-\theta_4)-(2d-2)p^5(\theta_4 +\vartheta)\\
&+(2d-2)^2(2d-3)p^7(1-p^3)^{2d-3} \left( 1-p-2p^2-2p^3-2\tau_{3,p}(\ve[1]) -4\theta_4-2\vartheta\right)\nnb
&\ \times \left( 1-\tau_{3,p}(\ve[1]) -\theta_2-\vartheta\right)\nnb
&+(2d-2)^2(2d-3)p^7(1-p^3)^{2d-3}(1-p-p^2-2\tau_{3,p}(\ve[1])-2\theta_4-\vartheta)\nnb
&\ \times  (1-\tau_{3,p}(\ve[1])-\theta_2)\nnb
&+(2d-2)^3(2d-3)p^8(1-p^3)^{2d-3}\left( 1-2\tau_{3,p}(\ve[1]) -\theta_2-\vartheta\right)\nnb
&\ \times \left( 1-p-2p^2-2\tau_{3,p}(\ve[1]) -4\theta_4-3\vartheta-(2dp)^4 (D^{\star 4}\star \tau_{4,p})(0)\right).\nn
\end{align}
\end{lemma}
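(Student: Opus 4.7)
\medskip

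\noindent
\textbf{Proof proposal for Lemma \ref{lemmapercboundLowerBounds}.} The strategy is a Bonferroni-style inclusion--exclusion on explicit \emph{witness configurations} that realize the events in \refeq{PiZero-in-clear}, \refeq{Psi0def}, and the analogous formula at $N=1$. Because each of the three quantities in question involves an intersection of an increasing ``connectivity'' event and a decreasing ``avoidance'' event, I will combine a lower bound on the connectivity part (obtained by summing over minimal paths and subtracting double--counting) with the systematic estimate
\[
\prob\bigl(W \cap R\bigr) \;\geq\; \prob(W) \;-\; \prob\bigl(W \cap R^{c}\bigr),
\]
in which $W$ is the chosen witness event and $R$ is the avoidance condition, and then enumerate the finitely many ways in which $R$ can fail.

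For \refeq{lemmaLowerbound-1}, I would write, using \refeq{Def-Pi-Split} and the computation of $\Psi^{b_\iota,\ssc[0],\kappa}(0,\ve[\iota];\{\ve[\iota]\})$ performed in Section~\ref{sec-compl},
\[
\sum_{\kappa}\Pi^{\ssc[0],\iota,\kappa}_{\alpha,p}(\ve[\iota]) \;=\; p\sum_{\kappa\neq\iota}\prob^{(0,\ve[\iota])}\!\Bigl(\{0\conn \ve[\iota]\}\cap\{\ve[\iota]-\ve[\kappa]\nin\tilde{\Ccal}^{(\ve[\iota],\ve[\iota]-\ve[\kappa])}(0)\}\Bigr).
\]
The witness $W_{\iota_1}$ is that all three bonds of the three--step path $0\to\ve[\iota_1]\to \ve[\iota_1]+\ve[\iota]\to\ve[\iota]$ (with $\iota_1\neq\pm\iota$) are occupied; for each $\kappa$ I restrict to $\iota_1\neq -\kappa$ so that the witness automatically avoids $\ve[\iota]-\ve[\kappa]$. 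Summing $\prob(W_{\iota_1})=p^{3}$ over the $(2d-2)$ admissible directions and over the $(2d-1)$ admissible $\kappa$ produces the leading factor $(2d-1)(2d-2)p^{4}$. Inclusion--exclusion on the union $\bigcup_{\iota_1}W_{\iota_1}$ contributes the overcounting correction $-(2d-2)^{2}p^{4}\tau_{3,p}(\ve[1])^{2}$, where two distinct witnesses share only the endpoints and their joint probability is bounded by a product of two $\tau_{3,p}(\ve[1])$ factors. The factor $(1-p^{3})^{2d-3}$ is then obtained by requiring that the remaining $(2d-3)$ perpendicular three--step detours from $\ve[\iota]$ back to $0$ via $\ve[\iota]-\ve[\kappa]$ are not completed; conditional on $W_{\iota_1}$ these detours are independent. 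The terms involving $\tau_{4,p}^{1}(2\ve[1])$, $\tau_{2,p}^{1}(\ve[1]+\ve[2])$, $p^{3}$, and $\vartheta$ all arise as bounds on $\prob(W_{\iota_1}\cap R^{c})$ where $R^{c}$ is realized by a longer alternative path (of lengths $4$, $2$, $3$, or via a convolution of type $D^{\star 3}\star\tau_{5,p}$ for the $\vartheta$ term). Finally, the second positive term $16(d-1)(d-2)(2d-3)p^{6}(1-p^{3})^{2d-2}(1-p^{5})^{\cdots}$ is obtained by adding a disjoint family of \emph{five--step} witnesses which do not overlap any three--step witness (hence the $(1-p^{3})^{2d-2}$ factor ensuring that no shorter path exists, which would already have been counted) and requiring that no further five--step paths to $\ve[\iota]-\ve[\kappa]$ complete, yielding the $(1-p^{5})$ power corresponding to the number of such independent detours.

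For \refeq{lemmaLowerbound-2} I would follow the same recipe applied to the $\Psi^{\ssc[0],\kappa}_{p}$ coefficient of \refeq{Psi0def}, now using that the double connection $\{0\dbc x\}$ at $x$ neighbouring $0$ is produced by the concatenation of a single bond and a three--step return, and then augment with the combinatorial factors $(2d-2)^{2}p^{4}$ (paths through distinct perpendicular intermediaries) and $64d(d-1)(d-2)$ (longer five--step configurations), each corrected by subtraction of the failure modes captured by $\tau_{3,p}(\ve[1])$, $\theta_{2}$, $\vartheta$, and $\tau_{5,p}^{1}(2\ve[1]+\ve[2])$. For \refeq{lemmaLowerbound-3}, the bound on $\sum_{\kappa}\hat\Pi^{\ssc[1],\iota,\kappa}_{p}(0)$, one needs witnesses living on two independent percolation copies (corresponding to the two graphs in the expansion \refeq{tau-tauj-LEC-ident2}): the first copy must realize a short path from $0$ to $\ve[\iota]$ with a pivotal bond, the second copy must realize a double connection emanating from the target of that pivotal bond, and the two must be compatible with the appropriate avoidance event for $-\kappa$. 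The bookkeeping of which perpendicular directions are allowed at each copy produces the prefactors $(2d-1)(2d-2)$, $(2d-2)^{2}(2d-3)$, and $(2d-2)^{3}(2d-3)$, while $\theta_{2},\theta_{4},\vartheta$ and the convolution $(2dp)^{4}(D^{\star 4}\star\tau_{4,p})(0)$ bound the respective failure modes.

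The principal obstacle will be the combinatorial enumeration of all failure modes of the avoidance events $\{\ve[\iota]-\ve[\kappa]\nin\tilde{\Ccal}^{(\ve[\iota],\ve[\iota]-\ve[\kappa])}(0)\}$ (and their analogues at the two--level expansion), so as to obtain \emph{sharp enough} subtractions to balance the leading terms. Particular care is required to avoid double--counting paths that could be attributed to several $(\iota_1,\kappa)$ pairs simultaneously, and for \refeq{lemmaLowerbound-3} to keep track of which bonds live on which percolation copy, since the independence structure is used crucially to produce the products $(1-p^{3})^{2d-3}$ and $(1-p^{5})^{\cdots}$. I expect the estimates to reduce, after this enumeration, to elementary bounds on $\tau_{n,p}$, $\tau^{\iota}_{n,p}$, and short convolutions of $D$ with $\tau_{n,p}$, all of which are controlled numerically in the notebook \cite{FitNoblePage} via the bootstrap functions of \refeq{defFunc1}--\refeq{defFunc3}.
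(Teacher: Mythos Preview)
Your overall strategy---explicit witness paths together with the Bonferroni inequality $\prob(W\cap R)\geq\prob(W)-\prob(W\cap R^c)$ for the avoidance event---is precisely the paper's approach. However, your attribution of the individual terms in \refeq{lemmaLowerbound-1} is off in a way that would cause trouble upon execution.

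First, the factor $(1-p^3)^{2d-3}$ does not arise from the avoidance event $R$. In the paper it comes from \emph{disjointifying} the witness events: one lower-bounds $\prob\big(\bigcup_\rho\{\gamma_\rho\text{ occ.}\}\big)$ by $\prob\big(\bigcup_\rho\{\gamma_\rho\text{ occ.}\}\cap\bigcap_{\rho'\neq\rho}\{\gamma_{\rho'}\text{ vac.}\}\big)$; since the paths $\gamma_\rho$ are bond-disjoint, these restricted events are mutually exclusive and factor, yielding $(2d-2)p^3(1-p^3)^{2d-3}$ directly---no second-order Bonferroni subtraction on the union is needed or used. Second, the term $-(2d-2)^2p^4\tau_{3,p}(\ve[1])^2$ is \emph{not} an overcounting correction on $\bigcup_{\iota_1}W_{\iota_1}$; note that $\prob(W_{\iota_1}\cap W_{\iota_2})=p^6$ exactly, so your proposed identification cannot match. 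In the paper this and the other $\tau$- and $\vartheta$-terms all arise when bounding $\prob(W\cap R^c)$: one sums over the vertices $v\in\gamma_\rho$ the probability that $v$ is connected to $\ve[\iota]+\ve[\kappa]$ off the rest of $\gamma_\rho$, and the catalog of resulting two-point functions gives the subtracted terms. Also a sign slip: the avoided vertex is $\ve[\iota]+\ve[\kappa]$ (cf.\ \refeq{Abbreviation-for-extra-bond}, \refeq{PiAlphaZero-in-clear}), not $\ve[\iota]-\ve[\kappa]$.

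Once these bookkeeping points are corrected, your plan for \refeq{lemmaLowerbound-1}--\refeq{lemmaLowerbound-3} coincides with the paper's proof; the paper treats \refeq{lemmaLowerbound-2} by summing the analogous contributions at $x=\ve[\iota]$, $\|x\|_2=2$, and the six-step loops, and defers \refeq{lemmaLowerbound-3} to an appendix using the same witness-path-plus-failure-mode enumeration across the two percolation copies.
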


We expect that the coefficients for $N=0$ and $N=1$ are of comparable size.
Thus, we use the following bounds to cancel out a part of the NoBLE coefficient in our analysis:
\begin{lemma}[Bounds on differences]
\label{lemmapercboundXi0minus1}
Let $p<p_c$. Then,
\begin{align}
\lbeq{Differencebound-1}
\Xi^\ssc[0]_{\alpha,p}(0)-\Xi^\ssc[1]_{\alpha,p}(0)=&0,\\
\lbeq{Differencebound-2}
\Xi^\ssc[1]_{\alpha,p}(\ve[1])-\Xi^\ssc[0]_{\alpha,p}(\ve[1])\leq&
(2d-2)p^4\left(1-(1-p^3)^{2d-3}\right)+p^2((2d-2)\tau_{4,p}^1(\ve[1]+\ve[2])+\tau_{4,p}^1(2\ve[1])),\\
\lbeq{Differencebound-3}
\Xi^\ssc[0]_{\alpha,p}(\ve[1])-\Xi^\ssc[1]_{\alpha,p}(\ve[1])\leq&
p\tau_{5,p}(\ve[1])+(2d-2)p^5+2(2d-2)p^4 (\tau_{3,p}(\ve[1])+\tau_{4,p}^1(\ve[1]+\ve[2])),
\end{align}
and
\begin{align}
\lbeq{Differencebound-5}
\sum_\iota \big(\Psi^{\ssc[0],1}_{\alpha,{\sss I},p}&(\ve[1]+\ve[\iota])  -\Psi^{\ssc[1],1}_{\alpha,{\sss I},p}(\ve[1]+\ve[\iota]) \big)\\
\leq&
(2d-2)\frac {p^5}{\aap}+ \frac {p^3}{\aap} \left( 2(2d-2)\tau_{4,p}^{\sss 1 }(\ve[1]+\ve[2])+\tau_{4,p}^{\sss 1 }(2\ve[1])+(4d-3)\tau_{3,p}(\ve[1])^2\right)\nnb
&+(2d-2) \frac {p^5}  {\aap}
\left[1-2(1-2\tau_{3,p}(\ve[1])-\tau_{2,p}^{\sss 1}(\ve[1]+\ve[2]))(1-\tau_{3,p}(\ve[1])-\theta_2)\right],\nnb
\lbeq{Differencebound-4}
\sum_\iota \big(\Psi^{\ssc[1],1}_{\alpha,{\sss I},p}& (\ve[1]+\ve[\iota])  -\Psi^{\ssc[0],1}_{\alpha,{\sss I},p}(\ve[1]+\ve[\iota])\big)\\
&\leq\frac {p^3}{\aap}\left((2d-2)p^2+ 2(2d-2) \tau_{4,p}^{\sss 1}(\ve[1]+\ve[2])+\tau_{2,p}^{\sss 1}(2\ve[1])\right)\nnb
&\quad+(2d-2)\frac {p^5}{\aap}
\left[ \tau_{3,p}(\ve[1])+2\theta_2+\tau_{5,p}^{\sss 1}(2\ve[1]+\ve[2]))\right],\nn
\end{align}
\begin{align}
\sum_\iota &\big(\Psi^{\ssc[0],1}_{\alpha,{\sss II},p}(\ve[\iota])  -\Psi^{\ssc[1],1}_{\alpha,{\sss II},p}(\ve[\iota])\big)
\lbeq{Differencebound-7}\leq (2d-1)p\tau_{5,p}(\ve[1]) \\
&+(2d-1)(2d-2)\frac {p^5}\aap
\left[1-\left(1-p-2\tau_{3,p}(\ve[1])-2\theta_4\right)
\left(1-\tau_{3,p}(\ve[1])-\theta_2-\vartheta\right)\right]\nnb
\lbeq{Differencebound-6}
 \sum_\iota \big(\Psi^{\ssc[1],1}_{\alpha,{\sss II},p}&(\ve[\iota])  -\Psi^{\ssc[0],1}_{\alpha,{\sss II},p}(\ve[\iota])\big)\\
\leq &(2d-1)(2d-2)\frac {p^5}{\aap} \left[ 1-(1-p^3)^{2d-3}\right]\nnb
&\quad+\frac {(2d-1)p^3}{\aap} \left((2d-2)\tau^{\sss 1}_{4,p}(\ve[1]+\ve[2])+\tau^{\sss 1}_{4,p}(2\ve[1])\right)+(2d-2)^2p^4\tau_{3,p}(\ve[1])^2\nnb
&\quad+(2d-2)p^4\left( \tau_{4,p}^{1}(2\ve[1])+(4d-5) \tau_{2,p}^{1}(\ve[1]+\ve[2])+(4d-4)p^3+2d \vartheta\right).\nn
\end{align}
\end{lemma}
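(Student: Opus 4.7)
The identity \refeq{Differencebound-1} is immediate: by \refeq{Def-Xi-Split} and \refeq{Def-XiOne-Split}, both coefficients are supported on the nearest neighbours of the origin, so both vanish at $x=0$.

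For the remaining bounds my approach is to enumerate the minimal (shortest-bond-count) configurations contributing to each coefficient and to pair every minimal configuration of the $N=0$ coefficient with the corresponding minimal configuration of the $N=1$ coefficient, so that the difference only sees the \emph{non-minimal} parts. Concretely, at $x=\ve[1]$ the minimal contribution to $\Xi^{\ssc[0]}_{\alpha,p}(\ve[1])$ comes from the direct bond $(0,\ve[1])$ together with one of the $2d-2$ three-step loops $0\to \ve[i]\to \ve[1]+\ve[i]\to \ve[1]$ with $i\neq \pm 1$. On the $N=1$ side, unfolding \refeq{Def-XiOne-Split} with first-copy pivotal bond $b_0=(0,\ve[i])$ and cluster $\tilde{\Ccal}^{(0,\ve[i])}_0(0)=\{0,\ve[1]\}$ built from the occupied edge $(0,\ve[1])$, the event $E'(\ve[i],\ve[1];\tilde{\Ccal}^{(0,\ve[i])}_0(0))$ reduces on the second copy to the two-step connection $\ve[i]\to \ve[1]+\ve[i]\to \ve[1]$, giving the same minimal bond count. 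After matching these, the differences are controlled by (a) an inclusion--exclusion correction when several parallel three-step loops coexist, producing the factor $1-(1-p^3)^{2d-3}$, and (b) configurations where one of the two paths has length at least five, which are bounded by $\tau^{1}_{4,p}(2\ve[1])$ and $\tau^{1}_{4,p}(\ve[1]+\ve[2])$ (corresponding to the two possible displacements of the intermediate vertex from $\ve[1]$) times the direct-bond factor $p$.

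For the $\Psi$-bounds \refeq{Differencebound-4}--\refeq{Differencebound-7} the same matching applies, now carrying through the additional avoidance $\{x-\ve[\kappa]\notin\tilde{\Ccal}^{(x,x-\ve[\kappa])}(x)\}$. Since this event is decreasing while the double-connection event is increasing, the Harris (FKG) inequality allows me to factor out the avoidance probability, which is then bounded from above by one minus the probability that all $2d-3$ alternative three-step loops at $x-\ve[\kappa]$ are absent, i.e., by $1-(1-p^3)^{2d-3}$; this produces the structural factors visible in \refeq{Differencebound-6}--\refeq{Differencebound-7}. The residual terms $\theta_2,\theta_4,\vartheta$ arise from BK-inequality upper bounds on configurations in which two short loops share an additional vertex (the $\theta$-terms) or the connecting path is forced to have length at least five ($\vartheta$). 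The principal obstacle throughout is combinatorial bookkeeping: every minimal configuration must be counted with exactly the same multiplicity in both coefficients so that its contribution cancels in the difference, and every non-minimal deviation must be upper-bounded by precisely one of the listed explicit terms. This is tractable only because the NoBLE forces every closed loop to contain at least four bonds, so each competing short loop has a known length and the enumeration closes in finitely many cases.
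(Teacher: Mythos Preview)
Your intuition that the leading $(2d-2)p^4$ contributions should match is exactly right and is what drives the smallness of the differences. However, the mechanism you propose for making this rigorous has two concrete gaps.

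First, you cannot literally ``pair configurations'' between $\Xi^{\ssc[0]}_{\alpha,p}$ and $\Xi^{\ssc[1]}_{\alpha,p}$: the former is a probability on a single percolation configuration, while the latter is defined via a nested expectation on two \emph{independent} copies (the outer $\expec_0$ and the inner $\prob_1^{0}$; see \refeq{Def-XiOne-Split}). There is no common probability space on which a term-by-term cancellation makes sense. The only way to compare them is to bound each coefficient separately from above and from below and then subtract, and this is precisely what the paper does. For instance, the paper shows
\[
\Xi^{\ssc[0]}_{\alpha,p}(\ve[1])=p\tau_{3,p}(\ve[1])\leq (2d-2)p^4+p\tau_{5,p}(\ve[1]),
\]
and, via an explicit restriction of the events in \refeq{Def-XiOne-Split},
\[
\Xi^{\ssc[1]}_{\alpha,p}(\ve[1])\geq (2d-2)p^4\bigl(1-p-2\tau_{3,p}(\ve[1])-2\tau_{4,p}(\ve[1]+\ve[2])\bigr),
\]
then subtracts to obtain \refeq{Differencebound-3}; the factor $1-(1-p^3)^{2d-3}$ in \refeq{Differencebound-2} comes from the paper's lower bound $\tau_{3,p}(\ve[1])\geq (2d-2)p^3(1-p^3)^{2d-3}$ in \refeq{Lower-Bounds-tau3}, not from an inclusion--exclusion on coexisting loops as you suggest. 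Your ``matching'' is really a reorganisation of this upper-minus-lower computation, but as written it does not supply the bounds.

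Second, your application of Harris/FKG to the $\Psi^{\ssc[1]}$ terms is incorrect. You claim the avoidance event $\{x-\ve[\kappa]\notin\tilde\Ccal_1^{(x,x-\ve[\kappa])}\}$ can be factored because it is decreasing while the remaining event is increasing. But in $\Psi^{\ssc[1],\kappa}_{\alpha,\cdot,p}$ the remaining event on the inner copy is $E'(e,x;\tilde\Ccal_0)$, which contains the ``no previous pivotal'' clause and is \emph{not} increasing. The paper notes this explicitly after \refeq{Harris-bd}: the Harris factorisation works for $N=0$ but not for $N\geq 1$. Consequently the lower bounds on $\Psi^{\ssc[1]}$ in the paper are obtained not by FKG but by restricting to explicit short-path configurations and bounding the complementary events by hand via \refeq{Lower-Bounds-not-connected-paths}; this is also where the correction terms $\theta_2,\theta_4,\vartheta$ genuinely enter.
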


Lemmas \ref{lemmapercboundXi1}--\ref{lemmapercboundXiiota1} and the lower bounds in Lemma \ref{lemmapercboundLowerBounds} for $N=0$ are proved in Section \ref{secProofBoundsNOne}.
\iflongversion
The proof of the missing lower bound in Lemma \ref{lemmapercboundLowerBounds} for $N=1$, as well as the proof of Lemma \ref{lemmapercboundXi0minus1}, can be found in
 Appendix \ref{Appendix-ToTheProofs-Difference}.
\else
The proof of the missing lower bound in Lemma \ref{lemmapercboundLowerBounds} for $N=1$, as well as the proof of Lemma \ref{lemmapercboundXi0minus1}, is given in \cite[Appendix B]{FitHof13d-ext}. It is based on similar ideas as used in the proof of Lemma \ref{lemmapercboundXi1} and \ref{lemmapercboundLowerBounds}.
\fi

\subsection{Diagrammatic bounds for $N\geq 2$}
\label{secPercostatingtheBounds}
We state the bounds on the NoBLE-coefficients for $N\geq 2$ in Propositions \ref{PropBoundXiBig} and \ref{PropBoundXiIotaBig}. We discuss the proof of these bounds in Section \ref{secProofBoundsNBig}.
\begin{prop}[Bounds on $\Xi^{\ssc[N]}_p$ for $N\geq 2$]
\label{PropBoundXiBig}
Let $p<p_c$. Then,
\begin{align}
\lbeq{BoundXiPercNTwo}
\hat \Xi^{\ssc[N]}_{p}(0)\leq& \vec P^{\sss \rm S} ({\bf B}^\iota)^{N-1} {\bf \bar  A}^\iota \vec P^{\sss \rm E}.
\end{align}
For $N\geq 2$ even,
\begin{align}
\lbeq{BoundXiDeltaTreeNTwoEven}
\sum_{x}\|x\|_2^2\Xi^{\ssc[N]}_{p}(x)\leq& (N+2) \left[ \vec h^{\sss \rm S} \left( {\bf A}^\iota\right)^T ({\bf \bar B}^\iota)^{N-1}  \vec P^{\sss \rm E}+  \vec P^{\sss \rm S} ({\bf B}^\iota)^{N-1} ({\bf H}^{\sss (3)} \vec P^{\sss \rm E}+{\bf A}^\iota \vec h^{\sss \rm E})\right]\\
&+(N+2)\sum_{M=0}^{N/2-1}
\vec P^{\sss \rm S} ({\bf B}^\iota)^{2M} \left(
{\bf C}^{\sss (1)}{\bf \bar  B}^\iota+\indic{M\neq \tfrac N 2-1}{\bf B}^\iota{\bf C}^{\sss (2)}\right) ({\bf \bar  B}^\iota)^{N-3-2M}\vec P^{\sss \rm E}, \nn
\end{align}
and, for $N\geq 2$ odd,
\begin{align}
\lbeq{BoundXiDeltaTreeNTwoOdd}
\sum_{x}\|x\|_2^2\Xi^{\ssc[N]}_{p}(x)\leq& (N+2)
\left[ \vec h^{\sss \rm S} \left({\bf A}^\iota\right)^T ({\bf  \bar  B}^\iota)^{N-1} \vec P^{\sss \rm E}+  \vec P^{\sss \rm S} ({\bf B}^\iota)^{N-1} ({\bf H}^{\sss (2)} \vec P^{\sss \rm E}+{\bf A}^\iota \vec h^{\sss \rm E})\right]\\
&+(N+2)\sum_{M=0}^{(N-3)/2} \vec P^{\sss \rm S} ({\bf B}^\iota)^{2M} ({\bf C}^{\sss (1)}{\bf \bar  B}^\iota+{\bf B}^\iota {\bf C}^{\sss (2)})( {\bf \bar B}^\iota)^{N-3-2M}\vec P^{\sss \rm E}. \nn
\end{align}
\end{prop}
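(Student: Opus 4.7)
\medskip

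\noindent\textbf{Proof proposal for Proposition \ref{PropBoundXiBig}.}
The plan is to iterate the diagrammatic bound already established for $N=1$ in Lemma \ref{lemmapercboundXi1}, using the event-level decomposition \refeq{XiFs} together with the matrix-valued building blocks introduced in Section \ref{secBoundsPercDefinitionBlocks}. Starting from
\begin{align*}
\Xi_p^{\ssc[N]}(x)
\leq \sum_{\vec{t},\vec{w},\vec{z},\vec b} p^N\,
\prob_p(F_0\cap\{\tb_0\nin\tilde\Ccal_0\})
\prod_{i=1}^{N-1}\prob_p^{\bb_{i-1}}(F_i\cap\{\tb_i\nin\tilde\Ccal_i\})\,
\prob_p^{\bb_{N-1}}(F_N),
\end{align*}
I first bound the factorised probabilities on each ``level'' using the $F=F'\cup F''\cup F'''$ decomposition together with the BK inequality (augmented by the repulsiveness observations made after \refeq{XiFs}). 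The event at level $0$ will be absorbed into the starting vector $\vec{P}^{\sss S}$, the event at level $N$ into $\vec{P}^{\sss E}$, and each middle level into one of the block matrices ${\bf B}^\iota$ or ${\bf \bar B}^\iota$. The key accounting device is to classify, for every interface between adjacent levels, the intrinsic length $l_i\in\{0,1,\geq 2\}$ of the line shared by the two neighbouring blocks; this gives the three-dimensional indexing ($a,b$) of every building block and is exactly what produces the matrix product.

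For the unweighted bound \refeq{BoundXiPercNTwo}, I would sum over the vertices $t_i,w_i,z_i$ while keeping track of these interface lengths, observing that the left boundary contributes at most $\vec{P}^{\sss S}$, each interior level contributes at most one factor of ${\bf B}^\iota$ (by the definition of $B^{\iota,a,b}$ in \refeq{def-Biota-perc}, which absorbs the three cases $F',F'',F'''$ together with the non-repulsive correction $B^{{\sss (2)},\iota}$ coming from the problematic configuration of Figure \ref{fig-intersectSausage}), the final internal level contributes ${\bf \bar A}^\iota$ (which differs from ${\bf B}^\iota$ only because the rightmost block has both incoming and outgoing shared lines), and the right boundary contributes $\vec{P}^{\sss E}$. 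Concatenating these and taking suprema over the intermediate vertices yields the claimed matrix product of length $N-1$.

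For the weighted bounds \refeq{BoundXiDeltaTreeNTwoEven}--\refeq{BoundXiDeltaTreeNTwoOdd}, the starting point is the elementary inequality
$\|x\|_2^2 \leq (N+2)\sum_{j=0}^{N+1}\|y_j - y_{j-1}\|_2^2$,
where $y_0=0,y_1=w_0,y_2=z_1,\dots,y_{N+1}=x$ is a labelling of the $N+2$ line segments appearing in the decomposition of Figure \ref{fig-Form-Xi4-Decomposed}. This lets me distribute the weight onto a single edge of the diagram at the cost of the prefactor $N+2$ and a sum over the position of the weighted edge. When the weighted edge sits on the leftmost or rightmost boundary, the $\vec{P}^{\sss S}$ or $\vec{P}^{\sss E}$ vector is replaced by its weighted counterpart $\vec{h}^{\sss S}$ or $\vec{h}^{\sss E}$; the ``transposed'' product $\vec{h}^{\sss S}({\bf A}^\iota)^T({\bf \bar B}^\iota)^{N-1}\vec P^{\sss E}$ in the even case arises because the weight on the first line segment flips the orientation of the initial open triangle, forcing the matrices to the right of it to be of $\bar B$ type instead of $B$ type. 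When the weighted edge sits at an internal interface, one internal matrix is replaced either by ${\bf H}^{\sss (2)}$, ${\bf H}^{\sss (3)}$, ${\bf C}^{\sss (1)}$, or ${\bf C}^{\sss (2)}$, depending on whether the weighted interface is between two closed triangles (${\bf H}$) or requires pulling the weight through an open square (${\bf C}$); the parity of $N$ determines which of ${\bf H}^{\sss (2)}$ or ${\bf H}^{\sss (3)}$ attaches to the right-boundary block, which is why the even and odd formulas differ in that single matrix. The sum over $M=0,\dots,\lfloor(N-3)/2\rfloor$ is the index of the interface at which the weight sits and the indicator $\indic{M\neq N/2-1}$ in the even case removes a double-counted configuration near the right boundary.

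The main obstacle I foresee is the careful bookkeeping in the weighted case: tracking which internal matrix is replaced by a ${\bf C}$ or ${\bf H}$ block while simultaneously respecting the parity-dependent orientation of the surrounding ${\bf B}^\iota$ and ${\bf \bar B}^\iota$ factors, and verifying that the non-repulsive exceptional term $B^{{\sss (2)},\iota}$ (needed to cover Figure \ref{fig-intersectSausage}) is correctly inherited at every interface. Once the alternating $B/\bar B$ pattern has been established, the remaining work reduces to a mechanical matching between the $F'$-$F''$-$F'''$ subcases and the entries of the building blocks tabulated in Tables \ref{InformalDefinitonOfBlock}--\ref{InformalDefinitonOfBlockDelta}, which, although long, follows the template of the $N=1$ proof.
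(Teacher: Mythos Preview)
Your proposal is correct and follows essentially the same approach as the paper: start from the event-level bound \refeq{XiFs}, classify each interface by the length $l_i\in\{0,1,\geq 2\}$ of the shared line, absorb the level-$0$ and level-$N$ events into $\vec P^{\sss \rm S}$ and $\vec P^{\sss \rm E}$, each interior level into a ${\bf B}^\iota$ (or ${\bf \bar B}^\iota$) factor, and for the weighted bounds distribute $\|x\|_2^2$ along the chain via $\|x\|_2^2\le (N+2)\sum_j\|y_j-y_{j-1}\|_2^2$, replacing the block carrying the weight by the appropriate ${\bf H}$ or ${\bf C}$ matrix. The paper organises this slightly differently by first stating an intermediate pointwise lemma (Lemma \ref{lemmaPercpointBound}) with recursively defined $P^{\ssc[M],b}$ and $R^{\ssc[M],b}$ and proving it by induction on $N$, then converting to the matrix product via the splitting argument \refeq{lemmapercboundXi1-1-summation}; but this is a packaging difference, not a different idea.
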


\begin{prop}[Bounds on $\Xi^{\ssc[N],\iota}_p$ for $N\geq 2$]
\label{PropBoundXiIotaBig}
Let $p<p_c$. Then,
\begin{align}
\lbeq{BoundXiPercNTwoIota}
\hat \Xi^{\ssc[N],\iota}_{p}(0)\leq& \vec P^\iota ({\bf B}^\iota)^{N-1} {\bf \bar  A}^{\iota}\vec P^{\sss \rm E}.
\end{align}
For $N\geq 2$ even,
\begin{align}
\lbeq{BoundXiIotaDeltapercNTwoEvenIota}
\sum_{x} \|x-\ve[\iota]\|_2^2 \Xi^{\ssc[N],\iota}_{p}(x)
\leq& (N+1) \left[ \vec h^{\iota,{\sss\rm II}} ({\bf \bar  B}^\iota)^{N-1} \vec P^{\sss \rm E}+  \vec P^\iota ({\bf B}^\iota)^{N-1} ({\bf H}^{\sss (3)} \vec P^{\sss \rm E}+{\bf A}^\iota \vec h^{\sss \rm E})\right]\\
&+(N+1)\indic{N\geq 4}\sum_{M=0}^{(N-4)/2} \vec P^\iota
({\bf B}^\iota)^{2M+1} ({\bf C}^{\sss (2)}{\bf \bar B}^\iota+{\bf B}^\iota {\bf C}^{\sss (1)})
({\bf \bar  B}^\iota)^{N-4-2M}\vec P^{\sss \rm E},\nnb
\lbeq{BoundXiIotaDeltapercNTwoEven}
\sum_{x} \|x\|_2^2 \Xi^{\ssc[N],\iota}_{p}(x)
\leq& (N+2) \left[ \vec h^{\iota,{\sss\rm II}} ({\bf \bar  B}^\iota)^{N-1} \vec P+  \vec P^\iota ({\bf B}^\iota)^{N-1} ({\bf H}^{\sss (3)} \vec P^{\sss \rm E}+{\bf A}^\iota \vec h^{\sss \rm E})\right]\\
&+(N+2)\indic{N\geq 4}\sum_{M=0}^{(N-4)/2} \vec P^\iota
({\bf B}^\iota)^{2M+1} ({\bf C}^{\sss (2)}{\bf \bar B}^\iota+{\bf B}^\iota {\bf C}^{\sss (1)})
({\bf \bar  B}^\iota)^{N-4-2M}\vec P^{\sss \rm E}\nnb
&+ (N+2) (\vec P^\iota ({\bf B}^\iota)^{N-1} {\bf \bar A}^{\iota} \vec P^{\sss \rm E}). \nn
\end{align}
For $N\geq 2$ odd,
\begin{align}
\lbeq{BoundXiIotaDeltapercNTwoOddIota}
\sum_{x} \|x-\ve[\iota]\|_2^2 \Xi^{\ssc[N],\iota}_{p}(x)
\leq&  (N+1) \left[ \vec h^{\iota,{\sss \rm II}} ({\bf \bar  B}^\iota)^{N-1} \vec P^{\sss \rm E}+  \vec P^\iota ({\bf B}^\iota)^{N-1} ({\bf H}^{\sss (2)}\vec P^{\sss \rm E}+{\bf A}^\iota \vec h^{\sss \rm E})\right]\\
&+ (N+1)\sum_{M=0}^{(N-3)/2} \vec P^\iota ({\bf B}^\iota)^{2M+1} {\bf C}^{\sss (2)}( {\bf \bar B}^\iota)^{N-3-2M}\vec P^{\sss \rm E}\nnb
&+ (N+1)\sum_{M=0}^{(N-5)/2} \vec P^\iota ({\bf B}^\iota)^{2M+2} {\bf C}^{\sss (1)}({\bf \bar B}^\iota)^{N-4-2M}\vec P^{\sss \rm E},\nnb
\lbeq{BoundXiIotaDeltapercNTwoOdd}
\sum_{x} \|x\|_2^2 \Xi^{\ssc[N],\iota}_{p}(x)
\leq& (N+2)
\left[ \vec h^{\iota,{\sss \rm II}} ({\bf \bar  B}^\iota)^{N-1} \vec P^{\sss \rm E}+  \vec P^\iota ({\bf B}^\iota)^{N-1} ({\bf H}^{\sss (2)} \vec P^{\sss \rm E}+{\bf A}^\iota \vec h^{\sss \rm E})\right]\\
&+ (N+2)\sum_{M=0}^{(N-3)/2} \vec P^\iota ({\bf B}^\iota)^{2M} ({\bf C}^{\sss (1)}{\bf \bar  B}^\iota+{\bf B}^\iota {\bf C}^{\sss (2)})( {\bf \bar B}^\iota)^{N-3-2M}\vec P^{\sss \rm E}\nnb
&+ (N+2) \vec P^\iota ({\bf B}^\iota)^{N-1} {\bf \bar A}^{\iota} \vec P^{\sss \rm E}.\nonumber
\end{align}
\end{prop}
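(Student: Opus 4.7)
}
The plan is to mimic the argument underlying Proposition \ref{PropBoundXiBig} (to be given in Section \ref{secProofBoundsNBig}), with the sole modifications occurring at the initial level of the expansion, where the extra bond $b_\iota=(0,\ve[\iota])$ has to be accommodated. Starting from the identity
\eqn{
\Xi^{\ssc[N],\iota}(x)=\Xi^{b_\iota,\ssc[N]}(0,x;\{\ve[\iota]\})+p\,\expec^{b_\iota}_{\sss 0}\!\left[\indic{\ve[\iota]\nin\tilde\Ccal^{b_\iota}_{\sss 0}(0)}\,\Xi^{{\sss{\rm B}}(0),\ssc[N-1]}(\ve[\iota],x;\tilde\Ccal^{b_\iota}_{\sss 0}(0))\right]
}
derived in Section \ref{sec-compl}, we would use \refeq{E'-FI-II-inclusion} together with \refeq{Eind}, \refeq{E'bd} to bound the first term by a sum over $(\vec t,\vec w,\vec z,\vec b)$ of a product of probabilities of $F$-events where the level-$0$ event $F_0$ is replaced by $F^{\iota,{\sss\rm I}}_0\cup F^{\iota,{\sss\rm II}}_0$. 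For the second term, we invoke \refeq{Eind2} at level $i=1$ and note that at level $i=0$ the relevant event is contained in $F^{\iota,{\sss\rm III}}_0$. Combining both contributions yields a formula structurally identical to \refeq{XiFs}, but with $F_0$ replaced by the union $F^{\iota,{\sss\rm I}}_0\cup F^{\iota,{\sss\rm II}}_0\cup F^{\iota,{\sss\rm III}}_0$ at level $0$.

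Next, following the philosophy explained in Section \ref{secBoundsPercDefinitionBlocks}, we would decompose the diagram as in Figure \ref{fig-Form-Xi4-Decomposed}, distinguishing for every shared line $l_i\in\{0,1,\geq 2\}$ between the three possible length classes. The initial building block now absorbs the level-$0$ $F^{\iota,\cdot}_0$ event and is bounded by the vector $\vec P^\iota$ (whose definition in Section \ref{secBoundsPercDefinitionBlocks} was tailored precisely to capture the three contributions $F^{\iota,{\sss\rm I}}_0$, $F^{\iota,{\sss\rm II}}_0$, $F^{\iota,{\sss\rm III}}_0$, together with the normalisation factor $1/(2d)$ from summing over $\iota$). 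The remaining $(N-1)$ intermediate blocks are bounded by the matrix ${\bf B}^\iota$, exactly as in the proof of \refeq{BoundXiPercNTwo}, and the final sausage is bounded by ${\bf \bar A}^{\iota}\vec P^{\sss E}$. This yields \refeq{BoundXiPercNTwoIota}.

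For the weighted bounds we would write $x=\sum_{i=0}^{N}(w_i-w_{i-1})$ (with the convention $w_{-1}=0$, $w_N=x$) and use the elementary inequality $\|x\|_2^2\le (N+2)\sum_i \|w_i-w_{i-1}\|_2^2$ to distribute the weight over the $N+2$ blocks constituting the diagram. Each placement of the weight then yields one of the terms in \refeq{BoundXiIotaDeltapercNTwoEvenIota}--\refeq{BoundXiIotaDeltapercNTwoOdd}: placing the weight on the initial block gives $\vec h^{\iota,{\sss\rm II}}(\bar{\bf B}^\iota)^{N-1}\vec P^{\sss E}$ (the block $\vec h^{\iota,{\sss\rm II}}$ defined in \refeq{defHiotaII} being the weighted counterpart of $\vec P^\iota$, including the contributions where the weighting vertex sits on the short additional leg arising from $F^{\iota,{\sss\rm III}}_0$); placing it on the final block gives $\vec P^\iota({\bf B}^\iota)^{N-1}({\bf H}^{\sss(3)}\vec P^{\sss E}+{\bf A}^\iota\vec h^{\sss E})$; and placing it on an intermediate block gives the sums involving ${\bf C}^{\sss(1)}$ and ${\bf C}^{\sss(2)}$. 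The parity distinction arises because ${\bf C}^{\sss(1)}$ carries a weighted vertex on the top of the intermediate block whereas ${\bf C}^{\sss(2)}$ carries it on the bottom, and the top/bottom alternation between successive blocks is fixed modulo $2$; whether an intermediate weighted block of type $(1)$ or $(2)$ fits at position $M$ depends on the parity of $M$ relative to $N$. Finally, the $\|x\|_2^2$ bounds \refeq{BoundXiIotaDeltapercNTwoEven} and \refeq{BoundXiIotaDeltapercNTwoOdd} follow from the corresponding $\|x-\ve[\iota]\|_2^2$ bounds together with $\|x\|_2^2\le \|x-\ve[\iota]\|_2^2 + 2(x-\ve[\iota])\cdot\ve[\iota] + 1$; after averaging over $\iota$ the linear term vanishes by symmetry (cf.\ Assumption 4.1 as verified in Section \ref{subsec-prop-coef}) so one picks up an extra $(N+2)\vec P^\iota({\bf B}^\iota)^{N-1}{\bf \bar A}^\iota\vec P^{\sss E}$ term, exactly as in Lemma \ref{lemmapercboundXiiota1} for $N=1$.

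The main obstacle is the bookkeeping of the parity cases together with the careful definition of $\vec P^\iota$ and $\vec h^{\iota,{\sss\rm II}}$ so that all three events $F^{\iota,{\sss\rm I}}_0,F^{\iota,{\sss\rm II}}_0,F^{\iota,{\sss\rm III}}_0$ are uniformly absorbed into one initial vector that still plugs consistently into the matrix product ${\bf B}^\iota,\bar{\bf B}^\iota,{\bf C}^{\sss(1)},{\bf C}^{\sss(2)}$: one has to check that the length class of the line shared between the initial $F^{\iota,\cdot}_0$-block and the subsequent $F$-block is tracked correctly, in particular that the extra single bond $b_\iota$ appearing in $F^{\iota,{\sss\rm III}}_0$ does not violate the repulsiveness built into the intermediate blocks. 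Once this is verified, the remaining steps are direct adaptations of the proof of Proposition \ref{PropBoundXiBig}.
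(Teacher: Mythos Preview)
Your overall strategy is correct and matches the paper: replace the level-$0$ event by $F^{\iota,{\sss\rm I}}_0\cup F^{\iota,{\sss\rm II}}_0\cup F^{\iota,{\sss\rm III}}_0$, absorb it into the initial block $\vec P^\iota$, and then run the same block decomposition and matrix-product argument as for Proposition~\ref{PropBoundXiBig}. The unweighted bound \refeq{BoundXiPercNTwoIota} follows exactly as you say, and the weight-splitting mechanism via \refeq{Weightsplit-end} is the right tool for the weighted bounds.

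There is, however, a genuine error in your last step. You claim that the $\|x\|_2^2$ bounds \refeq{BoundXiIotaDeltapercNTwoEven} and \refeq{BoundXiIotaDeltapercNTwoOdd} follow from the $\|x-\ve[\iota]\|_2^2$ bounds via the identity $\|x\|_2^2=\|x-\ve[\iota]\|_2^2+2(x-\ve[\iota])\cdot\ve[\iota]+1$, with the linear term vanishing by symmetry. This is false on two counts. First, the linear term does \emph{not} vanish: $\Xi^{\ssc[N],\iota}(x)$ is biased in the $\iota$-direction (the connection must pass through $\ve[\iota]$), so $\sum_x x_\iota\,\Xi^{\ssc[N],\iota}(x)>0$; the symmetry $\Xi^{\ssc[N],\iota}(x)=\Xi^{\ssc[N],-\iota}(-x)$ only shows this quantity is the same for $\iota$ and $-\iota$, not that it is zero. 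Second, even if it vanished, your argument would inherit the factor $(N+1)$ from the $\|x-\ve[\iota]\|_2^2$ bound, whereas the stated $\|x\|_2^2$ bounds carry the factor $(N+2)$; and for odd $N$ the intermediate sums in \refeq{BoundXiIotaDeltapercNTwoOddIota} and \refeq{BoundXiIotaDeltapercNTwoOdd} have \emph{different} exponent structures ($({\bf B}^\iota)^{2M+1}$ versus $({\bf B}^\iota)^{2M}$), so one cannot be a rescaling of the other. Incidentally, Lemma~\ref{lemmapercboundXiiota1} does not use your identity either; \refeq{BoundXiIotaOne-3} relies on the cruder inequality $\|x\|_2^2\le 2(\|x-\ve[\iota]\|_2^2+1)$.

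The paper's route is to perform two \emph{separate} splittings of the form \refeq{Weightsplit-end}. For $\|x-\ve[\iota]\|_2^2$ one decomposes $x-\ve[\iota]$ into $J=N+1$ displacements along the backbone starting at $\ve[\iota]$; this shifts the top/bottom alternation by one relative to $\Xi^{\ssc[N]}$, which is why the parity pattern of the ${\bf C}^{\sss(1)},{\bf C}^{\sss(2)}$ terms in \refeq{BoundXiIotaDeltapercNTwoEvenIota} and \refeq{BoundXiIotaDeltapercNTwoOddIota} differs from Proposition~\ref{PropBoundXiBig}. For $\|x\|_2^2$ one decomposes $x$ into $J=N+2$ displacements starting at $0$, recovering the same parity pattern as in \refeq{BoundXiDeltaTreeNTwoEven}--\refeq{BoundXiDeltaTreeNTwoOdd}; the piece corresponding to the step $0\to\ve[\iota]$ contributes weight $\|\ve[\iota]\|_2^2=1$, which produces the extra $(N+2)\vec P^\iota({\bf B}^\iota)^{N-1}\bar{\bf A}^\iota\vec P^{\sss\rm E}$ term.
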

In Section \ref{secProofBounds} we explain how these bounds are proved, see in particular Section \ref{secProofBoundsNBig}. The objects appearing in the bounds in Proposition \ref{PropBoundXiBig}--\ref{PropBoundXiIotaBig} can be evaluated numerically using the assumed bounds on $f_1(p)$, $f_2(p)$ and $f_3(p)$ and methods proved in the Mathematica notebook \verb|Percolation|. To obtain the mean-field result in dimension $d=11$ and $d=12$ we further improved the bounds stated in Proposition \ref{PropBoundXiBig}--\ref{PropBoundXiIotaBig} for $N=2,3$, by considering the special cases that the left- and right-most triangles are trivial, see Figure \ref{fig-Form-Xi4}.\\
In Section \ref{secProofBoundsNBig}, we sketch the proof of the proposition and comment on the improvement for $N=2,3$. A detailed explanation of these bounds and their proof can be found in Chapter 4 of the thesis of the first author \cite{Fit13}, which can be downloaded from \cite{FitNoblePage}.

\subsection{Summary of the bounds}
\label{secSummaryBounds}
We have now stated all the bounds on the NoBLE coefficients required for the NoBLE analysis, that is explained in \cite{FitHof13b} on the level of diagrams. In doing so we have proven Proposition \ref{prop-bds-LEC}. \\
In this section we review where to find the bounds stated in \cite[Assumption 4.3]{FitHof13b}.
We want to emphasize that, next to the diagrammatic bounds proven in this document, the proof of \cite[Assumption 4.3]{FitHof13b} also requires an analysis that enables us to bound them numerically and a computer program that computes the stated bounds numerically. Only once these numerical bounds are computed we can apply the analysis of \cite{FitHof13b} to obtain the mean-field results.
In the previous sections, we have proven the diagrammatic bounds that allow us to prove the following assumption. We first state it, and then check all required bounds one by one:
\begin{ass}[{\cite[Assumption 4.3]{FitHof13b}}: Diagrammatic bounds]
\label{assDiagBoundsCoeff}
Let $\Gamma_1,\Gamma_2,\Gamma_3\geq 0$. Assume that $p\in (p_I,p_c)$ is such that $f_i(p)\leq \Gamma_i$ holds.
Then $\hat \tau_p(k)\geq 0$ for all $k\in(-\pi,\pi)^d$. There exists $\betaaa\geq 1,\betaaalow>0$ such that
	\begin{align}
	\lbeq{analys-assumed-rho-Bound-two}
	\frac {\aabp}{ \aap}\leq \betaaa,\qquad \aap \geq \betaaalow.
	\end{align}
Further, there exist $\beta_{\sss \Xi}^\ssc[N],\beta_{\sss \Xi^\iota}^\ssc[N],\beta_{{\sss \Delta \Xi}}^\ssc[N],\beta_{{\sss \Delta \Xi^{\iota}},0}^\ssc[N],\beta_{{\sss \Delta \Xi^{\iota}},\iota}^\ssc[N] \geq 0$, such that
	\begin{align}
 	\lbeq{analys-assumed-BoundXi}
	\hat \Xi^\ssc[N]_p(0)&\leq \beta_{\sss \Xi}^\ssc[N],\qquad\qquad \hat \Xi^{\ssc[N],\iota}_p(0)
	\leq \beta_{{\sss \Xi}^{\iota}}^\ssc[N],\\
	\lbeq{analys-assumed-displacement-one-InXDiff}
	\sum_{x}\|x\|_2^2\Xi^{\ssc[N]}_p(x)&\leq \beta_{{\sss \Delta \Xi}}^\ssc[N],
	\qquad\quad\sum_{x} \|x\|_2^2 \Xi^{\ssc[N],\iota}_p(x)\leq \beta_{{\sss \Delta \Xi^{\iota},0}}^\ssc[N],\\
	\lbeq{analys-assumed-displacement-three-InXDiff}
	\sum_{x} \|x-\ve[\iota]\|_2^2 \Xi^{\ssc[N],\iota}_p(x)&\leq \beta_{{\sss \Delta \Xi^{\iota},\iota}}^\ssc[N],
	\end{align}
for all $N\geq 0$ and $k\in(-\pi,\pi)^d$. Moreover, we assume that $\sum_{N=0}^\infty \beta_{\bullet}^\ssc[N] <\infty$ for $\bullet \in\{ \Xi, \Xi^{\iota}, \Delta\Xi, \{\Delta \Xi^\iota,0\},\{\Delta
 \Xi^\iota,1\}\}$ and that
	\begin{align}
	\lbeq{analys-assumed-invertablecondition}
	\frac {(2d-1)\aabp}{1-\aap}\sum_{N=0}^\infty\beta_{{\sss \Xi}^{\iota}}^\ssc[N]<1.
	\end{align}
Further, there exist $\underline{\beta}_{\sss \Psi}^\ssc[0]$, $\underline{\beta}_{\sss \sum \Pi}^\ssc[1]$  such that
	\begin{align}
	\hat \Psi^{\ssc[0],\iota}_p(0)\geq&\  \underline{\beta}_{\sss \Psi}^\ssc[0],
	&\sum_\kappa \hat \Pi^{\ssc[1],\iota,\kappa}_p(0)\geq\  \underline{\beta}_{\sss
	\sum\Pi}^\ssc[1].
	\end{align}
Additionally, there exist $\beta_{{\sss\Xi_\alpha(0)}}^\ssc[1-0],\beta_{{\sss\Xi_\alpha(0)}}^\ssc[0-1],\beta_{{\sss\Xi_\alpha(\ve[1])}}^\ssc[1-0],\beta_{{\sss\Xi_\alpha(\ve[1])}}^\ssc[0-1]$ with
	\begin{align}
	\lbeq{boundOnXizero}
	-\beta_{{\sss\Xi_\alpha(0)}}^\ssc[1-0]\leq
	&~\Xi^\ssc[0]_{\alpha,p}(0)-\Xi^\ssc[1]_{\alpha,p}(0)\leq \beta_{{\sss\Xi_\alpha(0)}}^\ssc[0-1],\\
	-\beta_{{\sss\Xi_\alpha(\ve[1])}}^\ssc[1-0]
	\leq&~\Xi^\ssc[0]_{\alpha,p}(\ve[1])-\Xi^\ssc[1]_{\alpha,p}(\ve[1])
	\leq \beta_{{\sss\Xi_\alpha(\ve[1])}}^\ssc[0-1] ,
	\end{align}
and $\beta_{{\sss\Xi^{\iota}_\alpha,I}}^\ssc[0],\beta_{{\sss \sum}{\sss\Xi^{\iota}_\alpha,I}}^\ssc[0],\beta_{{\sss\Xi^{\iota}_\alpha,II}}^\ssc[0], \beta_{ {\sss\sum \Xi^{\iota}_\alpha,II}}^\ssc[0],
\geq 0$ such that
	\begin{align}
	\Xi^{\ssc[0],\iota}_{\alpha,{\sss I},p}(\ve[\iota])
	\leq& \beta_{{\sss\Xi^{\iota}_\alpha,I}}^\ssc[0],\qquad\qquad
	\sum_\kappa\Xi^{\ssc[0],\iota}_{\alpha,{\sss I},p}(\ve[\iota]+\ve[\kappa])
	\leq \beta_{{\sss\sum\Xi^{\iota}_\alpha,I}}^\ssc[0],\\
	\Xi^{\ssc[0],\iota}_{\alpha,{\sss II},p}(0)\leq& \beta_{{\sss\Xi^{\iota}_\alpha,II}}^\ssc[0],
	\qquad\qquad
	\sum_\kappa\Xi^{\ssc[0],\iota}_{\alpha,{\sss II},p}(\ve[\kappa])\leq
	\beta_{{\sss\sum \Xi^{\iota}_\alpha,II}}^\ssc[0].
	\end{align}
Also, there exist
$\beta_{{\sss\sum \Psi^{\iota}_\alpha,I}}^\ssc[0-1],\beta_{{\sss\sum \Psi^{\iota}_\alpha,II}}^\ssc[0-1],$
$\beta_{{\sss\sum \Psi^{\iota}_\alpha,I}}^\ssc[1-0],$
$\beta_{{\sss\sum \Psi^{\iota}_\alpha,II}}^\ssc[1-0],$
$\underline{\beta}_{ \sss \sum \Pi_\alpha}^\ssc[0],$
$\beta_{\sss\sum \Pi_\alpha}^\ssc[0]$, such that
	\begin{align}
	-\beta_{{\sss\sum \Psi^{\iota}_\alpha,I}}^\ssc[1-0]\leq&
	\sum_\kappa \left(\Psi^{\ssc[0],\iota}_{\alpha,{\sss I},p}(\ve[\iota]+\ve[\kappa])
	-\Psi^{\ssc[1],\iota}_{\alpha,{\sss I},p}(\ve[\iota]+\ve[\kappa])  \right)
	\leq \beta_{{\sss\sum \Psi^{\iota}_\alpha,I}}^\ssc[0-1],\\
	-\beta_{{\sss\sum \Psi^{\iota}_\alpha,II}}^\ssc[1-0]
	\leq&
	\sum_\kappa \left(\Psi^{\ssc[0],\iota}_{\alpha,{\sss II},p}(\ve[\kappa])  -\Psi^{\ssc[1],\iota}_{\alpha,{\sss II},p}(\ve[\kappa])\right)
	\leq \beta_{{\sss\sum \Psi^{\iota}_\alpha,I}}^\ssc[0-1],\\
	\underline{\beta}_{ \sss \sum \Pi_{\alpha}}^\ssc[0]
	\leq &\sum_{\kappa}\Pi^{\ssc[0],\iota,\kappa}_{\alpha,p}(\ve[\iota])\leq \beta_{\sss \sum \Pi_\alpha}^\ssc[0].
	\end{align}
For $N=0,1$, there exist
$\beta_{\sss\Xi,R}^\ssc[N]$,
$\beta_{\Delta \sss\Xi,R}^\ssc[N]$,
$\beta_{\sss\Psi,R,I}^\ssc[N]$,
$\beta_{\Delta \sss\Psi,R,I}^\ssc[N]$,
$\beta_{\sss\Psi,R,II}^\ssc[N]$,
$\beta_{\Delta \sss\Psi,R,II}^\ssc[N]\geq 0$,
such that
	\begin{align}
	\sum_{x}\Xi_{{\sss R},p}^\ssc[N](x) \leq &\beta_{\sss\Xi,R}^\ssc[N],  \qquad
	\sum_{x}\|x\|_2^2\Xi_{{\sss R},p}^\ssc[N](x) \leq \beta_{\Delta\sss\Xi,R}^\ssc[N],  \\
	\sum_{x}\Psi_{{\sss R,I},p}^{\ssc[N],\iota} (x) \leq& \beta_{\sss\Psi,R,I}^\ssc[N],  \qquad
	\sum_{x}\|x-\ve[\iota]\|_2^2\Psi_{{\sss R,I},p}^{\ssc[N],\iota} (x)\leq \beta_{\Delta\sss\Psi,R,I}^\ssc[N],
	\\
	\sum_{x}\Psi_{{\sss R,II},p}^{\ssc[N],\iota} (x)\leq& \beta_{\sss\Psi,R,II}^\ssc[N],  \qquad
	\sum_{x}\|x\|_2^2\Psi_{{\sss R,II},p}^{\ssc[N],\iota} (x)\leq \beta_{\Delta\sss\Psi,R,II}^\ssc[N].
	\end{align}
Further, there exist
$\beta_{\sss\Xi^\iota,R,I}^\ssc[0]$,
$\beta_{\Delta \sss\Xi^\iota,R,I}^\ssc[0]$,
$\beta_{\sss\Xi^\iota,R,II}^\ssc[0]$,
$\beta_{\Delta \sss\Xi^\iota,R,II}^\ssc[0]$,
$\beta_{\sss\Pi,R}^\ssc[0]$,
$\beta_{\Delta \sss\Pi,R}^\ssc[0]\geq 0$,
such that
	\begin{align}
	\sum_{x}\Xi_{{\sss R,I},p}^{\ssc[0],1} (x)\leq& \beta_{\sss\Xi^\iota,R,I}^\ssc[0],  \qquad
	\sum_{x}\|x-\ve[\iota]\|_2^2\Xi_{{\sss R,I},p}^{\ssc[0],\iota} (x+\ve[\iota])\leq \beta_{\Delta\sss\Xi^\iota,R,I}^\ssc[0],\\
	\sum_{x}\Xi_{{\sss R,II},p}^{\ssc[0],1} (x)\leq& \beta_{\sss\Xi^\iota,R,II}^\ssc[0],  \qquad
	\sum_{x}\|x\|_2^2\Xi_{{\sss R,II},p}^{\ssc[0],\iota} (x)\leq \beta_{\Delta\sss\Xi^\iota,R,II}^\ssc[0],  \\
	\sum_{x,\iota}\Pi^{\ssc[0],\iota,\kappa}_{{\sss R},z} (x)\leq& \beta_{\sss\Pi,R}^\ssc[0],  \qquad
	\sum_{x,\iota,\kappa}\|x\|_2^2\Pi^{\ssc[0],\iota,\kappa}_{{\sss R},p} (x+\ve[\iota]+\ve[\kappa])\leq
	\beta_{\Delta\sss\Pi,R}^\ssc[0].
	\lbeq{analys-assumed-displacement-xiikD}
	\end{align}
For all $\bullet \in\{ \Xi, \Xi^{\iota}, \Delta\Xi, \{\Delta \Xi^\iota,0\},\{\Delta \Xi^\iota,1\}\}$ and $N\in \Nbold$, $\beta_{\bullet}^{\ssc[N]}$ depends only on $\Gamma_1,\Gamma_2,\Gamma_3,d$ and on the model. The bounds stated above also holds for $p_I=(2d-1)^{-1}$ with the constants $\beta_{\bullet}$ only depending on the dimension $d$ and the model.
\end{ass}

In Table \ref{TableLinktoBounds}, we give the line numbers in which a given bound $\beta_{\bullet}$ is stated.
Some of the assumed bounds were not discussed yet. We derive these missing bounds now.
For percolation it is well known that $\hat \tau_p(k)\geq 0$, see \cite{AizNew84}. For the bounds stated in \refeq{analys-assumed-rho-Bound-two} we recall $\aabp=p,\ \aap=p\prob(e_\kappa\nin \tilde{\Ccal}^{(0,e_\kappa)}(0))$ and $p_I=(2d-1)^{-1}$. Thus,
\begin{align}
\frac {\aabp}{\aap}&=\frac 1 {1-\prob^{\{(0,e_\kappa)\}}(0\conn \ve[\kappa] )}=\frac 1 {1-\tau_{3,p}(\ve[1])}
\leq \frac 1 {1-\beta(\tau_{3,p}(\ve[1]))}
:=\betaaa,\\
\aap &\geq p_I \prob(e_\kappa\nin \tilde{\Ccal}^{(0,e_\kappa)}(0))=\frac {1-\tau_{3,p}(\ve[1])}{2d-1}
\geq \frac {1-\beta(\tau_{3,p}(\ve[1]))  }{2d-1}:=\betaaalow,
\end{align}
where $\beta(\tau_{3,p}(\ve[1]))$ is a numerical upper bound on $\tau_{3,p}(\ve[1])$.\\
The condition in \refeq{analys-assumed-invertablecondition} is numerical condition that is verified explicitly in the Mathematica notebooks. We remark that this condition is a relatively weak, in the sense that the bootstrap analysis, which in particular includes an  improvement of bounds, fails before \refeq{analys-assumed-invertablecondition}.

\begin{table}
\centering
\begin{tabular}{c|c || c|c || c|c }
  Bound             & defined in & Bound &  defined in & Bound &  defined in \\
  \hline
$\beta_{\sss \Xi}^\ssc[0]$ & \refeq{lemmapercboundXi0-bound-1} &
$\beta_{\sss \Xi}^\ssc[1]$ &  \refeq{lemmapercboundXi1-1} &
$\beta_{\sss \Xi}^\ssc[N],N\geq 2$ & \refeq{BoundXiPercNTwo}\\
\hline
$\beta_{\sss \Xi^\iota}^\ssc[0]$ & \refeq{lemmapercboundXiiota0-bound-1}&
$\beta_{\sss \Xi^\iota}^\ssc[1]$ & \refeq{BoundXiIotaOne-1}&
$\beta_{\sss \Xi^\iota}^\ssc[N],N\geq 2$ & \refeq{BoundXiPercNTwoIota}\\
\hline
$\beta_{{\sss \Delta \Xi}}^\ssc[0]$ & \refeq{lemmapercboundXi0-bound-1}&
$\beta_{{\sss \Delta \Xi}}^\ssc[1]$ & \refeq{lemmapercboundXi1-2}&
$\beta_{{\sss \Delta \Xi}}^\ssc[N],N\geq 2$ &  \refeq{BoundXiDeltaTreeNTwoEven}, \refeq{BoundXiDeltaTreeNTwoOdd}\\
\hline
$\beta_{{\sss \Delta \Xi^{\iota}},0}^\ssc[0]$ & \refeq{lemmapercboundXiiota0-bound-3}&
$\beta_{{\sss \Delta \Xi^{\iota}},0}^\ssc[1]$ & \refeq{BoundXiIotaOne-3}&
$\beta_{{\sss \Delta \Xi^{\iota}},0}^\ssc[N],N\geq 2$ &  \refeq{BoundXiIotaDeltapercNTwoEven}, \refeq{BoundXiIotaDeltapercNTwoOdd}\\
\hline
$\beta_{{\sss \Delta \Xi^{\iota}},\iota}^\ssc[0]$ & \refeq{lemmapercboundXiiota0-bound-2}&
$\beta_{{\sss \Delta \Xi^{\iota}},\iota}^\ssc[1]$ & \refeq{BoundXiIotaOne-2}&
$\beta_{{\sss \Delta \Xi^{\iota}},\iota}^\ssc[N],N\geq 2$ &  \refeq{BoundXiIotaDeltapercNTwoEvenIota}, \refeq{BoundXiIotaDeltapercNTwoOddIota}\\
\hline
$\underline{\beta}_{\sss \Psi}^\ssc[0]$ &   \refeq{lemmaLowerbound-2}&
 $\underline{\beta}_{\sss \sum \Pi}^\ssc[1]$ & \refeq{lemmaLowerbound-3}&
$ \beta_{{\sss\Xi_\alpha(0)}}^\ssc[0-1]$ &\refeq{Differencebound-1}\\
\hline
 $ \beta_{{\sss\Xi_\alpha(0)}}^\ssc[1-0]$ &\refeq{Differencebound-1}&
 $\beta_{{\sss\Xi_\alpha(\ve[1])}}^\ssc[1-0]$&\refeq{Differencebound-2}&
$\beta_{{\sss\Xi_\alpha(\ve[1])}}^\ssc[0-1]$ &\refeq{Differencebound-3}\\
\hline
$\beta_{{\sss\Xi^{\iota}_\alpha,I}}^\ssc[0]$ & 	\refeq{lemmapercboundXiiota0-bound-4} &
$\beta_{{\sss \sum}{\sss\Xi^{\iota}_\alpha,I}}^\ssc[0]$& \refeq{lemmapercboundXiiota0-bound-4} &
$\beta_{{\sss\Xi^{\iota}_\alpha,II}}^\ssc[0]$ & 	\refeq{lemmapercboundXiiota0-bound-4} \\
\hline
$\beta_{ {\sss\sum \Xi^{\iota}_\alpha,II}}^\ssc[0]$ & 	\refeq{lemmapercboundXiiota0-bound-5}  &&&\\
\hline
$\beta_{{\sss\sum \Psi^{\iota}_\alpha,I}}^\ssc[0-1]$  & \refeq{Differencebound-5} &
$\beta_{{\sss\sum \Psi^{\iota}_\alpha,II}}^\ssc[0-1]$ & \refeq{Differencebound-7} &
$\beta_{{\sss\sum \Psi^{\iota}_\alpha,I}}^\ssc[1-0]$  & \refeq{Differencebound-4} \\
\hline
$\beta_{{\sss\sum \Psi^{\iota}_\alpha,II}}^\ssc[1-0]$  & \refeq{Differencebound-6} &
$\underline{\beta}_{ \sss \sum \Pi_\alpha}^\ssc[0]$ & \refeq{lemmaLowerbound-1} &
$\beta_{\sss\sum \Pi_\alpha}^\ssc[0]$& \refeq{lemmapercboundXiiota0-bound-11} \\
\hline
$\beta_{\sss\Xi,R}^\ssc[0]$ & \refeq{lemmapercboundXi0-bound-4} &
$\beta_{\Delta \sss\Xi,R}^\ssc[0]$ &  \refeq{lemmapercboundXi0-bound-4}&
$\beta_{\sss\Psi,R,I}^\ssc[0]$ & \refeq{lemmapercboundXi0-bound-5}\\
\hline
$\beta_{\Delta \sss\Psi,R,I}^\ssc[0]$ & \refeq{lemmapercboundXi0-bound-6}&
$\beta_{\sss\Psi,R,II}^\ssc[0]$ &  \refeq{lemmapercboundXi0-bound-7}&
$\beta_{\Delta \sss\Psi,R,II}^\ssc[0]$ & \refeq{lemmapercboundXi0-bound-8}\\
\hline
$\beta_{\sss\Xi,R}^\ssc[1]$ &     \refeq{lemmapercboundXi1-3} &
$\beta_{\Delta \sss\Xi,R}^\ssc[1]$ &  \refeq{lemmapercboundXi1-4}&
$\beta_{\sss\Psi,R,I}^\ssc[1]$ & \refeq{lemmapercboundXi1-5} \\
\hline
$\beta_{\Delta \sss\Psi,R,I}^\ssc[1]$ & \refeq{lemmapercboundXi1-7} &
$\beta_{\sss\Psi,R,II}^\ssc[1]$ & \refeq{lemmapercboundXi1-6} &
$\beta_{\Delta \sss\Psi,R,II}^\ssc[1]$ & \refeq{lemmapercboundXi1-8} \\
\hline
$\beta_{\sss\Xi^\iota,R,I}^\ssc[0]$ & \refeq{lemmapercboundXiiota0-bound-7}&
$\beta_{\Delta \sss\Xi^\iota,R,I}^\ssc[0]$ & \refeq{lemmapercboundXiiota0-bound-8}&
$\beta_{\sss\Xi^\iota,R,II}^\ssc[0]$ & \refeq{lemmapercboundXiiota0-bound-9}\\
\hline
$\beta_{\Delta \sss\Xi^\iota,R,II}^\ssc[0]$ & \refeq{lemmapercboundXiiota0-bound-10}&
$\beta_{\sss\Pi,R}^\ssc[0]$ &\refeq{lemmapercboundXiiota0-bound-13}&
$\beta_{\Delta \sss\Pi,R}^\ssc[0]$ & \refeq{lemmapercboundXiiota0-bound-14}
\end{tabular}
\caption{An overview where to find the bounds stated in  \cite[Assumption 4.3]{FitHof13b}
in terms of diagrams.}
\label{TableLinktoBounds}
\end{table}
.his completes the summary of the bounds that we have proved. Using that $p=p_I$, see \cite[Assumptions 2.2]{FitHof13b}, or that the bootstrap function are bounded, we can compute numerical bounds on these diagrammatic bounds, see \refeq{Simple-Bound-triangle} for the idea of these bounds, or \cite[Section 5]{FitHof13b} for a complete description.

\section{Proof of the bounds}
\label{secProofBounds}
The bounds stated in the previous section are proved using ideas that are quite standard in lace expansion analyses, in combination with a consideration of cases for the number of edges involved in shared lines. This consideration is needed to use the additional avoidance constraints.
The proof of the bound for the classical lace expansion is already elaborate, adding the consideration of cases makes the proof even more lengthy.
In the proof of Lemma \ref{lemmapercboundXi1}, we discuss in detail how we use these different cases for our bounds. We will omit details in the explanation of the proof for $N\geq 2$.
For a detailed description of such bounds we refer the reader to \cite{Slad06} or \cite{Fit13}.
\subsection{Proof of the bounds for $N= 1$}
\label{secProofBoundsNOne}
\begin{proof}[Proof of Lemma \ref{lemmapercboundXi1}.]
We first prove the bounds on $\Xi_p^\ssc[1]$ and  $\Xi^\ssc[1]_{{\sss R},p}$ and then explain how to modify the arguments used to obtain the bounds on $\Psi_{{\sss R,I},p}^{\ssc[1],\kappa} $ and $\Psi_{{\sss R,II},p}^{\ssc[1],\kappa} $.\\
For $N=1$ we simplify the definition \refeq{tau-LEC-ident} to see that
\begin{align}
  	\lbeq{def-XiOne-Simple}
   	\Xi^{\ssc[1]}_p(x)
	&= \sum_{b_0}p\expec_{\sss 0} \left(\indic{0\dbc \bb_{\sss 0}}
	\indic{\tb_0\nin \tilde{\Ccal}_{\sss 0}}
    	\prob_{\sss 1}^{\bb_0} \big(E(\tb_{\sss 0},x;\tilde{\Ccal}_{\sss 0})\big)\right).
	\end{align}
In Section \ref{secBoundsPercEvents} we have proven with \refeq{XiFs} that
 	\eqan{
 	\lbeq{def-XiOne-Simple-Bound-Event}
    	\Xi_p^{\ssc[1]}(x)    & \leq \sum_{b_{\sss 0},t,w,z}
    	p\prob_p (F_{\sss 0}(b_{\sss 0},w,z)\cap\{\tb_{\sss 0}\nin \tilde{\Ccal}_0\})
	\prob_p^{\bb_{\sss 0}}(F_{\sss 1}(b_{\sss 0},t,z,x)).
	}
This can be displayed as in Figure \ref{fig-Form-Xi1}.

\begin{figure}[!htb]
\begin{center}
\picXiNOne[1]
\caption{Diagrammatic representations of the bound on $\Xi_p^{\ssc[1]}(x)$ in  \refeq{def-XiOne-Simple-Bound-Event}. The solid lines are connections in $\tilde{\Ccal}_{\sss 0}=\tilde \Ccal^{b_{\sss 0}}_{\sss 0}(0)$, while the dashed lines represent connections in $ \Ccal_1\subset \Zd\setminus\{\bb\}$.  Shaded triangles might be trivial. As explained in Section \ref{secBoundsPercEvents} we choose $z$ such that the connection $\{w\conn z\}$ intersects with $\tilde\Ccal_{\sss 1}$ only at $z$, so that all connections are bond-disjoint.}
\label{fig-Form-Xi1}
\end{center}
\end{figure}
\noindent
Using Figure \ref{fig-Form-Xi1}, it is straightforward to obtain that
 	\eqan{
    	\Xi_p^{\ssc[1]}(x)    & \leq \sum_{b_{\sss 0},t,w,z} \diagRepulsiveLetter{T}^*_{0,0,0}(\bb_{\sss 0},w,0)
	2d p \tau_p(t-\tb_{\sss 0})\tau_p(w-z)\diagRepulsiveLetter{T}^*_{0,0,0}(t-x,z-x,0).
	}
We use the repulsiveness properties to obtain a better bound. Namely, we prove that
	\begin{align}
   	\lbeq{lemmapercboundXi1-1-step0}
	\Xi_p^{\ssc[1]}(x) \leq \sum_{a,b=0}^2\sum_{u,\iota,w,z,t} P^{{\sss \rm S},a}(u,w)\bar A^{\iota,a,b}(u,w,t,z)P^{{\sss \rm E},b}(t-x,z-x),
	\end{align}
where, to avoid confusion between the bond $b_{\sss 0}$ and the number of edges in $A^{\iota,a,b}$, we replace the bond $b_{\sss 0}$ by $(u,u+\ve[\iota])$.
Once this is established, the bound \refeq{lemmapercboundXi1-1} follows as
\begin{align}
\sum_x \Xi_p^{\ssc[1]}(x) \leq& \sum_{a,b=0}^2\sum_{x,u,\iota,w,z,t} P^{{\sss \rm S},a}(u,w)\bar A^{\iota,a,b}(u,w,t,z)P^{{\sss \rm E},b}(t-x,z-x)\nnb
=& \sum_{a,b=0}^2\sum_{u,w} P^{{\sss \rm S},a}(u,u+w)\sum_{y,x} P^{{\sss \rm E},b}(x,x+y) \sum_{\iota,t}\bar A^{\iota,a,b}(0,w,t,t+y)\nnb
\leq& \sum_{a,b=0}^2\sum_{u,w} P^{{\sss \rm S},a}(u,u+w)\sum_{y,x} P^{{\sss \rm E},b}(x,x+y) \sup_{w,y}\sum_{\iota,t}\bar A^{\iota,a,b}(0,w,t,t+y)\nnb
   \lbeq{lemmapercboundXi1-1-summation}
=& \sum_{a,b=0}^2 (\vec P^{{\sss \rm S}})_a({\bf \bar A}^\iota)_{a,b} (\vec P^{{\sss \rm E}})_b=\vec P^{{\sss \rm S}} {\bf \bar A}^\iota \vec P^{{\sss \rm E}}.
\end{align}

Let us now prove \refeq{lemmapercboundXi1-1-step0}.
We denote by $d_{\Ccal}(x,y)$ the intrinsic distance between $x$ and $y$ in $\Ccal$, so the length of the shortest path of bonds that are occupied in $\Ccal$ and connect $x$ and $y$. We define $a=d_{\tilde \Ccal^{b_{\sss 0}}_0(0)}(\bb_{\sss 0},w)$ and $b=d_{\tilde \Ccal_1(x)}(t,z)$. We first discuss the left most triangle $0,\bb_{\sss 0},w$ and show that it is bounded by $P^{{\sss \rm S},a}(u,w)$, see \refeq{Pa-exampledefinition}. We split between several cases depending on the value of $a$:\\
{\bf Case $a=0$.}  In this case $w=u$. If $0=w=u$, then the left triangle shrinks to a point, otherwise $0$ and $w$ are doubly connected:
	\begin{align}
	\delta_{u,w}\prob(0\dbc w)=P^{{\sss \rm S},0}(u,w).
	\end{align}
{\bf Case $a=1$.} We conclude from $a=1$ that $u$ and $w$ are neighbors, $2dD(u-w)=1$, the bond $\{u,w\}$ is occupied and $u\neq 0$. We split between $w=0$ and $w\neq 0$ to obtain the desired bound:
	\begin{align}
	\delta_{w,0} \diagRepulsiveLetter{B}_{\underline 1,3}(u,0) + \diagRepulsiveLetter{T}_{1,\underline 1,1}(u,w,0)=P^{{\sss \rm S},1}(u,w).
	\end{align}
{\bf Case $a\geq 2$.} We consider the cases $w=0$ and $w\neq 0$ to obtain the bound
	\begin{align}
	\delta_{w,0} \prob(\{0\connLe{2} u\}\circ \{0\connLe{2} u\} )+\diagRepulsiveLetter{T}_{1,2,1}(u,w,0)\leq P^{{\sss \rm S},2}(u,w).
	\end{align}
Further, the right triangle $z,t,x$ is bounded by $P^{{\sss \rm E},b}(t-x,z-x)$ for the three different cases of $b$.
The difference to the left triangle is that when $x=z$, we have the freedom to choose $t=x$, so that we can exclude the case $x=z$ for $b\geq 1$.\\

Let us now discuss the middle piece of Figure \ref{fig-Form-Xi1} consisting of the square $(b_{\sss 0}=(u,u+\ve[\iota]),t,z,w)$. By definition of $F_{\sss 0}$ and $F_{\sss N}$ in \refeq{Fdefa}-\refeq{FNdefa},
we know that $z\nin b=(u,u+\ve[\iota])$, $u\neq x,t$.
Further, we note that the connections $\{u+\ve[\iota]\conn t\}$ and $\{w\conn z\}$ are realised on different percolation configurations.
For this reason we have introduced the concept of generalized-disjoint occurrence, see Definition \ref{def-gen-disjoint}.
We have to consider the nine combinations of $(a,b)$:\\
{\bf Case $a=0,b=0$.} We begin with the simplest case. We use $u+\ve[\iota]\neq z=t$ and $u=w=\bb\neq z$ to conclude
	\begin{align}
  	\lbeq{Bound-Xi-case-abZero}
	\diagRepulsiveLetter{T}_{\underline 1,1,1}(\ve[\iota],t-u,0)=\bar A^{\iota,0,0}(u,u,t,t).
	\end{align}
{\bf Case $a=1,b=0$.} We note that $t=z\nin\{ u,u+\ve[\iota]\}$ and $2dD(u-w)=1$, which implies that
	\begin{align}
  	2d D(u-w) \diagRepulsiveLetter{T}_{\underline 1,1,0}(\ve[\iota],t-u,w-u)\leq \bar A^{\iota,1,0}(u,w,t,t).
	\end{align}
{\bf Case $a\geq 2,b=0$.} We note that $t=z\nin\{ u,u+\ve[\iota]\}$ and obtain the bound
	\begin{align}
	\diagRepulsiveLetter{T}_{\underline 1,1,0}(\ve[\iota],t-u,w-u) \leq \bar A^{\iota,2,0}(u,w,t,t).
	\end{align}
{\bf Case $a=0,b=1$.} We note that $u=w\neq t,z$ and $2dD(z-t)=1$ and conclude
	\begin{align}
  	2d D(t-z)  \diagRepulsiveLetter{T}_{1,\underline 1,0}(u-z,u+\ve[\iota]-z,t-z)\leq \bar A^{\iota,0,1}(u,u,z,t).
	\end{align}
{\bf Case $a=0,b\geq 2$.} We note that $u=w\neq t,z$ and obtain
	\begin{align}
 	\diagRepulsiveLetter{T}_{1,\underline 1,0}(u-z,u+\ve[\iota]-z,t-z)
	\leq \bar A^{\iota,0,2}(u,u,z,t).
	\end{align}
{\bf Cases $a\geq 1$ and $b\geq 1$.} For $a\geq 1$ and $b\geq 1$, the two paths realising the connections $\{(u,u+\ve[\iota]) \text{ occ.}, u+\ve[\iota]\conn t\}$ and $\{w\conn z\}$ have no common vertices, leading to a repulsive diagram. Thus, we obtain
	\begin{align}
	\diagRepulsiveLetter{B}_{\underline 1,0}(\ve[\iota]-u,t-u)\tau_p(z-w)\leq \bar A^{\iota,a,b}(u,w,z,t).
	\end{align}
When $a=1$ and/or $b=1$, we include the information that either $u,w$ and/or $z,t$ are neighbors into the definition of $\bar A^{\iota,a,b}$. Using the information and the parity of the lattice allows us to obtain improved numerical bounds on $\bar A^{\iota,a,b}$.
This completes the proof of the bound \refeq{lemmapercboundXi1-1}.\\

To prove the bound \refeq{lemmapercboundXi1-3}, we review what contributions of $\Xi_p^{\ssc[1]}(x)$ have been extracted using
$\Xi^{\ssc[1]}_{\alpha,p}(x)$, see \refeq{Def-XiOne-Split}.
Indeed, we extract the contributions in which $\bb_0=0$, $\{\tb_0\conn x\}$ is cut through at $x$ and the connection to the cutting point is established in $\tilde\Ccal^{(0,e)}_0$ directly, so via the bond $(0,x)$. This corresponds to a contribution of $a=b=0$ in which $t$ and $u$ are directly connected. We split the bound in \refeq{Bound-Xi-case-abZero} into
	\begin{align}
	\lbeq{Bound-Xi-case-abZero-split}
	\diagRepulsiveLetter{T}_{\underline 1,1,\underline 1}(\ve[\iota],t-u,0)
	+\diagRepulsiveLetter{T}_{\underline 1,1,2}(\ve[\iota],t-u,0),
	\end{align}
and see that the first term corresponds to the event that we removed with $\Xi^{\ssc[1]}_{\alpha,p}(x)$. In \refeq{lemmapercboundXi1-3} we simply remove the bound in \refeq{Bound-Xi-case-abZero}  and replace it with a bound on the second term in \refeq{Bound-Xi-case-abZero-split}. \\[2mm]

Next, we explain how to obtain the bound on the weighted diagram \refeq{lemmapercboundXi1-2}.
First, we define an open bubble that will replace the left and right triangle:
\begin{align}
\lbeq{definition-Q1}
Q^{{\sss \rm S},0}(x,y)&= Q^{{\sss \rm E},0}(x,y)=P^{{\sss \rm S},0}(x,y),\\
Q^{{\sss \rm S},1}(x,y)&= 2d D(x-y) \left(\delta_{0,y}\tau_{3,p}(x)+\diagRepulsiveLetter{B}_{1,1}(-y,x-y)\right),\\
Q^{{\sss \rm S},2}(x,y)&= \delta_{0,y}\tau_{2,p}(x)+\diagRepulsiveLetter{B}_{1,1}(-y,x-y),
\lbeq{definition-Q3}
\end{align}
and $Q^{{\sss \rm E},b}(x,y)=(1-\delta_{0,y})Q^{{\sss \rm S},b}(x,y)$ for $b=1,2$.\\
Then, we show that, next to the bound in \refeq{lemmapercboundXi1-1-step0}, also the bounds
\begin{align}
  \lbeq{lemmapercboundXi1-1-step0.1}
\Xi_p^{\ssc[1]}(x) \leq& \sum_{a,b=0}^2\sum_{u,\iota,w,z,t}
P^{{\sss \rm S},a}(u,w) A^{\iota,a,b}(u,w,t,z)Q^{{\sss \rm E},b}(t-x,z-x),\\
  \lbeq{lemmapercboundXi1-1-step0.2}
\Xi_p^{\ssc[1]}(x) \leq& \sum_{a,b=0}^2\sum_{u,\iota,w,z,t}
Q^{{\sss \rm S},a}(u,w) A^{\iota,b,a}(t,z,u,w)P^{{\sss \rm E},b}(t-x,z-x),
\end{align}
hold. As the proof of these bounds is very similar to the proof of \refeq{lemmapercboundXi1-1-step0}, we omit it here.
For our bound we split the weight $\|x\|_2^2$ using the inequality:
\begin{align}
\|x\|_2^2\leq 3(\|w\|_2^2+\|z-w\|_2^2+\|x-z\|_2^2).
\end{align}
More precisely, we first use this inequality for each given configuration and then apply the bounds
   \refeq{lemmapercboundXi1-1-step0},
   \refeq{lemmapercboundXi1-1-step0.1},
   \refeq{lemmapercboundXi1-1-step0.2}
  to obtain
\begin{align}
\|x\|_2^2\Xi_p^{\ssc[1]}(x) \leq&3\sum_{a,b=0}^2\sum_{u,\iota,w,z,t} Q^{{\sss \rm S},a}(u,w) A^{\iota,b,a}(t,z,u,w)P^{{\sss \rm E},b}(t-x,z-x)\|w\|_2^2\nnb
&+3\sum_{a,b=0}^2\sum_{u,\iota,w,z,t} P^{{\sss \rm S},a}(u,w)\bar A^{\iota,a,b}(u,w,t,z)P^{{\sss \rm E},b}(t-x,z-x)\|w-z\|_2^2\nnb
&+3 \sum_{a,b=0}^2\sum_{u,\iota,w,z,t} P^{{\sss \rm S},a}(u,w) A^{\iota,a,b}(u,w,t,z) Q^{{\sss \rm E},b}(t-x,z-x)\|x-z\|_2^2.
  \lbeq{lemmapercboundXi1-1-step0.3}
\end{align}
We have defined the diagrams in $\vec h^{\sss \rm S},\vec h^{\sss \rm E}$ and ${\bf H}^{\sss (3)}$ as the bound on the weighted version of $Q^{\sss \rm S}$, $Q^{\sss \rm E}$ and $\bar A^\iota$, so that
\refeq{lemmapercboundXi1-1-step0.3} implies
\begin{align}
\sum_x\|x\|_2^2 \Xi_p^{\ssc[1]}(x) \leq& 3\vec h^{{\sss \rm S}}{\bf A}^\iota \vec P^{\sss \rm E}+3\vec P^{{\sss \rm S}} {\bf H}^{\sss (3)}\vec P^{\sss \rm E}+3\vec P^{{\sss \rm S}}{\bf A}^\iota \vec h^{\sss \rm E}.
\end{align}
We obtain the bound \refeq{lemmapercboundXi1-2} by extracting the special case that one or both of the triangles on the left and right are trivial, characterized by $\vec u^T=(1,0,0)$. In this case we simply use the weight $\|x\|_2^2$ or apply the inequalities
\begin{align}
\|x\|_2^2\leq 2(\|w\|_2^2+\|x-w\|_2^2)\text{ if $z=x$, \ \  or }\quad \|x\|_2^2\leq 2(\|z\|_2^2+\|x-z\|_2^2) \text{ if }w=0.
\end{align}
In this way we obtain the bound
\begin{align}
\sum_x\|x\|_2^2 \Xi_p^{\ssc[1]}(x) \leq&
\vec u^T  {\bf H}^{\sss (3)}\vec u
+2\vec u^T  {\bf H}^{\sss (3)}\big(\vec P^{\sss \rm E}-\vec u\big)+2\vec u^T {\bf A}^\iota\vec h^{\sss \rm E}
+2\vec h^{{\sss \rm S}}{\bf A}^\iota \vec u+2\big(\vec P^{{\sss \rm S}}-\vec u^T\big)  {\bf H}^{\sss (3)}\vec u\nnb
&+3\vec h^{{\sss \rm S}}{\bf A}^\iota \big(\vec P^{\sss \rm E}-\vec u\big)
+3\big(\vec P^{{\sss \rm S}}-\vec u^T\big)  {\bf H}^{\sss (3)}\big(\vec P^{\sss \rm E}-\vec u\big)
+3\big(\vec P^{{\sss \rm S}}-\vec u^T\big) {\bf A}^\iota \vec h^{\sss \rm E}.
\lbeq{XiIota-Deltabounds-versions1}
\end{align}
As this is a central quantity, we improve this bounds once more, by improving the bound for diagrams which involve only two triangles. These are precisely the terms carrying the factor $2$ in \refeq{XiIota-Deltabounds-versions1}.
\iflongversion
See Appendix \ref{Appendix-ToTheProofs-DoubleTriangle} for the details.
\else
As this is a simple analysis of special cases we omit it here. The details are given in the appendix of the extended version \cite{FitHof13d-ext}.
\fi
This creates the  term $\beta_{{\sss \Delta \Xi}}^\ssc[1]$ and proves \refeq{lemmapercboundXi1-2}. We obtain \refeq{lemmapercboundXi1-4} by reviewing the contribution that we remove in $\Xi^{\ssc[1]}_{\alpha,p}(x)$ and subtract the contribution it creates from the bound, see also \refeq{Bound-Xi-case-abZero-split}.\\[2mm]

Now we prove the bounds on $\sum_{\kappa}\Psi_{{\sss R,II},p}^{\ssc[1],\kappa}(x)$.
The coefficients $\Xi^{\ssc[1]}_{{\sss R},p}$ and $\Psi^{\ssc[1],\kappa}_{{\sss R,II},p}$ only differ by the factor $p/\aap$ and the constraint that $x+\ve[\kappa]\nin\tilde\Ccal_1^{(x,x+\ve[\kappa])}$. The constraint is created by the next pivotal bond $b_1=(x,x+\ve[\kappa])$ in the expansion, see Section \ref{secExp}. For each realisation at most $2d-1$ values of $\kappa$ can contribute, so that
\begin{align}
  \lbeq{lemmapercboundXi1-6-step1}
\sum_{\kappa}\Psi_{{\sss R,II},p}^{\ssc[1],\kappa} (x) \leq (2d-1)\frac{p}{\aap} \Xi^{\ssc[1]}_{{\sss R},p}(x),
\end{align}
for all $x$. Combining this with the bound on $\Xi^{\ssc[1]}_{{\sss R},p}$ in \refeq{lemmapercboundXi1-3} and \refeq{lemmapercboundXi1-4}, we obtain the stated upper bounds on $\Psi_{{\sss R,II},p}^{\ssc[1],\kappa}$ in \refeq{lemmapercboundXi1-6} and \refeq{lemmapercboundXi1-8}.\\[2mm]
The argument in \refeq{lemmapercboundXi1-6-step1} also implies that
\begin{align}
\sum_{\kappa}\Psi^{\ssc[N],\kappa}_p(x)\leq& (2d-1)\frac p {\aap}\Xi^\ssc[N]_p(x).
\end{align}
Combining this with the bound \refeq{lemmapercboundXi1-1} gives
\begin{align}
\lbeq{lemmapercboundXi1-5-step1}
\sum_{x}\Psi_{{\sss R,I},p}^{\ssc[1],\kappa} (x) \leq& \frac p {\aap}\frac {2d-1}{2d} \vec P^{{\sss \rm S}} {\bf \bar A}^\iota \vec P^{\sss \rm E}.
\end{align}
By the definition in \refeq{Def-PsiOne-SplitOne}, in $\Psi_{\alpha,p}^{\ssc[1],\kappa} $  we extract contributions in which $\bb_0=u=w=0$, $t=z=x$, $\|x-\ve[\kappa]\|\leq 1$ and $\tb$ and $x$ are connected by a short path.  In the bound on $\Xi^{\ssc[1]}$ these contribute to the case bounded in \refeq{Bound-Xi-case-abZero}.
Inspecting the proof in \refeq{Bound-Xi-case-abZero}, we see that we can bound this case for $\Psi_{{\sss R,I},p}^{\ssc[1],\kappa} $ by
\begin{align}
 \lbeq{lemmapercboundXi1-5-step2}
   \sum_{x}   \sum_{\iota}  \left( \indic{\|x-\ve[\kappa]\|_2> 1} \diagRepulsiveLetter{T}_{\underline 1,1,1}(\ve[\iota],x,0)
+   \indic{\|x-\ve[\kappa]\|_2\leq 1} \diagRepulsiveLetter{T}_{\underline 1,3,1}(\ve[\iota],x,0)\right).
\end{align}
We can remove $x=-\ve[\kappa]$ from the sum as $\Psi_{{\sss R,I},p}^{\ssc[1],\kappa}(-\ve[\kappa])=0$.
For our bound we extract all contributions in which $0$ and $x$ are connected via the direct edge, and note that the direct connection does not contribute for $x=\ve[\kappa]$. In this way we obtain the bound for this case
\begin{align}
 \lbeq{lemmapercboundXi1-5-step3}
     (2d-2)\sum_{\iota}\diagRepulsiveLetter{T}_{\underline 1,2,
     \underline 1}(\ve[\iota],\ve[2],0)
     + \sum_{x} \sum_{\iota} \diagRepulsiveLetter{T}_{\underline 1,1,2}(\ve[\iota],x,0).
\end{align}
We replace the original bound $({\bf \bar A}^\iota)_{0,0}$ in \refeq{lemmapercboundXi1-5-step1}  by this term and obtain \refeq{lemmapercboundXi1-5}.

In the bounds on weighted version of $\Psi_{{\sss R,I},p}^{\ssc[1],\kappa}$, see \refeq{lemmapercboundXi1-7}, we can unfortunately not benefit from the extracted contribution.
We explain the reason for this after proving the bound. We first use \refeq{XidominatespsiImproved} to bound, for every $x$,
\begin{align}
	\lbeq{Psi-R-x}
\Psi_{{\sss R,I},p}^{\ssc[1],\kappa}(x)\leq \Psi^{\ssc[N],\kappa}_p(x)\leq& \frac {p}{\aap} \Xi^\ssc[N]_p(x),
\end{align}
In the following, we first use symmetry to perform the sum over $\kappa$, then apply $\|x-\ve[\kappa]\|_2^2=\|x\|_2^2-2x_\kappa+1$ and \refeq{Psi-R-x}, to obtain
\begin{align}
  \lbeq{lemmapercboundXi1-7-step2}
\shift\sum_{x}\|x-\ve[\kappa]\|_2^2\Psi_{{\sss R,I},p}^{\ssc[1],\kappa} (x)\leq&
\frac 1 {2d} \frac {p}{\aap} \sum_{x,\kappa}(\|x\|_2^2-2x_\kappa+1)\Xi_{p}^{\ssc[1]} (x)=\frac {p}{\aap}\sum_{x}(\|x\|_2^2+1)\Xi_{p}^{\ssc[1]} (x).
\end{align}
In the second step we have used that $\Xi_{p}^{\ssc[1]} (x)$ is symmetric to conclude that $x_\kappa\Xi_{p}^{\ssc[1]} (x)$ vanishes when we sum over $x$. Using the already proven bounds \refeq{lemmapercboundXi1-1}, \refeq{lemmapercboundXi1-2} we obtain the bound claimed in \refeq{lemmapercboundXi1-7}. \\
For this bound it is not beneficial to extract contributions that contribute to
$\Psi_{{\sss R,I},p}^{\ssc[1],\kappa}$, but not to $\Psi_{p}^{\ssc[1],\kappa} (x)$, as this would create terms that are not symmetric. Without the symmetry in $x$, we would need to use the inequality $\|x-\ve[\kappa]\|_2^2\leq 2\|x\|_2^2+2$ to split the weight. The factor $2$ in this split is numerically worse than any gain we can possibly expect from the extraction of explicit contributions.
\end{proof}

\begin{proof}[Proof of Lemma \ref{lemmapercboundXiiota1}.]
This proof is similar to the proof of Lemma \ref{lemmapercboundXi1}.
First, we recall that, in Section \ref{secBoundsPercEvents}, we have proven that
 \eqan{
    \lbeq{lemmapercboundXiiota1-event-bound}
    \Xi_p^{\ssc[1],\iota}(x)    & \leq \sum_{b_{\sss 0},t,w,z} p
    \expec_{\sss 0} \left(\indicwo{F^\iota_{\sss 0}(b_{\sss 0},w,z)}\indic{\tb_{\sss 0}\nin \tilde{\Ccal}_0} \expec_{\sss 1}^{\bb_{\sss 0}}\left(\indicwo{F_{\sss 1}(\tb_{\sss 0},t,z,x)}\indic{0\conn z\text{ off }{\tilde\Ccal}_1\setminus\{z\}} \right)\right).
}
The event $F^\iota_{\sss 0}$ is given as a union of three events, so the diagram representing  \refeq{lemmapercboundXiiota1-event-bound} consists of three parts that are shown in Figure \ref{fig-Form-Xi1Iota}.

\begin{figure}[!htb]
\begin{center}
\picPiIotaOne[0.9]
\caption{Diagrammatic representations of $\Xi_p^{\ssc[1],\iota}(x)$. The solid lines are connections in $\tilde \Ccal_{\sss 0}=\tilde \Ccal^{b_{\sss 0}}_{\sss 0}(0)$, while the dashed lines represent connections in $\tilde\Ccal_{\sss 1}$.  Shaded triangles can be trivial. All connections are bond-disjoint. The first two contributions give rise to the $P^{\iota,a}(u,w)$ term in \refeq{lemmapercboundXiiota1-1-step0}, the last gives rise to the $\delta_{a,0}\delta_{\iota,\kappa}\indic{u=w=0}$ term.}
\label{fig-Form-Xi1Iota}
\end{center}
\end{figure}

\noindent
We define $a=d_{\tilde \Ccal_0}(u,w)$ (where we recall that $u=\bb_{\sss 0}$), and $b=d_{\tilde\Ccal_1}(t,z)$ and show that
\begin{align}
   \lbeq{lemmapercboundXiiota1-1-step0}
\shift \Xi_p^{\ssc[1],\iota}(x) \leq \sum_{a,b=0}^2\sum_{u,\kappa,w,z,t} (\delta_{a,0}\delta_{\iota,\kappa}\indic{u=w=0} +P^{\iota,a}(u,w))\bar A^{\kappa,a,b}(u,w,t,z)P^{{\sss\rm E},b}(t-x,z-x).
\end{align}

Once this is established, the bound \refeq{BoundXiIotaOne-1} follows by repeating the steps leading to \refeq{lemmapercboundXi1-1-summation}.
The right triangle $z,t,x$ and the square $w,z,t,\bb$ are bounded in the same way as the left triangle and the middle piece of $\Xi_p^{\ssc[1]}$. Thus, we will only discuss the bound on the left parts of the three diagrams. Again we consider the different cases $a=0,1,\geq 2$.\\
{\bf Case $a=0$.} In this case we know that $u=w$, which is possible for the events $F^{\iota,{\sss \rm I}}_0$ and $F^{\iota,{\sss \rm III}}_0$, and this is the only contribution due to $F^{\iota,{\sss \rm III}}_0$.
If $F^{\iota,{\sss \rm III}}_0$ occurs, then we have $w=u=0$ and $\kappa=\iota$.
For $F^{\iota,{\sss \rm I}}_0$, we first have a connection $0\conn \ve[\iota]$ that does not use
the bond $(0,\ve[\iota])$ and then $\ve[\iota]\dbc w=u$. We bound the sum of the probabilities of the contributions due to $F^{\iota,{\sss \rm I}}_0$ and $F^{\iota,{\sss \rm III}}_0$ by
\begin{align}
 \delta_{u,w}\big(\delta_{\iota,\kappa}\delta_{w,0} + (1-\delta_{0,w})\tau_{3,p}(\ve[\iota])\prob(\ve[\iota]\dbc w )\big)
  = \delta_{u,w} \left(\delta_{w,0}\delta_{\iota,\kappa}+  P^{\iota,0}(u,w)\right).
\end{align}
{\bf Case $a=1$.} The events $F^{\iota,{\sss \rm I}}_0$ and $F^{\iota,{\sss\rm II}}_0$ contribute.
The event $F^{\iota,{\sss\rm II}}_0$ can occur for $a=1$ only when  $u=\ve[\iota]$ and when $w$ is directly connected to $\ve[\iota]$. For $F^{\iota,{\sss \rm I}}_0$ we distinguish between the cases $w=\ve[\iota]$ and $w\neq \ve[\iota]$. We bound the sums of the probabilities of the discussed events by
\begin{align}
 \delta_{u,\ve[\iota]} \diagRepulsiveLetter{B}_{2,\underline 1}(w,\ve[\iota])+
\tau_{3,p}(\ve[\iota]) (\delta_{w,\ve[\iota]}\diagRepulsiveLetter{B}_{\underline 1,3}(u-\ve[\iota],0)+ \diagRepulsiveLetter{T}_{1,\underline 1,1}(w-\ve[\iota],u-\ve[\iota],0))=P^{\iota,1}(u,w).
\end{align}
{\bf Case $a\geq 2$.} The events $F^{\iota,{\sss \rm I}}_0$ and $F^{\iota,{\sss \rm II}}_0$ contribute.
For $F^{\iota,{\sss \rm I}}_0$, we note that $u\neq \ve[\iota]$ as the bubble would shrink to a point otherwise. For $F^{\iota,{\sss \rm II}}_0$, we distinguish between whether $u=\ve[\iota]$ or not, and and whether $w=0$ or not. As $a=d_{\tilde \Ccal_0}(u,w)\geq 2$, we conclude the bound to be
\begin{align}
\tau_{3,p}(\ve[\iota])P^{2}(u-\ve[\iota],w-\ve[\iota])+\delta_{u,\ve[\iota]}(\delta_{0,w} \tau_{3,p}(\ve[\iota])+\diagRepulsiveLetter{B}_{1,2}(w,\ve[\iota])) \nnb
+(\delta_{0,w}\tau_{3,p}(\ve[\iota])+\diagRepulsiveLetter{B}_{1,1}(w,\ve[\iota]))(1-\delta_{\ve[\iota],u})\prob(\ve[\iota]\dbc u)=P^{\iota,1}(u,w).
\end{align}
This completes the proof of  \refeq{lemmapercboundXiiota1-1-step0} and thus also of \refeq{BoundXiIotaOne-1}.\\

The bound on the weighted sums are obtained in the same way as the bound on $\sum_{x} \|x\|_2^2 \Xi^{\ssc[1]}_{p}(x)$.
We first prove a decompositions similar to \refeq{lemmapercboundXiiota1-1-step0}.
Thereby, we use $\|x-\ve[\iota]\|_2^2\leq 2\|t-\ve[\iota]\|_2^2+2\|t-x\|_2^2$ if the right triangle is non-trivial. As this follows the same ideas as demonstrated above, we omit the proof.
\end{proof}

\begin{proof}[Proof of Lemma \ref{lemmapercboundLowerBounds}.]
In this proof and the proof of Lemma \ref{lemmapercboundXi0minus1} we prove lower bounds on the coefficients.
We create most of our bounds using the FKG and Harris inequalities, which are standard tools in percolation (see \cite{Grim99}).
The coefficients are defined as the probability of combinations of increasing and decreasing events.
For the lower bounds we have the problem that we can not rearrange them such that these inequalities can be applied to our advantage.\\
As we explain in the following, we create these lower bounds by counting explicit contributions which use at most four steps and bound these by hand. We denote by
\begin{align}
\gamma_{\rho}=\{(0,\ve[\rho]),(\ve[\rho],\ve[1]+\ve[\rho]),(\ve[1]+\ve[\rho],\ve[1])\}
\end{align}
the three-step path from $0$ to $\ve[1]$ that passes through $\ve[\rho]\neq \ve[1],-\ve[1]$.
We say that $\gamma_\rho$ is occupied if all three bonds of $\gamma_\rho$ are occupied and otherwise we call it vacant.

We start by deriving a lower bound on $\tau_{3}(\ve[1])$, for which we note that
\begin{align}
    \lbeq{Lower-Bounds-tau3}
  \tau_{3}(\ve[1])\geq& \prob(\bigcup_{\rho:|\rho|\neq 1} \{ \gamma_\rho\text{ is occ.}\})  \geq
\prob(\bigcup_{\rho:|\rho|\neq 1} \{ \gamma_\rho\text{ is occ.}\}\cap \bigcap_{\iota\neq -1,1,\rho}
  \{\gamma_\iota \text{ is vac.}\})\\
=&\sum_{\rho}\prob( \gamma_\rho\text{ is occ.}) \prod_{\iota\neq -1,1,\rho}\prob(\gamma_\iota \text{ is vac.})= (2d-2)p^3(1-p^3)^{2d-3},
\nn
\end{align}
where the independence is due to the fact that the edges on the different paths $(\gamma_\rho)_\rho$ are bond disjoint. 
Further, we define the event
	\begin{align}
  	\lbeq{Abbreviation-for-extra-bond}
	\mathcal{T}_\kappa
	:=\{\ve[1]+\ve[\kappa]\nin \tilde \Ccal^{(\ve[1],\ve[1]+\ve[\kappa])}(\ve[1])\},
	\end{align}
and say that a vertex $v\in \Z^d$ is contained in a path $\gamma$ and write $v\in \gamma$ if it is the starting or endpoint of one of the bonds in $\gamma$. For the lower bound, we remark that
\begin{align}
\prob(\{0\connLe{3} \ve[1]\}\cap\ \mathcal{T}_\kappa )
\geq &\ \prob(\{0\connLe{\underline{3}} \ve[1]\}\cap\ \mathcal{T}_\kappa )\nnb
=&\ \prob(0\connLe{\underline{3}} \ve[1] )-\prob(\{0\connLe{\underline{3}} \ve[1]\}\cap\ \mathcal{T}^c_\kappa )\nnb
\stackrel{   \refeq{Lower-Bounds-tau3}} \geq&
 (2d-2)p^3(1-p^3)^{2d-3}-\prob(\{0\connLe{\underline{3}} \ve[1]\}\cap\ \mathcal{T}^c_\kappa ).
 \lbeq{Lower-Bounds-tau3-creation}
\end{align}
We bound the second term by
\begin{align}
\prob(\{0\connLe{\underline{3}} \ve[1]\}\cap\ \mathcal{T}^c_\kappa)
&\leq \sum_{\rho\colon |\rho|\neq 1} \prob(\{ \gamma_\rho\text{ is occ.}\}\cap\ \mathcal{T}^c_\kappa)\\
&\leq p^3\sum_{\rho:|\rho|\neq 1} \sum_{v\in \gamma_\rho}
\prob^{(\ve[1],\ve[1]+\ve[\kappa])}\big( v\conn \ve[1]+\ve[\kappa] \text{ off }
\{0,\ve[\rho],\ve[\rho]+\ve[1],\ve[1]\}\setminus \{v\}\big),\nn
\end{align}
so that
\begin{align}
\prob(\{0\connLe{\underline{3}} \ve[1]\}\cap\ \mathcal{T}^c_1)\leq& (2d-2)p^3
\left(\tau_{3,p}(\ve[1])+\tau_{2,p}^{1}(\ve[1]+\ve[2])+\tau_{5,p}^{1}(2\ve[1]+\ve[2])+\tau_{4,p}^{1}(2\ve[1])\right),\nnb
\prob(\{0\connLe{\underline{3}} \ve[1]\}\cap\ \mathcal{T}^c_\kappa)\leq& (2d-3)p^3
\left(\tau_{3,p}(\ve[1])+2\tau_{2,p}^{1}(2\ve[1]+\ve[2])+2p^3 + \tau_{5,p}(\ve[1]+\ve[2]+\ve[3])\right),\nn
\end{align}
for $|\kappa|\neq 1$. To summarize this, we state the bound when summing over $\kappa$
and note that $\kappa= -1$ does not contribute to the original object, to obtain
\begin{align}
 \lbeq{Lower-Bounds-tau3-sumed}
\sum_{\kappa}& \prob( \{0\connLe{3} \ve[1]\} \cap \mathcal{T}_\kappa )\nnb
\geq & (2d-1)(2d-2)p^3 (1-p^3)^{2d-3}
-(2d-2)^2p^3\tau_{3,p}(\ve[1])^2-(2d-2)p^3\tau_{4,p}^{1}(2\ve[1])\nnb
&-(2d-2)p^3\Big( (4d-5) \tau_{2,p}^{1}(\ve[1]+\ve[2])+(4d-4)p^3+\sum_{\kappa}
\tau_{5,p}(\ve[1]+\ve[2]+\ve[\kappa])
\Big).
\end{align}
Now we start to prove the stated lower bounds. We recall \refeq{Def-Pi-Split}, \refeq{PiZero-in-clear} to see that
\begin{align}
\Pi^{\ssc[0],{\sss 1},\kappa}_{\alpha,p}(x)&=
\delta_{x,\ve[1]} p\prob (\{0\conn \ve[1]\}\cap \mathcal{T}_\kappa \mid (0,\ve[1])\text{ is vacant} )
=\delta_{x,\ve[1]} p\prob (\{0\connLe{3} \ve[1]\}\cap \mathcal{T}_\kappa  ),
\lbeq{PiAlphaZero-in-clear}
\end{align}
so that the lower bound in \refeq{lemmaLowerbound-1} follows from \refeq{Lower-Bounds-tau3-sumed}.\\
Next, we create a lower bound for $\Psi^{\ssc[0],\kappa}$, see \refeq{def-psi-zero}.
For the $2d$ direct neighbours of the origin, we compute
\begin{align}
\sum_{\iota} \Psi^{\ssc[0],\kappa}(\ve[\iota])
&\geq \frac {p^2} {\aap} \sum_{\kappa} \prob^{(0,\ve[1])}_p(\{0\conn \ve[1] \} \cap \mathcal{T}_\kappa),
\end{align}
we bound this using \refeq{Lower-Bounds-tau3-sumed} (noting that the event is independent from the occupation status of $(0,\ve[1])$) and $p/\aap>1$
to obtain the first part of \refeq{lemmaLowerbound-2}.
For a better bound, we also consider the vertices at distance 2 from the origin that can be part of a four step loop:
\begin{align}
\sum_{\iota,\rho} \Psi^{\ssc[0],\kappa}(\ve[\iota]+\ve[\rho]) \geq&  2d(2d-2) \sum_{\kappa} \prob_p(\{0\connLe{\underline 2} \ve[1]+\ve[2] \} \circ \{0\connLe{\underline 2} \ve[1]+\ve[2] \} \cap \mathcal{T}_\kappa)\nnb
=&  (2d-2) p^4 \sum_{\kappa} \prob_p(\mathcal{T}_\kappa \mid 0,\ve[1],\ve[2],\ve[1]+\ve[2]\text { are occ.})\nnb
\geq &  (2d-2)^2 p^4 (1-\tau_{3,p}(\ve[1])-2\max_\kappa\{\tau_{2,p}^{1}(\ve[1]+\ve[\kappa])\}-2p^3 )\nnb
&- (2d-2) p^4\sum_{\kappa}\tau_{5,p}^{1}(\ve[1]+\ve[2]+\ve[\kappa])
\end{align}
This creates the bound on the second term in \refeq{lemmaLowerbound-2}. We improve this bound by also considering the $32 d(d-1)(d-2)$ paths that return to the origin in $6$ steps using three different dimensions. For these paths we have to exclude that a double connection is present in four steps and that the path passes the point $x+\ve[\kappa]$. We exclude these events by using a bound of of the following type: Let $\gamma$ and $\gamma'$ be two paths whose bonds do not touch (i.e., there is no $v$ such that $v\in \gamma, v\in \gamma'$), with $x\in \gamma$ and $y\in \gamma'$. Then,
	\begin{align}
	\lbeq{Lower-Bounds-not-connected-paths}
  	\prob( x \nc y \mid \gamma\text{ and }\gamma' \text{ are occ.})=&1-\prob( x \conn y \mid \gamma\text{ and }\gamma' \text{ are occ.})\\
  	\geq& 1-\sum_{v\in \gamma}\sum_{w\in \gamma'}\prob(v \conn w \text{ off }(\gamma\cup \gamma')\setminus\{v,w\} ).\nn
	\end{align}

The lower bound on $\Pi^{\ssc[1],\iota,\kappa}_p$ is proven in a similar way.
\iflongversion
 As this is not very insightful we omit it here and give it in the extended version in Appendix \ref{Appendix-ToTheProofs-LowerBound}.
In the same part of the appendix, we give the proof of Lemma \ref{lemmapercboundXi0minus1}.
 \else
As this is elaborate and not very insightful we omit the details. These details and the
proof of Lemma \ref{lemmapercboundXi0minus1} are given in \cite[Appendix B.2]{FitHof13d-ext} of the extended version.
\fi
\end{proof}

\subsection{Proof of the bounds for $N\geq 2$}
\label{secProofBoundsNBig}
\subsubsection{Strategy of proof for the bounds}
In this section we sketch how to prove the bounds on the coefficients stated in Propositions \ref{PropBoundXiBig} and \ref{PropBoundXiIotaBig}.
The proofs are basically an adaptation of the techniques of the classical lace expansion, see e.g.\ \cite{Slad06}, in combination with a consideration of cases for the lengths of lines that are shared by two parts of the arising diagrams. The first author explains this in detail in his thesis( see \cite[Chapter 4]{Fit13}).

The first step is to prove a pointwise bound on the coefficients. In order to do this, we combine the building blocks to construct the bounding diagrams. For $b=0,1,2$ and $x,y\in\Zd$, let
\begin{eqnarray}
P^{\ssc[0],b}(x,y)=P^{{\sss \rm S},b}(x,y),\qquad \qquad R^{\ssc[0],b}(x,y)=P^{{\sss \rm E},b}(x,y),
\end{eqnarray}
and, for $N\geq 1$, we recursively define
\begin{align}
P^{\ssc[N],b}(u_{\sss N},w_{\sss N})=&\sum_{u_{\sss N-1},w_{\sss N-1}\in\Zd}\sum_{\kappa}\sum_{a=0}^2
P^{\ssc[N-1],b}(u_{\sss N-1},w_{\sss N-1}) B^{\kappa,a,b}(u_{\sss N-1},w_{\sss N-1},w_{\sss N},u_{\sss N}),\\
R^{\ssc[N],a}(x,y)=&\sum_{u,v\in\Zd}\sum_{\kappa}\sum_{b=0}^2\bar B^{\kappa,a,b}(x,y,u,v) R^{\ssc[N-1],b}(u,v).
\end{align}
Further, recall the definition of $Q^{{\sss \rm S},a}$ and $Q^{{\sss \rm E},a}$ in \refeq{definition-Q1}-\refeq{definition-Q3}.
Then, we prove that these diagrams can be used to bound the coefficients as follows:
\begin{lemma}[$x$-space bounds]
\label{lemmaPercpointBound}
For every $x\in\Zd, N\geq 1$ and $0\leq M\leq N-1$,
\begin{align}
\Xi^{\ssc[N]}(x)\leq& \sum_{u_{\sss M},w_{\sss M},w_{\sss M+1},z_{\sss M+1}\in\Zd}\sum_{\kappa_{\sss M}}\sum_{a,b=1}^2
P^{\ssc[M],a}(u_{\sss M},w_{\sss M})\lbeq{Percolation-XiXspaceBoundOne} \\
&\qquad\quad \times \bar A^{\kappa_M,a,b}(u_{\sss M},w_{\sss M},w_{\sss M+1},z_{\sss M+1})R^{\ssc[N-M-1],b}(z_{\sss M+1}-x,w_{\sss M+1}-x),\nnb
\Xi^{\ssc[N]}(x)\leq& \sum_{u_{\sss N-1},w_{\sss N-1},t_N,z_N}\sum_{\kappa_{\sss N}}\sum_{a,b=0}^2 P^{\ssc[N-1],a}(u_{\sss N-1},w_{\sss N-1})
\lbeq{Percolation-XiXspaceBoundTwo}\\
&\qquad\quad \times A^{\kappa_N,a,b}(u_{\sss N-1},w_{\sss N-1},w_{\sss N},u_{\sss N})
Q^{{\sss \rm E},b}(u_{\sss N}-x,w_{\sss N}-x),\nnb
\Xi^{\ssc[N]}(x)\leq& \sum_{w_0,u_0,w_1,z_1}\sum_{a,b=0}^2\sum_{\kappa_{\sss 1}}
Q^{{\sss \rm S},a} (u_0,w_0) A^{\kappa,a,b}(z_1,w_{1},u_0,w_0)
\lbeq{Percolation-XiXspaceBoundThree}
R^{\ssc[N-1],b}(u_1-x,w_1-x).
\end{align}
\end{lemma}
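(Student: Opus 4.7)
The plan is to start from the event-based bound \refeq{XiFs} derived in Section \ref{secBoundsPercEvents}, which expresses $\Xi^{\ssc[N]}_p(x)$ as a sum over bonds $\vec{b}$ and vertices $\vec{t},\vec{w},\vec{z}$ of a product of probabilities of the $F$-events $F_0$, $F$, and $F_N$. Each factor corresponds to one ``segment'' of the diagram depicted in Figure \ref{fig-Form-Xi4}. The objective is to translate each such factor into one of the building blocks from Tables \ref{InformalDefinitonOfBlock}--\ref{InformalDefinitonOfBlockDelta}, and then combine them via the recursive definitions of $P^{\ssc[N]}$ and $R^{\ssc[N]}$.

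First I would establish the pointwise estimate for each interior factor $\prob^{\bb_{i-1}}_p(F(b_{i-1},t_i,z_i,b_i,w_i,z_{i+1})\cap\{\tb_i\nin\tilde{\Ccal}_i\})$ by case analysis on the lengths $a_i = d_{\tilde{\Ccal}_{i-1}}(\bb_{i-1},w_i)\in\{0,1,\geq 2\}$ and $b_i = d_{\tilde{\Ccal}_i}(\tb_i,z_{i+1})\in\{0,1,\geq 2\}$. This is exactly the case analysis carried out explicitly for $N=1$ in the derivation of \refeq{lemmapercboundXi1-1-step0} in the proof of Lemma \ref{lemmapercboundXi1}: each of the nine $(a,b)$ combinations is bounded separately, with the three sub-events $F'$, $F''$, $F'''$ contributing the repulsive cases (bounded by $A^{\kappa,a,b}$ or $\bar A^{\kappa,a,b}$) and the intersecting-sausage case of Figure \ref{fig-intersectSausage} (bounded by the non-repulsive variants absorbed into $B^{{\sss(2)},\kappa}$ and $\bar B^{{\sss(2)},\kappa}$ and ultimately into the full blocks $B^{\kappa,a,b}$ and $\bar B^{\kappa,a,b}$ via \refeq{def-Biota-perc}). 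The leftmost factor involving $F_0$ is bounded by $P^{{\sss \rm S},a}$ as in \refeq{Pa-exampledefinition}, and the rightmost factor involving $F_N$ by $P^{{\sss \rm E},b}$, again via the case analysis already carried out for $N=1$.

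The bounds \refeq{Percolation-XiXspaceBoundOne}--\refeq{Percolation-XiXspaceBoundThree} would then follow by telescoping the segment bounds and identifying the resulting convolution structure. For \refeq{Percolation-XiXspaceBoundOne}, I would single out the segment at level $M+1$, bound it by $\bar A^{\kappa_M,a,b}$, bundle segments $1,\ldots,M$ together with $P^{{\sss \rm S}}$ recursively into $P^{\ssc[M],a}$ (each merge absorbing one $B^{\kappa,a,b}$ block, matching the recursive definition of $P^{\ssc[N]}$), and bundle segments $M+2,\ldots,N-1$ together with $P^{{\sss \rm E}}$ recursively into $R^{\ssc[N-M-1],b}$ (each merge absorbing one $\bar B^{\kappa,a,b}$ block). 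The correctness of this regrouping is an easy induction in $M$, whose base case $M=0$ reduces to $P^{\ssc[0]}=P^{\sss \rm S}$ and whose inductive step just shifts one segment from the right factor into the left factor. Bounds \refeq{Percolation-XiXspaceBoundTwo} and \refeq{Percolation-XiXspaceBoundThree} are obtained analogously, but now the distinguished segment at the far right (respectively far left) together with the adjacent triangle is bounded by a weighted piece $Q^{{\sss \rm E},b}$ (respectively $Q^{{\sss \rm S},a}$) defined in \refeq{definition-Q1}--\refeq{definition-Q3}. Because $Q^{\sss \rm E}$ already carries the repulsiveness on its side, the adjacent segment can be bounded by the non-repulsive block $A^{\kappa,a,b}$ rather than $\bar A^{\kappa,a,b}$; this is the same trick used in \refeq{lemmapercboundXi1-1-step0.1}--\refeq{lemmapercboundXi1-1-step0.2} for $N=1$.

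The main obstacle is the bookkeeping rather than any single hard estimate: each of the building blocks $A^{\kappa,a,b}, \bar A^{\kappa,a,b}, A^{\kappa,a,b,*}, \bar A^{\kappa,a,b,*}, B^{\kappa,a,b}, \bar B^{\kappa,a,b}$ is itself defined through roughly a dozen sub-cases, and one must verify that the definitions of $B^{\kappa,a,b}$ and $\bar B^{\kappa,a,b}$ in \refeq{def-Biota-perc} actually dominate the case-by-case bound produced by the analysis of $F',F'',F'''$, with no double-counting of the repulsiveness between adjacent blocks and consistent treatment of the intersecting-sausage case. In particular, one must check that when $F'''$ occurs and the path $z_{i+1}\conn w_i$ forces a non-repulsive bound, the resulting non-repulsive contribution $B^{{\sss (2)},\kappa}$ (respectively $\bar B^{{\sss (2)},\kappa}$) is exactly what has been added to $B^{\kappa,a,b}$ (respectively $\bar B^{\kappa,a,b}$) in \refeq{def-Biota-perc}. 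Once this combinatorial consistency is established at a single interior segment, the three bounds of the lemma follow by the described telescoping without further obstacles.
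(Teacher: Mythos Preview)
Your proposal is correct and follows essentially the same approach as the paper: start from the event bound \refeq{XiFs}, perform the case analysis on the intrinsic-distance parameters $a_i\in\{0,1,\geq 2\}$ for each segment exactly as in the $N=1$ proof of Lemma \ref{lemmapercboundXi1}, and then assemble the pieces by induction on $N$ into the recursive blocks $P^{\ssc[M]}$ and $R^{\ssc[N-M-1]}$. The paper likewise emphasises that the three different bounds arise from the choice of which shared connection is absorbed into which neighbouring block, and that the main difficulty is the combinatorial bookkeeping rather than any new analytic idea.
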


Let us briefly discuss this in the example $N=2$, as displayed in Figure \ref{fig-Form-Xi-InBound}.
In Section \ref{secBoundsPercEvents}, we have bounded the coefficient in terms of simpler events and
have produced the bound \refeq{XiFs}:\footnote{Note that the measure ${\prob}^{\ssc[2]}$ enforces that the events
$E_{\text{vac}}(\tb_{\sss 0})_0$ and $E_{\text{vac}}(\tb_{\sss 1})_1$ occur, recall \refeq{PN-def} and \refeq{E'bd}.}
	\eqan{
 	 \Xi^{\ssc[2]}_p(x)\leq \sum_{\vec b, \vec w, \vec t.\vec z}  p^2 {\prob}^{\ssc[2]}\Big(& F_0(b_0,w_0,z_1)_0 \cap F(b_0,t_1,z_1,b_1, w_1, z_{2})_1
                        \cap F_{\sss 2}(b_1,t_{2},z_{2},x)_2\Big).
	}
We draw one possible contribution in Figure \ref{fig-Form-Xi-InBound}.
\begin{figure}[h]
\begin{center}
\picXiNtwoStructure[1]
\caption{The combination of events that we have used to bound $\Xi_p^{\ssc[2]}(x)$ and the corresponding bounding diagram. Lines indicate disjoint connections. A filled triangle might be trivial. Note that in this case we can choose $z_1$ and $z_2$ such that the path from $w_i$ to $z_i$ intersects $\tilde \Ccal_i$ only at $z_i$.} \label{fig-Form-Xi-InBound}
\end{center}
\end{figure}
We define
\begin{itemize}
\item[$\rhd$] $a_0$ to be the length of the path in $\tilde \Ccal_0$ from $\bb_0$ to $w_0$ that does not pass the origin,
\item[$\rhd$] $a_1$ to be the length of the path in $\tilde \Ccal_1$ from $\bb_1$ to $w_1$ that does not pass $z_1$,
\item[$\rhd$] $a_2$ to be the length of the path in $\tilde \Ccal_2$ from $z_2$ to $t_2$ that does not pass $x$.
\end{itemize}
Performing a consideration of cases for $a_i$, as was done in the proof of Lemma \ref{lemmapercboundXi1}, we obtain the bounds stated in
\refeq{Percolation-XiXspaceBoundOne}. Which of the three bounds is obtained depends on where we let the connections
$\{\bb_0\conn w_0\}$, $\{\bb_1\conn w_1\}$, $\{z_2\conn t_2\}$ contribute.\\

To prove the bounds for all $N$ we use induction on $N$.
The proof for $\Xi^{\iota}$ differs only in the different initial block of the bounding diagram.\\

Once the $x$-wise bounds of Lemma \ref{lemmaPercpointBound} are proven, we use a split as demonstrated in \refeq{lemmapercboundXi1-1-summation} to conclude the bounds stated in Propositions \ref{PropBoundXiBig} and \ref{PropBoundXiIotaBig}.

For the bound on the weighted sum we first split the weights at the level of events using
\begin{align}
\lbeq{Weightsplit-end}
  \|x\|_2^2\leq J\sum_{i=1}^J \|x_i\|_2^2,\quad \text{ for $x_i$ such that }  x=x_i.
 \end{align}
For each of the $J$ terms we use one of the bounds stated in Lemma \ref{lemmaPercpointBound} and decompose the sums as in the unweighted case.\\
To be able to show the mean-field result in $d=11,12$, we improve the bounds for $N=2,3$ by considering the special case that the left- and/or right-most triangle are trivial. Doing this we reduce the leading factor $J$ originating from \refeq{Weightsplit-end}, by one or two.
Further, for $N=2$, we extract the leading contribution, consisting only of two trivial triangles and bound these manually.
\iflongversion
Details can be found in Appendix \ref{Appendix-ToTheProofs-DoubleTriangle}.
\else
Details can be found in \cite[Appendix B.1]{FitHof13d-ext}.
\fi

\section{Proof related results: Proofs of Theorems \ref{thm-x-space}, \ref{thm-IIC-lim} and \ref{thm-one-arm}}
\label{sec-proof-rel-res}
In this section, we prove Theorems \ref{thm-x-space}, \ref{thm-IIC-lim} and \ref{thm-one-arm} one by one.
\bigskip

\noindent
{\it Proof of Theorem \ref{thm-x-space}.} The proof of Theorem \ref{thm-x-space} follows by using the $x$-space asymptotics proved by Takashi Hara in \cite{Hara08}. See in particular \cite[Proposition 1.3]{Hara08}.
In more detail we use that, by our numerical computations in dimension $d=11$,
	\eqn{
	\lbeq{Tbar-bd}
	\bar{T}^{\sss(0,0)}=\sup_{x\in \Zd}(\tau_{p_c}^{\star 3}(x)-\delta_{0,x})\leq 0.53562,\qquad
	T_{p_c}=2d p_c \sup_{x\in \Zd}(\tau_{p_c}^{\star 3}\star D)(x)\leq 0.28036.
	}
In particular, by a recent improvement of the bounds by Hara compared to \cite[Proposition 1.3]{Hara08}, it suffices to prove that 	
	\eqn{
	\lbeq{geom-cond}
	T_{p_c}(1+2\bar{T}^{\sss(0,0)})<1.
	}
This corresponds to the middle diagram in Figure \ref{fig-Form-Xi4-Decomposed}, which needs to be at most $1$ as it appears to the power
$N-1$ in Figure \ref{fig-Form-Xi4-Decomposed} and is being summed out over $N$.
The improvement in \refeq{geom-cond} follows by carefully inspecting which triangles can be trivial and which are not.
The first contribution in the middle diagram in Figure \ref{fig-Form-Xi4-Decomposed}
corresponds to the term $T_{p_c}$ in \refeq{geom-cond}, the other two contributions are each bounded by
$T_{p_c}\bar{T}^{\sss(0,0)}$.
The bound in \refeq{geom-cond} follows from \refeq{Tbar-bd} and the estimate on $p_c(11)\leq 0.048242$.
\qed
\bigskip

\noindent
{\it Proof of Theorem \ref{thm-IIC-lim}.}  Theorem \ref{thm-IIC-lim} is proved by the second author and J\'arai  \cite{HofJar04}. We use the more recent version in \cite{HeyHofHul14a}, where it was proved under the assumption that the classical lace expansion converges (see also the proof of Theorem \ref{thm-x-space}).
Thus, Theorem \ref{thm-IIC-lim} follows from Theorem \ref{thm-x-space} and the fact that $\hat{\Pi}_{p_c}^{\ssc[N]}(0)$ is exponentially small for $N$ large. We next show this latter claim. By \cite[(4.31) in Proposition 4.1]{BorChaHofSlaSpe05b}, for all $N\geq 1$,
	\eqn{
	\lbeq{PiN-bd}
	\hat{\Pi}_{p_c}^{\ssc[N]}(0)\leq T_{p_c}'[2T_{p_c}T_{p_c}']^{N-1},
	}
where
	\eqn{
	\lbeq{Tp-def}
	T_p'=\max_{x\in \Zd} (\tau_p\star\tau_p\star \tau_p)(x)\leq 1+\bar{T}^{\sss(0,0)}.
	}
An improvement alike the one used in \refeq{geom-cond} can improve the above by replacing \refeq{PiN-bd} by
	\eqn{
	\lbeq{PiN-bd-impr}
	\hat{\Pi}_{p_c}^{\ssc[N]}(0)\leq T_{p_c}'[T_{p_c}(1+2\bar{T}^{\sss(0,0)})]^{N-1},
	}
Therefore, it suffices to show that $T_{p_c}(1+2\bar{T}^{\sss(0,0)})<1$, which we have already proved above.
\qed
\bigskip

\noindent
{\it Proof of Theorem \ref{thm-one-arm}.}  Theorem \ref{thm-one-arm} is proved by Kozma and Nachmias \cite{KozNac08,KozNac11} under the assumption that there exist constants $c_1$ and $c_2$ with $0<c_1<c_2<\infty$ such that
	\eqn{
    	\lbeq{x-space-KN}
    	c_1\|x\|_2^{-(d-2)}\leq \tau_{p_c}(x)\leq c_2\|x\|_2^{-(d-2)}.
	}
The assumption in \refeq{x-space-KN} follows from Theorem \ref{thm-x-space}.
\qed

\paragraph{Acknowledgements.}
This work was supported in part by the Netherlands Organisation for Scientific Research (NWO) through VICI grant 639.033.806 and the Gravitation {\sc Networks} grant 024.002.003. We thank
David Brydges, Takashi Hara and Gordon Slade for their constant encouragement, as well as for several stimulating discussions. This work builds upon the work by Takashi Hara and Gordon Slade. We particularly thank Takashi Hara for sharing his handwritten notes on the proof of mean-field behavior for $d\geq 19$, and explaining how this can be extended to $d\geq 15$. We have thoroughly enjoyed our animated discussions with Takashi in July 2013, which allowed us to compare notes and estimates on triangles, two-point functions, etc. Without these discussions, it would have been much harder to compare our results to the results by Hara and Slade. Finally, we are indebted to Takashi for his help in the proof of Theorem \ref{thm-x-space}, which relies on an improved version of this analysis in \cite{Hara08} that Takashi shared with us. The work of RF was performed in part at Stockholm University in the period September 2013 until September 2015. We further thank the referee for comments that significantly improved the presentation of the paper.

\clearpage
\appendix
\section{Notation}
\begin{table}[th]
\begin{tabular}{|c|c|c|}
  \hline
  Notation & brief description & defined in\\
  \hline
  SRW & simple random walk  												                & Section \ref{sec-phil-proof}\\
  NBW & non-backtracking random walk  										                & Section \ref{sec-phil-proof}\\
  \hline
  & & \\[-2mm]
  $D$ & SRW step distribution                                                               & \refeq{def-Dhat} \\
  $\iota,\kappa$ & direction of a bond $\iota,\kappa\in\{\pm 1,\dots,\pm d \}$              & above \refeq{J-D-matrix-def}  \\
  $u,v,w,x,y$ & points on the lattice: $\Zd$                                                & \\
  $k$ & Fourier argument, so $k\in(-\pi,\pi)^d$ 								       	    & \refeq{def-FourTrans} \\
  $p$ & probability of a bond being occupied                                                & \\
  $f\star g, f^{\star n}$ & convolution of functions $f,g\mapsto \Zd$                       &  \refeq{definition-convolution}\\
  \hline
  & & \\[-2mm]
  $C_z, B_z$ &SRW and NBW two-point functions 	             				    			&  \refeq{genSRW}, \refeq{NBWGenSolved} \\
  $\mJ$ & permutation matrix with entries $(\mJ)_{\iota,\kappa}=\delta_{\iota,-\kappa}$ 	& \refeq{J-D-matrix-def}\\
  $\mD[k]$ & diagonal matrix with entries $(\mD[k])_{\iota,\kappa}=\delta_{\iota,\kappa} \e^{\ii k_\iota}.$ & \refeq{J-D-matrix-def}\\
  $\tau_p$ & percolation two-point function 									            & \refeq{def-tau}, \refeq{lace-exp-eq}\\
  $\tau^\iota_p$ & modified percolation two-point function 									& \refeq{tau-z-def}\\
  $\Xi_z, \Xi^{\iota}_z$ & coefficient of the NoBLE expansion 						        &  \\
  $\Psi^{\kappa}_z, \Pi^{\iota,\kappa}_z$ & coefficient of the NoBLE expansion 			    &  \\
  $\diagRepulsiveLetter{D},\diagRepulsiveLetter{B},\diagRepulsiveLetter{T},\diagRepulsiveLetter{S}$ & repulsive diagram used for the bounds 			    & \refeq{defpercRepDouble}-\refeq{defpercRepTriangle} \\
  $P^{{\sss \rm S}}$, $A$, $\bar B^{{\sss (2)},\iota}$ & building blocks used for the bounds 			    & Section \ref{secBoundsPercDefinitionBlocks}\\[2mm]
  \hline
  & & \\[-2mm]
   $f_1,f_2,f_3$ & bootstrap function                                                       &  \refeq{defFunc1}-\refeq{defFunc3}\\
   $\gamma_i,\Gamma_i$ & assumed/concluded bounds on the $f_i$                              &  \refeq{defFunc1}-\refeq{defFunc3}\\
  \hline
\end{tabular}
\caption{List of notation, that is used in at least two different sections.}
\end{table}
\clearpage

\bibliographystyle{plain}
\bibliography{NoBLEBiB}

\begin{thebibliography}{10}

\bibitem{AdlMeiAhaHar90}
J.~Adler, Y.~Meir, A.~Aharony, and A.~Harris.
\newblock Series study of percolation moments in general dimension.
\newblock {\em Phys. Rev. B}, {\bf 41}(13):9183--9206, May (1990).

\bibitem{AizBar87}
M.~Aizenman and D.J. Barsky.
\newblock Sharpness of the phase transition in percolation models.
\newblock {\em Commun. Math. Phys.}, {\bf 108}:489--526, (1987).

\bibitem{AizNew84}
M.~Aizenman and C.M. Newman.
\newblock Tree graph inequalities and critical behavior in percolation models.
\newblock {\em J. Stat. Phys.}, {\bf 36}:107--143, (1984).

\bibitem{BarAiz91}
D.J. Barsky and M.~Aizenman.
\newblock Percolation critical exponents under the triangle condition.
\newblock {\em Ann. Probab.}, {\bf 19}:1520--1536, (1991).

\bibitem{BauBrySla15b}
R.~Bauerschmidt, D.~Brydges, and G.~Slade.
\newblock Critical two-point function of the 4-dimensional weakly self-avoiding
  walk.
\newblock {\em Comm. Math. Phys.}, {\bf 338}(1):169--193, (2015).

\bibitem{BauBrySla15a}
R.~Bauerschmidt, D.~Brydges, and G~Slade.
\newblock Logarithmic correction for the susceptibility of the 4-dimensional
  weakly self-avoiding walk: a renormalisation group analysis.
\newblock {\em Comm. Math. Phys.}, {\bf 337}(2):817--877, (2015).

\bibitem{BerKea84}
J.~van~den Berg and M.~Keane.
\newblock On the continuity of the percolation probability function.
\newblock In {\em Conference in modern analysis and probability ({N}ew {H}aven,
  {C}onn., 1982)}, volume~{\bf 26} of {\em Contemp. Math.}, pages 61--65. Amer.
  Math. Soc., Providence, RI, (1984).

\bibitem{BerKes85}
J.~van~den Berg and H.~Kesten.
\newblock Inequalities with applications to percolation and reliability.
\newblock {\em J. Appl. Prob.}, {\bf 22}:556--569, (1985).

\bibitem{BorChaHofSlaSpe05b}
C.~Borgs, J.~Chayes, R.~van~der Hofstad, G.~Slade, and J.~Spencer.
\newblock Random subgraphs of finite graphs. {II}. {T}he lace expansion and the
  triangle condition.
\newblock {\em Ann. Probab.}, {\bf 33}(5):1886--1944, (2005).

\bibitem{BrySpe85}
D.C. Brydges and T.~Spencer.
\newblock Self-avoiding walk in 5 or more dimensions.
\newblock {\em Commun. Math. Phys.}, {\bf 97}:125--148, (1985).

\bibitem{CheHeySak15}
L.~Chen, S.~Handa, M.~Heydenreich, Kamijima, and A.~Sakai.
\newblock An attempt to prove mean-field behavior for nearest-neighbor
  percolation in 7 dimensions.
\newblock In progress.

\bibitem{DumTas15}
H.~Duminil-Copin and V.~Tassion.
\newblock A new proof of the sharpness of the phase transition for {B}ernoulli
  percolation and the {I}sing model.
\newblock {\em Commun. Math. Phys.}, 343(2):725--745, (2016).

\bibitem{FitNoblePage}
R.~Fitzner.
\newblock www.fitzner.nl/noble/.

\bibitem{Fit13}
R.~Fitzner.
\newblock Non-backtracking lace expansion.
\newblock {\em PhD. thesis, TU Eindhoven}, (2013).

\bibitem{FitHof13a}
R.~Fitzner and R.~van~der Hofstad.
\newblock Non-backtracking random walk.
\newblock {\em J. Statist. Phys.}, {\bf 150}(2):264--284, (2013).

\bibitem{FitHof13b}
R.~Fitzner and R.~van~der Hofstad.
\newblock Generalized approach to the non-backtracking lace expansion.
\newblock {\em Probab. Theory Related Fields}, pages 1--79, to appear (2017).

\bibitem{Gras03}
P.~Grassberger.
\newblock Critical percolation in high dimensions.
\newblock {\em Phys. Rev. E}, {\bf 67}:036101, Mar (2003).

\bibitem{Grim99}
G.~Grimmett.
\newblock {\em Percolation}.
\newblock Springer, Berlin, 2nd edition, (1999).

\bibitem{HanKamSak17}
S.~Handa, Kamijima, and A.~Sakai.
\newblock The lace expansion for the nearest-neighbor models on the
  body-centered cubic lattice.
\newblock In progress.

\bibitem{Hara08}
T.~Hara.
\newblock Decay of correlations in nearest-neighbor self-avoiding walk,
  percolation, lattice trees and animals.
\newblock {\em Ann. Probab.}, {\bf 36}(2):530--593, (2008).

\bibitem{HarSla90a}
T.~Hara and G.~Slade.
\newblock Mean-field critical behaviour for percolation in high dimensions.
\newblock {\em Commun. Math. Phys.}, {\bf 128}:333--391, (1990).

\bibitem{HarSla90b}
T.~Hara and G.~Slade.
\newblock On the upper critical dimension of lattice trees and lattice animals.
\newblock {\em J. Stat. Phys.}, {\bf 59}:1469--1510, (1990).

\bibitem{HarSla92b}
T.~Hara and G.~Slade.
\newblock The lace expansion for self-avoiding walk in five or more dimensions.
\newblock {\em Reviews in Math.\ Phys.}, {\bf 4}:235--327, (1992).

\bibitem{HarSla92a}
T.~Hara and G.~Slade.
\newblock Self-avoiding walk in five or more dimensions. {I.} {The} critical
  behaviour.
\newblock {\em Commun.\ Math.\ Phys.}, {\bf 147}:101--136, (1992).

\bibitem{HarSla94}
T.~Hara and G.~Slade.
\newblock Mean-field behaviour and the lace expansion.
\newblock In G.\ Grimmett, editor, {\em Probability and Phase Transition},
  Dordrecht, (1994). Kluwer.

\bibitem{HeyHof15}
M.~Heydenreich and R.~van~der Hofstad.
\newblock Progress in high-dimensional percolation and random graphs.
\newblock {\em Lecture notes for the CRM-PIMS Summer School in Probability},
  Preprint (2016).

\bibitem{HeyHofHul14a}
M.~Heydenreich, R.~van~der Hofstad, and T.~Hulshof.
\newblock High-dimensional incipient infinite clusters revisited.
\newblock {\em J. Stat. Phys.}, {\bf 155}(5):966--1025, (2014).

\bibitem{HeyHofSak08}
M.~Heydenreich, R.~van~der Hofstad, and A.~Sakai.
\newblock Mean-field behavior for long- and finite range {I}sing model,
  percolation and self-avoiding walk.
\newblock {\em J. Statist. Phys.}, {\bf 132}(5):1001--1049, (2008).

\bibitem{HofHolSla98}
R.~van~der Hofstad, F.~den Hollander, and G.~Slade.
\newblock A new inductive approach to the lace expansion for self-avoiding
  walks.
\newblock {\em Probab. Theory Related Fields}, {\bf 111}(2):253--286, (1998).

\bibitem{HofHolSla07b}
R.~van~der Hofstad, F.~den Hollander, and G.~Slade.
\newblock The survival probability for critical spread-out oriented percolation
  above 4+1 dimensions. {II.} {E}xpansion.
\newblock {\em Ann. Inst. H. Poincar\'e Probab. Statist.}, {\bf 5}(5):509--570,
  (2007).

\bibitem{HofJar04}
R.~van~der Hofstad and A.A. J{\'a}rai.
\newblock The incipient infinite cluster for high-dimensional unoriented
  percolation.
\newblock {\em J. Statist. Phys.}, {\bf 114}(3-4):625--663, (2004).

\bibitem{HofSak04}
R.~van~der Hofstad and A.~Sakai.
\newblock Gaussian scaling for the critical spread-out contact process above
  the upper critical dimension.
\newblock {\em Electron. J. Probab.}, {\bf 9}:710--769 (electronic), (2004).

\bibitem{HofSla02}
R.~van~der Hofstad and G.~Slade.
\newblock A generalised inductive approach to the lace expansion.
\newblock {\em Probab. Theory Related Fields}, {\bf 122}(3):389--430, (2002).

\bibitem{Hugh96}
B.~D. Hughes.
\newblock {\em Random walks and random environments. {V}ol. 2}.
\newblock Oxford Science Publications. The Clarendon Press Oxford University
  Press, New York, (1996).

\bibitem{Jara03b}
A.A. J{\'a}rai.
\newblock Incipient infinite percolation clusters in 2{D}.
\newblock {\em Ann. Probab.}, {\bf 31}(1):444--485, (2003).

\bibitem{Jara03a}
A.A. J{\'a}rai.
\newblock Invasion percolation and the incipient infinite cluster in 2{D}.
\newblock {\em Comm. Math. Phys.}, {\bf 236}(2):311--334, (2003).

\bibitem{Kest86a}
H.~Kesten.
\newblock The incipient infinite cluster in two-dimensional percolation.
\newblock {\em Probab. Theory Related Fields}, {\bf 73}(3):369--394, (1986).

\bibitem{KozNac08}
G.~Kozma and A.~Nachmias.
\newblock {The Alexander-Orbach conjecture holds in high dimensions}.
\newblock {\em Inventiones Mathematicae}, {\bf 178}(3):635--654, (2009).

\bibitem{KozNac11}
G.~Kozma and A.~Nachmias.
\newblock Arm exponents in high dimensional percolation.
\newblock {\em J. Amer. Math. Soc.}, {\bf 24}(2):375--409, (2011).

\bibitem{MadSla93}
N.~Madras and G.~Slade.
\newblock {\em The Self-Avoiding Walk}.
\newblock Birkh{\"a}user, Boston, (1993).

\bibitem{Mens86}
M.V. Menshikov.
\newblock Coincidence of critical points in percolation problems.
\newblock {\em Soviet Mathematics, Doklady}, {\bf 33}:856--859, (1986).

\bibitem{NguYan93}
B.G. Nguyen and W-S. Yang.
\newblock Triangle condition for oriented percolation in high dimensions.
\newblock {\em Ann.\ Probab.}, {\bf 21}:1809--1844, (1993).

\bibitem{NguYan95}
B.G. Nguyen and W-S. Yang.
\newblock Gaussian limit for critical oriented percolation in high dimensions.
\newblock {\em J. Stat. Phys.}, {\bf 78}(3):841--876, (1995).

\bibitem{Russ78}
L.~Russo.
\newblock A note on percolation.
\newblock {\em Zeitschrift f\"ur Wahrscheinlichkeitstheorie und Verwandte
  Gebiete}, {\bf 43}(1):39--48, (1978).

\bibitem{Russ81}
L.~Russo.
\newblock On the critical percolation probabilities.
\newblock {\em Zeitschrift f\"ur Wahrscheinlichkeitstheorie und Verwandte
  Gebiete}, {\bf 56}(2):229--237, (1981).

\bibitem{Saka01}
A.~Sakai.
\newblock Mean-field critical behavior for the contact process.
\newblock {\em J. Statist. Phys.}, {\bf 104}(1-2):111--143, (2001).

\bibitem{Saka07}
A.~Sakai.
\newblock Lace expansion for the {I}sing model.
\newblock {\em Comm. Math. Phys.}, {\bf 272}(2):283--344, (2007).

\bibitem{Slad87}
G.~Slade.
\newblock The diffusion of self-avoiding random walk in high dimensions.
\newblock {\em Commun. Math. Phys.}, {\bf 110}:661--683, (1987).

\bibitem{Slad06}
G.~Slade.
\newblock {\em The lace expansion and its applications}, volume~{\bf 1879} of
  {\em Lecture Notes in Mathematics}.
\newblock Springer-Verlag, Berlin, (2006).

\end{thebibliography}

 \iflongversion

\section{Detailed definition of the bounding diagrams}
\label{app-bounds}
In the appendix we define the ingredients of the bounding diagrams. These bounds are stated in the form of several tables.
For an example of how to read the definitions via tables compare \refeq{Pa-exampledefinition} and Table \ref{PercBoundTableP1One}. For the diagrams we use $a=d_{\tilde\Ccal}(0,v)$ and $b=d_{\tilde\Ccal}(x,y)$.

{\small \fourcolomntablePer{Diagrams and definition of $P^{b}(x,y)$ }{PercBoundTableP1One}{
  \multicolumn{1}{|c|} {\multirow{2}{*}{$\begin{array}{c} b=0\\ \Rightarrow x=y\end{array}$}} & $x=0$ &     \picTableCaseOne[0.6] & $\delta_{0,x}$    \\ \cline{2-4}
  \multicolumn{1}{|c|} {} & $x\neq 0 $ &     \picTableCaseThree[0.7] & $(1-\delta_{0,x})\prob(0\dbc x) $   \\ \cline{1-4}
  \multicolumn{1}{|c|} {\multirow{2}{*}{$\begin{array}{c} b=1\\ \Rightarrow x\neq 0\end{array}$}} & $y=0$ &    \picTableCaseFour[0.7] & $\delta_{0,y}\diagRepulsiveLetter{B}_{3,\underline 1}(x,0)$    \\ \cline{2-4}
  \multicolumn{1}{|c|} {} & $\begin{array}{c} 0\neq y \end{array}$ &  \picTableCaseSix[0.6] & $ \diagRepulsiveLetter{T}_{1,\underline 1,1}(x,y,0)$   \\ \cline{1-4}
   \multicolumn{1}{|c|} {\multirow{2}{*}{$\begin{array}{c} b\geq 2\\ \Rightarrow x\neq 0\end{array}$}} & $ y=0$ &   \picTableCaseEight[0.6] & $ \delta_{0,y}\diagRepulsiveLetter{D}_{2,2}(x)$  \\ \cline{2-4}
    \multicolumn{1}{|c|} {} & $y\neq 0 $ &     \picTableCaseNine[0.7] & $ \diagRepulsiveLetter{T}_{1,2,1}(x,y,0)$ \\
 }

 \fourcolomntablePer
{Diagrams and definition of $P^{\iota,b}(x,y)$ }
{PercBoundTableP1Iota}{
  \multicolumn{1}{|c|} {\multirow{1}{*}{$\begin{array}{c} b=0\\ \Rightarrow x=y\end{array}$}} &  &     \picPIotaZero[0.7] & $\tau_{3,p}(\ve[\iota]) \prob(\ve[\iota]\dbc x)$    \\ \cline{1-4}
  \multicolumn{1}{|c|} {\multirow{2}{*}{$\begin{array}{c} b=1\end{array}$}} & $y \text{ on sausage}$ &    \picPIotaOne[0.65] & $\begin{array}{c}
  \tau_{3,p}(\ve[\iota])  \\
\times \big(  \delta_{\ve[\iota],y}\diagRepulsiveLetter{B}_{3,\underline 1}(x-\ve[\iota],0)\\
+\diagRepulsiveLetter{T}_{1,\underline 1,1}(y-\ve[\iota],x-\ve[\iota],0) \big) \end{array} $
  \\ \cline{2-4}
  \multicolumn{1}{|c|} {} & $x=\ve[\iota]$ &\picPIotaOneTwo[0.7] & $\diagRepulsiveLetter{B}_{2,\underline 1}(y,\ve[\iota])$    \\ \cline{1-4}
   \multicolumn{1}{|c|} {\multirow{3}{*}{$\begin{array}{c} b\geq 2\end{array}$}} & $y \text{ on sausage}$ &     \picPIotaTwo[0.7] & $\tau_{3,p}(\ve[\iota]) P^{\ssc[0],2}(x-\ve[\iota],y-\ve[\iota])$   \\ \cline{2-4}
  \multicolumn{1}{|c|} {} & $x=\ve[\iota]$ &\picPIotaTwoTwo[0.7] & $\delta_{0,y}\tau_{3,p}(\ve[\iota])+\diagRepulsiveLetter{B}_{1,2}(y,x)$\\ \cline{2-4}
  \multicolumn{1}{|c|} {} & $\begin{array}{c}x\neq \ve[\iota]\\ y\text{ not}\\\text{ on sausage}\end{array}$ &\picPIotaTwoThree[0.65] & $\begin{array}{c}\left(\begin{array}{c} \diagRepulsiveLetter{B}_{1,1}(y,\ve[\iota])\\+ \delta_{0,y}\tau_{3,p}(\ve[\iota])\end{array}\right) \\ \times 	(1-\delta_{\ve[\iota],x})\prob(\ve[\iota]\dbc x)\end{array}$\\ }
}

\threecolomntablePer{Diagrams and definition of $A^{a,b}(0,v,x,y)$ }{PercBoundTableA}
{
$\begin{array}{c} a=b=0\\ \Rightarrow x=y,v=0\\ \Rightarrow v\neq y,x\neq e \end{array}$ & \picACaseTwo[0.5] & $
(1-\delta_{0,x})\prob(0\dbc x)$ \\ \cline{1-3}
$\begin{array}{c} a=0,b=1\\ \Rightarrow v=0,y\neq v\end{array}$&       \picACaseThree[0.5] &
$\begin{array}{c} \diagRepulsiveLetter{T}_{1,\underline 1,1}(x,y,0)\end{array}$\\ \cline{1-3}
  $\begin{array}{c} a=0,b\geq 2\\ \Rightarrow v=0,y\neq 0,\\
  \quad x\neq 0 \end{array}$ &       \picACaseFour[0.5] & $\diagRepulsiveLetter{T}_{1,2,1}(x,y,0)$ \\ \cline{1-3}
  $a=1,b=0$ &           \picACaseFive[0.5] & $\begin{array}{c} 2d D(v) \diagRepulsiveLetter{B}_{1,1}(x,v)\end{array}$ \\ \cline{1-3}
  $a=b=1$ &            \picACaseSix[0.5] & $\begin{array}{c}  2d D(v)\diagRepulsiveLetter{T}_{1,\underline 1,0}(x,y,v)\end{array}$ \\ \cline{1-3}
  $a=1,b\geq 2$ &      \picACaseSeven[0.5] & $ 2d D(v)\diagRepulsiveLetter{T}_{1,2,0}(x,y,v)$    \\ \cline{1-3}
  $a\geq 2,b=0$ &      \picACaseEight[0.5] & $\diagRepulsiveLetter{B}_{1,0}(x,v)$ \\ \cline{1-3}
  $a\geq 2,b=1$ &      \picACaseNine[0.5] & $\diagRepulsiveLetter{T}_{1,\underline 1,0}(x,y,v)$    \\ \cline{1-3}
  $a\geq 2,b\geq 2$ &  \picACaseTen[0.5] & $\diagRepulsiveLetter{T}_{1,2,0}(x,y,v)$  \\
}
\clearpage
\threecolomntablePer{Diagrams and definition of $A^{\iota,a,b}(0,v,x,y)$}{PercBoundTableAIota}
{
$\begin{array}{c} a=b=0\\ \Rightarrow x=y,v=0\\ \Rightarrow v\neq y,x\neq e \end{array}$ & \picAIotaCaseTwo[0.5] & $\begin{array}{c} \diagRepulsiveLetter{T}_{1,\underline 1,1}(\ve[\iota],x,0)\end{array}$ \\ \cline{1-3}
$\begin{array}{c} a=0,b=1\\ \Rightarrow v=0,y\neq v\end{array}$&       \picAIotaCaseThree[0.5] &
$\begin{array}{c}\delta_{x,\ve[\iota]} \diagRepulsiveLetter{T}_{\underline 1,\underline 1,2}(\ve[\iota],y,0)\\+\diagRepulsiveLetter{S}_{\underline 1,1,\underline 1,1}(\ve[\iota],x,y,0)\end{array}$\\ \cline{1-3}
  $\begin{array}{c} a=0,b\geq 2\\ \Rightarrow v=0,y\neq 0,\\
  x\neq 0 \end{array}$ &       \picAIotaCaseFour[0.5] & $\diagRepulsiveLetter{S}_{\underline 1,0,2,1}(\ve[\iota],x,y,0)$ \\ \cline{1-3}
  $\begin{array}{c} a=1,b=0\\  \Rightarrow x\neq 0 \end{array}$&      \picAIotaCaseFive[0.5] & $2d D(v) \diagRepulsiveLetter{T}_{\underline 1,1,0}(\ve[\iota],x,v)$ \\ \cline{1-3}
  $a=b=1$ &       \picAIotaCaseSeven[0.5] & $2d D(v)\diagRepulsiveLetter{S}_{\underline 1,0,\underline 1,0}(\ve[\iota],x,y,v)$ \\ \cline{1-3}
  $a=1,b\geq 2$ &      \picAIotaCaseEight[0.5] & $ 2d D(v)\diagRepulsiveLetter{S}_{\underline 1,0,2,0}(\ve[\iota],x,y,v)$    \\ \cline{1-3}
  $a\geq 2,b=0$ &       \picAIotaCaseSix[0.5] & $\diagRepulsiveLetter{T}_{\underline 1,1,0}(\ve[\iota],x,v)$ \\ \cline{1-3}
  $a\geq 2,b=1$ &   \picAIotaCaseTen[0.5] & $\diagRepulsiveLetter{S}_{\underline 1,0,\underline 1,0}(\ve[\iota],x,y,v)$    \\ \cline{1-3}
  $a\geq 2,b\geq 2$ & \picAIotaCaseNine[0.5] & $\diagRepulsiveLetter{S}_{\underline 1,0,2,0}(\ve[\iota],x,y,v)$  \\ \cline{1-3}
}
We define $A^{\iota,a,b,*}(0,v,x,y)$ alike $A^{\iota,a,b}(0,v,x,y)$, where we replace the repulsive\\
diagrams $\diagRepulsiveLetter{B}$, $\diagRepulsiveLetter{T}$, $\diagRepulsiveLetter{S}$ by the non-repulsive diagrams $\diagRepulsiveLetter{B}^*$, $\diagRepulsiveLetter{T}^*$, $\diagRepulsiveLetter{S}^*$ for $b\neq 0$.
\clearpage
\fourcolomntablePer{Diagrams and definition of $B^{{\sss (2)},\iota,a,b}(0,v,x,y)$}{PercBoundTableBNT}{
  \multicolumn{1}{|c|} {\multirow{3}{*}{$\begin{array}{c}a=0,b\geq 2\\0=v\neq w \end{array} $}} & $d_{\tilde\Ccal}(w,u)=1 $ &     \picBTwoPrimeCaseZeroTwoTwo[0.6] & \hspace{-5mm}$ \begin{array}{c} 2dD(w-u)\\
  \times \diagRepulsiveLetter{S}_{1,\underline 1,1,0}(w,u,x,\ve[\iota])\\
  \times\diagRepulsiveLetter{T}_{1,1,\underline 1}(u-y,w-y,0)
  \end{array}$    \\ \cline{2-4}
  \multicolumn{1}{|c|} {} & $d_{\tilde\Ccal}(w,u)\geq 2 $ &     \picBTwoPrimeCaseZeroTwoThree[0.6] & \hspace{-5mm}$ \begin{array}{c} \diagRepulsiveLetter{S}_{1,0,\underline 1,1}(x-u,\ve[\iota]-u,\\
  \qquad\qquad\quad-u,w-u)\\
  \times\diagRepulsiveLetter{T}_{2,1,1}(w-u,y-u,0)  \end{array}$    \\ \cline{1-4}
   \multicolumn{1}{|c|} {\multirow{3}{*}{$\begin{array}{c}a=1,\\b\geq 2\end{array}$}} &  $\begin{array}{c}  d_{\tilde\Ccal}(u,w)=1\end{array}$ &    \picBTwoPrimeCaseOneTwoThree[0.6] & \hspace{-8mm}
   $\begin{array}{c} \frac 1 p (2d)^2 D(v) D(w-u)   \\
  \times \diagRepulsiveLetter{T}_{1,1,\underline 1}(y-u,w-u,0) \\
  \times \diagRepulsiveLetter{P}_{\underline 1,0,1,\underline 1,0}(\ve[\iota],x,u,w,v) \end{array}$ \\ \cline{2-4}
    \multicolumn{1}{|c|} {} & $\begin{array}{c}d_{\tilde\Ccal}(u,w)\geq 2\end{array}$ &     \picBTwoPrimeCaseOneTwoFour[0.6] & \hspace{-5mm} $\begin{array}{c}
 \frac 1 p\diagRepulsiveLetter{T}_{1,1,2}(y-u,w-u,0) \\
  \times \diagRepulsiveLetter{P}_{1,0,\underline 1,\underline 1,0}(x-u,\ve[\iota]-u,\\
  \qquad\quad-u,v-u,w-u) \end{array}$ \\ \cline{1-4}
  \multicolumn{1}{|c|} {\multirow{3}{*}{$a\geq 2,b\geq 2$}} & $d_{\tilde\Ccal}(u,w)=1$ &    \picBTwoPrimeCaseTwoTwoTwo[0.7] & $\hspace{-5mm}  \begin{array}{c}
  \frac 1 p 2dD(w-u)\\  \times\diagRepulsiveLetter{T}_{1,1,\underline 1}(u-y,w-y,0)\\
  \times \diagRepulsiveLetter{P}_{\underline 1,0,1,\underline 1,0}(\ve[\iota],x,u,w,v)
  \end{array}$   \\ \cline{2-4}
  \multicolumn{1}{|c|} {} & $d_{\tilde\Ccal}(u,w)\geq 2$&     \picBTwoPrimeCaseTwoTwoThree[0.7] & $ \begin{array}{c}
   \diagRepulsiveLetter{B}_{1,1}(u-y,w-y)\\ \times
   \diagRepulsiveLetter{P}_{\underline 1,0,1,2,0}(\ve[\iota],x,u,w,v)
  \end{array}$  \\
 }

\fourcolomntablePer
{Diagram of the different cases of $\bar B^{{\sss (2)},\iota,a,b}(0,v,x,y)$ }
{PercBoundTableBNTTwoTwo}
{
    \multicolumn{1}{|c|} {\multirow{3}{*}{$\begin{array}{c} a\geq 2,b=0\\ \Rightarrow v=0 \end{array}$}} & $u=y$ & \picBbarPrimeTwoZeroOne[0.7] & $\hspace{-2mm}\begin{array}{c}\prob(y\dbc w) \\
    \times \diagRepulsiveLetter{S}_{\underline 1, 1,0,2}(\ve[\iota],-w,\\
    \qquad x-w,y-w)
    \end{array}$ \\ \cline{2-4}
  \multicolumn{1}{|c|} {} & $\begin{array}{c} d_{\tilde\Ccal}(u,y)\geq 1\\ d_{\tilde\Ccal}(w,u)=1\end{array}$ &  \picBbarPrimeTwoZeroTwo[0.7] & $ \hspace{-2mm} \begin{array}{c} \frac 1 p \diagRepulsiveLetter{T}_{1,1,\underline 1}(y-w,u-w,0)\\
  \times \diagRepulsiveLetter{P}_{0,1,\underline 1,\underline 1,1}(x,u,w,w+\ve[\iota],0)\end{array}$ \\ \cline{2-4}
  \multicolumn{1}{|c|} {} & $\begin{array}{c} d_{\tilde\Ccal}(u,y)\geq 1\\ d_{\tilde\Ccal}(w,u)\geq 2\end{array}$ &  \picBbarPrimeTwoZeroThree[0.7] & $\hspace{-2mm}\begin{array}{c} \diagRepulsiveLetter{T}_{1,1,2}(y-w,u-w,0)  \\ \times \diagRepulsiveLetter{S}_{\underline 1,1,0,1}(\ve[\iota],-w,x-w,\\
  \qquad\qquad\qquad u-w)  \end{array}$ \\ \cline{1-4}
    \multicolumn{1}{|c|} {\multirow{3}{*}{$\begin{array}{c} a\geq 2,b=1\end{array}$}} & $u=y$ & \picBbarPrimeTwoOneOne[0.7] & $\hspace{-2mm}
   \begin{array}{c} \frac 1 p \prob(y\dbc w)  \\
    \times  \diagRepulsiveLetter{P}_{\underline 1, 0,\underline 1,0,2}(\ve[\iota],v-w,-w,\\
    \qquad\qquad \quad x-w,y-w) \end{array} $ \\ \cline{2-4}
  \multicolumn{1}{|c|} {} & $\begin{array}{c} d_{\tilde\Ccal}(u,y)\geq 1\\ d_{\tilde\Ccal}(w,u)=1\end{array}$ &  \picBbarPrimeTwoOneThree[0.7] & $\hspace{-3mm} \begin{array}{c} \frac {2d D(v)} {p} \diagRepulsiveLetter{T}_{1,1,\underline 1}(y-w,u-w,0)\\
  \times \diagRepulsiveLetter{P}_{0,1,\underline 1,\underline 1,0}(x,u,w,w+\ve[\iota],v)\end{array}$ \\ \cline{2-4}
  \multicolumn{1}{|c|} {} & $\begin{array}{c} d_{\tilde\Ccal}(u,y)\geq 1\\ d_{\tilde\Ccal}(w,u)\geq 2\end{array}$ &  \picBbarPrimeTwoOneFour[0.7] & $\hspace{-3mm} \begin{array}{c} \frac{2d D(v)} p\diagRepulsiveLetter{T}_{1,1,2}(y-w,u-w,0)  \\ \times \diagRepulsiveLetter{P}_{\underline 1,0,\underline 1,0,1}(\ve[\iota],v-w,-w,\\
  \qquad\qquad x-w,u-w) \end{array}$ \\ \cline{1-4}
    \multicolumn{1}{|c|} {\multirow{3}{*}{$\begin{array}{c} a\geq 2,b\geq 2 \end{array}$}} & $u=y$ & \picBbarPrimeTwoTwoOne[0.7] & $\begin{array}{c}\prob(w\dbc y) \\
    \times \diagRepulsiveLetter{B}_{\underline 1,0}(\ve[\iota],v-w)  \\
    \times \diagRepulsiveLetter{B}_{0,2}(x,y) \end{array}$   \\ \cline{2-4}
    \multicolumn{1}{|c|} {} & $\begin{array}{c} d_{\tilde\Ccal}(u,y)\geq 1\\ d_{\tilde\Ccal}(w,u)=1\end{array}$ &  \picBbarPrimeTwoTwoThree[0.7] &.\vspace{-1cm} $ \begin{array}{c} \frac 1 p \diagRepulsiveLetter{T}_{1,1,\underline 1}(y-w,u-w,0)\\
    \times \diagRepulsiveLetter{P}_{0,1,\underline 1,\underline 1,0}(x,u,w,\\
    \qquad \qquad w+\ve[\iota],v)\end{array}$  \\ \cline{2-4}
        \multicolumn{1}{|c|} {} & $\begin{array}{c} d_{\tilde\Ccal}(u,y)\geq 1\\ d_{\tilde\Ccal}(w,u)\geq 2\end{array}$ &
        \picBbarPrimeTwoTwoFour[0.7] & $\begin{array}{c}\diagRepulsiveLetter{B}_{\underline 1,0}(\ve[\iota],v-w)
        \diagRepulsiveLetter{B}_{0,1}(x,u)  \\ \diagRepulsiveLetter{T}_{1,1,2}(y-w,u-w,0) \end{array}$  \\ }
\clearpage

\fourcolomntablePer{Diagram of the different cases of $\bar B^{{\sss (2)},\iota,a,b}(0,v,x,y)$(continued)}
{PercBoundTableBNTTwoOne}{
  \multicolumn{1}{|c|} {\multirow{1}{*}{$\begin{array}{c} a=1,b=0 \\ \Rightarrow y=u,v=w+\ve[\iota]\neq 0 \end{array}$}} &  &     \picBbarPrimeOneZeroTwo[0.7] &  $\hspace{-8mm} \begin{array}{c} \prob(y\dbc w) \\
  \times  \diagRepulsiveLetter{S}_{\underline 1,0,1,\underline 1}(x-y,-y,\\
  \qquad \qquad w+\ve[\iota]-y,w-y)\end{array}$  \\ \cline{1-4}
  \multicolumn{1}{|c|} {\multirow{1}{*}{$\begin{array}{c} a=1,b=1\\ \Rightarrow y=u \end{array}$}} &  & \picBbarPrimeOneOneTwo[0.7] &
  $\hspace{-8mm}\begin{array}{c} \frac 1 p \prob(y\dbc w) \\
  \times \diagRepulsiveLetter{P}_{\underline 1,0,\underline 1,0,\underline 1}(x-y, -y,\\
  \qquad v-y,w+\ve[\iota]-y,w-y)\end{array}$\\ \cline{1-4}
  \multicolumn{1}{|c|} {\multirow{1}{*}{$\begin{array}{c} a=1,b\geq2 \\ \Rightarrow y=u\end{array}$}} &  & \picBbarPrimeOneTwoTwo[0.7]
  & $\hspace{-2mm}\begin{array}{c} \prob(y\dbc w) \\\times \diagRepulsiveLetter{B}_{\underline 1,0}(\ve[\iota],v-w)\\
  \times\diagRepulsiveLetter{B}_{0,\underline 1}(x,y)\end{array}$  \\
}

\noindent
We define the diagrams $B^{{\sss (2)},\iota,a,b}, \bar B^{{\sss (2)},\iota,a,b}$ for the cases of $a,b$ not defined in the Tables \ref{PercBoundTableBNT}-\ref{PercBoundTableBNTTwoTwo} to be zero, i.e., for $b=0,1$ we let $B^{{\sss (2)},\iota,a,b}(0,v,x,y)=0$ and for $a=0$ we let $\bar B^{{\sss (2)},\iota,a,b}(0,v,x,y)=0$.\\
We have explained this in Section \ref{secBoundsPercEvents}. We define the double-open triangle $\bar A^{\iota,a,b}(0,v,x,y)$ to be
	\begin{eqnarray}
	\bar A^{\iota,a,0}(0,v,x,y)&=&A^{\iota,a,0}(0,v,x,y) \qquad\quad \text{ for }a=0,1,2,\\
	\bar A^{\iota,a,1}(0,v,x,y)&=&\frac {1}{p} A^{\iota,a,1}(0,v,x,y) \qquad \text{ for }a=0,1,2,\\
	\bar A^{\iota,0,2}(0,v,x,y)&=&\delta_{0,v}
	\diagRepulsiveLetter{T}^*_{1,\underline 1,0}(-y,\ve[\iota]-y,x-y),\\
	\bar A^{\iota,1,2}(0,v,x,y)&=&\frac 1 p
	\diagRepulsiveLetter{S}^*_{0,\underline 1,\underline 1,0}(v-y,-y,	\ve[\iota]-y,x-y),\\
	\bar A^{\iota,2,2}(0,v,x,y)&=& p\tau_{0,p}(x-\ve[\iota])\tau_{0,p}(v-y).
	\end{eqnarray}

\subsection*{Building blocks with weight.}
To bound $\sum_x \|x\|_2^2 \Xi^{\ssc[N]}_z(x)$ we define weighted diagrams.
These are diagrams in which one line has the weight $\|x\|_2^2$.
In Table \ref{InformalDefinitonOfBlockDelta} we give as small overview of these diagrams.
We use the building blocks defined in the previous section to define the weighted diagrams. For $a,b\in\{0,1,2\}$, let
	\begin{align}
	H^{{\sss (1)},a,b}(u,v,x,y)=&\|u-x\|_2^2 \bar A^{a,b}(u,v,x,y), \\
	H^{{\sss (2)},\iota,a,b}(u,v,x,y)=&\|u-x\|_2^2 \bar A^{\iota,a,b,*}(u,v,x,y),\\
	H^{{\sss (3)},\iota,a,b}(u,v,x,y)=&\|v-y\|_2^2 \bar A^{\iota,a,b,*}(u,v,x,y),
	\end{align}
and
	\begin{align}
	\lbeq{DefBlockc1-Percolation}
	C^{{\sss (1)},\iota,\kappa,a,b}(0,v,x,y)=
	&\sum_{c=0}^2\sum_{w,u}B^{\iota,a,c}(0,v,w,u)\bar A^{\kappa,c,b}(u,w,y,x)\|w\|_2^2,\\
	\lbeq{DefBlockc1-Percolation-two}
	C^{{\sss (2)},\iota,\kappa,a,b}(0,v,x,y)=
	&\sum_{c=0}^2\sum_{w,u}B^{\iota,a,c}(v,0,u,w)\bar A^{\kappa,c,b}(w,u,x,y) \|u\|_2^2.
	\end{align}

\section{Additional details for the bounds on the NoBLE coefficients}
\label{Appendix-ToTheProofs}
In this section we give some additional details of the proof of the bounds on the NoBLE coefficients that we omit in the article version of this document.

\subsection{Additional details for the bounds on $\Xi^{\ssc[1]}$ and $\Xi^{\ssc[2]}$}
\label{Appendix-ToTheProofs-DoubleTriangle}
In this section, we show how we improve the bound on the weighted coefficients $\Xi^{\ssc[1]}$ and $\Xi^{\ssc[2]}$ by considering special cases in which the diagram consists of two connected triangles.\\
We aim to improve the bound on $\Xi^{\ssc[1]}$, by replacing the sum of terms that have a coefficient 2 in the first line on the right-hand side of \refeq{XiIota-Deltabounds-versions1}, i.e.,
	\begin{align}
	2\vec u^T  {\bf H}^{\sss (3)}\big(\vec P^{\sss \rm E}-\vec u\big)
	+2\vec u^T {\bf A}^\iota\vec h^{\sss \rm E}
	+2\vec h^{{\sss \rm S}}{\bf A}^\iota \vec u
	+2\big(\vec P^{{\sss \rm S}}-\vec u^T\big)  {\bf H}^{\sss (3)}\vec u,
	\end{align}
by a better bound. These terms correspond to the cases in which either the left or the right diagram are trivial, see Figure \ref{fig-Form-Xi1}. We use the notation used there. We discuss the case in which the right triangle is trivial, i.e., $t=z=x$ and consider the following four cases for this diagram:
a) $u=w\neq 0$, b) $w=0, u\neq w,$ c) $u,w$ are directly connected by a bond and d) the remaining cases.\\
{\bf Case a) $u=w\neq 0$.} We use $\|x\|_2^2=\|w\|_2^2+\|x-w\|_2^2+w^T (x-w)$ and spatial symmetry to obtain
	\begin{align}
	\sum_{w,\iota,x}\|x\|_2^2&
	\prob(0\dbc w)\diagRepulsiveLetter{T}_{\underline 1,1,1}(\ve[\iota],x-w,0)  \nnb
	=&\sum_{w,\iota,x}(\|w\|_2^2+\|x-w\|_2^2)
	\prob(0\dbc w)\diagRepulsiveLetter{T}_{\underline 1,1,1}(\ve[\iota],x-w,0)
	\nnb
	\leq& \diagRepulsiveLetter{H}^{\sss D}_{1}
	\sum_{x}\diagRepulsiveLetter{T}_{\underline 1,1,1}(\ve[\iota], x,0)
	+\diagRepulsiveLetter{H}_{2}(0) \sum_x \diagRepulsiveLetter{D}_{1,1}(x),
	\lbeq{XiIota-Delta-Step1}
	\end{align}
where we recall \eqref{Hi-defs} for the definitions of $\diagRepulsiveLetter{H}^{\sss D}_{1}$ and $\diagRepulsiveLetter{H}_{2}(x)$.\\
{\bf Case b) $w=0, u\neq w$.} We bound this contribution by
	\begin{align}
	\sum_{u,\iota,x}\|x\|_2^2
	\diagRepulsiveLetter{T}_{1,1,\underline 1}(x,u+\ve[\iota],u)  \prob(0\dbc u)
	\leq \sup_{x\neq 0}\diagRepulsiveLetter{H}_{2}(x) \sum_u \diagRepulsiveLetter{D}_{1,1}(u).
	\lbeq{XiIota-Delta-Step2}
	\end{align}
{\bf Case c) $u,w$ are directly connected.} We use $\|x\|_2^2\leq 2\|w\|_2^2+2\|x-w\|_2^2$ and rename $u=w+\ve[\kappa]$ to bound the diagram by
	\begin{align}
	p\sum_{v,\kappa,\iota,x}&\|x\|_2^2
	\diagRepulsiveLetter{B}_{1,1}(-w,\ve[\kappa])
	\diagRepulsiveLetter{T}_{1,1,\underline 1}(x-w,\ve[\kappa]+\ve[\iota],\ve[\kappa]) \nnb
	\leq& 4d p \diagRepulsiveLetter{H}_{1}(\ve[1])
	\sum_{\iota,x}\diagRepulsiveLetter{T}_{1,1,\underline 1}(x,\ve[1]+\ve[\iota],\ve[1])
	+4d p \diagRepulsiveLetter{H}_{2}(\ve[1])
	\sum_{x}\diagRepulsiveLetter{B}_{1,1}(x,\ve[1])
	\lbeq{XiIota-Delta-Step3}
	\end{align}
{\bf Case d) Remaining cases.} We bound the remaining cases by
	\begin{align}
	&2 \sum_{u,w,\iota,x}\|w\|_2^2
	\diagRepulsiveLetter{B}_{1,1}(-w,u-w)
	\diagRepulsiveLetter{S}_{\underline 1, 1,1,2}(\ve[\iota],x-u,w-u,0) \nnb
	&+2 \sum_{u,w,\iota,x}\|x-w\|_2^2
	\diagRepulsiveLetter{T}_{1,2,1}(w,u,0)
	\diagRepulsiveLetter{T}_{\underline 1,1,1}(\ve[\iota],x-u,w-u)  \nnb
	\leq& 2\sup_{x\neq 0}\diagRepulsiveLetter{H}_{1}(x)
	\sum_{\iota,x,y} \diagRepulsiveLetter{S}_{\underline 1, 1,1,2}(\ve[\iota],x,y,0)
	+2\sup_{x\neq 0}\diagRepulsiveLetter{H}_{2}(x)
	\sum_{x,y} \diagRepulsiveLetter{T}_{1,2, 1}(x,y,0).
	\lbeq{XiIota-Delta-Step4}
	\end{align}
We bound the diagram in which $u=w=0$, so that the left diagram is trivial, in the same way, with the exception that the special case $z=x$, alike \refeq{XiIota-Delta-Step2}, does not need to be considered, as we can choose $t=x$ in this case. The improved bound on the sum of terms that have a coefficient 2 in the first line on the right-hand side of \refeq{XiIota-Deltabounds-versions1} follows by summing all the bounds obtained above.\\

To be able to show the mean-field result in $d=11,12$, we extract the leading contribution of $\Xi^{\ssc[2]}$, see Figure
\ref{fig-Form-Xi2-leading}, and bound these manually in a similar way as for the diagrams discussed for $\Xi^{\ssc[1]}$ above. Here, the leading contribution arises when all {\em three} triangles that could be present in the diagram $\Xi^{\ssc[2]}$ are trivial.

\begin{figure}[h]
\begin{center}
\picXiNTwoMain[1]
\caption{Diagrammatic representations of main contribution to $\Xi_p^{\ssc[2]}(x)$.
The solid line is a connections in $\tilde \Ccal^{b_0}_{\sss 0}(0)$,
the dashed lines represent connections in $ \tilde\Ccal^{b_1}_1\subseteq \Zd\setminus\{\bb_0,\tb_1\}$, and the dotted line a connection in $ \Ccal_2(y+\ve[\kappa])\subseteq \Zd\setminus\{\bb_1\}$.}
\label{fig-Form-Xi2-leading}
\end{center}
\end{figure}

\noindent
For these bounds we defined an adaptation of $\diagRepulsiveLetter{H}_{n}(x)$, see \eqref{Hi-defs}:
\begin{align}
  	\diagRepulsiveLetter{H}'_{n}(x)&:=
	\max \big\{\sum_{e,y} \|y\|_2^2 \diagRepulsiveLetter{T}_{\underline{1},0,n}(e,y,x),
	\sum_{e,y} \|y\|_2^2 \diagRepulsiveLetter{T}_{0,\underline{1},n}(y-e,y,x),\\
	&\qquad\qquad\qquad\qquad
	\sum_{\iota,\kappa,y} \|y\|_2^2
	\diagRepulsiveLetter{S}_{\underline{1},1,n-1,\underline{1}}(e_{\iota},y,x+e_{\kappa},x)\big\}.\nn
	\label{Hidash-defs}
	\end{align}
In $\diagRepulsiveLetter{H}_{n}(x)$ the weight $\|y\|_2^2$ is \emph{carried} by a single connection/path, while the weight is in
 $\diagRepulsiveLetter{H}'_{n}(x)$ is along the combination of a pivotal edge and a connection.
At the end of this section we discuss how we bound $\diagRepulsiveLetter{H}_{n}(x)$ and $\diagRepulsiveLetter{H}'_{n}(x)$ numerically.\\
Now we bound the diagram pictured in Figure \ref{fig-Form-Xi2-leading}, by considering the five cases
a) $t=x$, b) $t=\tb_0$, c) $\bb_1=t$, d) $\bb_1$ and $t$ being directly connected by an bond and e) the remaining cases.\\
	
\noindent
{\bf Case a) $t=x$.} We bound this contribution by
	\begin{align}
	\sum_{\iota,\kappa,y, x}&
	\|x\|_2^2\diagRepulsiveLetter{T}_{0,\underline 1,1}(\ve[\iota]-x,-x,y-x)
	\diagRepulsiveLetter{T}_{1,1,\underline 1}(\ve[\kappa],x-y,0)  \nnb
	&\leq
	\sum_{\kappa,y}\diagRepulsiveLetter{T}_{\underline 1,1,1}(\ve[\kappa],y,0)   	
	\Big(\sup_{x}\diagRepulsiveLetter{H}'_{1}(x)\Big).
	\end{align}
{\bf Case b) $t=\bb_0$.} We bound this contribution by
	\begin{align}
	\sum_{\iota,\kappa,y, x}&
	\|x\|^2 \diagRepulsiveLetter{B}_{1,1}(y,\ve[\iota])
	\diagRepulsiveLetter{S}_{\underline{1},1,1,\underline 1}(\ve[\iota],x,y+\ve[\kappa],y)
	\leq \sum_{e,y}\diagRepulsiveLetter{B}_{1,1}(y,e)
	\Big(\sup_{x}\diagRepulsiveLetter{H}'_{2}(x)\Big).
	\end{align}
{\bf Case c) $\bb_1=y=t$.} We split again as in \refeq{XiIota-Delta-Step1} and use symmetry to obtain
	\begin{align}
	\sum_{\iota,\kappa,t, x}&\|x\|_2^2
	\diagRepulsiveLetter{T}_{\underline 1,1,1}(\ve[\iota],t,0)
	\diagRepulsiveLetter{T}_{\underline 1,1,1}(\ve[\kappa],x-t,0)  \\
	&=\sum_{\iota,\kappa,t, x}(\|t\|_2^2+\|t-x\|_2^2)
	\diagRepulsiveLetter{T}_{\underline 1,1,1}(\ve[\iota],t,0)
	\diagRepulsiveLetter{T}_{\underline 1,1,1}(\ve[\kappa],x-t,0)
	\leq 2\diagRepulsiveLetter{H}_{2}(0)
	\sum_{x,t}\diagRepulsiveLetter{T}_{\underline 1,1,1}(x,y,0).\nn
\end{align}
{\bf Case d) $\bb_1=y$ and $t$ are directly connected.} We bound as in \refeq{XiIota-Delta-Step3}
	\begin{align}
	&2dp\sum_{\iota,\kappa,y,t, x}
	2\left(\|y\|_2^2+\|x-y\|_2^2\right) D(y-t)
	\diagRepulsiveLetter{T}_{1,\underline 1,1}(-y,\ve[\iota]-y,t-y)
	\diagRepulsiveLetter{T}_{\underline 1,1, 1}(\ve[\kappa],x-y,t-y)  \nnb
	&\qquad=
	4d p \left( \diagRepulsiveLetter{H}'_{1}(\ve[1])  + \diagRepulsiveLetter{H}_{2}(\ve[1]) \right)
	\sum_{\iota,x}\diagRepulsiveLetter{T}_{\underline 1,1,1}(\ve[\iota],x,\ve[1]).
	\end{align}
{\bf Case e) $\bb_1=y\neq t$ that are not directly connected.} We proceed as in \refeq{XiIota-Delta-Step4} and bound the diagram by
	\begin{align}
	&2\sum_{\iota,\kappa,y,t, x}\|y\|_2^2
	\diagRepulsiveLetter{T}_{1,\underline 1,1}(-y,\ve[\iota]-y,t-y)
	\diagRepulsiveLetter{S}_{\underline 1,1,1,2}(\ve[\kappa],x-y,t-y,0)  \nnb
	&+2\sum_{\iota,\kappa,y,t, x}\|x-y\|_2^2
	\diagRepulsiveLetter{S}_{\underline 1,1,2,1}(\ve[\iota],t,y,0)
	\diagRepulsiveLetter{T}_{\underline 1,1,1}(\ve[\kappa], x-y,t-y)\nnb
	&\qquad=
	2\sup_{x\neq 0} \diagRepulsiveLetter{H}_{2}(x)
	\sum_{x,y,t}\diagRepulsiveLetter{S}_{\underline 1,1,1,2}(x,y,t,0)
	+2\sup_{x\neq 0} \diagRepulsiveLetter{H}'_{1}(x)
	\sum_{x,y,t}\diagRepulsiveLetter{S}_{\underline 1,1,2,1}(x,y,t,0).
	\end{align}
This completes the derivation of the bound for the special case of contribution to $\Xi^{\ssc[2]}$ that have a form as shown in Figure \ref{fig-Form-Xi2-leading}.

\paragraph{How to bound repulsive weighted diagrams.}
Here we explain how we can bound $\diagRepulsiveLetter{H}_{n}(x)$ and $\diagRepulsiveLetter{H}'_{n}(x)$. We start with $\diagRepulsiveLetter{H}_{n}(x)$.

Dropping the repulsiveness constraint present in $\diagRepulsiveLetter{H}_{n}(x)$ and using $\tau_n(x)\leq (2dp)^n (D\star \tau)(x)$, we obtain
	\begin{align}
 	\diagRepulsiveLetter{H}_{n}(x)\leq (2dp)^n\sum_{y} \tau(y)\|y\|_2^2 (D\star \tau)(x)
	\end{align}
for all three terms in \eqref{Hi-defs}. The bound on the bootstrap function $f_3(p)\leq \Gamma_3$ directly implies a numerical bound on the right-hand-side. In \cite[Section 5.3.3]{FitHof13b}, we discuss in detail how we improve this bound for $x=0$, by extracting short explicit contributions. For $x=e$ being a neighbor of the origin, we use that $\diagRepulsiveLetter{H}_{n}(e)=\tfrac 1 {2d} (D\star \diagRepulsiveLetter{H}_{n})(0)$ to gain an extra factor $D$.

In $\diagRepulsiveLetter{H}'_{n}(x)$, the weight $\|y\|^2$ is on a line that combines an edge $(0,e)$ and a path $e\conn y$. We first bound $\diagRepulsiveLetter{H}'_{n}(x)$ by
	\begin{align}
  	\lbeq{Bound-Hprime-step1}
 	\diagRepulsiveLetter{H}'_{n}(x)
	\leq\sum_{\iota, \rho,y} \|y\|_2^2 \diagRepulsiveLetter{B}_{\underline{1},1}(e_{\iota},y)\diagRepulsiveLetter{B}_{n-1,\underline{1}} (x-y-e_{\rho},x-y).
	\end{align}
Unfortunately, we can not bound this using $f_3(p)\leq\Gamma_3$ directly, as $\diagRepulsiveLetter{B}_{\underline{1},1}$ is not obviously bounded by $\tau$. To derive a bound we define the event
	\begin{align}
	E^\iota(y)=(\{(0,\ve[\iota]) \text{ is occ.}\}\cap \{ \ve[\iota]\conn y \text{ off } (0,\ve[\iota])\}
	\end{align}
and note that, by inclusion-exclusion,
	\begin{align}
	\prob(0\conn y) =\prob\Big(\bigcup_\iota  E^\iota(y) \Big)
	\geq \sum_\iota \prob (E^\iota )
	- \tfrac12 \sum_{\kappa\neq\iota}  \prob(E^\iota(y) \cap E^\kappa(y)).
  	\lbeq{Bound-Hprime-step2}
	\end{align}
For the special case $\diagRepulsiveLetter{B}_{\underline{1},1}$ this first connection is only a single bond, so that
	\begin{align}
	\prob (E^\iota )=\diagRepulsiveLetter{B}_{\underline{1},1}(\ve[\iota],y).
  	\lbeq{Bound-Hprime-step3}
	\end{align}
For the second term we note that $E^\iota(y) \cap E^\kappa(y)$ implies the existence of a point $w$ for which $E^\iota(w) \circ E^\kappa(w)\circ \{w\conn y\}$ occurs, so that
	\begin{align}
	\tfrac12 \sum_{\kappa,\iota}  \prob(E^\iota(y) \cap E^\kappa(y))&\leq
	\tfrac12 \sum_{\kappa, \iota} \sum_w
	\prob(E^\iota(w) \circ E^\kappa(w)\circ \{w\conn y\})\nnb
	&\leq
	\tfrac12 \sum_{\kappa,\iota}
	\sum_w \prob(E^\iota(w) \circ E^\kappa(w)) \tau(y-w).
	\lbeq{Bound-Hprime-step4}
	\end{align}
Combining \refeq{Bound-Hprime-step2}-\refeq{Bound-Hprime-step4} leads to
	\eqn{
	\sum_{\iota} \diagRepulsiveLetter{B}_{\underline{1},1}(e_\iota,y)
	\leq \tau(y)+\tfrac12 \sum_{\kappa,\iota} \sum_w
	\diagRepulsiveLetter{S}_{\underline{1},0,0,\underline{1}}(e_\iota,w,e_\kappa,0) \tau(y-w),
	}
which, when substituted into \refeq{Bound-Hprime-step1} leads to
	\begin{align}
 	\diagRepulsiveLetter{H}'_{n}(x)
	&\leq \sum_{\rho,y}\|y\|_2^2\tau(y)\diagRepulsiveLetter{B}_{n-1,\underline{1}} (x-y-e_{\rho},x-y)\\
	&\qquad+
 	\tfrac12  \sum_{\rho,\kappa,\iota}
	\sum_{y,w\neq 0} \|y\|_2^2
	\diagRepulsiveLetter{S}_{\underline{1},0,0,\underline{1}}(e_\iota,w,e_\kappa,0) \tau(y-w)
	\diagRepulsiveLetter{B}_{n-1,\underline{1}} (x-y-e_{\rho},x-y).\nn
  	\end{align}
The first part is bounded using $f_3(p)\leq \Gamma_3$. The second, numerically smaller, term can be bounded as
	\begin{align}
 	&\shift\shift \tfrac12  \sum_{\rho,\kappa,\iota}
	\sum_{y,w\neq 0} \|y\|_2^2\diagRepulsiveLetter{S}_{\underline{1},0,0,\underline{1}}(e_\iota,w,e_\kappa,0)
	\tau (y-w) \diagRepulsiveLetter{B}_{n-1,\underline{1}} (x-y-e_{\rho},x-y)\\
	&\leq  \tfrac12  \sum_{\kappa,\iota} \sum_{y,w\neq 0} 2(\|w\|_2^2+\|y-w\|_2^2)
	\diagRepulsiveLetter{S}_{\underline{1},0,0,\underline{1}}(e_\iota,w,e_\kappa,0)
	\tau(y-w)\diagRepulsiveLetter{B}_{n-1,\underline{1}} (x-y-e_{\rho},x-y)\nnb
	&\leq \diagRepulsiveLetter{H}'_{1}(0)
	\sup_{w'}\sum_{y,\rho} \tau_{0}(y-w')\diagRepulsiveLetter{B}_{n-1,\underline{1}} (x-y-e_{\rho},x-y)\nnb
	&\qquad\qquad
	+\sum_{\iota,\kappa,w} \diagRepulsiveLetter{S}_{\underline{1},0,0,\underline{1}}(e_\iota,w,e_\kappa,0)
	\sup_{w'}  \sum_{t,\rho} \|y-w'\|_2^2\tau_{0}(y-w')\diagRepulsiveLetter{B}_{n-1,\underline{1}} (x-y-e_{\rho},x-y).\nn
	\end{align}
The last term can be numerically bounded using the usual techniques. For $x=0$ and $l=1$, the above gives a linear relation, where the coefficient of $\diagRepulsiveLetter{H}'_{1}(0)$ on the right hand side is strictly smaller than one. Thus, this gives a bound on $\diagRepulsiveLetter{H}'_{1}(0)$, which can then be used to obtain bounds for general $x\neq 0$.

\subsection{Additions to the proof of Lemma \ref{lemmapercboundLowerBounds}}
\label{Appendix-ToTheProofs-LowerBound}
Here we prove lower bounds on $\Pi^{\ssc[1],\iota,\kappa}_p(x)$ for $x\sim 0$ and $x\sim e_{\iota}$. We do this by explicitly identifying contributions involving a small number of bonds, and bounding these contributions from below.
A special role is played by the $E'(x,y;A)$ event, which requires the last sausage from $x$ to $y$ to be cut through by $A$ (in our examples $A$ will be given by $\tilde{\Ccal}_{\sss 0}$), as well as the statement that there is no previous pivotal $b'$ for $x\conn y$ such that $x\ct{A}\bb'$ occurs. We refer to the latter as the `no previous pivotal' requirement. Further, in $\Pi^{\ssc[1],\iota,\kappa}_p(x)$, the indicator that $x+e_{\kappa}\not\in \tilde{\Ccal}_{\sss 1}$ appears, which we will refer to as the `no backtracking' requirement. We will deal with the `no previous pivotal' and `no backtracking' requirements by using inclusion-exclusion and explicitly bounding the contributions where these requirements are violated from above.

We start with the definition of $\Pi^{\ssc[1],1,\kappa}(x)$ in \refeq{tau-tauj-LEC-ident2}, which contains two terms.
For the first term in \refeq{tau-tauj-LEC-ident2}, we restrict to the case that the first pivotal bond starts at $\ve[1]=\bb_0$. The second term is simplified using the notation introduced in \refeq{Abbreviation-for-extra-bond}.
This yields
	\begin{align}
  	\Pi^{\ssc[1],1,\kappa}(x)\geq &\ p^2\sum_{\iota}
  	\expec^{(0,\ve[1])}_{\sss 0 }\Big(\indic{0 \conn \ve[1]} \indicwo{{\mathcal T}_\iota}\prob^{\ve[1]}_{\sss 1}
   	\left(E'(\ve[1]+\ve[\iota],x;\tilde{\Ccal}_{\sss 0})\cap\{x+\ve[\kappa]\nin\tilde \Ccal_{\sss 1}\}\right)\Big)\nnb
   	&+  p^2\expect_{\sss 0}^{(0,\ve[1])} \left[\indicwo{\ve[1]\nin\tilde\Ccal_0 }
	\prob^{0}_{\sss 1}\big(E'(\ve[1],x;\tilde \Ccal^{(0,\ve[1])}_{\sss 0}(0))
	\cap \{x+\ve[\kappa]\nin \tilde \Ccal_{\sss 1}\}\big)\right].
   	\lbeq{lemmaLowerbound-3-1}
	\end{align}
Let us first discuss the second term as it is simpler. We consider two cases for $x=\ve[2]$ and $x=\ve[2]+\ve[1]$. For
$x=\ve[2]$ and $\kappa\neq -1,-2$, we compute
	\begin{align}
 	&\shift\shift p^2\expect_{\sss 0}^{(0,\ve[1])}
	\left[\indic{\ve[1]\nin\tilde\Ccal_0}
	\prob^{0}_{\sss 1}\big(E'(\ve[1],\ve[2];\tilde \Ccal^{(0,\ve[1])}_{\sss 0}(0))
	\cap \{\ve[2]+\ve[\kappa]\nin \tilde \Ccal_{\sss 1}\}\big)\right]
	\nnb
 	\geq& p^2 \expect_{\sss 0}^{(0,\ve[1])}
	\left[\indic{\ve[1],\ve[1]+\ve[2]\nin \tilde \Ccal^{(0,\ve[1])}_{\sss 0}(0)}\indic{(0,\ve[2]) \text{ is occ.}}
	\prob_{\sss 1}^{0}\big((\ve[1],\ve[1]+\ve[2]),(\ve[1]+\ve[2],\ve[2])
	\text{ are occ.}, \ve[2]+\ve[\kappa]\nin\tilde\Ccal_1\big)\right].\nn
	\end{align}
Next, we use inclusion-exclusion on the event $\ve[2]+\ve[\kappa]\nin\tilde\Ccal_1$, as we did in \refeq{Lower-Bounds-tau3-creation},
and bound first term as described in \refeq{Lower-Bounds-not-connected-paths} as
	\begin{align}
  	& p^2 \prob_{\sss 0}^{(0,\ve[1])} \big(\ve[1],\ve[1]+\ve[2]\nin \tilde \Ccal^{(0,\ve[1])}_{\sss 0}(0),
	(0,\ve[2]) \text{ is occ.}\big)
	\prob_{\sss 1}^{0}\big((\ve[1],\ve[1]+\ve[2]),(\ve[1]+\ve[2],\ve[2]) \text{ are occ.}\big)\nnb
  	&-  p^2 \expec_{\sss 0}^{(0,\ve[1])} \Big[\indic{\ve[1],\ve[1]+\ve[2]\nin \tilde \Ccal^{(0,\ve[1])}_{\sss 0}(0)}
	\indic{(0,\ve[2]) \text{ is occ.}} \prob_{\sss 1}^{0}
	\big((\ve[1],\ve[1]+\ve[2]),(\ve[1]+\ve[2],\ve[2])
	\text{ are occ.},\ve[2]+\ve[\kappa]\in\tilde\Ccal_1\big)\Big]\nnb
  	\geq & p^5 \left(1-\prob_{\sss 0}^{(0,\ve[1])} ( \ve[1]\in \Ccal(0))
	-\prob_{\sss 0}^{\{\ve[1],\ve[2]\}}( \ve[1]+\ve[2]\in \Ccal(0))
	-\prob_{\sss 0}^{\{0,\ve[1]+\ve[2]\}} ( \ve[1]\in \Ccal(\ve[2])) \right)\nnb
 	& -p^5\prob_{\sss 0} (\ve[1]+\ve[2]\in \Ccal(\ve[2]) ) -p^5 \prob_{\sss 0}^{(\ve[2],\ve[2]+\ve[\kappa])}
	\left(\ve[2]+\ve[\kappa]\in\Ccal(\ve[2])\mid (\ve[1],\ve[1]+\ve[2]),(\ve[1]+\ve[2],\ve[2]) \text{ are occ.}\right)\nnb
 	\geq & p^5 (1-p-2\tau_{3,p}(\ve[1])-2 \tau_{4,p}^1(\ve[1]+\ve[2]))
	-p^5(\tau_{3,p}(\ve[1])+\tau_{2,p}^1(\ve[1]+\ve[\kappa])+\tau_{3,p}^1(\ve[1]+\ve[2]+\ve[\kappa])).\nn
  	\end{align}
 In the same way we obtain, for $x=\ve[1]+\ve[2]$,
	\begin{align}
 	&\shift\shift p^2\expect_{\sss 0}^{(0,\ve[1])} \left[\indic{\ve[1]\nin\tilde\Ccal_0}
	\prob^{0}_{\sss 1}(E'(\ve[1],\ve[2];\tilde \Ccal^{(0,\ve[1])}_{\sss 0}(0))
	\cap \{\ve[1]+\ve[2]+\ve[\kappa]\nin\tilde\Ccal_{\sss 1}(\ve[2])\})\right]\nnb
 	\geq & p^2 \expect_{\sss 0}^{(0,\ve[1])} \left[\indic{\ve[1]\nin\tilde\Ccal_0}
 	\indic{(0,\ve[2]),(\ve[2],\ve[1]+\ve[2]) \text{ occ.}} \prob_{\sss 1}^{0}
	((\ve[1],\ve[1]+\ve[2])) \text{ occ.}, \ve[1]+\ve[2]+\ve[\kappa]\nin\tilde \Ccal_1(e_1))
	\right]\nnb
 	\geq & p^5 (1-p-2\tau_{3,p}(\ve[1])-\tau_{4,p}^{\sss 1}(\ve[1]+\ve[2])
  	-\tau_{3,p}(\ve[1])-\tau_{2,p}^{\sss 1}(\ve[2]+\ve[\kappa])),
  	\lbeq{lemmaLowerbound-3-3}
	\end{align}
and note that $\kappa=-1$ and $\kappa=-2$ do not contribute.
Using symmetry, we conclude that
	\begin{align}
	\shift\shift\shift\sum_{\kappa,\rho}&\left[\Pi^{\ssc[1],1,\kappa}(\ve[\rho])+ \Pi^{\ssc[1],1,\kappa}(\ve[1]+\ve[\rho])\right]\nnb
	&\geq (2d-1)(2d-2) p^5(1-p-3\tau_{3,p}(\ve[1])-\theta_2-\theta_4)-(2d-2)p^5(\theta_4 +\vartheta),
  	\lbeq{lemmaLowerbound-3-4}
  	\end{align}
with
	\begin{align}
	\theta_2&=\max_{\iota=1,2}\tau_{2,p}^{\sss 1}(\ve[1]+\ve[\iota]),\quad
	\theta_4=\max_{\iota=1,2}\tau_{4,p}^{\sss 1}(\ve[1]+\ve[\iota])\\
	\vartheta&=\frac {d^2}{(d-1)(d-2)}(D^{\star 3}\star \tau_{5,p})(0).
	\end{align}

Now we bound the first term in  \refeq{lemmaLowerbound-3-1}. For a lower bound we restrict to the case where $0\conn \ve[1]$ is realised via the path $\gamma_\rho$, $b_0=(\ve[1],\ve[1]+\ve[\iota])$, and either
\begin{enumerate}[(i.)]
\item $x=\ve[1]+\ve[\rho]$, which is connected to $\tb=\ve[1]+\ve[\iota]$ in $\tilde \Ccal_1$ via the two-bond path
\begin{align}
\gamma'_{\iota,\rho}=(\ve[1]+\ve[\iota],\ve[1]+\ve[\iota]+\ve[\rho],\ve[1]+\ve[\rho]),
\end{align}
\item $x=\ve[1]+\ve[\rho]+\ve[\iota]$, the bond $(\ve[1]+\ve[\rho],x)$ is occupied in $\expec^{(0,\ve[1])}_{\sss 0 }$ and
 $(\ve[1]+\ve[\iota],x)$ is occupied in $\expec^{(0,\ve[1])}_{\sss 1 }$,
\item $x=\ve[1]+\ve[t]$, the bond $(\ve[1],\ve[1]+\ve[t])$ is occupied in $\expec^{(0,\ve[1])}_{\sss 0 }$ and $x$ is connected to $\tb=\ve[1]+\ve[\iota]$ in $\tilde \Ccal_1$ via the two-bond path $\gamma'_{\iota,t}$.
\end{enumerate}
We note that only $\rho,\iota,t$, with $|\rho|\neq 1$, $\iota\neq \rho,-\rho,-1$ and $t\neq \rho,1,-1$ contribute.
For the explanation below we fix $\rho, \iota$ and $t$, and only sum later.\\
{\bf Case (i.).} We bound
\begin{align}
&\expec^{(0,\ve[1])}_{\sss 0 }\Big(\indic{0 \conn \ve[1]} \indicwo{\mathcal{T}_\iota}\prob^{\ve[1]}_{\sss 1}
   \big(E'(\ve[1]+\ve[\iota],\ve[1]+\ve[\rho];\tilde{\Ccal}_{\sss 0})  \cap \{\ve[1]+\ve[\rho]+\ve[\kappa]\nin\tilde \Ccal_1\}\big)\Big)\nnb
&\geq  \sum_{\iota,t}\expec^{(0,\ve[1])}_{\sss 0 }\Big[\indicwo{\gamma_\rho\text{ occ.}}\Big(\prod_{\rho'\neq \rho}\indicwo{\gamma_{\rho'}\text{ vac.}}\Big)
 \indic{\ve[1]+\ve[\iota],\ve[1]+\ve[\iota]+\ve[\rho] \nin \tilde \Ccal_{\sss 0}}
\prob^{\ve[1]}_{\sss 1}  \big(\gamma'_{\iota,\rho}\text{ occ.}, \ve[1]+\ve[\rho]+\ve[\kappa]\nin\tilde \Ccal_1\big)\Big] .
\end{align}
We bound the probabilities of this in the way explained in \refeq{Lower-Bounds-tau3-creation}, \refeq{Lower-Bounds-not-connected-paths}, as
\begin{align}
\prob^{\ve[1]}_{\sss 1}
\big(\gamma'_{\iota,\rho}\text{ occ.},\ve[1]+\ve[\rho]+\ve[\kappa]\nin\tilde \Ccal_1\big)
&\geq p^2(1-\tau_{3,p}(\ve[1])-\tau_{2,p}^{\sss \rho}(\ve[\rho]+\ve[\kappa])-\tau_{5,p}(\ve[\rho]+\ve[\kappa]-\ve[\iota])),
\end{align}
and
\begin{align}
\expec^{(0,\ve[1])}_{\sss 0 }\Big[\indicwo{\gamma_\rho\text{ occ.}}&
\Big(\prod_{\rho'\neq \rho}\indicwo{\gamma_{\rho'}\text{ vac.}}\Big)
 \indic{\ve[1]+\ve[\iota],\ve[1]+\ve[\iota]+\ve[\rho] \nin \tilde \Ccal_{\sss 0}}\Big]\nnb
\geq& p^3 (1-p^3)^{2d-3}(1-p-2\tau_{3,p}(\ve[1])-2\tau_{4,p}^{\sss 1}(\ve[1]+\ve[2])
-2\tau_{2,p}^{\sss \iota}(\ve[1]+\ve[\iota]))\nnb
&-2p^3 (1-p^3)^{2d-3}(p^3+\tau_{5,p}^{\sss 1}(\ve[1]+\ve[\rho]+\ve[\iota])).
\end{align}
This means that
\begin{align}
   \sum_{\kappa,\rho}  \Pi^{\ssc[1],1,\kappa}(\ve[1]+\ve[\rho])\geq &\
(2d-2)^2(2d-3)p^7\left( 1-p-2p^2-2p^3-2\tau_{3,p}(\ve[1]) -4\theta_4-2\vartheta\right)\nnb
&\ \times \left( 1-\tau_{3,p}(\ve[1]) -\theta_2-\vartheta\right).
 \lbeq{lemmaLowerbound-3-5}
\end{align}
{\bf Case (ii.).} We consider $x=\ve[1]+\ve[\iota]+\ve[\rho]$ is a neighbor of $\ve[1]+\ve[\iota]$, and note that since $\ve[1]+\ve[\iota]\not\in \tilde{\Ccal}_{\sss 0}$ the `no previous pivotal' requirement in $E'(\ve[1]+\ve[\iota],x;\tilde{\Ccal}_{\sss 0})$ is automatically satisfied. Therefore,
	\begin{align}
	\lbeq{one-bond-no-previous}
	&\shift\shift\expec^{(0,\ve[1])}_{\sss 0 }\Big(\indic{0 \conn \ve[1]}
	\indicwo{\mathcal{T}_\iota} \prob^{\ve[1]}_{\sss 1}
   	\big(E'(\ve[1]+\ve[\iota],x;\tilde{\Ccal}_{\sss 0})  \cap \{x+\ve[\kappa]\nin\tilde\Ccal_{\sss 1}\}\big)\Big)\nnb
	\geq&\   \expec^{(0,\ve[1])}_{\sss 0}
	\Big(\indic{\gamma_\rho,(\ve[1]+\ve[\rho],x)\text{ occ.}}
	\indic{\ve[1]+\ve[\iota]\nin \tilde \Ccal_{\sss 0}}
	\prob^{\ve[1]}_{\sss 1} \big((\ve[1]+\ve[\iota],x)\text{ occ.}, x+\ve[\kappa]\nin \tilde \Ccal_{\sss 1}\big)\Big)  \nnb
	\geq&\ p^5(1-p-2\tau_{3,p}(\ve[1])-\tau_{2,p}^{\sss 1}(\ve[1]+\ve[\iota])
	-\tau_{4,p}^{\sss \rho}(\ve[\rho]+\ve[\iota])
	-\tau^{\sss 1}_{5,p}(\ve[1]+\ve[\iota]-\ve[\rho]))\nnb
	&\quad\times(1-\tau_{3,p}(\ve[1])-\tau_{2,p}^{\iota}(\ve[\iota]+\ve[\kappa])),
	\end{align}
and conclude
	\begin{align}
 	\sum_{\kappa,\rho,\iota} \Pi^{\ssc[1],1,\kappa}(\ve[1]+\ve[\iota]+\ve[\rho])
  	\geq&(2d-2)^2(2d-3)p^7(1-p^3)^{2d-3}(1-p-p^2-2\tau_{3,p}(\ve[1])-2\theta_4-\vartheta)\nnb
	&\ \times  (1-\tau_{3,p}(\ve[1])-\theta_2).
 	\lbeq{lemmaLowerbound-3-6}
	\end{align}
{\bf Case (iii.).} We consider $x=\ve[1]+\ve[t]$ and use the bound
	\begin{align}
	&\shift\shift\expec^{(0,\ve[1])}_{\sss 0 }\Big(\indic{0 \conn \ve[1]}
	\indicwo{\mathcal{T}_\iota}\prob^{\ve[1]}_{\sss 1}
   	\big(E'(\ve[1]+\ve[\iota],\ve[1]+\ve[t];\tilde{\Ccal}_{\sss 0})
	\cap \{\ve[1]+\ve[t]+\ve[\kappa]\nin\tilde \Ccal_{\sss 1}\}\big)\Big)\\
	\geq&  \expec^{(0,\ve[1])}_{\sss 0 }\Big[\indicwo{\gamma_\rho,(\ve[1],\ve[1]+\ve[t])\text{ occ.}}
	\Big(\prod_{\rho'\neq \rho}\indicwo{\gamma_{\rho'}\text{ vac.}}\Big)
 	\indic{\ve[1]+\ve[\iota],\ve[\iota]+\ve[t]+\ve[\rho] \nin \tilde \Ccal_{\sss 0}}
 	\prob^{\ve[1]}_{\sss 1}   \big(\gamma'_{\iota,t}\text{ occ.},
 	\ve[1]+\ve[t]+\ve[\kappa]\nin\tilde \Ccal_{\sss 1}\big)\Big]. \nn
	\end{align}
We bound the probabilities of this as explained in \refeq{Lower-Bounds-tau3-creation}, \refeq{Lower-Bounds-not-connected-paths}, to obtain
	\begin{align}
	\prob^{\ve[1]}_{\sss 1}   \big(\gamma'_{\iota,t}\text{ occ.},
 	\ve[1]+\ve[t]+\ve[\kappa]\nin\tilde \Ccal_{\sss 1}\big)
	&\geq p^2(1-\tau_{3,p}(\ve[1])-\tau_{2,p}^{\sss t}(\ve[t]+\ve[\kappa])
	-p^3-\tau_{5,p}(\ve[\iota]+\ve[\kappa]-\ve[t])),\nn
	\end{align}
and
	\begin{align}
	&\expec^{(0,\ve[1])}_{\sss 0 }\Big[\indicwo{\gamma_\rho,(\ve[1],\ve[1]+\ve[\rho])\text{ occ.}}
	\Big(\prod_{\rho'\neq \rho}\indicwo{\gamma_{\rho'}\text{ vac.}}\Big)
 	\indic{\ve[1]+\ve[\iota],\ve[\iota]+\ve[t]+\ve[\rho] \nin \tilde \Ccal_{\sss 0}}\Big]  \nnb
 	&\qquad \geq p^4 (1-p^3)^{2d-3}(1-p-2\tau_{3,p}(\ve[1])-2\tau_{4,p}^{\sss 1}(\ve[1]+\ve[2])
	-\tau_{2,p}^{\sss \iota}(\ve[t]+\ve[\iota])-\tau_{2,p}^{\sss 1}(\ve[1]+\ve[t]))\nnb
	&\qquad \quad-p^4 (1-p^3)^{2d-3}(3\tau_{5,p}^{\sss 1}(\ve[1]+\ve[t]+\ve[\iota])+
	\tau_{4,p}^{\sss 1}	(\ve[1]+\ve[\rho]+\ve[t]+\ve[\iota])).
	\end{align}
This implies that
	\begin{align}
   	\lbeq{lemmaLowerbound-3-7}
   	\sum_{\kappa,\rho}  \Pi^{\ssc[1],1,\kappa}(\ve[1]+\ve[\rho])
	\geq &(2d-2)^3(2d-3)p^8(1-p^3)^{2d-3}
	\left( 1-2\tau_{3,p}(\ve[1]) -\theta_2-\vartheta\right)\\
	&\ \times \left( 1-p-2p^2-2\tau_{3,p}(\ve[1]) -4\theta_4
	-3\vartheta-(2dp)^4 (D^{\star 4}\star \tau_{4,p})(0)\right).\nn
	\end{align}
We add the bounds \refeq{lemmaLowerbound-3-4}, \refeq{lemmaLowerbound-3-5}, \refeq{lemmaLowerbound-3-6}, \refeq{lemmaLowerbound-3-7} to obtain the bound stated in \refeq{lemmaLowerbound-3}.

\subsection{Proof of Lemma \ref{lemmapercboundXi0minus1}}
\label{Appendix-ToTheProofs-Difference}
We begin the proof of Lemma \ref{lemmapercboundXi0minus1}
by noting that \refeq{Differencebound-1} holds trivially as both terms are zero by definition, see \refeq{Def-Xi-Split} and \refeq{Def-XiOne-Split}.
We prove the other bounds stated in Lemma \ref{lemmapercboundXi0minus1} by deriving upper and lower bounds on the coefficients.
\paragraph{Upper bounds.}
We begin with the upper bounds, as they are simpler.
By definition,
  	\begin{align}
	\Xi^{\ssc[0]}_{\alpha,p}(\ve[1])&=\prob_p(\{0\dbc \ve[1]\}\cap\{(0,\ve[1])\text{ is occ.}\} )\nnb
	&=p \prob^{(0,\ve[1])}(0\conn \ve[1])=p\tau_{3,p}(\ve[1])\leq (2d-2)p^4+p\tau_{5,p}(\ve[1]).
	\lbeq{proof-Difference-Xi0Upper}
	\end{align}
For $\Psi^{\ssc[0],\kappa}_{\alpha,{\sss II},p}$, defined in \refeq{Def-Psi-Split-II}, we first note that $\Psi^{\ssc[0],-1}_{\alpha,{\sss II},p}(\ve[1])=0$ and then compute that
	\begin{align}
	\sum_{\kappa}\Psi^{\ssc[0],\kappa}_{\alpha,{\sss II},p}(\ve[1])&\leq
	\frac p \aap \sum_{\kappa\neq -1}\prob_p(\{0\connLe{\underline 3} \ve[1]\} \cap\ \mathcal{T}_\kappa )+ 	
	\prob_p(\{0\connLe{5} \ve[1]\} \cap\ \mathcal{T}_\kappa )\nnb
	&\stackrel{\refeq{Harris-bd}} \leq
	(2d-1)(2d-2) \frac {p^5}{\aap} +(2d-1)p\tau_{5,p}(\ve[1]).
	\lbeq{proof-Difference-PsiII0Upper}
	\end{align}
For $\Psi^{\ssc[0],1}_{\alpha,{\sss I},p}(\ve[1]+\ve[2])$, we compute
	\begin{align}
	\Psi^{\ssc[0],1}_{\alpha,{\sss I},p}(\ve[1]+\ve[2])=&\frac {p}{\aap}
	\prob_p(\{0\dbc \ve[1]+\ve[2]\}\cap\{0\connLe{\underline 2}
	\ve[1]+\ve[2]\}\cap\{(2\ve[1]+\ve[2])\nin\tilde \Ccal^{(\ve[1]+\ve[2],2\ve[1]+\ve[2])}(0)\})\nnb
	\leq &\ \frac {p}{\aap} \prob_p (\{ (0,\ve[1]),(\ve[1],\ve[1]+\ve[2]),(0,\ve[2] ),(\ve[2],\ve[1]+\ve[2])
	\text{ are occ.}\} )\nnb
	&+2\frac {p}{\aap} \prob_p (\{ (0,\ve[1]),(\ve[1],\ve[1]+\ve[2]) \text{ are occ.}\}
	\circ \{0\connLe{4} \ve[1]+\ve[2]\} )\nnb
	\leq &\ \frac {p^5}{\aap}+ \frac {2p^3}{\aap} \tau_{4,p}^{\sss 1 }(\ve[1]+\ve[2])+\frac{2p^3}\aap\tau_{3,p}(\ve[1])^2.
	\lbeq{proof-Difference-PsiI0Upper-temp}
	\end{align}
The factor $2$ of the second term is present as there exists two $2$-step paths from $0$ to $\ve[1]+\ve[2]$ and the $\tau_{3,p}(\ve[1])^2$ arises due to paths that use $\ve[1]$, but not the bonds $ (0,\ve[1]),(\ve[1],\ve[1]+\ve[2])$.
By symmetry, we can use this bound for $\Psi^{\ssc[0],1}_{\alpha,{\sss I},p}(\ve[1]+\ve[\iota])$ for all $\iota$ with $|\iota|\neq 1$. For $\iota=-1$, we note that $\Psi^{\ssc[0],\kappa}_{\alpha,{\sss I},p}(0)=0$, while, for $\iota=1$ we do not extract any special contributions as there exists only one $2$-step path from $0$ to $2\ve[1]$, and conclude that
	\begin{align}
	\Psi^{\ssc[0],1}_{\alpha,{\sss I},p}(2\ve[1])\leq
	&\frac {p^3}{\aap}\left(\tau_{4,p}^{\sss 1 }(2\ve[1])+\tau_{3,p}(\ve[1])^2\right).
	\end{align}
The sum over $\iota$ leads to
	\begin{align}
	\lbeq{proof-Difference-PsiI0Upper}
	\sum_\iota \Psi^{\ssc[0],1}_{\alpha,{\sss I},p}(\ve[1]+\ve[\iota]) \leq&
	(2d-2)\frac {p^5}{\aap}\\
	&+ \frac {p^3}{\aap} \left( 2(2d-2)\tau_{4,p}^{\sss 1 }(\ve[1]+\ve[2])
	+\tau_{4,p}^{\sss 1 }(2\ve[1])+(4d-3)\tau_{3,p}(\ve[1])^2\right),\nn
	\end{align}
where our numerical analysis shows that the first term constitutes around $90\%$ of the numerical bound.
This completes the upper bound for $N=0$. We bound $\Xi^{\ssc[1]}_{\alpha,p}(\ve[1]) $ as
	\begin{align}
    	\Xi^{\ssc[1]}_{\alpha,p}(\ve[1]) &= p\sum_{e} \expec_{\sss 0}
	\left(\indic{e\nin \tilde \Ccal^{(0,e)}_{\sss 0}(0) }  \indic{(0,\ve[1])\text{ is occ.}}
	\prob_{\sss 1}^{0} \big(E'(e,\ve[1];\tilde \Ccal^{(0,e)}_{\sss 0}(0))\big)\right)\nnb
	&\leq p^2 \sum_{\iota\neq 1} \tau_{2,p}^{-\iota}(\ve[1]-\ve[\iota])
	\leq  p^2 \left( (2d-2)\tau^{\sss 1}_{2,p}(\ve[1]+\ve[2])+\tau^{\sss 1}_{2,p}(2\ve[1])\right).
	\lbeq{proof-Difference-Xi1Upper}
	\end{align}
To create this bound we first drop the constraint $e\nin \tilde \Ccal^{(0,e)}_{\sss 0}(0)$. Then, we bound $E'(e,\ve[1];\tilde \Ccal^{(0,e)}_{\sss 0}(0))$ by the event  $\{\ve[\iota]\conn\ve[1]\text { off 0}\}$, which has probability $\tau_{2,p}^{-\iota}(\ve[1]-\ve[\iota])$.  In the last step, we have used the spatial symmetry.
For $\Psi^{\ssc[1],\kappa}_{\alpha,{\sss II},p}(x)$, we use an explicit bound for
contributions in which the loop present consists of four bonds.
Applying this to $x=e_1$ and bounding the contributions where the loop consists of more than four step as in
\refeq{proof-Difference-Xi1Upper}, we obtain
	\begin{align}
	\sum_{\kappa} \Psi^{\ssc[1],\kappa}_{\alpha,{\sss II},p}(\ve[1])
	\leq (2d-1)\frac { p^3}{\aap} \left((2d-2)p^2+ (2d-2)\tau^{\sss 1}_{4,p}(\ve[1]+\ve[2])
	+\tau^{\sss 1}_{4,p}(2\ve[1])\right),
	\lbeq{proof-Difference-PsiII1Upper}
	\end{align}
where the first contribution is due to loops of 4 bonds, and the others due to longer loops.

For the last remaining upper bound needed to prove Lemma \ref{lemmapercboundXi0minus1}, we first simplify the representation of $\Psi^{\ssc[1],\kappa}_{\alpha,{\sss I},p}(x)$ in \refeq{Def-PsiOne-SplitOne} for the special case $\kappa=1$ and $\|x-\ve[1]\|_2=1$ as
	\begin{align}	
	\Psi^{\ssc[1],1}_{\alpha,{\sss I},p}(x)=
	\frac {p^2}  {\aap} 	\sum_{ e } \expec_{\sss 0} \big( &\indic{e\nin \tilde \Ccal_0}\indic{x\in \tilde \Ccal_0}
	    	\prob_{\sss 1}^{0} \big(E'(e,x;\tilde \Ccal_0)\cap\{\ve[1]+x \nin \tilde \Ccal^{(x,\ve[1]+x)}_1\}\cap\{x\sim e,
	(x,e) \text{ occ.}\} \big)\big),
	\end{align}
with $\tilde \Ccal_{\sss 0}=\tilde \Ccal^{(0,e)}_{\sss 0}(0)$ and $\tilde \Ccal_{\sss 1}=\tilde \Ccal^{(x,x+\ve[1])}_{\sss 1}(e)$. Thus, $x$ and $e$ are of the form $x=\ve[1]+\ve[\iota], e\in\{\ve[1],\ve[\iota]\}$,  with $\iota\neq -1$ as $0\nin \tilde \Ccal_1$ by definition of $\expec_{\sss 1}^{0}$.
Since, $\tb=e\nin \tilde \Ccal_0$ is directly connected to $x=\ve[1]+\ve[\iota]$ the event $E'(e,x;\tilde \Ccal_0)$ always occurs, so that
	\begin{align}	
	\Psi^{\ssc[1],1}_{\alpha,{\sss I},p}(\ve[1]+\ve[\iota])=
	\frac {p^2}  {\aap} 	\sum_{e\in\{\ve[1],\ve[\iota]\}} \expec_{\sss 0} \big( \indic {e\nin \tilde  \Ccal_0}
	\indic{\ve[1]+\ve[\iota]\in \tilde\Ccal_0}
	    	\prob_{\sss 1}^{0} \big(&2\ve[1]+\ve[\iota]\nin \tilde \Ccal_1,(\ve[1]+\ve[\iota], e )\text{ is occ.}\big)\big).
	    	\lbeq{proof-Difference-PsiI1Rewrite}
	\end{align}
For the upper bound, we drop the condition $2\ve[1]+\ve[\iota]\nin \tilde \Ccal_1$ and obtain
	\begin{align}	
	\sum_{\iota}\Psi^{\ssc[1],1}_{\alpha,{\sss I},p}(\ve[1]+\ve[\iota])	    	
	\leq& 	\frac {p^3}{\aap}\left( 2(2d-2) 	\tau_{2,p}^{\sss 1}(\ve[1]+\ve[2])+\tau_{2,p}^{\sss 1}(2\ve[1])\right).
	    	\lbeq{proof-Difference-PsiI1Upper}
	\end{align}

\paragraph{Lower bounds.} For $\Xi^{\ssc[0]}_{\alpha,p}(\ve[1])$, defined in
\refeq{Def-Xi-Split}, we see that
	\eqan{
  	\Xi^{\ssc[0]}_{\alpha,p}(\ve[1])&=p \prob_p^{\sss (0,\ve[1])} (0\conn \ve[1])
	}
and use \refeq{Lower-Bounds-tau3} for a lower bound on the probability. For $\Psi^{\ssc[0],\kappa}_{\alpha,{\sss II},p}$, we note that
\begin{align}
\Psi^{\ssc[0],\kappa}_{\alpha,{\sss II},p}(\ve[1])=&\frac {p}{\aap}
\prob_p(\{0\connLe{3} \ve[1]\}\cap\{(0,\ve[1])\text{ is occ.}\}\cap\mathcal{T}_\kappa )
\lbeq{proof-Difference-PsiII0Lower-temp}
\end{align}
and use \refeq{Abbreviation-for-extra-bond} to obtain a lower bound.
For a lower bound on $ \Psi^{\ssc[0],1}_{\alpha,{\sss I},p}(\ve[1]+\ve[\iota])$, we only consider $|\iota|\neq 1$ and set $\iota=2$ for our discussion. We recall
\refeq{proof-Difference-PsiI0Upper-temp} and use \refeq{Lower-Bounds-not-connected-paths} to compute
\begin{align}
\Psi^{\ssc[0],1}_{\alpha,{\sss I},p}(\ve[1]+\ve[2])
\geq &\ \frac {p}{\aap} \prob_p (\{ (0,\ve[1]),(\ve[1],\ve[1]+\ve[2]),(0,\ve[2] ),(\ve[2],\ve[1]+\ve[2])\text{ are occ.}\}\nnb
&\qquad \qquad \qquad \cap\{(2\ve[1]+\ve[2])\nin\tilde \Ccal^{(\ve[1]+\ve[2],2\ve[1]+\ve[2])}(0)\} )\nnb
\geq &\ \frac {p^5}{\aap}(1-\tau_{3,p}(\ve[1])-\tau_{2,p}^{\sss 1}(\ve[1]+\ve[2])-\tau_{2,p}^{\sss 1}(2\ve[1])-\tau_{5,p}^{\sss 1}(2\ve[1]+\ve[2]))
\lbeq{proof-Difference-PsiI0Lower}
\end{align}
The lower bound on $\Xi^{\ssc[1]}_{\alpha,p}(\ve[1])$ is obtained in a similar way as
\begin{align}
    	\Xi^{\ssc[1]}_{\alpha,p}(\ve[1]) &= p\sum_{e} \expec_{\sss 0}
	\left(\indic{e\nin \tilde \Ccal^{(0,e)}_{\sss 0}(0)}  \indic{(0,\ve[1])\text{ is occ.}}
	\prob_{\sss 1}^{0} \big(E'(e,\ve[1];\tilde \Ccal^{(0,e)}_0(0))\big)\right)\nnb
	&\geq p\sum_{\iota\colon |\iota|\neq 1} \expec_{\sss 0} 	\left(\indic{\ve[\iota],\ve[\iota]+\ve[1]\nin \tilde \Ccal^{(0,\ve[\iota])}_{\sss 0}(0)}  \indic{(0,\ve[1])\text{ is occ.}}
	\prob_{\sss 1}^{0} \big((\ve[\iota],\ve[\iota]+\ve[1]), (\ve[\iota]+\ve[1],\ve[1]) \text{ occ.} \big)\right)\nnb
	&\geq (2d-2)p^4 (1-p-2\tau_{3,p}(\ve[1])-2\tau_{4,p}(\ve[1]+\ve[2])).
\lbeq{proof-Difference-Xi1Lower}
\end{align}

In the same way, we obtain
\begin{align}
\lbeq{proof-Difference-Psi1Lower}
\sum_{\kappa} 	\Psi^{\ssc[1], \kappa}_{\alpha,{\sss II},p}(\ve[1])\geq&
(2d-1)(2d-2)\frac {p^5}\aap( 1-p-2\tau_{3,p}(\ve[1])-2\theta_4)
\left(1-\tau_{3,p}(\ve[1])-\theta_2-\vartheta\right),
\end{align}
where the second factor is due to the constraint $\ve[1]+\ve[\kappa]\nin \tilde \Ccal^{(\ve[1],\ve[1]+\ve[\kappa])}_{\sss 1}(\ve[1])$, while $(\ve[\iota],\ve[\iota]+\ve[1])$ and $(\ve[1],\ve[\iota]+\ve[1])$ are occupied.
For the lower bound on $\Psi^{\ssc[1],1}_{\alpha,{\sss I},p}(\ve[1]+\ve[\iota])$, we start
from \refeq{proof-Difference-PsiI1Rewrite} and restrict to the case that the connection $0\conn x=\ve[1]+\ve[\iota]$ is created in two steps:
\begin{align}	
	\Psi^{\ssc[1],1}_{\alpha,{\sss I},p}(\ve[1]+\ve[\iota])\geq
		\frac {p^2}  {\aap} \sum_{
            \stackrel{  \rho,\kappa\in\{1,\iota\}}
		 {\kappa\neq \rho }
		} \expec_{\sss 0}& \Big( \indic{\ve[\rho]\nin \tilde \Ccal^{(0,\ve[\rho])}_0(0)}\indic{(0,\ve[\kappa]),(\ve[\kappa],\ve[1]+\ve[\iota])\text{ occ.}}\nnb
	    	&\prob_{\sss 1}^{0} \Big(2\ve[1]+\ve[\iota]\nin \tilde\Ccal^{(\ve[1]+\ve[\iota],2\ve[1]+\ve[\iota])}_{\sss 1},(\ve[1]+\ve[\iota] ),(\ve[\rho]+\ve[\iota],\ve[1])\text{ occ.}\Big)\Big)
\end{align}
Using the technique explained in the proof of Lemma \ref{lemmapercboundLowerBounds}, see
\refeq{Lower-Bounds-not-connected-paths}, we bound this by
\begin{align}	
	\Psi^{\ssc[1],1}_{\alpha,{\sss I},p}(\ve[1]+\ve[\iota])\geq
	 \sum_{\rho=1,\iota} \frac {p^5}  {\aap}(1-2\tau_{3,p}(\ve[1])-\tau_{2,p}^{\sss 1}(\ve[\iota]+\ve[\rho]))(1-\tau_{3,p}(\ve[1])-\tau_{2,p}^{\sss 1}(\ve[1]+\ve[\rho]))
\end{align}
Summing over $\iota$ with $|\iota|\neq 1$ we obtain the bound
\begin{align}	
\sum_{\iota}\Psi^{\ssc[1],1}_{\alpha,{\sss I},p}(\ve[1]+\ve[\iota])\geq
	 (2d-2) \frac {p^5}  {\aap}&(1-2\tau_{3,p}(\ve[1])-\tau_{2,p}^{\sss 1}(\ve[1]+\ve[2]))\nnb
	 &\times(2-2\tau_{3,p}(\ve[1])-\tau_{2,p}^{\sss 1}(2\ve[1])-\tau_{2,p}^{\sss 1}(\ve[1]+\ve[2]))
\lbeq{proof-Difference-PsiI1Lower}
\end{align}
Combining the upper and lower bounds creates the bounds stated in \refeq{Differencebound-2}-\refeq{Differencebound-7}.
\qed

\subsection{Some difficult weighted blocks}
\label{secBoundNBigSpecial}
As most building blocks are defined using just one simple diagrams, it is rather straightforward to bound them using our numerical estimates and the bootstrap functions. See e.g.\ \cite[Section 4.3]{FitHof13b}, where this is explained in detail.
However, the bounds on the blocks $C^{{\sss (1)},\iota,\kappa,a,b}, C^{{\sss (2)},\iota,\kappa,a,b}$ and the entries of $\vec h^{\iota},\vec h^{\iota,{\rm \sss II}}$ require some additional ideas. As these have not been discussed in \cite{FitHof13b}, we do so here.

\subsubsection{Bounds on $C^{\sss (1)}$ and $C^{\sss (2)}$}
In this section, we bound $C^{{\sss (1)},\iota,\kappa,a,b}, C^{{\sss (2)},\iota,\kappa,a,b}$ as shown in Figure \ref{fig-Form-C1-Block-perc}-\ref{fig-Form-C2-Block-perc}.
\begin{figure}[h]
\begin{center}
\picWeightConstructOne[0.7]
\caption{Diagrammatic representation of $C^{{\sss (1)},\iota,\kappa,a,b}$.}
\label{fig-Form-C1-Block-perc}
\picWeightConstructTwo[0.7]
\caption{Diagrammatic representation of $C^{{\sss (2)},\iota,\kappa,a,b}$.}
\label{fig-Form-C2-Block-perc}
\end{center}
\end{figure}
\noindent
For the bound on ${\bf C}^{\sss (1)}$, Figure \ref{fig-Form-C1-Block-perc},  we consider three cases.
First the left diagram, in which we apply $\|w\|_2^2\leq 2(\|t\|_2^2+\|t-w\|_2^2)$ and bound the result by
$2 \left(\left[{\bf H}^{\sss (2)}{\bf A}+{\bf A}^{\iota,*} {\bf H}^{\sss (1)}\right] {\bf A}^\iota\right)_{a,b}$.
Secondly, the right diagram in which, the small triangle is trivial. In this case, the bound is ${\bf H}^{\sss (2)}{\bf A}^\iota_{a,b}$. In the last case, we bound the diagram by a combination of $H^{\sss (2)}$ and $\bar B^{\sss (2)}$.
In this way, we obtain the bound
	\begin{align}
	({\bf C}^{\sss (1)})_{a,b}\leq&
 	\left(\left[2{\bf H}^{\sss (2)}{\bf A}+2{\bf A}^{\iota,*} {\bf H}^{\sss (1)}+{\bf H}^{\sss (2)}\right]
	{\bf A}^\iota\right)_{a,b}
 	+\sum_{c=0}^2 {\bf H}^{\sss (2)}_{a,c}\sup_{v}\sum_{\iota,x,y} \bar B^{{\sss (2)},\iota,c,b}(0,v,x,y).\nn
	\end{align}
For the bound on $({\bf C}^{\sss (2)})_{a,b}$, we extract the case that the triangle $(z,t,u)$ is non-trivial, as this is by far the most difficult term.
For this, we define $C^{{\sss (3)},\iota,a,b}(0,v,x,y)$ to be the diagram $C^{{\sss (2)},\iota,a,b}(0,v,x,y)$ in which we replace the weight $\|u\|_2^2$ by $\|z-u\|_2^2$ (see Figure \ref{fig-Form-C2-Block-perc}), and define
	\begin{align}
  	\lbeq{definition-supC3}
	({\bf C}^{\sss (3)})_{a,b}=&\sup_{v,y} \sum_{\iota,\kappa,x}C^{{\sss (3)},\iota,\kappa,a,b}(0,v,x,x+y).
	\end{align}
Then, we bound ${\bf C}^{\sss (2)}$ similarly to ${\bf C}^{\sss (1)}$,
using $\|u\|_2^2\leq 2(\|z\|_2^2+\|u-z\|_2^2)$, to obtain the bound
	\begin{align}
	({\bf C}^{\sss (2)})_{a,b}\leq&
 	\left(\left[2{\bf H}^{\sss (3)}{\bf A}+2{\bf A}^{\iota,*} {\bf H}^{\sss (1)}+
	{\bf H}^{\sss (3)}\right] {\bf A}^\iota +2  {\bf C}^{\sss (3)}\right)_{a,b}\nnb
 	&+\sum_{c=0}^2 {\bf H}^{\sss (3)}_{a,c}\sup_{v}\sum_{\iota,x,y} \bar B^{(2),\iota,c,b}(0,v,x,y).
	\end{align}
To bound $({\bf C}^{\sss (3)})_{a,b}$, we bound the underlying diagram $C^{(3),\iota,\kappa}$ using simple diagrams:
	\begin{align}
  	\lbeq{C3-first-Bound}
	\sum_{\iota,\kappa}C^{{\sss (3)},\iota,\kappa}&(0,v,x,x+y)
	\leq \sum_{t,u,w,z\in\Zd}
	\diagRepulsiveLetter{S}_{0,1,1,1}(z,t,w,v) \tau_p(z-u) \|z-u\|_2^2\\
	&\qquad\qquad \qquad \qquad\qquad\times \tau_{1,p}(u-x)\tau_{1,p}(t-u) \tau_{0,p}(w-x-y)\nnb
	&\qquad\qquad\qquad+\sum_{u,w,z\neq u}\shift \diagRepulsiveLetter{T}_{0,1,1}(z,w,v)
	\prob_p(z\dbc u ) \|z-u\|_2^2\tau_{1,p}(u-x)\tau_{0,p}(w-x-y),
	\nn
	\end{align}
compare this with the right diagram in Figure \ref{fig-Form-C2-Block-perc}.  We use this bound to compute \refeq{definition-supC3}.
As the order in which the bounds are performed is quite delicate, we show each step explicitly:
	\begin{align}
	& \sup_{v,y} \sum_{x,t,u,w,z\in\Zd}
	\diagRepulsiveLetter{S}_{0,1,1,1}(z,t,w,v) \tau_z(z-u) \|z-u\|_2^2\tau_{1,p}(u-x)\tau_{1,p}(t-u) \tau_{0,p}(w-x-y)\nnb
	\leq &\big(\sup_{r\in\Zd}\tau_p(r)\|r\|_2^2\big)
	\sup_{v,y\in\Zd} \sum_{t,u,w,z,x\in\Zd}\Big[\diagRepulsiveLetter{S}_{0,1,1,1}(z,t,w,v)
	\tau_{1,p}(t-u)\tau_{1,p}(u-x) 	\tau_{0,p}(w-x-y)\Big]\nnb
	=&\big(\sup_{r\in\Zd}\tau_p(r)\|r\|_2^2\big)
	\sup_{v,y\in\Zd} \sum_{t,w',z\in\Zd}\Big[ \diagRepulsiveLetter{S}_{0,1,1,1}(z,t,t+w',v) \nnb
	& \qquad\qquad\qquad \qquad\qquad \qquad\qquad
	\times \sum_{u',x'\in\Zd }\tau_{1,p}(u')\tau_{1,p}(u'-x') \tau_{0,p}(w'-x'-y)\Big].
	\end{align}
We relabel $w'=w-t, x'=x-t, u'=u-t$. Then, we take the supremum over $w'$ and obtain the bound
	\begin{align}
	\leq&\big(\sup_{r\in\Zd}\tau_p(r)\|r\|_2^2\big)
 	\sup_{v\in\Zd}\sum_{t,w,z\in\Zd} \diagRepulsiveLetter{S}_{0,1,1,1}(z,t,w,v)
  	\sup_{y,w\in\Zd} \sum_{x'}(\tau_{1,p}^{\star 2})(u')\tau_{0,p}(y-x'-w)
	\end{align}
with
	\begin{align}
 	\sup_{y,w\in\Zd} \sum_{x'}(\tau_{1,p}^{\star 2})(x')\tau_{0,p}(y-x'-w)
	= \sup_{y\in\Zd} (\tau_{1,p}^{\star 2}\star \tau_{0,p})(y)=\sup_{y}\sum_{w,u}\diagRepulsiveLetter{T}^*_{1,1,0}(u,w,y)
	\end{align}
We bound the second term in \refeq{C3-first-Bound} in a similar manner. We relabel $u'=u-z$ and rewrite this term as
	\begin{align}
	&\sup_{v,y}
	\sum_{x,u',w,z\colon u'\neq 0}
	\diagRepulsiveLetter{T}_{0,1,1}(z,w,v) \prob_p(0\dbc u' ) \|u'\|_2^2\
	\tau_{1,p}(u+z-x)\tau_{0,p}(w-x-y)\\
	&\leq \sum_{u'\neq 0}\prob_p(0\dbc u' ) \|u'\|_2^2
	\left[\sup_{v}\sum_{w,z}\diagRepulsiveLetter{T}_{0,1,1}(z,w,v)\right]
	\left[ \sup_{u',w,y,z}\sum_{x}\tau_{1,p}(u'+z-x)\tau_{0,p}(w-x-y)\right],\nn
	\end{align}
with
	\begin{align}
  	\sup_{u',w,y,z}\sum_{x}\tau_{1,p}(u'+z-x)\tau_{0,p}(w-x-y)&=\sup_{u',w,y,z}\sum_{x}\tau_{1,p}(u'+z-x)\tau_{0,p}(x+y-w)\nnb
	&=\sup_{y\in\Zd} (\tau_{1,p}\star \tau_{0,p})(y).
	\end{align}
These bounds hold for all $a,b$. For $a,b\in \{0,1\},$ we can further improve these bounds, for example, by using that the complete square $\diagRepulsiveLetter{S}_{0,1,1,1}(z,t,w,v)$ contains at least four steps. \\[2mm]

\noindent
{\bf Remark.} We have bounded ${\bf C}^{\sss (3)}$ using a square. Thus, this bound can only be used for $d\geq 9$ as the square is infinite in $d=7,8$. It is possible to bound ${\bf C}^{\sss (3)}$ using only triangles and weighted bubbles, which are finite in $d\geq 7$. However, this requires that we decompose the diagram of the coefficient differently. This decomposition would require a second set of building blocks. As we cannot prove mean-field behaviour in $d=7,8$ anyway, we simply use the bound derived above.

\subsubsection{Bounds on $\vec h^{\iota}$ and $\vec h^{\iota,{\rm \sss II}}$}
For $\vec h^{\iota,{\rm \sss II}}$, it follows from its definition in \refeq{defHiotaII} and a simple step that
	\begin{align}
	(\vec h^{\iota,{\rm \sss II}})_{b}\leq (\vec h^{\iota})_{b}+2 (\vec h^{\iota}{\bf A}^{\iota})_{b}
	+2 (\vec P^\iota{\bf A}^\iota {\bf H}^{\sss (2)})_b.
	\end{align}
The term $\vec h^{\iota}$ consists of three contributions that we display in Figure \ref{fig-Form-hiota}.

\begin{figure}[h]
\begin{center}
\picWeightPiIota[0.75]
\caption{The possible forms of a diagram in $\vec h^{\iota}(k)$. For the bounds we take the supremum over $x-y$ and sum over $x$ and $\iota$. We divide the result by $2d$ to average over $\iota$.}
\label{fig-Form-hiota}
\end{center}
\end{figure}
\noindent

In the first diagram only $y\neq 0$ contributes, since $y\in\tilde \Ccal_{\sss 0} \cap \tilde \Ccal_{\sss 1}$. This means that the connection $\ve[\iota]\to 0\to y$ consists of at least two steps. We bound this contribution by $p \sum_{\iota,\kappa}H^{{\sss (3)},\kappa,1,b}(\ve[\iota],0,y,x)$, where the factor $p$ is created by the connection $\ve[\iota]\conn 0$ that does not contribute to $H^{{\sss (3)},\kappa,1,b}$.
The second and third diagrams are decomposed as shown in Figures \ref{fig-Form-hiota-decomp1} and \ref{fig-Form-hiota-decomp2}.

\begin{figure}[h]
\begin{center}
\picHIotaDecompositionOne[0.75]
\caption{The decomposition of the second diagram of Figure \ref{fig-Form-hiota}.}
\label{fig-Form-hiota-decomp1}
\end{center}
\end{figure}
\begin{figure}[h]
\begin{center}
\picHIotaDecompositionTwo[0.75]
\caption{The decomposition of the third diagram of Figure \ref{fig-Form-hiota}.}
\label{fig-Form-hiota-decomp2}
\end{center}
\end{figure}

We bound the diagrams shown in Figure \ref{fig-Form-hiota-decomp1} by
\begin{align}
& \tau_{3,p}(\ve[\iota])({\bf H}^{\sss (2)})_{0,b}
+2\tau_{3,p}(\ve[\iota])((\vec P^{\sss \rm S}-(1,0,0)^T){\bf H}^{\sss (2)})_{b}+2\tau_{3,p}(\ve[\iota])({\bf H}^{\sss (1)}{\bf A}^\iota)_{0,b}.
\lbeq{bound-hiota-partial-one}
\end{align}
The diagrams of Figure \ref{fig-Form-hiota-decomp2} are bounded by
\begin{align}
& \sum_{w}\diagRepulsiveLetter{B}_{2,\underline 1 }(w,\ve[\iota])({\bf H}^{\sss (2)})_{1,b}
+\sum_{w}\diagRepulsiveLetter{B}_{0,2}(w,\ve[\iota])({\bf H}^{\sss (2)})_{2,b}\nnb
&+2\sum_{w} (\diagRepulsiveLetter{B}_{2,\underline 1}(w,\ve[\iota])+\diagRepulsiveLetter{B}_{0,2}(w,\ve[\iota]))
({\bf A}^\iota)_{0,0} \max\left\{({\bf H}^{\sss (2)})_{1,b},({\bf H}^{\sss (2)})_{2,b}\right\}\nnb
&+2\sum_{w} (\diagRepulsiveLetter{B}_{2,\underline 1}(w,\ve[\iota])+\diagRepulsiveLetter{B}_{0,2}(w,\ve[\iota]))({\bf H}^{\sss (1)})_{0,0}
\max\left\{({\bf A}^\iota)_{1,b},({\bf A}^\iota)_{2,b}\right\}.
\lbeq{bound-hiota-partial-two}
\end{align}
Combining these bounds we obtain
	\begin{align}
	(\vec h^{\iota})_{b}\leq& p({\bf H}^{\sss (3)})_{1,b}+\refeq{bound-hiota-partial-one}
	+ \refeq{bound-hiota-partial-two},
	\end{align}
where the two line numbers denote the terms given in the corresponding lines.

\fi

\end{document}